\documentclass[12pt]{amsart}
\usepackage{graphics,wrapfig, graphicx, mathrsfs,xcolor}
\usepackage{mathtools}
\usepackage{amssymb,amscd,graphicx,xcolor,subfig,tikz}
\usepackage{graphics}
\usepackage{indentfirst}
\usepackage{bm, enumerate,xcolor}
\usepackage[dvips]{epsfig}
\usepackage{latexsym}
\usepackage{float}
\usepackage[colorlinks]{hyperref}
\AtBeginDocument{
   \hypersetup{
    linkcolor=blue,
    citecolor=blue,
 }
}

\usepackage{amsmath}
\usepackage{amssymb}
\usepackage[applemac]{inputenc}
\usepackage[T1]{fontenc}
\newtheorem{lemma}{Lemma}[section]
\newtheorem{theorem}{Theorem}[section]
\newtheorem{corollary}{Corollary}[section]

\newtheorem{definition}{Definition}[section]
\newtheorem{remark}{Remark}[section]

\newtheorem{example}{Example}[section]

\numberwithin{equation}{section} \numberwithin{theorem}{section}
\numberwithin{example}{section} \numberwithin{remark}{section}
\numberwithin{figure}{section} \numberwithin{algorithm}{section}
\mathtoolsset{showonlyrefs}

\title[Schauder estimate in large scale for planar cavity flows]{Schauder estimate in large scale for compressible planar cavity flows with fast-decaying vorticity}
\author{Chilin Zhang}
\address{School of Mathematical Sciences, Fudan University, Shanghai 200433, China}\email{zhangchilin@fudan.edu.cn}

\begin{document}
\begin{abstract}
    In this paper, we study planar compressible cavity flows fast-decaying vorticity and establish a Schauder estimate in large scale. As an application, we obtain an a priori asymptotic growth rate estimate of the cavity, that $s^{\frac{1}{2}-\delta}\lesssim h(s)\lesssim s^{\frac{1}{2}+\delta}$. This is by far the most accurate asymptotic estimate for compressible cavity flows and is probably the first paper that takes vorticity into account in the asymptotic analysis.
\end{abstract}
\maketitle
\noindent{\bf Keywords.} Compressible cavity flows, Free boundary problems, Asymptotic analysis, Schauder estimate in large scale.\\
2020 {\bf MSC.} 76B10, 35R35, 35B40, 35B65.

\section{Introduction}
Cavity and jet flows are classical free boundary problems in fluid mechanics, sharing the same physical origin. In both cases, a free surface separates the fluid from vacuum or a low-pressure region behind a rigid body. Their mathematical theory originated from the hodograph and conformal mapping methods in Helmholtz \cite{H1868}, Kirchhoff \cite{K1869}, Leray \cite{L35} and Berg \cite{B62}, and was later reformulated in the variational framework of Alt-Caffarelli-Friedman \cite{ACF82,ACF83,ACF85}. Since our focus is on the cavity problem, we refer the reader to \cite{ACF85,CDW20,LSTX25} for the jet literature and proceed directly to the cavity setting.

The well-posedness of the 2D cavity problem was original established by Leray \cite{L35} for the incompressible case, and his result was extended to the compressible case in Berg \cite{B62}. For the axially symmetric setting, early results on infinite cavities were obtained by Caffarelli-Friedman \cite{CF82}, while Alt-Caffarelli-Friedman \cite{ACF85} later established well-posedness for compressible axially symmetric cavity flows via the variational method, extending \cite{CF82} to non-monotone obstacles.

Later, Alt-Caffarelli-Friedman applied the variational method and established the well-posedness of compressible 3D axial symmetric cavity flow in \cite{ACF85}. More recently, Wang \cite{W21} established well-posedness for 2D compressible subsonic cavity flows in an infinitely long nozzle, with additional properties such as the positivity of the horizontal velocity and the far-field asymptotic behavior.

For a cavity flow, it is an interesting problem to determine the asymptotic growth rate of the free boundary. In \cite{L46}, Levinson studied the growth rate of incompressible 3D axial symmetric cavity flow, and established the estimate
\begin{equation*}
    \frac{\sqrt{r}}{(\ln{r})^{\frac{1}{4}+\epsilon}}\leq h(r)\leq\frac{\sqrt{r}}{(\ln{r})^{\frac{1}{4}-\epsilon}}.
\end{equation*}
However, his proof is based on a power-law ansatz with a slowly varying correction at the level of formal asymptotics. Besides, the convergence of certain integrals in his derivation was later shown to be problematic in \cite{LLWX26}. In \cite{LLWX26}, Li-Lin-Weiss-Xie rigorously established the Levinson asymptotics by a potential theoretic reduction to ODE approach, proving both the rigidity of the asymptotic profile and the existence of a solution with this behavior. Both \cite{L46} and \cite{LLWX26} applied the integral equation method, relying heavily on the linear structure. Recently, in \cite{KW26}, Kriventsov and Weiss developed a robust potential analysis method that applies to equations with quasi-linear principal part, which in their regime is not treated as a perturbation of a linear operator.

For compressible flows, the result is considerably less accurate: the best estimate so far was obtained in \cite{ACF85}, where the authors showed that $h(r)=o(r)$ as $r\to\infty$. Moreover, to the best of our knowledge, the well-posedness of compressible cavity flows with vorticity remains open. The only available result for rotational cavity flows is Cheng-Du-Zhang \cite{CDZ17}, who established existence and uniqueness for the two-dimensional incompressible cavity flow with non-zero vorticity in an infinitely long channel; however, their analysis does not cover the compressible case, nor does it address the asymptotic growth rate of the free boundary. On the compressible side, the only relevant work is Li-Shi-Tang-Xie \cite{LSTX25}, where well-posedness is established for 2D compressible rotational jet flows, rather than cavity flows.

In this paper, we study the asymptotic growth rate of the free boundary for 2D compressible cavity flows with small vorticity. We assume that the velocity field is symmetric about the $x_{1}$-axis and that there is no stagnation point. Since the well-posedness of compressible cavity flows with vorticity remains open, our growth rate estimate is necessarily a priori: it does not rely on the existence of a global solution. Our main result (Theorem~\ref{thm. growth rate of free boundary}) states that, under the assumptions (A1)-(A6), the free boundary satisfies
\begin{equation*}
    s^{\frac{1}{2}-\delta}\leq h(s)\leq s^{\frac{1}{2}+\delta}\quad\mbox{for all }\delta>0\mbox{ and }s\geq\underline{s}(\delta),
\end{equation*}
This is, to the best of our knowledge, the first power-type growth rate estimate for compressible cavity flows with non-zero vorticity. Moreover, the estimate is highly localized: it does not require the flow to be globally defined in $\mathbb{R}^{2}_{+}$, in contrast to \cite{L46,LLWX26}.

\subsection{Problem formulation} The cavity problem consists of fluids being separated by a nose. In this paper, we assume that the nose is described by a function $N(x_{1})$ defined for $x_{1}\in[-L,0]$, such that
\begin{equation*}
    N(-L)=0<N(0),\quad\mbox{and }N'(x_{1})\geq0\mbox{ in }[-L,0].
\end{equation*}
The nose, or the obstacle region, is defined by
\begin{equation*}
    \mbox{obstacle}=\{(x_{1},x_{2}):-L\leq x_{1}\leq0,\ |x_{2}|\leq N(x_{1})\},
\end{equation*}
For convenience, we extend $N(x_{1})$ trivially in $(-\infty,L]$. To the left (namely, the $-\vec{e}_{1}$ direction) of the nose, a shear flow is flowing in the $+\vec{e}_{1}$ direction, and the velocity field is symmetric about the $x_{1}$-axis. The cavity region caused by the nose is assumed to be
\begin{equation*}
    \mbox{cavity region}=\{(x_{1},x_{2}):x_{1}\geq0,\ |x_{2}|\leq h(x_{1})\},
\end{equation*}
where $h:\mathbb{R}_{+}\to\mathbb{R}_{+}$ is assumed to be an increasing function with $h(0)=N(0)$. In the cavity region, there is no fluid and can be approximated by the vacuum.

By assuming the symmetry of the incoming flow about the $x_{1}$-axis, one can consider a problem in $\mathbb{R}^{2}_{+}$ as follows: Let $\Gamma$ be the border (a special stream line) of the flow, with
\begin{equation*}
    \Gamma:=\begin{cases}
        x_{2}=N(x_{1})=0,&x_{1}\leq-L,\\
        x_{2}=N(x_{1}),&-L\leq x_{1}\leq0\\
        x_{2}=h(x_{1}),&x_{1}\geq0.
    \end{cases}
\end{equation*}
The fluid is flowing above $\Gamma$, namely in the region $\Gamma^{+}$ given below:
\begin{equation*}
    \Gamma^{+}:=\begin{cases}
        x_{2}>N(x_{1})=0,&x_{1}\leq-L,\\
        x_{2}>N(x_{1}),&-L\leq x_{1}\leq0\\
        x_{2}>h(x_{1}),&x_{1}\geq0.
    \end{cases}
\end{equation*}
In the region $\Gamma^{+}$, the flow is described by a compressible steady Euler flow
\begin{equation}\label{eq. initial main}
    \left\{\begin{aligned}
        &\nabla\cdot(\rho\mathbf{u})=0,\\
        &\rho\mathbf{u}\cdot\nabla\mathbf{u}+\nabla p(\rho)=0.
    \end{aligned}\right.
\end{equation}
When there is no stagnation point (i.e. $\mathbf{u}=0$), then there is a stream function representation $\psi$, such that $\rho\mathbf{u}=(\psi_{y},-\psi_{x})$, and the stream function satisfies the following quasi-linear equation:
\begin{equation}\label{eq. quasilinear structure}
    \nabla\cdot\Big(g(|\nabla\psi|^{2},\psi)\nabla\psi\Big)=\frac{\mathcal{B}'(\psi)}{g(|\nabla\psi|^{2},\psi)}.
\end{equation}
In \eqref{eq. quasilinear structure}, the function $g$ represents the $1/\rho$, and the right-hand side corresponds to the vorticity of the flow. Using the Bernoulli law, one has the following equation:
\begin{equation}\label{eq. physics structure}
    \frac{T}{2}\cdot g(T,z)^{2}+h\Big(\frac{1}{g(T,z)}\Big)=\mathcal{B}(z),\quad\mbox{where }h(\rho)=\int_{0}^{\rho}\frac{1}{\widetilde{\rho}}\frac{dp}{d\widetilde{\rho}}d\widetilde{\rho}+C.
\end{equation}
A brief derivation of the stream function representation will be give in Section~\ref{sec. Basis on steady compressible planar Euler flows}.

Since $\Gamma$ is a special stream line, $\psi$ must be constant on $\Gamma$. without loss of generality, we assume that $\psi\equiv0$ on $\Gamma$. Since the pressure at the cavity is constant, one can imagine that $p(\rho)\equiv constant$ on the free boundary $\{(x_{1},x_{2}):x_{1}\geq0,\ x_{2}=h(x_{1})\}$. Then applying \eqref{eq. physics structure} gives that $|\nabla\psi|$ is a positive constant on the free boundary, as long as the flow is sub-sonic. In this paper, we let this positive constant be $\Lambda$.

Then, in the first quadrant where $\Gamma$ coincides with the free boundary, the Euler equation \eqref{eq. quasilinear structure} together with the free boundary assumption can be simplified into the following standard Bernoulli-type one-phase problem:
\begin{equation}\label{eq. main simple}
    \left\{\begin{aligned}
    &\nabla\cdot\Big(g(|\nabla\psi|^{2},\psi)\nabla\psi\Big)=\frac{\mathcal{B}'(\psi)}{g(|\nabla\psi|^{2},\psi)}&\mbox{in }&\{\psi>0\},\\
    &|\nabla\psi|=\Lambda&\mbox{on }&\{\psi=0\}\cap\partial\{\psi>0\}.
    \end{aligned}\right.
\end{equation}
Among all solutions to \eqref{eq. main simple}, shear flow solutions, namely solutions depending only on $x_{2}$, are the simplest and can be analyzed via ODE methods. See the definition below:
\begin{definition}\label{def. shear flow}
    We let $\Psi(x)=\Psi(x_{2})$ be a shear flow solution to \eqref{eq. quasilinear structure} satisfying the following properties
    \begin{equation}\label{eq. 1D solution Psi}
    \left\{\begin{aligned}
        &\Big(g(|\Psi_{t}|^{2},\Psi)\Psi_{t}\Big)_{t}=\frac{\mathcal{B}'(\Psi)}{g(|\Psi_{t}|^{2},\Psi)}\mbox{ and }\Psi>0\mbox{ for }t>0,\\
        &\Psi\equiv0\mbox{ for }t\leq0,\quad\Psi_{t}(0+)=\Lambda,\\
        &\Psi_{t}\geq C^{-1}\mbox{ in }\mathbb{R}_{+},\quad\mbox{and }\|\Psi_{t}\|_{L^{\infty}(\mathbb{R}_{+})}<Q_{sound}.
    \end{aligned}\right.
    \end{equation}
\end{definition}

In the upper plane $\mathbb{R}^{2}_{+}$, we assume that the cavity flow is sub-sonic and is asymptotically flat. Precisely, we give the following definition:
\begin{definition}\label{def. asymptotically flat}
    We say that the stream function $\psi$ for a sub-sonic cavity flow without stagnation is asymptotically flat, if $0<|\nabla\psi|<Q_{sound}$ everywhere (see Subsection~\ref{subset. sub-sonic} for $Q_{sound}$) in $\Gamma^{+}$, and if there exists a function $\epsilon_{R}$ with $\displaystyle\lim_{R\to\infty}\epsilon_{R}\to0$, such that
\begin{equation}\label{eq. cavity main problem}
    \left\{\begin{aligned}
        &\nabla\cdot\Big(g(|\nabla\psi|^{2},\psi)\nabla\psi\Big)=\frac{\mathcal{B}'(\psi)}{g(|\nabla\psi|^{2},\psi)}\mbox{ and }\psi>0&\mbox{in }&\Gamma^{+},\\
        &\psi=0&\mbox{on }&\Gamma,\\
        &|\nabla\psi|=\Lambda&\mbox{on }&\{x_{2}=h(x_{1})\},\\
        &\Psi(x_{2}-\epsilon_{R}R)\leq\psi(x)\leq\Psi(x_{2})&\mbox{in }&[-R,R]^{2}.
    \end{aligned}\right.
\end{equation}
In practice, for convenience, we would like to choose $\epsilon_{R}$ as small as possible, such that for any $\epsilon'<\epsilon_{R}$, we always have
\begin{equation*}
    \psi(x)<\Psi(x_{2}-\epsilon'R)\quad\mbox{for some }x\in[-R,R]^{2}.
\end{equation*}
\end{definition}

\subsection{Main results}
In this paper, we are interested in determining the growth rate of the free boundary $x_{2}=h(x_{1})$. Since the well-posedness of 2D compressible cavity flow with vorticity is by far unknown, the growth rate estimate in this paper has to be an a priori one. Nevertheless, since it was shown in Berg \cite{B62} the existence of irrotational flows, the main results in this paper is definitely valid as least in the irrotational case.

In this paper, we make the following assumptions:
\begin{itemize}
    \item[(A1)] The functions $p(\rho)$ and $h(\rho)$ depend only on the properties of the fluid (come from experimental data or theoretical computation). In this paper, we only assume that they are second order differentiable for all $\rho\in(0,+\infty)$
    \item[(A2)] The physical identity \eqref{eq. physics structure} holds, and so do its compatibility conditions.
    \item[(A3)] There is a shear flow solution $\Psi(x_{2})$ satisfying Definition~\ref{def. shear flow}.
    \item[(A4)] The shear flow solution satisfies $\|\Psi_{t}\|_{L^{\infty}(\mathbb{R}_{+})}<Q_{sound}$ and
    \begin{equation*}
        \frac{\partial}{\partial T}\Big(T\cdot g(T,z)^{2}\Big)\in[C^{-1},+\infty),\quad\mbox{for all }(T,z)\in[0,Q_{sound}^{2})\times[0,+\infty).
    \end{equation*}
    See Subsection~\ref{subset. sub-sonic} for the discussion of sub-sonic flows and $Q_{sound}$.
    \item[(A5)] The inverse of density, i.e. $g(T,z)$, is a $C^{2}$ function, such that
    \begin{equation*}
        |D^{2}g|\leq C\quad\mbox{for }(T,z)\in[0,L^{2}]\times[0,+\infty).
    \end{equation*}
    \item[(A6)] The function $\mathcal{B}(z)$ satisfies
    \begin{equation*}
        \mathcal{B}(0)=\frac{\Lambda^{2}}{2}g(\Lambda^{2},0)^{2}\quad\mbox{and }\lim_{z\to+\infty}z\cdot\mathcal{B}'(z)=0.
    \end{equation*}
    Moreover, $\mathcal{B}(+\infty)$ is strictly positive, such that
    \begin{equation*}
        \mathcal{B}(+\infty)=\lim_{\substack{T\to V\\z\to\infty}}\frac{T}{2}\cdot g(T,z)^{2},\quad\mbox{for some }V\in(0,Q_{sound}).
    \end{equation*}
\end{itemize}
Throughout this paper (including the Appendices), all theorems, lemma, corollaries take these assumptions without further mentioning.

Our first main result is the following Schauder estimate in large scale.
\begin{theorem}\label{thm. Schauder mixed}
    There exist an uniform constants $C>0$ and two functions $\overline{\epsilon}(\delta,L,N),M(\delta,L,N)>0$, such that the following fact holds for all $\delta>0$, $\epsilon\leq\overline{\epsilon}\big(\delta,L,N(0)\big)$, and all $\displaystyle R\geq M\big(\delta,L,N(0)\big)/\epsilon$: Assume that $\psi$ is a sub-sonic solution satisfying
    \begin{equation}\label{eq. cavity local problem}
    \left\{\begin{aligned}
        &\nabla\cdot\Big(g(|\nabla\psi|^{2},\psi)\nabla\psi\Big)=\frac{\mathcal{B}'(\psi)}{g(|\nabla\psi|^{2},\psi)}\mbox{ and }\psi>0&\mbox{in }&\Gamma^{+},\\
        &\psi=0&\mbox{on }&\Gamma,\\
        &|\nabla\psi|=\Lambda&\mbox{on }&\{x_{2}=h(x_{1})\}\cap[-R,R]^{2},\\
        &\Psi(x_{2}-\epsilon R)\leq\psi(x)\leq\Psi(x_{2})&\mbox{in }&[-R,R]^{2}.
    \end{aligned}\right.
    \end{equation}
    Let
    \begin{equation*}
        v_{R}(y)=\frac{\Psi^{-1}(\psi(x))-x_{2}}{\epsilon R}\quad\mbox{for }y\in[-1,1]^{2}\cap\{\psi(Ry)>0\},\quad\mbox{where }x=Ry.
    \end{equation*}
    Then, there exists a real number $-K\leq A_{R}\leq0$, such that
    \begin{equation}\label{eq. Schauder mixed error}
        \Big|v_{R}(y)-A_{R}\cdot Re(\sqrt{y_{1}+i\mu y_{2}})\Big|\leq K\cdot|A_{R}|\cdot Re(\sqrt{y_{1}+i\mu y_{2}})\cdot|y|+\delta
    \end{equation}
    in $B_{1/2}\cap\{\psi(Ry)>0\}$. Here, $\mu$ is given explicitly as follows:
    \begin{equation}\label{eq. vertical mach deform}
        \mu=\Big(\frac{g(V^{2},+\infty)}{g(V^{2},+\infty)+2V^{2}\cdot g_{T}(V^{2},+\infty)}\Big)^{1/2}.
    \end{equation}
\end{theorem}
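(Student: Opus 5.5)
This is a compactness/blow-down argument combined with a linearization around the shear flow.

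\textbf{Step 1: linearized problem.} Set $\phi=\Psi^{-1}\circ\psi$, so that $\psi=\Psi(\phi)$. Then $w=\phi-x_2$ satisfies a divergence-form quasilinear equation. Its coefficients depend on $\nabla w$ and on $\Psi'(\phi)$, $\Psi''(\phi)$. Written this way, the shear flow $w\equiv0$ is an exact solution, and the free boundary condition $|\nabla\psi|=\Lambda$ becomes $|\nabla\phi|=1$ on $\{\phi=0\}$.

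In the rescaled variable $y=x/R$, consider $v_R=w(Ry)/(\epsilon R)$. By the trapping $\Psi(x_2-\epsilon R)\le\psi\le\Psi(x_2)$, we have $-1\le v_R\le0$. The free boundary lies in $\{0\le y_2\le\epsilon\}$, and the nose lies in $\{|y|\lesssim (L+N(0))/R\}$, which is $\le\epsilon/M$-small.

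Linearizing the equation in $v$, the coefficients evaluated at $\phi\approx x_2=Ry_2$ converge, as $Ry_2\to\infty$, to their limits at $z=+\infty$. This uses (A6): $z\mathcal B'(z)\to0$ and $\Psi_t\to V$. The limit operator is $\partial_{11}+\mu^{-2}\partial_{22}$ up to a constant factor, with $\mu$ as in \eqref{eq. vertical mach deform}. This follows from differentiating $g(|\nabla\psi|^2)\nabla\psi$ at $\nabla\psi=(0,V)$: the $y_2$-direction picks up the factor $g+2V^2g_T$.

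The free boundary condition linearizes to $\partial_{y_2}v=0$, since $|\nabla\phi|^2=1+2w_2+|\nabla w|^2$. So in the limit $v$ satisfies an anisotropic Laplace equation in the slit domain $B_1\setminus\{y_2=0,\,y_1\ge0\}$ after reflecting. It has the Neumann condition on the cavity part $y_1>0$, and the Dirichlet condition $v=-\phi/(\epsilon R)\cdot$const, in fact $v=0$, on the wall part $y_1<0$, where $\phi=0$ exactly at $x_2=0$.

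\textbf{Step 2: classification.} After the change $y_2\mapsto y_2/\mu$, bounded solutions of the mixed problem that are homogeneous-expandable are spanned by $\mathrm{Re}\sqrt{y_1+i\mu y_2}$, $\mathrm{Re}(y_1+i\mu y_2)^{3/2}$, and so on. Hence the leading term is $A\,\mathrm{Re}\sqrt{y_1+i\mu y_2}$, with the next term $O(|y|^{3/2})$, which gives the factor $|y|$ relative to the leading term.

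The sign $A\le0$ comes from $v\le0$. The bound $|A|\le K$ comes from $|v|\le1$.

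\textbf{Step 3: compactness argument.} Argue by contradiction: take $\epsilon_k\to0$ and $R_k\epsilon_k\to\infty$ with \eqref{eq. Schauder mixed error} failing for every $A$. We need uniform H\"older estimates for $v_{R_k}$ up to the thin free boundary layer. These come from an improvement-of-flatness / Harnack inequality for one-phase problems (De Silva style), applied at scales $\ge\epsilon R$. Below that scale, the error $\delta$ absorbs the oscillation.

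Away from $y_2\sim 0$, but with $Ry_2$ bounded, the coefficients are not the limiting ones. However, the region $\{Ry_2\le C\}$ shrinks to the slit, so Arzel\`a–Ascoli gives a subsequence converging locally uniformly to a solution of the limit mixed problem. By Step 2 the limit has the stated form, which is a contradiction.

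The main obstacle is exactly this uniform H\"older/Harnack estimate near the junction of the Neumann (free) and Dirichlet (wall) parts. There one needs $v$ to attain the boundary values continuously, with a modulus independent of $k$. The nose at scale $\epsilon/M$ must be shown to be invisible; I would handle it with explicit barriers built from $\Psi(x_2-c)$ and from perturbations of $\mathrm{Re}\sqrt{\cdot}$. The inhomogeneous vorticity terms decay because $z\mathcal B'(z)\to0$, and the $\delta$ margin absorbs the contribution from the boundary strip $Ry_2=O(1)$.
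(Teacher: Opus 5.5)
Your architecture matches the paper's. You run a contradiction sequence with $\epsilon_k\to0$, $\epsilon_kR_k\to\infty$. You get uniform H\"older control of $v_k$ at scales $\gtrsim\epsilon_k$ from a large-scale Harnack inequality with shear-flow-based barriers. You pass to a blow-down limit solving $\mu^2v_{11}+v_{22}=0$ in $B_1^+$, with $v=0$ on $[-1,0]\times\{0\}$ and $\partial_2v=0$ on $[0,1]\times\{0\}$. One slip: after even reflection across the Neumann part, the slit is $\{y_2=0,\ y_1\le0\}$, not $\{y_1\ge0\}$. That is where $\mathrm{Re}\sqrt{y_1+i\mu y_2}$ vanishes.

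The genuine gap is in Step 2. The estimate to be proved has error $K|A|\,\mathrm{Re}\sqrt{y_1+i\mu y_2}\,|y|$. This is relative to the leading term and has a constant proportional to $|A|$. In particular, when $A=0$ it demands $|v_R|\le\delta$, so the blow-down limit must vanish identically. Your expansion only gives $|v-A\,\mathrm{Re}\sqrt{y_1+i\mu y_2}|\le C|y|^{3/2}$, with $C$ controlled by $\|v\|_\infty\le1$. You also use $v\le0$ only to fix the sign of $A$.

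That cannot suffice. $\mathrm{Re}(y_1+i\mu y_2)^{3/2}$ solves the same mixed problem with $A=0$ and is not small. Moreover, an $O(|y|^{3/2})$ bound is not $\lesssim|y|\,\mathrm{Re}\sqrt{y_1+i\mu y_2}$ near the slit, where the leading term vanishes. What rules these out is the one-sign condition $-v\ge0$, used through a boundary Harnack principle. The paper quotes De Silva--Savin's slit-domain result for exactly this.

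In your separation-of-variables framework the fix is elementary. Use stretched polar coordinates $(y_1,\mu y_2)=re^{i\theta}$ on a half-disc $\{r<r_0\}$, and write $-v=\sum_{k\ge0}a_k(r/r_0)^{k+1/2}\cos((k+\frac12)\theta)$ with $a_k=\frac{2}{\pi}\int_0^\pi(-v)(r_0,\theta)\cos((k+\frac12)\theta)\,d\theta$. Since $|\cos((2k+1)\phi)|\le(2k+1)\cos\phi$ on $[0,\pi/2]$, you get $|\cos((k+\frac12)\theta)|\le(2k+1)\cos(\theta/2)$. Together with $-v\ge0$ this gives $|a_k|\le(2k+1)a_0$. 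Hence the tail is at most $C a_0 (r/r_0)^{3/2}\cos(\theta/2)$ for $r\le r_0/2$. This yields both the relative error with constant proportional to $|A|$ and the dichotomy $a_0=0\Rightarrow v\equiv0$.

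With this inserted your argument closes. The proportionality to $|A|$ is exactly what the later non-degeneracy lemma relies on, so it cannot be dropped.
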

\begin{remark}
    An alternative way to express $Re(\sqrt{y_{1}+i\mu y_{2}})$ is
    \begin{equation*}
        Re(\sqrt{y_{1}+i\mu y_{2}})=\sqrt{\frac{y_{1}+\sqrt{y_{1}^{2}+\mu^{2}y_{2}^{2}}}{2}}.
    \end{equation*}
\end{remark}
\begin{remark}
    In fact, the constant $\mu$ in \eqref{eq. vertical mach deform} is highly related to the mach number. Precisely speaking, $\sqrt{1-\mu^2}$ is equal to the limiting mach number of the flow as $x_2\to+\infty$.
\end{remark}
Notice that when $Ry\in\Gamma$, it must hold that $\Psi^{-1}\circ\psi(Ry)=0$ in the limit sense. As a result, one has $\displaystyle v_{R}(y)=-\frac{x_{2}}{\epsilon R}=-\frac{h(x_{1})}{\epsilon R}$. In other words, the horizontal shifting function $v_{R}(y)$ documents the location of the free boundary. Therefore, we are able to obtain the second main result: the a priori growth rate estimate of the free boundary.
\begin{theorem}\label{thm. growth rate of free boundary}
    Assume that $\psi$ is an asymptotically flat sub-sonic solution without stagnation (see Definition~\ref{def. asymptotically flat}) and assume that $h(x_{1})$ is strictly increasing. Then, one has
    \begin{equation*}
        \lim_{\substack{\frac{1}{2}\leq\frac{s}{t}\leq2\\s,t\to\infty}}\frac{h(s)/h(t)}{\sqrt{s/t}}=1.
    \end{equation*}
    Specifically, for every $\delta>0$, there exists some $\underline{s}(\delta)>0$, such that
    \begin{equation*}
        s^{\frac{1}{2}-\delta}\leq h(s)\leq s^{\frac{1}{2}+\delta}\quad\mbox{for all }s\geq\underline{s}(\delta).
    \end{equation*}
\end{theorem}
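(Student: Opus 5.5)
The plan is to derive Theorem~\ref{thm. growth rate of free boundary} from Theorem~\ref{thm. Schauder mixed} by reading the free boundary off the horizontal shifting function $v_R$.

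\textbf{Step 1: apply the Schauder estimate.} Fix $\delta>0$. Since $\psi$ is asymptotically flat, $\epsilon_R\to0$. For large $R$ I set $\epsilon=\max(\epsilon_R,\epsilon_*)$, where $\epsilon_*\le\overline{\epsilon}$ is a fixed small constant; this guarantees $R\ge M/\epsilon$ eventually. The hypothesis \eqref{eq. cavity local problem} then holds, because $\Psi(x_2-\epsilon R)\le\Psi(x_2-\epsilon_R R)$. Theorem~\ref{thm. Schauder mixed} gives $A_R\in[-K,0]$ with \eqref{eq. Schauder mixed error}.

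\textbf{Step 2: evaluate on the free boundary.} At a point $y=(y_1,h(Ry_1)/R)$ with $y_1\in(0,1/2)$ we have $v_R(y)=-h(Ry_1)/(\epsilon R)$. Since $h(s)=o(s)$ (indeed $h(s)\le \epsilon_s s$ by flatness, as $\Psi(x_2-\epsilon_s s)\le \psi=0$ on $\Gamma$), we get $y_2\ll y_1$. Hence $\mathrm{Re}\sqrt{y_1+i\mu y_2}=\sqrt{y_1}\,(1+o(1))$. The estimate becomes
\begin{equation*}
\frac{h(Ry_1)}{\epsilon R}=|A_R|\sqrt{y_1}\,\bigl(1+O(|y_1|)+o(1)\bigr)\pm\delta .
\end{equation*}

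\textbf{Step 3: nondegeneracy of $|A_R|$.} To compare $h(s)$ and $h(t)$ with $s=Ry_1$, $t=Ry_1'$, I need $|A_R|$ to dominate $\delta$. The minimality convention for $\epsilon_R$ in Definition~\ref{def. asymptotically flat} gives that $\psi<\Psi(x_2-\epsilon'R)$ somewhere for $\epsilon'<\epsilon_R$. So $\inf v_R$ on $[-1,1]^2$ is about $-1$. If instead $|A_R|\le C\delta$, then $|v_R|\lesssim\delta$ on $B_{1/2}$. I will need to propagate this to $[-1,1]^2$ to get a contradiction. This is the step I expect to be the main obstacle.

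I would handle it by rescaling: apply the theorem at scale $2R$, or alternatively choose the scale $R$ adapted to $\epsilon=\epsilon_R$ exactly. Far from $\Gamma$, $v_R$ is small by interior Harnack/Lipschitz control, because $\Psi^{-1}\circ\psi-x_2$ solves a uniformly elliptic linearized equation. So the extremal defect must occur near the boundary, i.e. it is comparable to $h(R)/(\epsilon R)$. This yields $|A_R|\ge c>0$ once $\delta$ is small.

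\textbf{Step 4: the ratio limit.} Dividing the Step 2 relations at $y_1$ and $y_1'$ with $y_1/y_1'\in[1/2,2]$, $y_1\approx\eta$ small, gives
\begin{equation*}
\frac{h(s)}{h(t)}=\sqrt{s/t}\,\bigl(1+O(\eta)+O(\delta/c)\bigr).
\end{equation*}
Taking $R=s/\eta$ and letting $\eta,\delta\to0$ gives the limit $1$.

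\textbf{Step 5: the power bounds.} Iterate along $s_k=2^k s_0$. We have $h(s_{k+1})/h(s_k)\in[\sqrt2\,2^{-\delta},\sqrt2\,2^{\delta}]$ for $k$ large. Summing the logarithms, and using monotonicity of $h$ to interpolate between dyadic points, gives $s^{1/2-2\delta}\lesssim h(s)\lesssim s^{1/2+2\delta}$. Absorbing the constants by enlarging $\underline{s}(\delta)$ yields the claim.
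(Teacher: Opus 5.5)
Your Steps 2, 4 and 5 are fine, but both places where the real work lies are left open.

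\textbf{Nondegeneracy of $A_R$.} The mechanism you sketch in Step 3 does not work. Interior Harnack/Lipschitz control gives regularity of $v_R$ away from $\Gamma$, not smallness: $v_R$ has sup norm $1$, and its blow-down profile $A\,\mathrm{Re}\sqrt{y_1+i\mu y_2}$ is largest far from the origin. More fundamentally, no single-scale argument can succeed. Theorem~\ref{thm. Schauder mixed} only speaks about $B_{1/2}$, and $|A_R|\lesssim\delta$ merely says $M_{R/2}\lesssim\delta M_R$, which is compatible with $\|v_R\|_{L^\infty}=1$. Locating the extremum of $v_R$ near the free boundary at $|y|\sim1$ says nothing about $A_R$. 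Moving that point into $B_{1/2}$ by working at scale $2R$ only reduces the question to $M_{2R}\lesssim h(R)$, which is circular.

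The missing idea is that of Lemma~\ref{lem. Non-degeneracy}. Normalize by $M_R=\|\Psi^{-1}(\psi)-x_2\|_{L^\infty(B_R\cap\{\psi>0\})}$, i.e.\ $\epsilon=M_R/R$, and show that degeneracy propagates \emph{upward} in scale:
\begin{itemize}
\item If $|A_R|\le\underline{a}$, then $M_{\rho R}/M_R=\|v_R\|_{L^\infty(B_\rho)}\le\rho^2$.
\item Inserting this ratio into the estimate at scale $R/\rho$ (lower bound at $\rho^2\vec{e}_2$, upper bound on $B_\rho$) forces $|A_{R/\rho}|\le\underline{a}$ again.
\item Iterating gives $M_{\rho^{-k}R}\ge\rho^{-2k}M_R$, hence $M_r/r\to\infty$, contradicting $\epsilon_r\to0$.
\end{itemize}

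\textbf{The bounded case.} You never treat $\limsup_{R\to\infty}\epsilon_RR<\infty$. This is exactly where the hypothesis that $h$ is strictly increasing is needed; you only use monotonicity for dyadic interpolation.

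Your device $\epsilon=\max(\epsilon_R,\epsilon_*)$ makes Theorem~\ref{thm. Schauder mixed} formally applicable but empty. Then $\|v_R\|_{L^\infty}\le\epsilon_R/\epsilon_*\to0$, so the estimate is satisfied with $A_R=0$ and Step 2 yields no ratio information. This choice also contradicts the normalization $\inf v_R\approx-1$ you invoke in Step 3. The informative normalization $\epsilon=\epsilon_R$ requires $\epsilon_RR\ge M$, i.e.\ $M_R\to\infty$. If $M_R$ stays bounded, then $h$ is bounded and $h(s)/h(t)\to1\neq\sqrt{s/t}$, so this case must be excluded by a separate argument.

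The paper does this in Steps 5--6 of its proof:
\begin{itemize}
\item A blow-down in the first quadrant has both Neumann and constant Dirichlet data on the bottom, which forces $v_\infty$ to be constant.
\item Interior Harnack, Lemma~\ref{lem. maximum variant} and the minimality of $\epsilon_R$ then show $h(+\infty)=G:=\limsup\epsilon_RR$.
\item The Harnack improvement away from the origin (the remark following Theorem~\ref{thm. Harnack mixed}) produces $s_k\to\infty$ with $h(s_k)\le(1-\sigma)h(+\infty)$, contradicting monotonicity.
\end{itemize}
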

To summarize, the main contributions of this paper are threefold. First, we obtain the a priori $\frac{1}{2}\pm\delta$ growth rate estimate for compressible cavity flows, which is almost sharp as a power law. Second, this is the first result on the asymptotic behavior of compressible cavity flows with vorticity; the only previous work on rotational cavity flows is \cite{CDZ17}, which addresses well-posedness in the incompressible case and does not study the growth rate. Third, our estimate is highly localized, requiring neither the global existence of the flow nor the almost-power-growth ansatz used in \cite{L46}.

\subsection{Key ideas}
The proof is based on the ``improvement of flatness'' method of De Silva \cite{D11} (see also Velichkov's textbook \cite{V23}), but with two key differences specific to the quasilinear setting with vorticity.

First, for the quasi-linear problem with vorticity, the original linearization methods is invalid for analysis in large scale. To meet this difficulty, we apply Savin's ODE method \cite{S09} to construct radial barriers by deforming the shear flow solution $\Psi$. Luckily, the structure of quasilinear equation \eqref{eq. quasilinear structure} allows us to do so. Using Savin's ODE method, there is no much distinction between compressible rotational flows and incompressible irrotational flows.

Second, the ``improvement of flatness'' can only be used to prove the regularity of the interface (as in \cite{D11}), or to prove Liouville type results for entire solutions (as in \cite{AS22,S09}). To my knowledge, this is the first paper to use the ``improvement of flatness'' method to predict the asymptotic behavior of an entire solution, instead of proving the Liouville theorem. Since the free boundary is expected to grow like $x_{1}^{\frac{1}{2}\pm\delta}$, the term ``improvement of flatness'' would be misleading; we therefore refer to the method as ``Schauder estimate in large scale''.

Unlike in \cite{D11}, the blow-down limit $v_{\infty}$ of the horizontal shifting function $v_{R}(y)$ does not take the Neumann boundary condition on the whole bottom $\{y_{2}=0\}$. Instead, on the bottom, $v_{\infty}$ satisfies the Dirichlet boundary condition on $[-1,0]\times\{0\}$ and takes the Neumann boundary condition on $[0,1]\times\{0\}$. See details in Lemma~\ref{lem. blow-down discreption mixed}. For such a mixed boundary data \eqref{eq. mixed limit fractional problem}, De Silva-Savin \cite{DS16} has already obtained a boundary Harnack principle, which explicitly describes the asymptotic behavior of the blow-down limit $v_{\infty}$. We also realize that indeed \eqref{eq. mixed limit fractional problem} is closely related to the Caffarelli-Silvestre extension (see \cite{CS07}) of the fractional operator $(-\Delta)^{1/2}$ in $\mathbb{R}^{1}$. Then, the asymptotic estimate in Theorem~\ref{thm. growth rate of free boundary} coincides the fact that positive $\frac{1}{2}$-harmonic functions in $\mathbb{R}_{+}$ with zero exterior data must be a multiple of $(x_{+})^{1/2}$ (see e.g. \cite{ZLCC16}).

\subsection{Organization of this paper}
The proof is divided into three main steps:
\begin{itemize}
    \item In Section~\ref{sec. Basis on steady compressible planar Euler flows}, we derive the quasi-linear equation \eqref{eq. quasilinear structure} and discussed its ellipticity. Besides, we derive the physics equation \eqref{eq. physics structure}, as well as its compatibility conditions.
    \item In Section~\ref{sec. Harnack}, we prove the Harnack principles, see Theorem~\ref{thm. Harnack pure} and Theorem~\ref{thm. Harnack mixed}.
    \item In Section~\ref{sec. Schauder}, we prove the Schauder estimates, including the first main result Theorem~\ref{thm. Schauder mixed}, as well as Theorem~\ref{thm. Schauder pure}.
    \item In Section~\ref{sec. growth rate}, we apply Theorem~\ref{thm. Schauder mixed} to give an a priori estimate of the free boundary $\{x_{2}=h(x_{1})\}$, thus proving the second main result Theorem~\ref{thm. growth rate of free boundary}.
\end{itemize}
To avoid too much tedious computation in the main text, a lot of details are written in the Appendices:
\begin{itemize}
    \item In Appendix~\ref{sec. appendix ODE}, we prove an ODE analysis and deformation akin to Savin \cite{S09}.
    \item In Appendix~\ref{sec. appendix maximum principle}, we prove several maximum principles in the horizontal direction.
    \item In Appendix~\ref{sec. appendix Small-perturbation}, we study the linearized equation of \eqref{eq. quasilinear structure}, obtaining the interior Harnack principle and the asymptotic profile.
\end{itemize}

\section{Basis on steady compressible planar Euler flows}\label{sec. Basis on steady compressible planar Euler flows}
In this section, we give a brief derivation of the stream function representation.
\subsection{Bernoulli Law}
Multiplying the momentum equation in \eqref{eq. initial main} by $\mathbf{u}$ and using the continuity equation yields the Bernoulli Law
\begin{equation}\label{eq. Bernoulli law}
    \mathbf{u}\cdot\nabla H=0,\quad H=\frac{|\mathbf{u}|^{2}}{2}+h(\rho),
\end{equation}
where $h(\rho)$ satisfies $\frac{dh}{d\rho}=\frac{1}{\rho}\frac{dp}{d\rho}$.
\begin{example}
    For an ideal gas, the pressure function $p(\rho)$ is given as $p(\rho)=\rho^{\gamma}/\gamma$ for some $\gamma>1$. In this case, we correspondingly have that $h(\rho)=\frac{\rho^{\gamma-1}}{\gamma-1}$.
\end{example}

\subsection{Vorticity}
Let the vorticity $\omega=\nabla\times\mathbf{u}$ be the curl of $\mathbf{u}$. Since $\mathbf{u}$ is a planar vector, we see that $\omega\parallel\vec{e}_{3}$ where $\vec{e}_{3}$ is perpendicular to the plane. Therefore, one can treat $\omega$ as a scalar, defined as
\begin{equation*}
    \omega=\nabla\times\mathbf{u}:=\partial_{x}u^{2}-\partial_{y}u^{1}.
\end{equation*}
If we take the curl (and treat the curl as a scalar) of the second line of \eqref{eq. initial main}, then
\begin{equation}\label{eq. curl of second line}
    \mathbf{u}\cdot\nabla(\frac{\omega}{\rho})=0.
\end{equation}

\subsection{Stream function representation}
Since the flow is divergence-free, there exists a stream function $\psi$ such that $\rho\mathbf{u}=(\psi_{y},-\psi_{x})$. Then, we have that
\begin{equation*}
    \mathbf{u}=\big(\frac{\psi_{y}}{\rho},-\frac{\psi_{x}}{\rho}\big),\quad\omega=-\nabla\cdot\big(\frac{\nabla\psi}{\rho}\big).
\end{equation*}
Then, we get from \eqref{eq. Bernoulli law} and \eqref{eq. curl of second line} that
\begin{equation*}
    \nabla\psi\parallel\nabla\Big(\frac{|\nabla\psi|^{2}}{2\rho^{2}}+h(\rho)\Big).
\end{equation*}
Consequently, if there is no stagnation, i.e. $|\nabla\psi|\neq0$ everywhere, then
\begin{equation}\label{eq. H depends only on psi}
    H=\frac{|\mathbf{u}|^{2}}{2}+h(\rho)=\frac{|\nabla\psi|^{2}}{2\rho^{2}}+h(\rho)=\mathcal{B}(\psi).
\end{equation}
Reversing this equation means that $\rho$ depends on $(|\nabla\psi|^{2},\psi)$. Let us denote
\begin{equation*}
    g(|\nabla\psi|^{2},\psi)=\frac{1}{\rho}.
\end{equation*}
Set $T=|\nabla\psi|^{2}$ and $z=\psi$, then we have verified \eqref{eq. physics structure}.

\subsection{Quasi-linear equation structure}
Taking the gradient on both sides on \eqref{eq. H depends only on psi} gives
\begin{equation*}
    \mathbf{u}\cdot\nabla\mathbf{u}+\mathbf{u}\times\omega+\nabla h=\mathcal{B}'(\psi)\nabla\psi,
\end{equation*}
where $\mathbf{u}\times\omega$ is understood as $\mathbf{u}\times\omega\vec{e}_{3}=u^{2}\omega\vec{e}_{1}-u^{1}\omega\vec{e}_{2}$.
Since $\nabla h=h'(\rho)\nabla\rho=\frac{1}{\rho}\nabla p$, we get $\mathbf{u}\times(\omega\vec{e}_{3})=\mathcal{B}'(\psi)\nabla\psi$. In other words, using $\mathbf{u}=-\frac{1}{\rho}\nabla^{\perp}\psi$, one has
\begin{equation*}
    -\frac{\omega}{\rho}\nabla\psi=\mathcal{B}'(\psi)\nabla\psi.
\end{equation*}
If there is no stagnation, i.e. $|\nabla\psi|\neq0$ everywhere, then
\begin{equation*}
    -\frac{\omega}{\rho}=\mathcal{B}'(\psi),\quad\mbox{i.e. }\nabla\cdot\Big(g(|\nabla\psi|^{2},\psi)\nabla\psi\Big)=\frac{\mathcal{B}'(\psi)}{g(|\nabla\psi|^{2},\psi)}.
\end{equation*}
In other words, the vorticity $\omega$ has the following expression:
\begin{equation*}
    \omega=-\frac{\mathcal{B}'(\psi)}{g(|\nabla\psi|^{2},\psi)}.
\end{equation*}
Till now, we have seen that if $|\nabla\psi|\neq0$ (i.e. no stagnation), then \eqref{eq. initial main} implies \eqref{eq. physics structure} and \eqref{eq. quasilinear structure}. Then, we see that the stream function $\psi$ satisfies a quasi-linear equation.

\subsection{Compatibility conditions}
Let us derive the compatibility conditions of \eqref{eq. physics structure}. Taking the $T$-derivative of \eqref{eq. physics structure} yields
\begin{equation}\label{eq. compatibility T}
    \frac{g^{2}}{2}+T\cdot g g_{T}-h'(\frac{1}{g})\frac{g_{T}}{g^{2}}=0.
\end{equation}
Taking the $z$-derivative of \eqref{eq. physics structure} yields
\begin{equation*}
    T\cdot g g_{z}-h'(\frac{1}{g})\frac{g_{z}}{g^{2}}=\mathcal{B}'(z).
\end{equation*}
Applying \eqref{eq. compatibility T} gives the following further simplification:
\begin{equation}\label{eq. compatibility Z}
    g_{z}=-\frac{2g_{T}}{g^{2}}\cdot\mathcal{B}'(z).
\end{equation}
Taking the mixed order derivative of \eqref{eq. physics structure} gives that
\begin{equation*}
    g g_{z}+T\cdot\frac{\partial(g g_{z})}{\partial T}+h''(\frac{1}{g})\frac{g_{z}g_{T}}{g^{4}}-h'(\frac{1}{g})\frac{(gg_{z})_{T}-3g_{z}g_{T}}{g^{3}}=0.
\end{equation*}
Using \eqref{eq. compatibility T} and reordering the above identity gives that
\begin{equation*}
    \frac{g}{2g_{T}}\cdot\frac{\partial(g g_{z})}{\partial T}=\Big(\frac{1}{g^{3}}h'(\frac{1}{g})-T\Big)\cdot\frac{\partial(g g_{z})}{\partial T}=g g_{z}+h''(\frac{1}{g})\frac{g_{z}g_{T}}{g^{4}}+3h'(\frac{1}{g})\frac{g_{z}g_{T}}{g^{3}}.
\end{equation*}
Substituting the term $g_{z}$ by \eqref{eq. compatibility Z} gives that
\begin{equation}\label{eq. compatibility TZ}
    \frac{\partial(g g_{z})}{\partial T}=-\Big(4\frac{(g_{T})^{2}}{g^{2}}+4h''(\frac{1}{g})\cdot\frac{(g_{T})^{3}}{g^{7}}+12h'(\frac{1}{g})\cdot\frac{(g_{T})^{3}}{g^{6}}\Big)\cdot\mathcal{B}'(z).
\end{equation}

\subsection{Sub-sonic flow and ellipticity}\label{subset. sub-sonic}
Related to the fluid and the function $g(T,z)$ there is a constant called $Q_{sound}$, such that
\begin{equation*}
    \frac{\partial}{\partial T}\Big(T\cdot g(T,z)^{2}\Big)>0,\quad\mbox{for }(T,z)\in[0,Q_{sound})\times\mathbb{R}_{+}.
\end{equation*}
For incompressible fluids, then one has $Q_{sound}=+\infty$.

We say that the flow $(\rho,\mathbf{u})$ is sub-sonic near a point $x$, if the stream function $\psi$ satisfies
\begin{equation*}
    |\nabla\psi|<Q_{sound}\quad\mbox{near }x.
\end{equation*}
In this case, the linearized equation of \eqref{eq. quasilinear structure} is elliptic near $x$. Moreover, if
\begin{equation*}
    \lambda^{-1}\leq\frac{\partial}{\partial T}\Big(T\cdot g(T,z)^{2}\Big)\leq\lambda,\quad\mbox{for all }(T,z)\in Range\Big(|\nabla\psi|^{2},\psi\Big),
\end{equation*}
then the linearized equation of \eqref{eq. quasilinear structure} is uniformly elliptic.

\section{Harnack principles}\label{sec. Harnack}
In this section, we study the Harnack principle in large scale for the free boundary problem. The following two theorems are for the purely free boundary problem and for the mixed fixed-and-free boundary problem, respectively.
\begin{theorem}\label{thm. Harnack pure}
    There exist two sufficiently small constants $\overline{\epsilon},\sigma>0$ and a sufficiently large constant $\underline{R}$, such that the following fact holds for all $\epsilon\leq\overline{\epsilon}$ and all $R\geq\underline{R}$. Assume that
    \begin{equation*}
        \psi\mbox{ solves \eqref{eq. main simple} and satisfies }\Psi(x_{2}-\epsilon R)\leq\psi(x)\leq\Psi(x_{2}+\epsilon R)\mbox{ in }[-R,R]^{2}.
    \end{equation*}
    Then there exist $a,b\in[-\epsilon R,\epsilon R]$ satisfying $0\leq b-a\leq(1-\sigma)\cdot2\epsilon R$, such that
    \begin{equation*}
        \Psi(x_{2}+a)\leq\psi(x)\leq\Psi(x_{2}+b)\mbox{ in }[-\sigma R,\sigma R]^{2}.
    \end{equation*}
\end{theorem}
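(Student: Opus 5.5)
We follow De Silva's Harnack inequality for flat free boundaries \cite{D11}. Savin's ODE barriers \cite{S09}, built by deforming the shear flow $\Psi$ (Appendix~\ref{sec. appendix ODE}), replace the harmonic linear barriers. The work is done in the variable $x_2$ measured through $\Psi^{-1}$, so the natural quantity is the vertical shift $w(x)=\Psi^{-1}(\psi(x))-x_2\in[-\epsilon R,\epsilon R]$. It is enough to improve one side. Consider the point $\bar x=(0,R/2)$, which lies well inside $\{\psi>0\}$ since $\epsilon$ is small. Then either $w(\bar x)\ge 0$ or $w(\bar x)\le 0$. By the symmetry of the argument (comparison from above versus from below), assume $w(\bar x)\ge0$, i.e. $\psi(\bar x)\ge\Psi(\bar x_2)$. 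We want to show $\psi\ge\Psi(x_2-\epsilon R+2\sigma\epsilon R)$ in $[-\sigma R,\sigma R]^2$. This gives $a=-(1-2\sigma)\epsilon R$, $b=\epsilon R$, hence $b-a\le(1-\sigma)2\epsilon R$ after renaming $\sigma$.

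\emph{Step 1: interior Harnack.} Both $\psi$ and $\Psi(x_2-\epsilon R)$ solve the quasilinear equation \eqref{eq. quasilinear structure}, the latter shifted. Their difference, or equivalently $w+\epsilon R\ge0$, satisfies a linear uniformly elliptic equation in divergence form. Its coefficients are obtained by integrating $D_{p}(g(|p|^2,z)p)$ along the segment between the two gradients. Uniform ellipticity follows from (A4) and the subsonic bound $\|\Psi_t\|_\infty<Q_{sound}$. The lower-order terms come from the $z$-dependence, controlled by (A5) and the compatibility identity \eqref{eq. compatibility Z}. In a ball $B_{cR}(\bar x)$ these lower-order terms scale like $R\cdot\mathcal B'(\Psi)$, which is $o(1)$ by (A6) since $\Psi\sim x_2\sim R$ there. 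Appendix~\ref{sec. appendix Small-perturbation} provides an interior Harnack principle in exactly this setting. Since $w(\bar x)+\epsilon R\ge\epsilon R$, it yields $w+\epsilon R\ge c\,\epsilon R$ on $B_{cR}(\bar x)$.

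\emph{Step 2: propagation to the free boundary via a barrier.} Following Appendix~\ref{sec. appendix ODE}, build a family of subsolutions of the form
\begin{equation*}
    \Phi_t(x)=\Psi\big(x_2-\epsilon R+c\,\epsilon R\,t\,\varphi(x/R)\big),\quad t\in[0,1].
\end{equation*}
Here $\varphi$ is the De Silva radial bump: it is positive in $B_{3/4}$ and vanishes outside, with $\varphi\ge c>0$ on $[-\sigma,\sigma]\times[0,\sigma]$. It is chosen so that the deformed profile is a strict subsolution of \eqref{eq. quasilinear structure} away from $B_{cR}(\bar x)$. It must also satisfy $|\nabla\Phi_t|>\Lambda$ on $\partial\{\Phi_t>0\}$ where $\varphi>0$, i.e. a strict subsolution in the free boundary sense. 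Savin's ODE method handles this: one solves an ODE in the radial variable instead of using a harmonic function. The strict gradient condition at the zero level follows from $\Psi_t(0+)=\Lambda$ together with the sign of $\partial_{x_2}\varphi$ along the free boundary region. Start from $t=0$, where $\Phi_0\le\psi$. Increase $t$ to the first touching point. On $B_{cR}(\bar x)$ touching is excluded by Step 1. Elsewhere in the positive phase it is excluded by the strict subsolution property and the comparison principle. On the free boundary it is excluded because $|\nabla\psi|=\Lambda<|\nabla\Phi_t|$. Outside the support of $\varphi$ it is excluded because $\Phi_t=\Psi(x_2-\epsilon R)$ and the comparison there is strict. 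Hence $t$ reaches $1$, which gives the claimed lower bound on $[-\sigma R,\sigma R]^2$.

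\emph{Main obstacle.} The delicate point is the barrier construction in Step 2 at large scale. The operator is quasilinear with $\psi$-dependent coefficients, and the vorticity term $\mathcal B'(\psi)/g$ is not scale-invariant. The subsolution inequality generated by the perturbation $\epsilon R\,\varphi(x/R)$ is of size $\epsilon/R$. It must dominate the errors coming from freezing coefficients at $\Psi$, and from the $z$-derivatives of $g$ multiplied by $\mathcal B'$. The quadratic error is of order $(\epsilon/R)\cdot\epsilon$, which is fine for $\epsilon\le\overline\epsilon$. The vorticity error is of order $|\mathcal B'(\Psi)|\,\epsilon R/R^2$. By (A6), $z\mathcal B'(z)\to0$, and $\Psi(x_2)\gtrsim x_2$ for $x_2\gg1$, so this error is small. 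Near $x_2=O(1)$, however, $\mathcal B'$ is not small. I would handle this region by making the ODE in $x_2$ exact there: the $x_2$-profile follows the linearization of \eqref{eq. 1D solution Psi}, so that only $x_1$-dependence and large-$x_2$ effects appear as errors. This is what forces $R\ge\underline R$, and it is where I expect most of the effort to go.
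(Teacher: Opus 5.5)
Your Step 2 has a genuine gap: the barrier $\Phi_t=\Psi(\tau)$, with $\tau=x_2+s(x)$ and $s=-\epsilon R+c\epsilon R\,t\,\varphi(x/R)$, is not a subsolution in the region where the comparison has to work. Your error count in the ``main obstacle'' paragraph is off by a factor $R$. Already in the incompressible case $g\equiv1$, where $\Psi''=\mathcal B'(\Psi)$ and $|\nabla\tau|^2-1=2\partial_2s+|\nabla s|^2$, one has the exact identity
\begin{equation*}
\Delta\Psi(\tau)-\mathcal{B}'\big(\Psi(\tau)\big)=\Psi'(\tau)\,\Delta s+\mathcal{B}'\big(\Psi(\tau)\big)\big(2\partial_2 s+|\nabla s|^2\big).
\end{equation*}
The gain $\Psi'\Delta s$ is of size $\epsilon/R$. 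The vorticity term, however, is first order in $\nabla s$ and has size $|\mathcal B'(\Psi(\tau))|\,\epsilon$. Equivalently, the drift $\vec b$ of Appendix~\ref{sec. appendix Small-perturbation} is of size $R|\mathcal B'(\Psi(Ry_2))|$ in rescaled variables; this is exactly why Lemma~\ref{lem. when does b converge to zero?} controls it only on $\{y_2\ge\delta\}$. So the vorticity error is negligible only where $R|\mathcal B'(\Psi(\tau))|\ll1$. Assumption (A6) guarantees this for $\tau\gtrsim\theta R$, not merely for $\tau\gg1$.

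Now look at the strip just above the free boundary of $\Phi_t$, where every possible contact point lies. There the sign of $\nabla\cdot(g\nabla\Phi_t)-\mathcal B'(\Phi_t)/g$ is governed by $\mathcal B'\cdot\partial_2\varphi$. You need $\partial_2\varphi>0$ in this strip to get $|\nabla\Phi_t|>\Lambda$, while $\mathcal B'$ has no sign. Hence the comparison breaks down exactly where it is needed.

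Your remedy, ``make the $x_2$-ODE exact there'', is not a construction. A fix at the level of the linearization would not suffice either: the quadratic remainder $\mathcal B'(\Psi(\tau))|\nabla s|^2\sim\epsilon^2$ is not $o(\epsilon/R)$ once $\epsilon R\gg1$, and the statement allows $\epsilon R$ arbitrarily large. A smaller point: with the factor $t$ in front of $\varphi$, the starting barrier $\Phi_0=\Psi(x_2-\epsilon R)$ has no strictness, so the continuity argument can stall at $t=0$. De Silva's normalization $\Psi(x_2-\epsilon R+c\epsilon R(\varphi(x/R)-1+t))$ avoids this.

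The missing idea is to never compose $\Psi$ with a phase whose gradient is not of unit length. In Lemma~\ref{lem. maximum principle for Harnack} the barriers are $\check\phi_h(x)=\check w_\rho(\rho-|x-q_h|)$, with $\rho=r/\delta$ and $\delta r\sim\kappa\epsilon R$. Since $|\nabla(\rho-|x-q_h|)|=1$, the equation reduces to the radial ODE of Lemma~\ref{lem. barrier A}. There the vorticity is absorbed exactly through the first integral $F(\check T(s),s)=2\mathcal B(s)+\mathcal E(s)$, and the only error is the curvature term $\frac{1}{\rho-t}g\check w_\rho'$, which is dominated by $\mathcal E$.

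The improvement then comes from geometry, in three pieces:
\begin{itemize}
\item the strip Harnack estimate from Lemma~\ref{lem. interior Harnack} on $[-\frac{9}{10}R,\frac{9}{10}R]\times[\frac{\lambda}{10}R,\frac{9}{10}R]$ controls the inner boundary of the sliding sector;
\item the sag $\rho(1-\cos\theta)\sim\delta r$ of these huge circles controls its side boundary;
\item sliding $q_h$ downward carries a gain $\delta r/64$ down to the free boundary (Corollary~\ref{cor. application A of maximum principle}).
\end{itemize}
Free-boundary contact is not excluded by a strict gradient inequality; indeed $\check w_\rho'(0)=\Lambda$ exactly, since $\mathcal E(0)=0$. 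Instead, at a contact point $\{\psi>0\}$ has an interior tangent ball of radius $\rho$, and one compares second normal derivatives using the strict ODE inequality at $t=0$.
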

\begin{theorem}\label{thm. Harnack mixed}
    There exist two sufficiently small constants $\overline{\epsilon},\sigma>0$ and a sufficiently large constant $\underline{R}$, such that the following fact holds for all $\epsilon\leq\overline{\epsilon}$ and all $R\geq\underline{R}$ with $\epsilon R\geq2\cdot N(0)=2\cdot h(0)$: Assume that $\psi$ satisfies \eqref{eq. cavity local problem}, then, one has
    \begin{equation*}
        \Psi\big(x_{2}-(1-\sigma)\epsilon R\big)\leq\psi(x)\leq\Psi(x_{2})\quad\mbox{in }[-\sigma R,\sigma R]^{2}.
    \end{equation*}
\end{theorem}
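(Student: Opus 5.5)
The statement to prove is Theorem~\ref{thm. Harnack mixed}, the one-sided Harnack improvement near the nose. The plan is a De Silva-type argument: build a barrier from a radially deformed shear flow (Savin's ODE deformation, Appendix~\ref{sec. appendix ODE}) and slide it using the horizontal maximum principles of Appendix~\ref{sec. appendix maximum principle}. Since the upper bound $\psi\le\Psi(x_2)$ is already assumed, only the lower bound needs improving. We must show that $\psi\ge\Psi(x_2-(1-\sigma)\epsilon R)$ in $[-\sigma R,\sigma R]^2$.

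\textbf{Step 1: pick a point of separation.} Take $\bar x=(-R/2,R/2)$, which lies well inside the fluid, far from both $\Gamma$ and the nose, since $\epsilon R\ge 2N(0)$ and $R$ is large. Consider $w=\Psi^{-1}(\psi)-x_2\in[-\epsilon R,0]$. Two alternatives occur at $\bar x$: either $w(\bar x)\ge-\epsilon R/2$, or $w(\bar x)<-\epsilon R/2$, meaning $\psi$ is closer to the lower bound there.

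\textbf{Step 2: the case $w(\bar x)\ge -\epsilon R/2$.} Apply an interior Harnack inequality to the nonnegative quantity $\psi-\Psi(x_2-\epsilon R)$ in the ball $B_{R/4}(\bar x)$. The linearization of \eqref{eq. quasilinear structure} around $\Psi$ is uniformly elliptic by (A4)--(A5), and the interior Harnack principle is available from Appendix~\ref{sec. appendix Small-perturbation}. Writing $\psi=\Psi(x_2+w)$ and using $\Psi_t\ge C^{-1}$, the difference $\psi-\Psi(x_2-\epsilon R)$ is comparable to $w+\epsilon R$. So $w+\epsilon R\ge c\,\epsilon R$ on $B_{R/4}(\bar x)$. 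The vorticity errors are controlled because (A6) makes $\mathcal B'(\psi)$ decay, and they are absorbed once $\epsilon\le\bar\epsilon$ and $R\ge\underline R$.

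\textbf{Step 3: propagate the gain with a barrier.} Construct a subsolution $\phi(x)=\Psi\bigl(x_2-\epsilon R+c\,\epsilon R\,\eta(x/R)\bigr)$, suitably deformed by the ODE method so that it remains a strict subsolution of \eqref{eq. quasilinear structure} with $|\nabla\phi|>\Lambda$ on its zero set. Here $\eta$ is a radial bump of the form $|x-\bar x|^{-\gamma}$-type, cut off, positive in $[-\sigma R,\sigma R]^2$ and vanishing on $\partial([-R,R]^2)$. The family $\phi_t$, $t\in[0,1]$, lies below $\psi$ at $t=0$ and on the boundary of the domain. A first touching point cannot be interior (strict subsolution), cannot be on $B_{R/4}(\bar x)$ (Step 2), and cannot be on the free boundary $\{x_2=h(x_1)\}$ (gradient condition, as in De Silva \cite{D11}). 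The step specific to the mixed problem is the fixed boundary, the flat bottom and the nose, where $\psi=0$. There we need $\phi_t\le0$. This holds because the lower bound $\Psi(x_2-\epsilon R)$ vanishes for $x_2\le\epsilon R$, and the nose height satisfies $N\le N(0)\le\epsilon R/2$. So if the bump amplitude $c\,\epsilon R$ is at most $\epsilon R/2$, the shifted zero level $x_2=\epsilon R(1-c\eta)$ stays above $N(0)$, and $\phi_t=0$ on $\Gamma\cap\{x_1\le 0\}$. This is exactly where the hypothesis $\epsilon R\ge2N(0)$ enters. At $t=1$ we obtain $w\ge-(1-\sigma)\epsilon R$ on $[-\sigma R,\sigma R]^2$.

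\textbf{Step 4: the case $w(\bar x)<-\epsilon R/2$.} This alternative is impossible to improve downward, so it must be excluded instead. Near $\bar x$, $\psi\le\Psi(x_2-\epsilon R/2)$, and Harnack applied to $\Psi(x_2)-\psi\ge0$ pushes this bound along the flat bottom $x_1\le -L$, where $\psi=0$ on $\{x_2=0\}$. Sliding a supersolution barrier from above then forces $\psi$ to vanish on a set where $x_2>0$ near the fixed wall. But $\psi>0$ in $\Gamma^+$, a contradiction. Equivalently, this case can be handled by reflection, comparing $\psi$ against the fixed Dirichlet boundary in Step 3 instead.

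\textbf{Main obstacle.} The hardest part is the barrier construction of Step 3: producing a strict subsolution of the fully quasilinear rotational equation at scale $R$, not a linearized one, via Savin's ODE deformation, with errors of order $\epsilon^2R$ and a vorticity contribution $o(1)$ from (A6). The barrier must also be compatible with the corner geometry where the nose meets the free boundary. If the corner causes trouble, I would first try placing the bump center $\bar x$ strictly to the left, so that $\eta$ is supported away from the nose apart from its tail, and let the constraint $c\,\epsilon R\le \epsilon R-N(0)$ absorb the tail's effect there.
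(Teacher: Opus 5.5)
Your Step 4 is where the proof breaks. The alternative $w(\bar x)<-\epsilon R/2$ cannot be excluded, and the mechanism you invoke to exclude it does not exist. Sliding a supersolution from above to carry an interior upper bound $\psi\le\Psi(x_2-c\,\epsilon R)$ down to $\partial\{\psi>0\}$ (the analogue of Lemma~\ref{lem. maximum principle for Harnack}(B)) works only because a touching point on the zero set is ruled out by $|\nabla\psi|=\Lambda$. On the flat wall $\{x_2=0,\ x_1\le -L\}$ there is no gradient condition; $\psi$ may simply have slope smaller than $\Lambda$ there. So the barrier stops at the wall, and nothing forces $\psi$ to vanish inside $\{x_2>0\}$.

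The argument also proves too much. Nothing in it uses the threshold $\frac12$, so running it with $\tau\epsilon R$ for every $\tau>0$ would give $w\equiv0$ away from the wall, i.e.\ $\psi\equiv\Psi(x_2)$ there. By unique continuation this would hold in all of $\Gamma^+\cap(-R,R)^2$, contradicting $\psi=0$ on $\{x_2=h(x_1)\}$ with $h\ge N(0)>0$. Without Step 4, your Step 2 gives no quantitative gain when $w(\bar x)+\epsilon R$ is small, so in that case the lower bound is not improved.

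The missing idea is that the Dirichlet wall helps the lower bound, not the upper one, and it does so unconditionally, so no case split is needed. In $[-\frac34R,-\frac14R]\times(0,\frac12R)$ one has $\psi>0$. Take the upside-down profile $w_{\rho,m}(\rho-|x-q_h|)$ of Lemma~\ref{lem. barrier B}, with $\rho=r/\delta$; it is a subsolution whose zero set is a circular arc lying inside $\{x_2>0\}$. It can therefore be slid down from above without ever touching $\psi$ on its zero set, and no free boundary condition is used. This is Lemma~\ref{lem. maximum principle for forced decay} (its proof gives $\psi\ge\Psi(x_2-(1-\sigma)\delta r)$), and it yields $\psi\ge\Psi(x_2-(1-\sigma)\epsilon R)$ near $-\frac{R}{2}\vec{e}_1$ for every admissible $\psi$. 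Chaining Lemma~\ref{lem. interior Harnack} through $[-\frac34,-\frac14]\times[0,\frac12]$ and then $[-1,1]\times[\frac{\lambda}{20},1]$ gives $w+\epsilon R\ge c\,\epsilon R$ on the whole strip $[-\frac{9}{10}R,\frac{9}{10}R]\times[\frac{\lambda}{10}R,\frac{9}{10}R]$, in particular above the origin.

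This also fixes a geometric problem in your Step 3. A radial bump centred at $\bar x=(-\frac{R}{2},\frac{R}{2})$ that vanishes on $\partial[-R,R]^2$ cannot be positive near the origin, since $|\bar x|=R/\sqrt{2}>R/2=\mathrm{dist}(\bar x,\{x_1=-R\})$; moving $\bar x$ further left only makes this worse. For the final descent the paper does not build a De Silva bump, whose subsolution property for the rotational quasilinear operator near the zero set you have not established. Instead it slides large circles carrying the radial ODE profile of Lemma~\ref{lem. barrier A}, via Lemma~\ref{lem. maximum variant} and Corollary~\ref{cor. application C of maximum principle}. That step needs $|\nabla\psi|=\Lambda$ only at zero-set points with $x_2\ge\epsilon R/2\ge N(0)$, which is where $\epsilon R\ge 2N(0)$ enters, as you correctly identified.
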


\subsection{Harnack principle for the purely free boundary problem}
\begin{proof}[Proof of Theorem~\ref{thm. Harnack pure}]
    Let $\lambda$ be the constants produced in Lemma~\ref{lem. maximum principle for Harnack}.
    
    \textbf{Step 1: Interior Harnack principle.} We consider two cases:
    \begin{equation}\label{eq. alternative of Harnack}
        \mbox{Either }\psi(\frac{9}{10}R\vec{e}_{2})\geq\Psi(\frac{9}{10}R\vec{e}_{2}),\quad\mbox{or }\psi(\frac{9}{10}R\vec{e}_{2})\leq\Psi(\frac{9}{10}R\vec{e}_{2}).
    \end{equation}
    If the first case of \eqref{eq. alternative of Harnack} holds, then let us apply Lemma~\ref{lem. interior Harnack} (with $h=\frac{\lambda}{20}$) to the difference
    \begin{equation*}
        v_{R}(y)=\frac{\Psi^{-1}\circ\psi(Ry)-y_{2}+\epsilon R}{\epsilon R},\quad y\in[-1,1]\times[\frac{\lambda}{20},1].
    \end{equation*}
    As a result, for sufficiently small $\epsilon\leq\overline{\epsilon}$, there exists a constant $\kappa(\lambda)$, such that
    \begin{equation*}
        v_{R}(y)\geq\kappa(\lambda)\cdot v_{R}(\frac{9}{10}\vec{e}_{2})\geq\kappa(\lambda)\mbox{ in }[-\frac{9}{10},\frac{9}{10}]\times[\frac{\lambda}{10},\frac{9}{10}].
    \end{equation*}
    In other words,
    \begin{equation}\label{eq. pure alternative case 1, also useful in mixed}
        \psi(x)\geq\Psi\Big(x_{2}-\big(1-\kappa(\lambda)\big)\epsilon R\Big)\mbox{ in }\Big[-\frac{9R}{10},\frac{9R}{10}\Big]\times\Big[\frac{\lambda R}{10},\frac{9R}{10}\Big].
    \end{equation}

    Similarly, if the second case of \eqref{eq. alternative of Harnack} holds, then
    \begin{equation*}
        \psi(x)\leq\Psi\Big(x_{2}+\big(1-\kappa(\lambda)\big)\epsilon R\Big)\mbox{ in }\Big[-\frac{9R}{10},\frac{9R}{10}\Big]\times\Big[\frac{\lambda R}{10},\frac{9R}{10}\Big].
    \end{equation*}

    \textbf{Step 2: Improvement of the free boundary.} Let $\overline{\epsilon}(\cdot)$ be the function produced in Corollary~\ref{cor. application A of maximum principle}. Now we choose
    \begin{equation*}
        \overline{\epsilon}:=\overline{\epsilon}\big(\kappa(\lambda)\big),\quad\sigma=\min\Big\{\frac{\lambda}{4},\frac{\kappa(\lambda)}{128}\Big\}.
    \end{equation*}
    If $\epsilon\leq\overline{\epsilon}$, then applying Corollary~\ref{cor. application A of maximum principle} to $\psi$ gives the following estimates:
    \begin{itemize}
        \item If the first case of \eqref{eq. alternative of Harnack} holds, then
        \begin{equation*}
            \psi(x)\geq\Psi\Big(x_{2}-\big(1-\frac{\kappa(\lambda)}{64}\big)\epsilon R\Big)\mbox{ in }\big[-\frac{\lambda}{4}R,\frac{\lambda}{4}R\big]^{2};
        \end{equation*}
        \item If the second case of \eqref{eq. alternative of Harnack} holds, then
        \begin{equation*}
            \psi(x)\leq\Psi\Big(x_{2}+\big(1-\frac{\kappa(\lambda)}{64}\big)\epsilon R\Big)\mbox{ in }\big[-\frac{\lambda}{4}R,\frac{\lambda}{4}R\big]^{2}.
        \end{equation*}
    \end{itemize}
    Then we have finished the proof of Theorem~\ref{thm. Harnack pure}. In fact, by choosing
    \begin{equation*}
        (a,b)=\left\{\begin{aligned}
            &\Big(-\epsilon R,\big(1-\frac{\kappa(\lambda)}{64}\big)\epsilon R\Big),&\mbox{if the first case of \eqref{eq. alternative of Harnack} holds},\\
            &\Big(-\big(1-\frac{\kappa(\lambda)}{64}\big)\epsilon R,\epsilon R\Big),&\mbox{if the second case of \eqref{eq. alternative of Harnack} holds},
        \end{aligned}\right.
    \end{equation*}
    all requirements of Theorem~\ref{thm. Harnack pure} are satisfied.
\end{proof}

\subsection{Harnack principle for the cavity problem}
\begin{proof}[Proof of Theorem~\ref{thm. Harnack mixed}]
    Similar to the proof of Theorem~\ref{thm. Harnack pure}, once \eqref{eq. pure alternative case 1, also useful in mixed} is verified, applying Corollary~\ref{cor. application C of maximum principle} (instead of Corollary~\ref{cor. application A of maximum principle}) gives the desired estimate. In fact, since
    \begin{equation*}
        \Big([-R,R]\times\big[\frac{\epsilon R}{2},R\big]\Big)\cap\partial\{\psi>0\}\subseteq\{x_{2}\geq h(0)\}\cap\partial\{\psi>0\}\subseteq\{x_{2}=h(x_{1})\},
    \end{equation*}
    we see that $|\nabla\psi|=\Lambda$ on $\Big([-R,R]\times\big[\frac{\epsilon R}{2},R\big]\Big)\cap\partial\{\psi>0\}$, hence the assumptions of Corollary~\ref{cor. application C of maximum principle} are all verified.

    It remains the prove \eqref{eq. pure alternative case 1, also useful in mixed} in the settings of Theorem~\ref{thm. Harnack mixed}. To do this, we first apply Lemma~\ref{lem. maximum principle for forced decay} in the region $[-\frac{3}{4}R,-\frac{1}{4}R]\times[0,\frac{1}{2}R]$, where $\psi$ satisfies \eqref{eq. quasilinear structure} without free boundary points. This implies that
    \begin{equation}\label{eq. force decay 1 time}
        \psi(x)\geq\Psi\big(x_{2}-(1-\sigma)\epsilon R\big)\quad\mbox{in }-\frac{R}{2}\vec{e}_{1}+[-\sigma R,\sigma R]^{2}.
    \end{equation}
    Consider the function
    \begin{equation*}
        v_{R}(y)=\frac{\Psi^{-1}(\psi(x))-x_{2}}{\epsilon R},\quad\mbox{where }x=Ry.
    \end{equation*}
    Clearly, $-1\leq v_{R}(y)\leq0$ in $[-1,1]^{2}\cap\{\psi(Ry)>0\}$. Moreover, \eqref{eq. force decay 1 time} implies that
    \begin{equation*}
        v_{R}(-\frac{1}{2}\vec{e}_{1}+\sigma\vec{e}_{1})+1\geq\sigma>0.
    \end{equation*}
    Applying the interior Harnack principle Lemma~\ref{lem. interior Harnack} to $v_{R}+1$ first in $[-\frac{3}{4},-\frac{1}{4}]\times[0,\frac{1}{2}]$ and then in $[-1,1]\times[\frac{\lambda}{20},1]$ implies that
    \begin{equation*}
        \inf_{[-\frac{9}{10}R,\frac{9}{10}R]\times[\frac{\lambda}{10}R,\frac{9}{10}R]}(v_{R}+1)\geq c(\lambda,\sigma)\cdot\Big(v_{R}(-\frac{1}{2}\vec{e}_{1}+\sigma\vec{e}_{1})+1\Big)\geq c'(\lambda,\sigma).
    \end{equation*}
    Since $\sigma$ is a uniform constant, one can write $\kappa(\lambda)=c'(\lambda,\sigma)$, then \eqref{eq. pure alternative case 1, also useful in mixed} is verified by the definition of $v_{R}$. This completes the proof of Theorem~\ref{thm. Harnack mixed}.
\end{proof}
\begin{remark}\label{rmk. how to avoid h(infty)<infty}
    Even if the assumption $\epsilon R\geq2\cdot N(0)=2\cdot h(0)$ in Theorem~\ref{thm. Harnack mixed} is removed, the estimate \eqref{eq. pure alternative case 1, also useful in mixed} is still valid. Then, by applying Theorem~\ref{thm. Harnack pure} in the first quadrant, where $\partial\{\psi>0\}$ consists solely of the free boundary (i.e. $|\nabla\psi|=\Lambda$), we still have
    \begin{equation*}
        \psi(x)\geq\Psi\Big(x_{2}-(1-\sigma)\epsilon R\Big)\quad\mbox{in }[\frac{1}{2}-\frac{\sigma}{2}R,\frac{1}{2}+\frac{\sigma}{2}R]\times[0,\sigma R].
    \end{equation*}
    This does not improve the lower bound of $\psi$ near the origin, but it can at least lower the height of the free boundary in the first quadrant far from the origin.
\end{remark}

\subsection{H\"older regularity of the level sets}
As a consequence of Theorem~\ref{thm. Harnack pure}, we have the following H\"older estimate:
\begin{corollary}\label{cor. pure boundary Holder}
    There exist uniform constants $\alpha,C>0$ and a function $d(\epsilon,R)>0$ with $\displaystyle\lim_{\substack{\epsilon\to0\\R\to\infty}}d(\epsilon,R)=0$, such that the following statement holds: Let $\epsilon\leq\frac{1}{2}$ and let $R>0$ be arbitrary. Assume that
    \begin{equation*}
        \psi(x)\mbox{ solves \eqref{eq. main simple} and satisfies }\Psi(x_{2}-\epsilon R)\leq\psi(x)\leq\Psi(x_{2}+\epsilon R)\mbox{ in }[-R,R]^{2}.
    \end{equation*}
    Denote $v:[-1,1]^{2}\cap\{\psi(Ry)>0\}\to\mathbb{R}$ as
    \begin{equation*}
        v(y)=\frac{\Psi^{-1}(\psi(x))-x_{2}}{\epsilon R},\quad\mbox{where }x=Ry.
    \end{equation*}
    Then, one has
    \begin{equation*}
        [-1,1]\times[\epsilon,1]\subseteq[-1,1]^{2}\cap\{\psi(Ry)>0\}\subseteq[-1,1]\times[-\epsilon,1].
    \end{equation*}
    Moreover, for all $y^{1},y^{2}\in\big[-\frac{1}{2},\frac{1}{2}\big]^{2}\cap\{\psi(Ry)>0\}$, one has
    \begin{equation*}
        \big|v(y^{1})-v(y^{2})\big|\leq C\cdot\big(|y^{1}-y^{2}|^{\alpha}+d(\epsilon,R)^{\alpha}\big).
    \end{equation*}
\end{corollary}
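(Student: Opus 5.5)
The plan is the standard iteration of the Harnack principle (Theorem~\ref{thm. Harnack pure}) across dyadic-type scales, in the spirit of De Silva's boundary Harnack argument. The localisation of $\{\psi(Ry)>0\}$ is immediate. On $\Gamma$ one has $\psi=0$, and $\Psi>0$ only for positive argument. So $\psi(x)\ge\Psi(x_2-\epsilon R)>0$ whenever $x_2>\epsilon R$, and $\psi(x)\le\Psi(x_2+\epsilon R)=0$ whenever $x_2\le-\epsilon R$. After rescaling $x=Ry$ this gives the two inclusions.

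For the H\"older bound, fix $y^0\in[-\tfrac12,\tfrac12]^2\cap\{\psi(Ry)>0\}$. The translated function $\psi(\cdot+Ry^0_1\vec e_1)$ still solves \eqref{eq. main simple}, since the equation has no explicit $x$-dependence, and it satisfies the same flatness assumption in $[-R/2,R/2]^2$. Apply Theorem~\ref{thm. Harnack pure} repeatedly with radii $R_k=\sigma^kR/2$ and flatness $\epsilon_k$ defined by $2\epsilon_kR_k=b_k-a_k$. At each step we obtain shifts $a_k\le a_{k+1}\le b_{k+1}\le b_k$ with
\[
b_{k+1}-a_{k+1}\le(1-\sigma)(b_k-a_k).
\]
To re-center, note that $\Psi(x_2+a)\le\psi\le\Psi(x_2+b)$ is the hypothesis of Theorem~\ref{thm. Harnack pure} for the vertically translated function $\psi(x+\tfrac{a+b}{2}\vec e_2)$ with $\epsilon R=\tfrac{b-a}{2}$. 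Vertical translation is harmless as long as the free boundary part stays inside the box, which holds because $|a|,|b|\le\epsilon R$ is small compared with $R_k$ in the relevant range.

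The relative flatness evolves as $\epsilon_{k+1}/\epsilon_k\le(1-\sigma)/\sigma$. This ratio is larger than $1$, so I would instead take $\sigma$ small and accept that $\epsilon_k$ grows. Concretely, $\epsilon_k\le\epsilon\,\big((1-\sigma)/\sigma\big)^k$ while $R_k=\sigma^kR/2$. The iteration is allowed only while $\epsilon_k\le\overline\epsilon$ and $R_k\ge\underline R$. It therefore stops at a scale $k_*$ with $\sigma^{k_*}\approx d(\epsilon,R)$, where $d$ is chosen so that both constraints hold up to $k_*$. For instance one can take $d(\epsilon,R)=\max\{(\epsilon/\overline\epsilon)^{\beta},\underline R/R\}$ for a suitable $\beta>0$ depending on $\sigma$. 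Then $d\to0$ as $\epsilon\to0$ and $R\to\infty$.

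Translating back to $v$, the oscillation of $v$ in the box of size $\sigma^k$ around $y^0$ is at most
\[
\frac{b_k-a_k}{\epsilon R}\le2(1-\sigma)^k \quad\text{for } k\le k_*.
\]
Given $y^1,y^2$, choose $k$ with $|y^1-y^2|\sim\sigma^k$. If $k\le k_*$, both points lie in the same box, and we get $|v(y^1)-v(y^2)|\le C|y^1-y^2|^\alpha$ with $\alpha=\log(1-\sigma)/\log\sigma$. If $k>k_*$, we use the oscillation bound at scale $k_*$, which gives $Cd^\alpha$. Points that are far apart are handled by the trivial bound $|v|\le1$.

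The main obstacle is the bookkeeping of the growing relative flatness $\epsilon_k$ against the shrinking radius $R_k$. The error floor $d(\epsilon,R)$ is introduced precisely to absorb this. A second delicate point is justifying the re-centring near the free boundary, so that the hypothesis of Theorem~\ref{thm. Harnack pure} holds in each translated box, and in particular that the zero set of $\psi$ inside every box consists only of free boundary points.
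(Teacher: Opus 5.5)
\textbf{Gap: interior regularity is missing.} Your localisation of the positivity set is correct. Your iteration of Theorem~\ref{thm. Harnack pure} is also essentially Step 1 of the paper's proof, including the re-centring by vertical translation and the bookkeeping $\epsilon_k\lesssim\epsilon\big((1-\sigma)/\sigma\big)^k$ against $R_k=\sigma^kR/2$. However, that iteration only controls $v$ in boxes of size $\sim\sigma^k$ centred at $(y^0_1,0)$, i.e.\ on the free boundary. You only translate horizontally by $Ry^0_1$, plus a vertical shift of size $O(\epsilon R)$ for re-centring. Theorem~\ref{thm. Harnack pure} cannot be used in a box centred at an interior point. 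Its hypothesis is a trapping $\Psi(x_2\mp\epsilon R)$ relative to the centre of the box, with the zero level of the barriers passing through the box. After a large vertical translation you would instead have barriers $\Psi(x_2+c\pm\epsilon'R')$ with $c$ much larger than the box.

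Consequently your claim ``if $k\le k_*$, both points lie in the same box'' is false whenever $y^1_2\gg|y^1-y^2|$. For instance, $y^1=(0,\tfrac14)$ and $y^2=(\sigma^{10},\tfrac14)$ lie in no box of your iteration of size smaller than $\sim\tfrac14$. The boundary iteration therefore only gives $|v(y^1)-v(y^2)|\le C4^{-\alpha}$, which does not decay with $|y^1-y^2|$.

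\textbf{What the paper adds.} The missing ingredient is an interior H\"older estimate for $v$ away from the free boundary. In the paper this is Lemma~\ref{lem. interior Harnack}. There $v$ solves the small-perturbation equation $\mathrm{tr}(A\,D^2v)+\vec b\cdot\nabla v=0$ of Lemma~\ref{lem. summarize horrible computations}. This equation is uniformly elliptic with bounded drift in $[-1,1]\times[h,1]$ once $\epsilon\le\epsilon_0(h)$ and $R\ge r_0(h)$, so Savin's Harnack inequality applies there.

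One then argues as in Step 2 of the paper, with $\delta=|y^1-y^2|$:
\begin{itemize}
\item If one of the points lies within $3\delta$ of the free boundary, the boundary oscillation bound suffices.
\item Otherwise, let $r\ge 3\delta$ be the distance from $y^1$ to the free boundary, attained at $y^3$. The boundary iteration bounds the oscillation of $v$ on $y^3+[-4r,4r]^2$ by $C(r^{\alpha_1}+d^{\alpha_1})$. The rescaled interior H\"older estimate then gives $|v(y^1)-v(y^2)|\le C(\delta/r)^{\alpha_2}(r^{\alpha_1}+d^{\alpha_1})\le C(\delta^{\alpha}+d^{\alpha})$ with $\alpha=\min\{\alpha_1,\alpha_2\}$.
\end{itemize}
This also forces your error floor $d(\epsilon,R)$ to satisfy $\epsilon\le\epsilon_0(d)$ and $R\ge r_0(d)$. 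It is not enough that the boundary iteration runs down to scale $d$.
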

\begin{proof}
    The domain inclusion fact is obvious, so we mainly focus on proving the H\"older regularity. In fact, let $d$ be arbitrary, we would like to show that for sufficiently small $\epsilon$ and large $R$ (depending on $d$), such that
    \begin{equation}\label{eq. another way of cor 4.1 estimate}
        \big|v(y^{1})-v(y^{2})\big|\leq C\cdot\big(|y^{1}-y^{2}|^{\alpha}+d^{\alpha}\big).
    \end{equation}
    The proof of \eqref{eq. another way of cor 4.1 estimate} is divided into two main steps:
    
    \textbf{Step 1: Oscillation reduction near the free boundary.} Let $\overline{\epsilon},\sigma>0$ be the uniform constants produced by Theorem~\ref{thm. Harnack pure}. Let
    \begin{equation*}
        R_{k}=\sigma^{k}\frac{R}{2},\quad0\leq k\leq K,\quad\mbox{where }K:=\Big\lfloor\log_{\sigma}(\frac{\epsilon}{2\overline{\epsilon}})\Big\rfloor.
    \end{equation*}
    If $\epsilon$ is sufficiently small, then $K$ is sufficiently large. Let
    \begin{equation*}
        R_{k}=\sigma^{k}\frac{R}{2}\mbox{ and }\epsilon_{k}=\big(\frac{1-\sigma}{\sigma}\big)^{k}\cdot2\epsilon,\quad\mbox{for }k\in\{0,\cdots,K\}.
    \end{equation*}
    Let $x_{1}\in\big[-\frac{R}{2},\frac{R}{2}\big]$ be fixed and let $\widetilde{\psi}(z):=\psi(z+x_{1}\vec{e}_{1})$. We then notice that
    \begin{equation*}
        \widetilde{\psi}(z)\mbox{ satisfies \eqref{eq. main simple} and }\Psi(z_{2}-\epsilon_{0}R_{0})\leq\widetilde{\psi}(z)\leq\Psi(z_{2}+\epsilon_{0}R_{0})\mbox{ in }[-R_{0},R_{0}]^{2}.
    \end{equation*}
    By inductively applying Theorem~\ref{thm. Harnack pure} $K$ times, we see that there exists a nested sequence of intervals $\{(a_{k},b_{k})\}_{k=0}^{K+1}$, i.e. $a_{k}\leq a_{k+1}\leq b_{k+1}\leq b_{k}$ for $0\leq k\leq K$, with $(a_{0},b_{0})=(-\epsilon_{0}R_{0},\epsilon_{0}R_{0})$, such that
    \begin{equation*}
        \frac{b_{k+1}-a_{k+1}}{b_{k}-a_{k}}\leq(1-\sigma)\mbox{ for }0\leq k\leq K,\quad\mbox{i.e. }b_{k}-a_{k}\leq\epsilon_{k}R_{k}\mbox{ for }0\leq k\leq K+1,
    \end{equation*}
    and that
    \begin{equation*}
        \Psi(z_{2}+a_{k})\leq\widetilde{\psi}(z)\leq\Psi(z_{2}+b_{k})\mbox{ in }[-R_{k},R_{k}]^{2},\quad\mbox{for }0\leq k\leq K+1.
    \end{equation*}
    This estimate, when expressed in terms of the function $v(y)$, means that
    \begin{equation*}
        \frac{a_{k}}{\epsilon R}\leq v(y+\frac{x_{1}}{R}\vec{e}_{1})\leq\frac{b_{k}}{\epsilon R},\quad\mbox{for }y\in\big[-\frac{\sigma^{k}}{2},\frac{\sigma^{k}}{2}\big]^{2}\cap\{\psi(Ry)>0\}\mbox{ with }0\leq k\leq K+1.
    \end{equation*}
    By letting $\alpha_{1}:=\log_{\sigma}(1-\sigma)>0$, one has
    \begin{equation*}
        \mathop{osc}_{[y_{1}-r,y_{1}+r]\times[-r,r]\cap\{\psi(Ry)>0\}}v\leq C\cdot r^{\alpha_{1}},\quad\mbox{for all }\frac{\sigma^{K+1}}{2}\leq r\leq\frac{1}{2}\mbox{ and }|y_{1}|\leq\frac{1}{2}.
    \end{equation*}
    For $\epsilon$ small enough so that $\frac{\sigma^{K+1}}{2}\leq d$, the above estimate implies In other words, 
    \begin{equation*}
        \mathop{osc}_{y_{1}\vec{e}_{1}+[-r,r]^{2}\cap\{\psi(Ry)>0\}}v\leq C\big(r^{\alpha_{1}}+d^{\alpha_{1}}\big),\quad\mbox{for all }r\leq\frac{1}{2}\mbox{ and }|y_{1}|\leq\frac{1}{2}.
    \end{equation*}

    \textbf{Step 2: Interior H\"older regularity.} Now, choose $h=d$ in Lemma~\ref{lem. interior Harnack}. Then, for $R$ large and $\epsilon$ small, the Harnack principle (and thus the H\"older estimate) in Lemma~\ref{lem. interior Harnack} holds in $\frac{1}{r}\Omega=[-1,1]\times[d,1]$.
    
    Let $y^{1},y^{2}\in\big[-\frac{1}{2},\frac{1}{2}\big]^{2}\cap\{\psi(Ry)>0\}$, and denote $\delta=|y^{1}-y^{2}|$. Without loss of generality, we assume that $\delta\geq d$. If not, then using an intermediate point $y^{3}$ such that $|y^{1}-y^{3}|\sim|y^{2}-y^{3}|\sim d$ gives the difference between $v(y^{1})$ and $v(y^{2})$.
    \begin{itemize}
        \item Case 1: If $dist(y^{1},y^{3})\leq3\delta$ for some $y^{3}\in[-\frac{1}{2},\frac{1}{2}\big]^{2}\cap\partial\{\psi(Ry)>0\}$, then we have
        \begin{equation*}
            dist(y^{2},y^{3})\leq4\delta.
        \end{equation*}
        Using the oscillation reduction in \textbf{Step 1}, we see that
        \begin{equation*}
            |v(y^{1})-v(y^{2})|\leq|v(y^{1})-v(y^{3})|+|v(y^{2})-v(y^{3})|\leq C\big(\delta^{\alpha_{1}}+d^{\alpha_{1}}\big).
        \end{equation*}
        Similarly, if $dist(y^{2},y^{3})\leq3\delta$, then the same argument holds.
        \item Case 2: If it holds that
        \begin{equation*}
            dist(y^{1},\partial\{\psi(Ry)>0\}),dist(y^{2},\partial\{\psi(Ry)>0\})\geq3\delta,
        \end{equation*}
        then let us assume that
        \begin{equation*}
            dist(y^{1},\partial\{\psi(Ry)>0\})=dist(y^{1},y^{3})=r
        \end{equation*}
        for some $y^{3}\in[-\frac{1}{2},\frac{1}{2}\big]^{2}\cap\partial\{\psi(Ry)>0\}$. Clearly, one has $r\geq3\delta$, and
        \begin{equation*}
            \frac{r}{2}\leq dist(y^{2},y^{3})\leq2r,\quad1\leq\frac{dist(y^{2},y^{3})}{dist(y^{2},\partial\{\psi(Ry)>0\})}\leq2.
        \end{equation*}
        By \textbf{Step 1}, we see that
        \begin{equation*}
            \mathop{osc}_{y^{3}+[-4r,4r]^{2}\cap\{\psi(Ry)>0\}}v\leq C\big(r^{\alpha_{1}}+d^{\alpha_{1}}\big).
        \end{equation*}
        Moreover, it follows that
        \begin{equation*}
            dist\Big(y^{i},\partial(y^{3}+[-4r,4r]^{2}\cap\{\psi(Ry)>0\})\Big)\geq r,\quad i=1,2.
        \end{equation*}
        Therefore, using the interior H\"older estimate (following from the interior Harnack principle Lemma~\ref{lem. interior Harnack}), we see that
        \begin{equation*}
            |v(y^{1})-v(y^{2})|\leq C\cdot(\frac{\delta}{r})^{\alpha_{2}}\cdot\mathop{osc}_{y^{3}+[-4r,4r]^{2}\cap\{\psi(Ry)>0\}}v\leq C(\delta^{\alpha}+d^{\alpha}).
        \end{equation*}
        Here, $\alpha$ is chosen as $\alpha=\frac{1}{2}\min\{\alpha_{1},\alpha_{2}\}$.
    \end{itemize}
    Then, we have finished the proof of \eqref{eq. another way of cor 4.1 estimate} when $R\gg1\gg\epsilon$, hence the proof of Corollary~\ref{cor. pure boundary Holder} is completed.
\end{proof}

\begin{corollary}\label{cor. mixed boundary Holder}
    There exist uniform constants $\alpha,C>0$ and a function $d(\epsilon,R)>0$ with
    \begin{equation*}
        \lim_{\substack{\epsilon\to0\\\epsilon R\to\infty}}d(\epsilon,R)=0,
    \end{equation*}
    such that the following statement holds: Let $\epsilon\leq\frac{1}{2}$ and let $R>0$ be arbitrary. Assume that
    \begin{equation*}
        \psi(x)\mbox{ solves \eqref{eq. cavity local problem} and satisfies }\Psi(x_{2}-\epsilon R)\leq\psi(x)\leq\Psi(x_{2})\mbox{ in }[-R,R]^{2}.
    \end{equation*}
    Denote $v:[-1,1]^{2}\cap\{\psi(Ry)>0\}\to\mathbb{R}$ as
    \begin{equation*}
        v(y)=\frac{\Psi^{-1}(\psi(x))-x_{2}}{\epsilon R},\quad\mbox{where }x=Ry.
    \end{equation*}
    Then, one has
    \begin{equation*}
        [-1,1]\times[\epsilon,1]\subseteq[-1,1]^{2}\cap\{\psi(Ry)>0\}\subseteq[-1,1]\times[-\epsilon,1].
    \end{equation*}
    Moreover, for all $y^{1},y^{2}\in\big[-\frac{1}{2},\frac{1}{2}\big]^{2}\cap\{\psi(Ry)>0\}$, one has
    \begin{equation*}
        \big|v(y^{1})-v(y^{2})\big|\leq C\cdot\big(|y^{1}-y^{2}|^{\alpha}+d(\epsilon,R)^{\alpha}\big).
    \end{equation*}
    Furthermore, for all $y\in\big[-\frac{1}{2},\frac{1}{2}\big]^{2}\cap\{\psi(Ry)>0\}$, it holds that
    \begin{equation}\label{eq. uniform vanishing on -x axis}
        |v(y)|\leq C\cdot\big(dist(y,\mathbb{R}_{-}\times\{0\})^{\alpha}+d(\epsilon,R)^{\alpha}\big).
    \end{equation}
\end{corollary}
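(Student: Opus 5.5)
Corollary~\ref{cor. mixed boundary Holder} follows the two-step scheme of Corollary~\ref{cor. pure boundary Holder}. The only change is which Harnack principle is iterated near each boundary point, plus one extra argument for \eqref{eq. uniform vanishing on -x axis}.

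The domain inclusion is immediate from $\Psi(x_{2}-\epsilon R)\leq\psi\leq\Psi(x_{2})$ and $\Psi>0$ exactly on $\{t>0\}$. For the H\"older estimate we fix $d>0$ and aim to prove $|v(y^{1})-v(y^{2})|\leq C(|y^{1}-y^{2}|^{\alpha}+d^{\alpha})$ once $\epsilon$ is small and $\epsilon R$ is large. We split the points $y_{1}\vec{e}_{1}$, $|y_{1}|\leq\frac12$, on the bottom into three families.

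\textbf{Family (a): $y_{1}\leq-c$.} Here the boundary is the fixed boundary $\Gamma$ (the nose together with $\{x_{2}=0\}$, $x_{1}\leq -L$). Since $\epsilon R\gg L$, the nose lies in a tiny neighbourhood of the origin after rescaling. At such points $\psi$ solves the equation with Dirichlet data $0$ on a flat piece. The barrier argument of Lemma~\ref{lem. maximum principle for forced decay}, iterated on dyadic squares, gives the geometric decay $\mathop{osc}v\leq C r^{\alpha_{1}}$. In fact it gives more: $|v|\leq C r^{\alpha_{1}}$ on squares of size $r$ centred on the bottom, since $v=-x_{2}/(\epsilon R)$ is $O(r)$ there. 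This is exactly the mechanism in the proof of Theorem~\ref{thm. Harnack mixed}, now run at every scale down to $r\sim d$.

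\textbf{Family (b): $y_{1}\geq c$.} Here $\partial\{\psi>0\}$ is purely the free boundary. After translating, Theorem~\ref{thm. Harnack pure} applies, and we iterate it as in Step~1 of Corollary~\ref{cor. pure boundary Holder}. To get the starting two-sided flatness at scale $cR$, we use $\Psi(x_{2}-\epsilon R)\leq\psi\leq\Psi(x_{2})\leq\Psi(x_{2}+\epsilon R)$ and recentre by $\frac{\epsilon R}{2}$. Since the free boundary sits at height $h\geq h(0)$, the region remains free-boundary only.

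\textbf{Family (c): $|y_{1}|<c$.} Near the origin, where Dirichlet and free boundary meet, we iterate Theorem~\ref{thm. Harnack mixed} around the origin. Its hypothesis $\epsilon_{k}R_{k}\geq 2N(0)$ survives for $k\leq K$ with $\sigma^{K}\sim d$, because $\epsilon_{k}R_{k}$ shrinks only geometrically from $\epsilon R\to\infty$. Points with $|y_{1}|<c$ but away from $0$ at scale $r$ are covered by whichever of (a) or (b) applies at that scale.

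Combining the three families gives oscillation decay $\leq C(r^{\alpha_{1}}+d^{\alpha_{1}})$ around every boundary point of $[-\frac12,\frac12]$. Step~2 of Corollary~\ref{cor. pure boundary Holder} (the interior Harnack Lemma~\ref{lem. interior Harnack} with $h=d$, and the same two-case distance argument) then yields the global H\"older bound verbatim.

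For \eqref{eq. uniform vanishing on -x axis}, the key observation is that on $\Gamma\cap\{x_{1}\leq0\}$ one has $v=-N(x_{1})/(\epsilon R)$, whose absolute value is at most $N(0)/(\epsilon R)$, which tends to $0$. So $v$ essentially vanishes on $\mathbb{R}_{-}\times\{0\}$. Take $y$ with $\rho=dist(y,\mathbb{R}_{-}\times\{0\})$ attained at some $z$ on the negative axis, including $z=0$.
\begin{itemize}
\item[(i)] If $z$ is away from $0$ at scale $\rho$, the family (a) estimate $|v|\leq C\rho^{\alpha_{1}}$ on the square of size $2\rho$ about $z$ gives the bound.
\item[(ii)] If $z=0$, we need the improved mixed Harnack at the origin, Theorem~\ref{thm. Harnack mixed}, iterated. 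Its conclusion is one-sided with upper bound $\Psi(x_{2})$, so after $k$ steps we get $-(1-\sigma)^{k}\leq v\leq 0$ on $[-\sigma^{k},\sigma^{k}]^{2}$, up to rescaling constants. Hence $|v|\leq C\rho^{\alpha}+Cd^{\alpha}$.
\end{itemize}
The upper bound $v\leq0$ is automatic, so only the lower bound needs the iteration.

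The main obstacle is bookkeeping the admissible range of scales in family (c) and in case (ii). The condition $\epsilon_{k}R_{k}\geq 2N(0)$ and the smallness $\epsilon_{k}\leq\overline{\epsilon}$ must hold simultaneously down to scale $d$. Note that $\epsilon_{k}$ here is the relative flatness, which grows like $((1-\sigma)/\sigma)^{k}$ relative to the scale, so the iteration must stop once $\epsilon_{k}$ reaches $\overline{\epsilon}$. This is why $d(\epsilon,R)$ must depend on both $\epsilon\to0$ and $\epsilon R\to\infty$. I would define $K$ as the minimum of the two stopping indices and set $d=\sigma^{K}$.
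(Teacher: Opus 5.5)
Your proposal is correct and follows essentially the paper's proof: iterate Theorem~\ref{thm. Harnack mixed} at the origin to get $-C(r^{\alpha_1}+d^{\alpha_1})\leq v\leq 0$, treat the free-boundary side of each dyadic scale by the pure Harnack iteration and the Dirichlet side by Lemma~\ref{lem. maximum principle for forced decay}, then glue with Lemma~\ref{lem. interior Harnack}; your explicit stopping-index bookkeeping for $\epsilon_kR_k\geq 2N(0)$ is more careful than the paper's. One slip in family (a): the smallness of $|v|$ there comes from the upper barrier $\Psi(x_2)$ staying fixed (so $v\leq 0$) while the lemma only lifts the lower barrier, not from $-x_2/(\epsilon R)$ being $O(r)$.
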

\begin{proof}
    Fix an arbitrary and small $d>0$. Similar to \textbf{Step 1} in Corollary~\ref{cor. pure boundary Holder}, we can apply Theorem~\ref{thm. Harnack mixed} and get that for $\epsilon\ll1\ll R$,
    \begin{equation*}
        -C\big(r^{\alpha_{1}}+d^{\alpha_{1}}\big)\leq v\leq0,\quad\mbox{for all }y\in[-r,r]^{2}\cap\{\psi(Ry)>0\}\mbox{ with }r\leq\frac{1}{2},
    \end{equation*}
    Then, in each $\mathcal{R}_{r}=\big([-4r,4r]^{2}\setminus[-r/2,r/2]^{2})\cap\{\psi(Ry)>0\}$ with $d\leq r\leq\frac{1}{8}$, we have
    \begin{equation*}
        osc_{\mathcal{R}_{r}}v\leq C\big(r^{\alpha_{1}}+d^{\alpha_{1}}\big)\quad\mbox{in }\mathcal{R}_{r}.
    \end{equation*}
    
    By applying Corollary~\ref{cor. pure boundary Holder} to $v$ in $\big([0,4r]^{2}\setminus[-r/2,r/2]^{2})\cap\{\psi(Ry)>0\}$, where the zero set of $\psi$ purely consists of free boundary points (i.e. $|\nabla\psi|=\Lambda$), we get that for all $y^{1},y^{2}\in\big([0,2r]^{2}\setminus[-r,r]^{2})\cap\{\psi(Ry)>0\}$,
    \begin{equation*}
        |v(y^{1})-v(y^{2})|\leq C\Big(|y^{1}-y^{2}|^{\alpha_{2}}+d^{\alpha_{2}}\Big),\quad\mbox{as long as }\epsilon\ll1\ll R.
    \end{equation*}
    
    Moreover, since $v=0$ on $[-4r,-r/2]\times\{0\}$, by applying Lemma~\ref{lem. maximum principle for forced decay} in $\big([-4r,0]^{2}\setminus[-r/2,r/2]^{2})$ for $O\big(\min\big\{\ln{(rR)},\ln{(\epsilon/r)}\big\}\big)$ many times, we have
    \begin{equation*}
        \|v\|_{L^{\infty}([-2r,r]\times[0,t]^{2})}\leq C\Big(t^{\alpha_{3}}+d^{\alpha_{3}}\Big).
    \end{equation*}
    Then by applying Lemma~\ref{lem. interior Harnack} afterwards (similar to \textbf{Step 2} in the proof of Corollary~\ref{cor. pure boundary Holder}), we obtain that
    \begin{equation*}
        |v(y^{1})-v(y^{2})|\leq C\Big(|y^{1}-y^{2}|^{\alpha_{4}}+d^{\alpha_{4}}\Big)\mbox{ in }[-2r,0]^{2}\setminus[-r,r]^{2}.
    \end{equation*}
    Combining all estimates obtained above and allowing $r$ to vary finishes the proof of Corollary~\ref{cor. mixed boundary Holder}.
\end{proof}

\section{Schauder estimates}\label{sec. Schauder}
In this section, we study the a priori Schauder estimate in large scale for the free boundary problem. The following theorem and Theorem~\ref{thm. Schauder mixed} are for the purely free boundary problem and for the mixed fixed-and-free boundary problem, respectively.
\begin{theorem}\label{thm. Schauder pure}
    Then there exist five uniform constants $\overline{\epsilon},\underline{R},\rho,\alpha,C>0$, such that the following fact holds for all $\epsilon\leq\overline{\epsilon}$ and all $R\geq\underline{R}$. Assume that
    \begin{equation*}
        \psi\mbox{ solves \eqref{eq. main simple} and satisfies }\Psi(x_{2}-\epsilon R)\leq\psi(x)\leq\Psi(x_{2}+\epsilon R)\quad\mbox{in }B_{R}.
    \end{equation*}
    Then, there exist a unit vector $\vec{\nu}$ and two real numbers $a,b$ satisfying
    \begin{equation}\label{eq. Schauder pure result line 1}
        \angle{(\vec{\nu},\vec{e}_{2})}\leq C\epsilon,\quad\mbox{and }a,b\in[-C\epsilon R,C\epsilon R]\mbox{ with }0\leq b-a\leq\rho^{1+\alpha}\cdot2\epsilon R,
    \end{equation}
    such that
    \begin{equation}\label{eq. Schauder pure result line 2}
        \Psi(x\cdot\vec{\nu}+a)\leq\psi(x)\leq\Psi(x\cdot\vec{\nu}+b)\quad\mbox{in }B_{\rho R}.
    \end{equation}
\end{theorem}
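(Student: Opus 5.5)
We argue by compactness and contradiction, in the spirit of De Silva's improvement of flatness \cite{D11}, with Corollary~\ref{cor. pure boundary Holder} supplying the compactness. Fix $\alpha\in(0,1)$ and then $\rho$ small, both to be chosen. Suppose the theorem fails. Then there are sequences $\epsilon_k\to0$, $R_k\to\infty$ and solutions $\psi_k$ of \eqref{eq. main simple} such that
\begin{equation*}
\Psi(x_2-\epsilon_kR_k)\leq\psi_k\leq\Psi(x_2+\epsilon_kR_k)\quad\mbox{in }B_{R_k},
\end{equation*}
but no pair $(\vec\nu,a,b)$ as in \eqref{eq. Schauder pure result line 1}--\eqref{eq. Schauder pure result line 2} exists.

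For each $k$ we form the horizontal shifting function
\begin{equation*}
v_k(y)=\frac{\Psi^{-1}(\psi_k(R_ky))-R_ky_2}{\epsilon_kR_k},
\end{equation*}
which satisfies $|v_k|\leq1$. By Corollary~\ref{cor. pure boundary Holder}, the family $\{v_k\}$ is equi-Hölder up to an error $d(\epsilon_k,R_k)\to0$, and the domains converge to $\{y_2>0\}$ in Hausdorff distance. An Arzelà--Ascoli argument of the type in \cite{D11} then gives a subsequence whose graphs converge uniformly to a Hölder function $v_\infty$ on $\overline{B_{1/2}^+}$.

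The next step is to identify the limit problem. In the interior, after rescaling $x=R_ky$, the profile $\Psi(x_2)$ is evaluated at heights $\sim R_k\to\infty$. By (A6), $\Psi_t\to V$ and the vorticity term $\mathcal{B}'(\Psi)$ decays faster than $1/\Psi$. The linearized operator around $\Psi$ therefore converges to the constant-coefficient operator
\begin{equation*}
\partial_{11}+\mu^{-2}\partial_{22},
\end{equation*}
with $\mu$ given by \eqref{eq. vertical mach deform}; this uses Appendix~\ref{sec. appendix Small-perturbation} and the compatibility conditions \eqref{eq. compatibility T}--\eqref{eq. compatibility TZ}. On $\{y_2=0\}$ the free boundary condition $|\nabla\psi|=\Lambda=\Psi_t(0+)$ linearizes to the homogeneous Neumann condition $\partial_2v_\infty=0$, understood in the viscosity sense. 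To verify both statements in the viscosity sense we touch $v_\infty$ by quadratic polynomials $P$ and pull them back to the perturbed comparison functions $\Psi(x_2+\epsilon_kR_kP(x/R_k))$. These need to be genuine strict sub- or supersolutions of the quasilinear equation and of the free boundary condition at scale $R_k$. We plan to build them by Savin's ODE deformation of $\Psi$ (Appendix~\ref{sec. appendix ODE}), which controls the error coming from $\Psi''$, and to combine this with the maximum principles of Appendix~\ref{sec. appendix maximum principle}.

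With the limit problem in hand, we reflect $v_\infty$ evenly across $\{y_2=0\}$. The reflected function solves a constant-coefficient elliptic equation in $B_{1/2}$, so it is smooth, and
\begin{equation*}
|v_\infty(y)-v_\infty(0)-\nabla v_\infty(0)\cdot y|\leq C_0|y|^2\quad\mbox{with }\partial_2v_\infty(0)=0,
\end{equation*}
where $C_0$ is universal because $|v_\infty|\leq1$. We now choose $\rho$ so small that $C_0\rho^2\leq\frac{1}{4}\rho^{1+\alpha}$. Transferring this bound back to $v_k$ for $k$ large gives, in $B_\rho$,
\begin{equation*}
\big|v_k-v_\infty(0)-\partial_1v_\infty(0)y_1\big|\leq\tfrac12\rho^{1+\alpha}.
\end{equation*}
In the original variables this says $\psi_k$ is trapped between $\Psi(x_2+\epsilon_kR_k\ell(x/R_k)\pm\ldots)$, where $\ell$ is the affine function above. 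Because $\Psi$ is nearly linear at large heights, the tilt $\epsilon_k\partial_1v_\infty(0)x_1$ can be absorbed by rotating to a unit vector $\vec\nu$ with $\angle(\vec\nu,\vec e_2)\leq C\epsilon_k$. The remaining discrepancy between $\Psi(x\cdot\vec\nu)$ and $\Psi(x_2+\epsilon x_1)$ is $O(\epsilon^2R)$ in $B_{\rho R}$, which is negligible compared with $\rho^{1+\alpha}\epsilon R$. This produces the forbidden triple $(\vec\nu,a,b)$ and contradicts the choice of $\psi_k$.

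The main obstacle is the viscosity verification of the limit problem: we must construct barriers from quadratic perturbations of $\Psi$ that remain strict sub- or supersolutions uniformly as $R_k\to\infty$, near the free boundary where $\Psi$ is not close to linear and the vorticity term is not negligible. Our first attempt will be to compose the Savin-type radial deformation of $\Psi$ with the quadratic correction. We will absorb the $O(1)$-height region near the free boundary, whose size is $\ll\epsilon_kR_k$ after scaling, into the $d(\epsilon,R)$ error of Corollary~\ref{cor. pure boundary Holder}.
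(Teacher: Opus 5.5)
Your architecture is the paper's: a contradiction sequence with $\epsilon_{k}\to0$, $R_{k}\to\infty$, compactness of $v_{k}$ from Corollary~\ref{cor. pure boundary Holder}, and identification of the blow-down limit. In the paper that identification is Lemma~\ref{lem. limit description}, and its interior part is done as you suggest: one touches $v_{k}$ by quadratics in the non-divergence equation of Lemma~\ref{lem. summarize horrible computations}, using Remark~\ref{rmk. limiting mach number equation} and Lemma~\ref{lem. when does b converge to zero?}. The remaining steps also match: the $C^{1,1}$ bound after even reflection (Lemma~\ref{lem. C^2 estimate for harmonic functions}), the choice $\rho^{1-\alpha}\lesssim1/K$, and the rotation to $\vec{\nu}$. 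The rotation uses only the monotonicity of $\Psi$, since you are comparing arguments of $\Psi$; near-linearity plays no role.

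What is missing is the step you flag yourself, and the fallback you propose for it does not work. You cannot absorb the layer where $\Psi$ is far from linear into the error $d(\epsilon,R)$, for two reasons.
\begin{itemize}
\item The theorem assumes only $\epsilon\leq\overline{\epsilon}$ and $R\geq\underline{R}$. So $\epsilon_{k}R_{k}$ may stay bounded, and then the whole flatness band lies inside that layer.
\item More fundamentally, the Neumann condition is the linearization of $|\nabla\psi|=\Lambda$, which is imposed exactly in that layer. The quantity $d(\epsilon,R)$ is only an error in a H\"older modulus and carries no boundary information.
\end{itemize}

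The paper also does not build quadratically perturbed barriers. In Lemma~\ref{lem. limit description}(5) it argues as follows.
\begin{itemize}
\item Suppose $\widetilde{Q}=-M|y-y^{*}|^{2}+\vec{b}\cdot(y-y^{*})+v(y^{*})$ touches $v$ from below at $y^{*}\in\{y_{2}=0\}$ with $b_{2}>0$. Then $\psi_{k}\geq\Psi\big(x_{2}+\epsilon_{k}R_{k}\widetilde{Q}_{k}(x/R_{k})\big)$ on $\{\psi_{k}>0\}$.
\item A ray argument extends this bound to the zero set, giving \eqref{eq. claim lower bound extends to contact region}.
\item Rotating so that $\vec{e}_{2}+\epsilon_{k}\vec{b}$ becomes vertical turns it into the bent-up bound $\widetilde{\psi}_{k}\geq\Psi\big((1+\beta\epsilon_{k})\widetilde{x}_{2}-\iota\epsilon_{k}\widetilde{R}_{k}\big)$ in $B_{\widetilde{R}_{k}}$, with $\beta=b_{2}/2$ and $\iota\leq\lambda\beta/512$.
\item Corollary~\ref{cor. application B of maximum principle} upgrades this to the pure upward translation $\Psi\big(\widetilde{x}_{2}+\frac{\lambda\beta}{512}\epsilon_{k}\widetilde{R}_{k}\big)$ in $B_{\lambda\widetilde{R}_{k}/4}$. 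This is incompatible with $\widetilde{Q}_{k}$ touching $v_{k}$ at points $y^{(*,k)}\to y^{*}$.
\end{itemize}
The missing idea is therefore to convert the quadratic touching, by a rotation, into the bent-up configuration and then apply the bending-up maximum principle. Because the radial barriers of Lemma~\ref{lem. barrier A} (which underlie Lemma~\ref{lem. maximum principle for Harnack}) are exact strict subsolutions of the full free boundary problem at every radius, the nonlinear layer is handled exactly. No relation between $\epsilon_{k}R_{k}$ and $1$ is needed.

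The same ray argument, \eqref{eq. claim of Schauder from positive set to contact set}, is also needed in your final step. The bound $|v_{k}-\ell|\leq\frac{1}{2}\rho^{1+\alpha}$ traps $\psi_{k}$ only on $B_{\rho R_{k}}\cap\{\psi_{k}>0\}$. For the lower bound you still have to show that the zero points of $\psi_{k}$ in $B_{\rho R_{k}}$ lie in $\{x\cdot\vec{\nu}+a\leq0\}$.
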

\subsection{Blow-down analysis}
\begin{lemma}\label{lem. limit description}
    Assume that $\epsilon_{k}\to0$ and $R_{k}\to\infty$. Assume that
    \begin{equation*}
        \psi_{k}\mbox{ solves \eqref{eq. main simple} and satisfies }\Psi(x_{2}-\epsilon_{k}R_{k})\leq\psi(x)\leq\Psi(x_{2}+\epsilon_{k}R_{k})\mbox{ in }[-R_{k},R_{k}]^{2}.
    \end{equation*}
    Denote $\Omega_{k}:=[-1,1\big]^{2}\cap\{\psi_{k}(R_{k}y)>0\}$ and denote $v_{k}:\Omega_{k}\to\mathbb{R}$ as
    \begin{equation*}
        v_{k}(y)=\frac{\Psi^{-1}(\psi_{k}(x))-x_{2}}{\epsilon_{k}R_{k}},\quad\mbox{where }x=R_{k}y.
    \end{equation*}
    Then, there exists a subsequence of $\{v_{k}\}$ (still labeled by $k$) and a function $v:[-\frac{1}{2},\frac{1}{2}]\times[0,\frac{1}{2}]\to\mathbb{R}$, such that the followings hold:
    \begin{itemize}
        \item[(1)] For every $k$, one has $[-1,1]\times[\epsilon_{k},1]\subseteq\Omega_{k}\subseteq[-1,1]\times[-\epsilon_{k},1]$.
        \item[(2)] As $k\to\infty$, one has
        \begin{equation}\label{eq. v_k uniformly converges}
            \lim_{k\to\infty}\|v_{k}-v\|_{L^{\infty}([-\frac{1}{2},\frac{1}{2}]\times[0,\frac{1}{2}]\cap\Omega_{k})}=0.
        \end{equation}
        Besides, for all fixed $\epsilon>0$, one has
        \begin{equation*}
            \lim_{k\to\infty}\|v_{k}-v\|_{L^{\infty}([-\frac{1}{2},\frac{1}{2}]\times[\epsilon,\frac{1}{2}])}=0.
        \end{equation*}
        \item[(3)] For $\alpha$ being the constant in Corollary~\ref{cor. pure boundary Holder}, one has $v\in C^{\alpha}([-\frac{1}{2},\frac{1}{2}]\times[0,\frac{1}{2}])$;
    \end{itemize}
    Moreover, in the viscosity sense, one has
    \begin{equation*}
        \mu^{2}v_{11}+v_{22}=0\quad\mbox{in }(-\frac{1}{2},\frac{1}{2})\times(0,\frac{1}{2})\mbox{ for }\mu\mbox{ defined in \eqref{eq. vertical mach deform}},
    \end{equation*}
    and $\partial_{x_{2}}v(t\vec{e}_{1})=0$ for $x\in(-\frac{1}{2},\frac{1}{2})$. More precisely, we mean:
    \begin{itemize}
        \item[(4)] For every $y^{*}\in(-\frac{1}{2},\frac{1}{2})\times(0,\frac{1}{2})$ and every quadratic polynomial $P(y)$, if
        \begin{equation*}
            v(y^{*})-P(y^{*})=\sup_{B_{\rho}(y^{*})}(v-P)=0\quad\mbox{for some }\rho>0,
        \end{equation*}
        then $(\mu^{2}\partial_{11}+\partial_{22})P(y^{*})\geq0$. Conversely, if the ``sup'' is replaced with ``inf'', then $(\mu^{2}\partial_{11}+\partial_{22})P(y^{*})\leq0$.
        \item[(5)] For every $y^{*}\in(-\frac{1}{2},\frac{1}{2})\times\{0\}$ and every quadratic polynomial $P(y)$, if
        \begin{equation*}
            v(y^{*})-P(y^{*})=\sup_{B_{\rho}(y^{*})^{+}}(v-P)=0\quad\mbox{for some }\rho>0,
        \end{equation*}
        then $\partial_{2}P(y^{*})\geq0$. Conversely, if the ``sup'' is replaced with ``inf'', then $\partial_{2}P(y^{*})\leq0$.
    \end{itemize}
\end{lemma}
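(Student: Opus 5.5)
Items (1)--(3) follow from Corollary~\ref{cor. pure boundary Holder} by an Arzel\`a--Ascoli type argument. Item (1) is the domain inclusion already stated there. For compactness, the Corollary gives, for every $k$, $|v_k(y^1)-v_k(y^2)|\le C(|y^1-y^2|^\alpha+d(\epsilon_k,R_k)^\alpha)$ on $[-\frac12,\frac12]^2\cap\Omega_k$ with $d(\epsilon_k,R_k)\to0$, together with $|v_k|\le1$. This is equicontinuity up to a vanishing error. A diagonal argument over a countable dense set of $[-\frac12,\frac12]\times[0,\frac12]$ therefore produces a subsequence and a limit $v$, and $v$ inherits $|v(y^1)-v(y^2)|\le C|y^1-y^2|^\alpha$. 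This gives (3), and the uniform convergence \eqref{eq. v_k uniformly converges} up to the strip $\{|y_2|\le\epsilon_k\}$. To make that statement meaningful near $y_2\in[-\epsilon_k,0]$, I extend $v$ continuously (e.g.\ evenly) below $y_2=0$ and use the H\"older modulus.

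For (4) I use the standard viscosity argument by contradiction. Suppose $P$ touches $v$ strictly from above at an interior $y^*$ (first replace $P$ by $P+\eta|y-y^*|^2$) with $(\mu^2\partial_{11}+\partial_{22})P(y^*)<0$. By uniform convergence, a translate $P+c_k$ touches $v_k$ from above at $y_k\to y^*$. Undoing the scaling, the function $\Phi_k(x)=\Psi\big(x_2+\epsilon_kR_k(P(x/R_k)+c_k)\big)$ touches $\psi_k$ from above at $R_ky_k$. Since $\Phi_k$ is smooth, it must satisfy the quasilinear equation \eqref{eq. quasilinear structure} with ``$\ge$'' at the touching point, in the appropriate sense. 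The next step is to expand the operator. Near height $x_2\sim R_k y_2^*\to\infty$, $\Psi_t\to V$ by (A6), so $\Psi''$ and the vorticity term $\mathcal{B}'(\Psi)$ are $o(1)\cdot$ small: indeed $z\mathcal{B}'(z)\to0$ and $\Psi\sim Vx_2$ give $\mathcal B'(\Psi)=o(1/(R_k))$. The correction involves $\epsilon_k R_k^{-1}D^2P$. Linearizing $\nabla\cdot(g(|\nabla\Phi|^2,\Phi)\nabla\Phi)$ about $\Psi$ via (A5) yields
\begin{equation*}
\frac{\epsilon_k}{R_k}\Big(g\,P_{11}+(g+2V^2g_T)P_{22}\Big)+(\text{shear-flow terms that cancel by \eqref{eq. 1D solution Psi}})+o\Big(\frac{\epsilon_k}{R_k}\Big),
\end{equation*}
with $g,g_T$ evaluated at $(V^2,+\infty)$. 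Dividing by $g+2V^2g_T$ gives $\mu^2P_{11}+P_{22}\ge -o(1)$, a contradiction. I expect the main obstacle to be the rigorous bookkeeping: the exact cancellation with the shear flow (which is why one composes with $\Psi$ rather than subtracting), and showing that the errors from $\Psi''$, $\mathcal{B}'$, and the variable coefficients are $o(\epsilon_k/R_k)$. The relevant scales are $\epsilon_k^2$ from the quadratic terms and $1/(R_k y_2^*)$-type decay of $\Psi''$, which must be compared with $\epsilon_k$. If $\Psi''$ decays only like $o(1/x_2)$, I would absorb it using the fact that touching at interior heights $\sim R_k$ makes it $o(R_k^{-1})$, which is still not obviously $o(\epsilon_k/R_k)$. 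In that case I would instead use Savin-type deformed barriers from Appendix~\ref{sec. appendix ODE}: replace $\Psi$ by the exact shear profile shifted by $\epsilon_kR_kP$, so that only the genuinely second-order terms in $P$ survive.

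For (5), take $y^*$ on the bottom and suppose $P$ touches from above with $\partial_2P(y^*)<0$. After adding $-\eta y_2+ C y_2^2$-type modifications, I may assume in addition $(\mu^2\partial_{11}+\partial_{22})P<0$ strictly, so touching cannot occur in the interior by (4)'s computation. Hence the touching point of $\Phi_k$ with $\psi_k$ lies on the free boundary $\partial\{\psi_k>0\}$. There $\Phi_k$ lies above $\psi_k$, so $|\nabla\Phi_k|\ge\Lambda$ in the viscosity free-boundary sense. But $|\nabla\Phi_k|=\Psi_t(0+)\,(1+\epsilon_k\partial_2P+O(\epsilon_k^2))=\Lambda(1+\epsilon_k\partial_2P(y^*)+o(\epsilon_k))<\Lambda$, a contradiction. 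The "inf" cases of (4) and (5) are symmetric.
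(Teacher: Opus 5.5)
The gap is in (5). Items (1)--(3) are fine. Your (4) is the paper's linearization in a different form; the paper writes it as $tr(\overleftrightarrow{A}D^2v)+\vec b\cdot\nabla v=0$ with $\vec b\to0$ on $\{y_2\ge\delta\}$ (Lemma~\ref{lem. when does b converge to zero?}).

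Your worry in (4) is unfounded. Set $u=x_2+\epsilon_kR_k(P(x/R_k)+c_k)$. The part of the operator on $\Phi_k=\Psi(u)$ not involving $D^2u$ is a function $f(u,|\nabla u|^2)$ with $f(u,1)\equiv0$ by \eqref{eq. 1D solution Psi}. So it carries the factor $|\nabla u|^2-1=2\epsilon_k\partial_2P+\epsilon_k^2|\nabla P|^2$, times quantities bounded by $|\mathcal{B}'(\Psi(u))|$; this uses $\Psi''=H'(\Psi)/2$ and \eqref{eq. compatibility Z}. For $g\equiv1$ one has exactly
\[
\Delta\Phi_k-\mathcal{B}'(\Phi_k)=\frac{\epsilon_k}{R_k}\Psi'(u)\Delta_yP+\mathcal{B}'(\Psi(u))\big(2\epsilon_k\partial_2P+\epsilon_k^2|\nabla_yP|^2\big).
\]
At heights $u\sim R_ky_2^*$ the last term is $\epsilon_k\cdot o(R_k^{-1})$ by (A6), so (4) needs no Savin barriers.

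The failing sentence in (5) is ``touching cannot occur in the interior by (4)'s computation.'' That computation only controls points with $y_2$ bounded away from $0$. In (5) the touching points satisfy $y_k\to y^*\in\{y_2=0\}$, so $R_ky_k$ may lie at height $o(R_k)$, even $O(1)$, above the free boundary of $\Phi_k$.

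There the formula above shows the problem. At bounded heights $\mathcal{B}'(\Psi(u))$ is of order one, and in general (A6) only gives $o(1/u)$, which does not beat $R_k^{-1}$ when $u\ll R_k$. So the first-order term is of order $\epsilon_k$, while the second-order term is only $O(\epsilon_k/R_k)$. Since $\partial_2P(y^*)<0$ is exactly your hypothesis, this term is positive wherever $\mathcal{B}'<0$, and $\Phi_k$ is then a strict \emph{sub}solution in that strip. No choice of the fixed polynomial $P$ (for instance making $P_{22}$ very negative) can compensate a factor $R_k$. Hence interior touching near the bottom is not excluded, and you never reach the free-boundary gradient contradiction. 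In the irrotational case $\mathcal{B}'\equiv0$ the term is absent and your argument works; the obstruction is the vorticity near the free boundary.

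The paper, treating the symmetric case of touching from below, never tests the equation near the bottom and keeps only the slope information. First it shows that $\psi_k\ge\Psi\big(x_2+\epsilon_kR_k\widetilde{Q}_k(x/R_k)\big)$ extends across the zero set. Then it rotates coordinates to get $\widetilde\psi_k\ge\Psi\big((1+\beta\epsilon_k)\widetilde x_2-\iota\epsilon_k\widetilde R_k\big)$, absorbing the quadratic part into $\iota\le\lambda\beta/512$ by taking $\rho$ small. Finally it applies Corollary~\ref{cor. application B of maximum principle}.

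That corollary uses the gain $\beta\epsilon_kx_2\gtrsim\beta\epsilon_k r$ only at heights $\gtrsim r$, where it is a genuine vertical shift. It transports this gain down to the free boundary through Lemma~\ref{lem. maximum principle for Harnack}. The sliding radial barriers there come from Lemma~\ref{lem. barrier A}, are built from the exact first integral of the shear flow, and are strict sub/supersolutions at every height, vorticity included. To close (5) you need an ingredient of this kind: comparison functions that stay exact all the way down to the free boundary.
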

Since the proof is really involved, it will be broken into three shorter parts. We will prove part (1)-(3) first, and then part (4), and finally prove part (5).
\begin{proof}[Proof of Lemma~\ref{lem. limit description} (1)-(3)]
    These statements follow directly from Corollary~\ref{cor. pure boundary Holder}.
\end{proof}
\begin{proof}[Proof of Lemma~\ref{lem. limit description} (4)]
    Fix an arbitrary point $y^{*}\in(-\frac{1}{2},\frac{1}{2})\times(0,\frac{1}{2})$. Let us show that $(\mu^{2}\partial_{11}+\partial_{22})v(\zeta^{*})\leq0$ in the viscosity sense (the other side can be shown similarly). Suppose that
    \begin{equation*}
        P(y)=(y-y^{*})^{T}M(y-y^{*})+\vec{b}\cdot(y-y^{*})+v(y^{*})
    \end{equation*}
    touches the graph of $v(y)$ from below in $B_{\\rho}(y^{*})$ for some $\rho>0$ and satisfies $(\mu^{2}\partial_{11}+\partial_{22})P>0$. Since $P$ is a strict sub-solution, we can let $0<a\ll1$ and set
    \begin{equation*}
        \widetilde{P}(y)=(y-y^{*})^{T}\widetilde{M}(y-y^{*})+\vec{b}\cdot(y-y^{*})+v(y^{*}),\quad\mbox{where }\widetilde{M}=M-a\delta^{ij}.
    \end{equation*}
    Then, we still have $(\mu^{2}\partial_{11}+\partial_{22})\widetilde{P}>0$. Furthermore,
    \begin{equation}\label{eq. subharmonic test function}
        v-\widetilde{P}\geq(v-\widetilde{P})(y^{*})=0\mbox{ in }B_{\rho}(y^{*}),\quad v-\widetilde{P}\geq\frac{a\rho^{2}}{4}\mbox{ in }B_{\rho}(y^{*})\setminus B_{\rho/2}(y^{*}).
    \end{equation}
    Without loss of generality, we can assume that $\rho$ is so small that
    \begin{equation*}
        B_{\rho}(y^{*})\subseteq(-\frac{1}{2},\frac{1}{2})\times(0,\frac{1}{2})\mbox{ and }B_{\rho}(y^{*})\subseteq\Omega_{k}\quad\mbox{for sufficiently large }k.
    \end{equation*}
    Therefore, it follows from Lemma~\ref{lem. limit description} (1)-(3) and from \eqref{eq. subharmonic test function} that there exist $h_{k}\to0$ and $y^{k}\in B_{\rho/2}(y^{*})$, such that
    \begin{equation*}
        \widetilde{P}_{k}(y)=\widetilde{P}(y)+h_{k}\mbox{ touches }v_{k}\mbox{ from below at }y^{k}.
    \end{equation*}
    Let us rewrite $\widetilde{P}_{k}(y)$ such that it centers at $\zeta^{k}$, i.e.:
    \begin{equation*}
        \widetilde{P}_{k}(y)=(y-y^{k})^{T}\widetilde{M}(y-y^{k})+\vec{b}^{k}\cdot(y-y^{k})+v_{k}(y^{k}),\quad\mbox{where }\vec{b}^{k}=\vec{b}+2M(y^{k}-y^{*}).
    \end{equation*}
    Since $y^{*}_{2}>0$, we see that $y^{k}_{2}\geq\frac{1}{2}y^{*}_{2}>0$ for large $k$. By Remark~\ref{rmk. limiting mach number equation} and Lemma~\ref{lem. when does b converge to zero?}, we see that for sufficiently large $k$,
    \begin{equation*}
        (\mu^{2}\partial_{11}+\partial_{22})\widetilde{P}_{k}(y^{k})+o(1)\leq0,
    \end{equation*}
    where $o(1)\to1$ as $R_{k}\to\infty$ as $\epsilon_{k}\to0$. This results in a contradiction.
\end{proof}
\begin{proof}[Proof of Lemma~\ref{lem. limit description} (5)]
    Let us show that $\partial_{2}v\leq0$ on $(-\frac{1}{2},\frac{1}{2})\times\{0\}$ in the viscosity sense. The other direction that $\partial_{2}v\geq0$ on $(-\frac{1}{2},\frac{1}{2})\times\{0\}$ can be argued similarly.
    
    \textbf{Step 1: Fixing a test function.} Suppose that the condition $\partial_{y_{2}}v\leq0$ fails at $y^{*}\in(-\frac{1}{2},\frac{1}{2})\times\{0\}$, then there exists a quadratic test function $Q$ such that
    \begin{equation*}
        v(y^{*})-Q(y^{*})=\inf_{B_{\rho_{0}}(y^{*})^{+}}(v-Q)=0\mbox{ for some }\rho_{0}>0,\quad\mbox{and }\partial_{2}Q(y^{*})>0.
    \end{equation*}
    Since subtracting $Q$ with $(y-y^{*})^{T}A(y-y^{*})$ for a positive-definite matrix $A$ preserves $\partial_{2}Q(y^{*})$, and the new quadratic function still touches the graph of $v$ from below at $y^{*}$. Therefore, let us assume for simplicity that the quadratic test function takes the form
    \begin{equation*}
        Q(y)=-2M|y-y^{*}|^{2}+2\vec{b}\cdot(y-y^{*})+v(y^{*})\quad\mbox{with }\min\{M,\vec{b}\cdot\vec{e}_{2}\}\geq a>0.
    \end{equation*}
    Now, let $\lambda$ and $\delta_{0}(\beta)$ be the uniform constant and function produced by Corollary~\ref{cor. application B of maximum principle}. Then, let us set
    \begin{equation*}
        \rho=\min\Big\{\frac{1}{2}-|y^{*}|,\frac{\lambda\cdot\langle\vec{b},\vec{e}_{2}\rangle}{2048M},\rho_{0}\Big\},\quad\beta=\frac{\langle\vec{b},\vec{e}_{2}\rangle}{2}\quad\overline{\epsilon}=\delta_{0}(\beta).
    \end{equation*}
    Set
    \begin{equation*}
        \widetilde{Q}(y)=-M|y-y^{*}|^{2}+\vec{b}\cdot(y-y^{*})+v(y^{*}),
    \end{equation*}
    then for every $t\in(0,\rho)$ one has
    \begin{equation}\label{eq. large derivative test function}
        v-\widetilde{Q}\geq(v-\widetilde{Q})(y^{*})=0\mbox{ in }B_{\rho}(y^{*})^{+},\quad\mbox{and }v-\widetilde{Q}\geq a\cdot t^{2}\mbox{ in }B_{\rho}(y^{*})^{+}\setminus B_{t}(y^{*})^{+}.
    \end{equation}
    
    \textbf{Step 2: Analysis of $v_{k}$.} By \eqref{eq. v_k uniformly converges}, one has
    \begin{equation*}
        v_{k}-\widetilde{Q}\geq-o(1)\mbox{ in }B_{\rho}(\zeta^{*})\cap\Omega_{k},
    \end{equation*}
    and
    \begin{equation*}
        v_{k}-\widetilde{Q}\geq a\cdot t^{2}-o(1)\mbox{ in }\big(B_{\rho}(y^{*})\setminus B_{t}(y^{*})\big)\cap\Omega_{k}.
    \end{equation*}
    Since one can let $\rho$ arbitrarily small, we see that as $k\to\infty$, there exist two sequences $\{y^{(*,k)}\}\subseteq\mathbb{R}^{2}$ and $\{h_{k}\}\subseteq\mathbb{R}$, such that
    \begin{equation}\label{eq. sequence of contact points}
        y^{(*,k)}\in\overline{B_{\rho}(y^{*})\cap\{\psi_{k}(R_{k}y)>0\}},\quad y^{(*,k)}\to y^{*},\quad h_{k}\to v(y^{*}),
    \end{equation}
    and that
    \begin{equation}\label{eq. v_k minimal assumption}
        \liminf_{\substack{y\in B_{\rho}(y^{*})\cap\Omega_{k}\\y\to y^{(*,k)}}}(v_{k}-\widetilde{Q}_{k})(y)=\inf_{B_{\rho}(y^{*})\cap\Omega_{k}}(v_{k}-\widetilde{Q}_{k})=0,
    \end{equation}
    where
    \begin{equation}\label{eq. widetilde Q_k form}
        \widetilde{Q}_{k}(y)=-M|y-y^{*}|^{2}+\vec{b}\cdot(y-y^{*})+h_{k}.
    \end{equation}

    \textbf{Step 3: Analysis of $\psi_{k}$.} From \eqref{eq. v_k minimal assumption} and by the continuity of $v_{k}$ in $\Omega_{k}$, we actually have that
    \begin{equation*}
        \liminf_{\substack{\eta\in B_{\rho}(y^{*})\cap\Omega_{k}\\\eta\to y}}v_{k}(\eta)\geq\widetilde{Q}_{k}(y)\quad\mbox{for all }y\in\overline{B_{\rho}(y^{*})\cap\Omega_{k}}.
    \end{equation*} 
    Since
    \begin{equation*}
        \psi_{k}(R_{k}y)=0\quad\mbox{on }[-\frac{1}{2},\frac{1}{2}]^{2}\cap\partial\{\psi_{k}(r_{k}\zeta)>0\}=[-\frac{1}{2},\frac{1}{2}]^{2}\cap\partial\Omega_{k},
    \end{equation*}
    we then infer from the definition of $v_{k}$ that
    \begin{equation}\label{eq. Q_k behavior on free boundary}
        \widetilde{Q}_{k}(y)\leq\liminf_{\substack{\eta\in B_{\rho}(y^{*})\cap\Omega_{k}\\\eta\to y}}v_{k}(\eta)=-\frac{y_{2}}{\epsilon_{k}}\mbox{ on }\big[-\frac{1}{2},\frac{1}{2}\big]^{2}\cap\partial\Omega_{k}.
    \end{equation}
    Besides, it follows from \eqref{eq. v_k minimal assumption} that
    \begin{equation*}
        \psi_{k}(x)\geq\Psi\Big(x_{2}+\widetilde{Q}_{k}(\frac{x}{R_{k}})\epsilon_{k}R_{k}\Big)\mbox{ in }B_{\rho R_{k}}(R_{k}y^{*})\cap\{\psi_{k}(x)>0\}.
    \end{equation*}
    We claim that indeed
    \begin{equation}\label{eq. claim lower bound extends to contact region}
        \psi_{k}(x)\geq\Psi\Big(x_{2}+\widetilde{Q}_{k}(\frac{x}{R_{k}})\epsilon_{k}R_{k}\Big)\mbox{ in }B_{\rho_{0}R_{k}}(R_{k}y^{*}).
    \end{equation}

    In fact, suppose that \eqref{eq. claim lower bound extends to contact region} is false. Then there exists $z\in B_{\rho R_{k}}(R_{k}y^{*})$, such that
    \begin{equation}\label{eq. if (4.6) is wrong}
        \psi_{k}(z)=0,\quad z_{2}+\widetilde{Q}_{k}(\frac{z}{R_{k}})\epsilon_{k}R_{k}>0.
    \end{equation}
    As we are assuming $\psi(x)\geq\Psi(x_{2}-\epsilon_{k}R_{k})$ in $[-R_{k},R_{k}]^{2}$, we must have
    \begin{equation*}
        z\in B_{\rho R_{k}}(R_{k}y^{*})\cap\{z_{2}\leq\epsilon_{k}R_{k}\}.
    \end{equation*}
    Recall that $R_{k}y^{*}\in(-\frac{R_{k}}{2},\frac{R_{k}}{2})\times\{0\}$, we see that for $k$ sufficiently large, one always have $\epsilon_{k}\leq\rho /\sqrt{2}$. Therefore, there exists a unit vector $\vec{e}$ with $\angle{(\vec{e},\vec{e}_{2})}\leq\frac{\pi}{4}$, such that the ray $z+t\vec{e}$ with $t\geq0$ enters the region $\{x_{2}\geq\epsilon_{k}R_{k}\}$ before leaving the ball $B_{\rho R_{k}}(R_{k}y^{*})$. Precisely, we mean
    \begin{equation*}
        z+t\vec{e}\in B_{\rho R_{k}}(R_{k}y^{*}),\quad\mbox{for all }0\leq t\leq\frac{\epsilon_{k}R_{k}-x_{2}}{\langle\vec{e},\vec{e}_{2}\rangle}.
    \end{equation*}
    Since $\psi_{k}(x)\geq\Psi(x_{2}-\epsilon_{k}R_{k})$ in $B_{\rho R_{k}}(R_{k}y^{*})\subset\subset[-R_{k},R_{k}]^{2}$, we see that
    \begin{equation*}
        \psi_{k}(z+t\vec{e})>0,\quad\mbox{for any }t\mbox{ slightly larger than }\frac{\epsilon_{k}R_{k}-x_{2}}{\langle\vec{e},\vec{e}_{2}\rangle}.
    \end{equation*}
    Therefore, we let
    \begin{equation*}
        \overline{t}=\sup\Big\{t\geq0,\quad\psi_{k}(z+\tau\vec{e})=0\mbox{ for all }0\leq\tau\leq t\Big\},
    \end{equation*}
    then it must holds that
    \begin{equation*}
        0\leq\overline{t}\leq\frac{\epsilon_{k}R_{k}-x_{2}}{\langle\vec{e},\vec{e}_{2}\rangle},\quad\mbox{and }z+\overline{t}\vec{e}\in B_{\rho R_{k}}(R_{k}y^{*})\cap\partial\{\psi_{k}>0\}.
    \end{equation*}
    By letting $\displaystyle y=\frac{z+\overline{t}\vec{e}}{R_{k}}$ in \eqref{eq. Q_k behavior on free boundary}, one has
    \begin{equation*}
        \langle z+\overline{t}\vec{e},\vec{e}_{2}\rangle+\widetilde{Q}_{k}(\frac{z+\overline{t}\vec{e}}{R_{k}})\epsilon_{k}R_{k}=R_{k}y_{2}+\widetilde{Q}_{k}(y)\epsilon_{k}R_{k}\leq0.
    \end{equation*}
    However, on the other hand, given that $\epsilon_{k}\ll1$, one has
    \begin{equation*}
        \frac{d}{dt}\Big\{\langle z+t\vec{e},\vec{e}_{2}\rangle+\widetilde{Q}_{k}(\frac{z+t\vec{e}}{R_{k}})\epsilon_{k}R_{k}\Big\}\geq\frac{1}{\sqrt{2}}-\epsilon_{k}\|\nabla_{y}\widetilde{Q}\|_{L^{\infty}([-1,1]^{2})}>\frac{1}{10}.
    \end{equation*}
    Integrating the above estimate contradicts the assumption \eqref{eq. if (4.6) is wrong}, hence \eqref{eq. claim lower bound extends to contact region} is verified.

    \textbf{Step 4: Coordinate change.} Consider the following isometry $E_{k}$:
    \begin{equation*}
        x=E_{k}(\widetilde{x})=\frac{1}{|\vec{e}_{2}+\epsilon_{k}\vec{b}|}\begin{bmatrix}
            1+\epsilon_{k}b_{2}&\epsilon_{k}b_{1}\\-\epsilon_{k}b_{1}&1+\epsilon_{k}b_{2}
        \end{bmatrix}\cdot\begin{bmatrix}
            \widetilde{x}_{1}\\\widetilde{x}_{2}
        \end{bmatrix}+R_{k}y^{*}.
    \end{equation*}
    Let $\widetilde{\psi}_{k}(\widetilde{z}):B_{\rho R_{k}}\to\mathbb{R}$ be defined as follows:
    \begin{equation*}
        \widetilde{\psi}_{k}(\widetilde{x}):=(\psi_{k}\circ E_{k})(\widetilde{x})=\psi(x).
    \end{equation*}
    By \eqref{eq. claim lower bound extends to contact region}, we have that
    \begin{equation*}
        \widetilde{\psi}_{k}(\widetilde{x})\geq\Psi\Big(|\vec{e}_{2}+\epsilon_{k}\vec{b}|\cdot\widetilde{x}_{2}-M\frac{\epsilon_{k}}{R_{k}}|\widetilde{x}|^{2}+h_{k}\epsilon_{k}R_{k}\Big)\mbox{ in }B_{\rho R_{k}}.
    \end{equation*}
    For sufficiently large $k$, we always have $|h_{k}|\ll\rho^{2}$ and
    \begin{equation*}
        |\vec{e}_{2}+\epsilon_{k}\vec{b}|\cdot\widetilde{x}_{2}-M\frac{\epsilon_{k}}{R_{k}}|\widetilde{x}|^{2}+h_{k}\epsilon_{k}R_{k}\geq|\vec{e}_{2}+\epsilon_{k}\vec{b}|\cdot\widetilde{x}_{2}-(M\rho^{2}-h_{k})\epsilon_{k}R_{k}\quad\mbox{for }\widetilde{x}\in B_{\rho R_{k}}.
    \end{equation*}
    Let $\iota=2M\rho$, then $0<\iota\leq\frac{\lambda\beta}{512}$ and $\mu$ is independent of $k$. Furthermore, for sufficiently large $k$, we have
    \begin{equation*}
        \beta\leq\frac{|\vec{e}_{2}+\epsilon_{k}\vec{b}|-1}{\epsilon_{k}},\quad\mbox{and }\iota\geq\frac{M\rho^{2}-h_{k}}{\rho}.
    \end{equation*}
    Then, one has
    \begin{equation*}
        \widetilde{\psi}_{k}(\widetilde{x})\geq\Psi\Big((1+\beta\epsilon_{k})\cdot\widetilde{x}_{2}-\iota\epsilon_{k}\widetilde{R}_{k}\Big)\mbox{ in }B_{\widetilde{R}_{k}},\quad\mbox{where }\widetilde{R}_{k}=\rho\cdot R_{k}.
    \end{equation*}

    \textbf{Step 5: Apply the maximum principle.} When $k$ is sufficiently large, we also have $\epsilon_{k}\leq\overline{\epsilon}=\delta_{0}(\beta)$. Since $0<\iota\leq\frac{\lambda\beta}{512}$, applying Corollary~\ref{cor. application B of maximum principle} to $\widetilde{\psi}_{k}$ in $B_{\widetilde{R}_{k}}$ yields that
    \begin{equation*}
        \widetilde{\psi}_{k}(\widetilde{x})\geq\Psi\big(\widetilde{x}_{2}+\frac{\lambda\beta}{512}\epsilon_{k}\widetilde{R}_{k}\big)=\Psi\big(\widetilde{x}_{2}+\frac{\lambda\beta}{512}\epsilon_{k}\rho R_{k}\big)\mbox{ in }B_{\frac{\lambda}{4}\widetilde{R}_{k}}.
    \end{equation*}
    In the $z$-coordinate (via the map $E_{k}$), we have
    \begin{equation*}
        \psi_{k}(x)\geq\Psi\Big(\frac{\epsilon_{k}b_{1}(x_{1}-R_{k}y^{*}_{1})+(1+\epsilon_{k}b_{2})x_{2}}{|\vec{e}_{2}+\epsilon_{k}\vec{b}|}+\frac{\lambda\beta}{512}\epsilon_{k}\rho R_{k}\Big)\mbox{ in }B_{\frac{\lambda}{4}\rho R_{k}}(R_{k}y^{*}).
    \end{equation*}
    
    \textbf{Step 6: Derive a contradiction.} Now let us study $v_{k}$ again, where $k$ is sufficiently large. By the definition of $v_{k}$, one has
    \begin{align*}
        v_{k}(y)\geq&\frac{\epsilon_{k}b_{1}(x_{1}-R_{k}y^{*}_{1})+(1+\epsilon_{k}b_{2}-|\vec{e}_{2}+\epsilon_{k}\vec{b}|)x_{2}}{\epsilon_{k}R_{k}\cdot|\vec{e}_{2}+\epsilon_{k}\vec{b}|}+\frac{\lambda\beta\rho}{512}\\
        \geq&-\frac{|\epsilon_{k}b_{1}|+(1+\epsilon_{k}b_{2}-|\vec{e}_{2}+\epsilon_{k}\vec{b}|)}{\epsilon_{k}\cdot|\vec{e}_{2}+\epsilon_{k}\vec{b}|}\cdot|y-y^{*}|+\frac{\lambda\beta\rho}{512}\\
        \geq&-2|b_{1}|\cdot|y-y^{*}|+\frac{\lambda\beta\rho}{512},\quad\mbox{for all }y=\frac{x}{R_{k}}\in B_{\frac{\lambda\rho}{4}}(y^{*})\cap\Omega_{k}.
    \end{align*}
    By the continuity of $v_{k}$ in $\Omega_{k}$, we further have
    \begin{equation}\label{eq. v_k(z) is large}
        \liminf_{\substack{\eta\in B_{\rho}(y^{*})\cap\Omega_{k}\\\eta\to y}}v_{k}(\eta)\geq-2|b_{1}|\cdot|y-y^{*}|+\frac{\lambda\beta\rho}{512},\quad\mbox{for all }y\in\overline{B_{\frac{\lambda\rho}{4}}(y^{*})\cap\Omega_{k}}.
    \end{equation}
    In particular, let $y=y^{(*,k)}$ in \eqref{eq. v_k(z) is large}. Since $|y^{(*,k)}-y^{*}|\to0$, we see that
    \begin{equation*}
        \liminf_{\substack{\eta\in B_{\rho}(y^{*})\cap\Omega_{k}\\\eta\to y^{(*,k)}}}v_{k}(\eta)\geq\frac{\lambda\beta\rho}{1024}\quad\mbox{for sufficiently large }k.
    \end{equation*}
    However, recall that it follows from \eqref{eq. sequence of contact points}, \eqref{eq. v_k minimal assumption}, and \eqref{eq. widetilde Q_k form} that
    \begin{equation*}
        \lim_{k\to\infty}\liminf_{\substack{\eta\in B_{\rho}(y^{*})\cap\Omega_{k}\\\eta\to y^{(*,k)}}}v_{k}(\eta)=\lim_{k\to\infty}\Big\{-M|y^{(*,k)}-y^{*}|^{2}+\vec{b}\cdot(y^{(*,k)}-y^{*})+h_{k}\Big\}=0,
    \end{equation*}
    causing a contradiction. Hence, the Neumann boundary condition is verified.
\end{proof}
\subsection{Schauder estimate for the purely free boundary problem} In this subsection, our goal is to prove Theorem~\ref{thm. Schauder pure}. Firstly, we need the following $C^{2}$ estimate:
\begin{lemma}\label{lem. C^2 estimate for harmonic functions}
    Let $B_{1}^{+}\subseteq\mathbb{R}^{2}$ be the two-dimensional unit half ball, and let $\mu$ be given in \eqref{eq. vertical mach deform}. Assume that $v\in C(\overline{B_{1}^{+}})$ satisfies
    \begin{equation}\label{eq. harmonic and even-symmetric problem}
        \left\{\begin{aligned}
            &(\mu^{2}\partial_{11}+\partial_{22})v(y)=0\mbox{ and }|v(y)|\leq1&\mbox{in }&B_{1}^{+},\\
            &\partial_{2}v=0&\mbox{on }&[-1,1]\times\{0\}.
        \end{aligned}\right.
    \end{equation}
    Then, one has $\|v\|_{C^{1,1}(B_{1/2}^{+})}\leq K$ for some uniform constant $K$. Moreover, there exists an affine function $l(y)$ (which is even in $y_{2}$), such that
    \begin{equation*}
        \|l\|_{L^{\infty}(B_{1})}\leq K,\quad\mbox{and }|v(y)-l(y)|\leq K|y|^{2}\mbox{ in }B_{1}^{+}.
    \end{equation*}
\end{lemma}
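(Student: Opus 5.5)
The plan is to reduce the statement to the classical regularity theory of the Laplacian by a linear change of variables followed by an even reflection. Set $w(z)=v(\mu z_{1},z_{2})$. Then
\begin{equation*}
(\partial_{z_{1}z_{1}}+\partial_{z_{2}z_{2}})w=\big((\mu^{2}\partial_{11}+\partial_{22})v\big)(\mu z_{1},z_{2})=0,
\end{equation*}
so $w$ is harmonic in the half-ellipse $E^{+}=\{(z_{1},z_{2}):z_{2}>0,\ \mu^{2}z_{1}^{2}+z_{2}^{2}<1\}$. Its boundary condition $\partial_{z_{2}}w=0$ holds on the flat part $\{z_{2}=0,\ |z_{1}|<1/\mu\}$, and $|w|\le1$. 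Since $\mu\in(0,1]$ by (A4), the half-ellipse $E^{+}$ contains $B_{1}^{+}$ in the $z$-variables. The constant $\mu$ depends only on the fluid data, so every constant below will be uniform.

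The next step is to reflect evenly, setting $\bar w(z_{1},-z_{2})=w(z_{1},z_{2})$. The viscosity Neumann condition is the one obtained from Lemma~\ref{lem. limit description} (5). I would show that $\bar w$ is harmonic across $\{z_{2}=0\}$ in the viscosity sense, and this is the main obstacle. The argument I would try first uses comparison. Solve the Dirichlet problem for the Laplacian in a ball $B_{r}(z^{0})$ centred on the axis, with boundary data $\bar w$. The solution $h$ is even by uniqueness and symmetry, so $\partial_{2}h=0$ on the axis.

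Now compare $w$ and $h$ in the half-ball. Suppose $w-h-\eta z_{2}^{2}+\eta'$ attained a positive maximum at an interior or axis point. A small quadratic perturbation $\pm\tau(z_{2}-cz_{2}^{2})$ gives a strict test function with nonzero normal derivative on the axis. Such a test function contradicts the viscosity Neumann inequality of Lemma~\ref{lem. limit description} (5) at axis contacts, and it contradicts the viscosity harmonicity of part (4) at interior contacts. This is the standard uniqueness argument for viscosity solutions with oblique or Neumann data. Letting the perturbation parameters tend to $0$ gives $w=h$ in the half-ball. Hence $\bar w$ is a classical harmonic function in a full neighborhood of the axis, and by this comparison argument it is harmonic in the full ellipse $E$.

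With harmonicity in hand, the interior estimates for harmonic functions give
\begin{equation*}
\|\bar w\|_{C^{2}(B_{3/4})}\le C\|\bar w\|_{L^{\infty}}\le C,
\end{equation*}
since $B_{3/4}$ is compactly contained in $E$ (in $z$-coordinates, using $\mu\le1$). Undoing the scaling $z_{1}=y_{1}/\mu$ only multiplies derivatives by powers of $\mu^{-1}$, which is a uniform constant. This yields $\|v\|_{C^{1,1}(B_{1/2}^{+})}\le K$.

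For the affine approximation, take $l(y)=v(0)+\partial_{1}v(0)\,y_{1}$. Evenness of $\bar w$ forces $\partial_{2}v(0)=0$, so $l$ is independent of $y_{2}$ and in particular even in $y_{2}$. Taylor's theorem with the $C^{2}$ bound gives $|v-l|\le K|y|^{2}$ in $B_{1/2}^{+}$. On $B_{1}^{+}\setminus B_{1/2}^{+}$ the bound is trivial: there $|v-l|\le 1+\|l\|_{L^{\infty}}\le 4(1+K)|y|^{2}$. Enlarging $K$ therefore gives the estimate on all of $B_{1}^{+}$, together with $\|l\|_{L^{\infty}(B_{1})}\le K$.
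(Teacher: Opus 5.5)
Your proposal is correct and is essentially the paper's argument: dilate in $y_{1}$ to turn $\mu^{2}\partial_{11}+\partial_{22}$ into the Laplacian, then use classical harmonic estimates. Your even reflection is the mechanism behind the Neumann Green representation the paper cites, and you also justify upgrading the viscosity Neumann condition from Lemma~\ref{lem. limit description} to a classical one, which the paper leaves implicit.

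Two small repairs are needed:
\begin{enumerate}
\item The image of $B_{1/2}^{+}$ under $y\mapsto(y_{1}/\mu,y_{2})$ is the half-ellipse $\frac{1}{2}E^{+}$, which is not contained in $B_{3/4}$ once $\mu<2/3$. Apply the interior estimates on $\frac{1}{2}E$ instead; it stays at distance at least $\frac{1}{2}$ from $\partial E$ because $E$ is convex and contains $B_{1}$.
\item In the comparison step the signs are off: with your $-\eta z_{2}^{2}$, an interior maximum gives no contradiction. Perturb by some $\zeta$ with $\Delta\zeta<0$ in the half-ball and $\partial_{2}\zeta<0$ on the axis, e.g.\ $\zeta=-z_{2}-z_{2}^{2}$. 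Then, if $\sup(w-h)>0$ and $\tau>0$ is small, the maximum of $w-h-\tau\zeta$ can lie neither on the curved boundary, nor in the interior, nor on the axis.
\end{enumerate}
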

\begin{proof}
    After a coordinate dilation in the $x_{1}$-direction, $v$ becomes harmonic in the new coordinate. Then, the desired estimate follows from the Green representation formula.
\end{proof}
\begin{proof}[Proof of Theorem~\ref{thm. Schauder pure}]
    The proof is based on an contradiction argument. Let $\alpha\in(0,1)$ be arbitrary and fixed. Let $K$ be the uniform constant produced by Lemma~\ref{lem. C^2 estimate for harmonic functions}. Then, let us fix a $\rho$ as follows:
    \begin{equation*}
        \rho=\min\Big\{\big(\frac{1}{2K}\big)^{\frac{1}{1-\alpha}},1\Big\}.
    \end{equation*}
    
    \textbf{Step 1: A sequence of counter examples.} Suppose that no matter how small $\overline{\epsilon}$ is chosen, Theorem~\ref{thm. Schauder pure} always fails. Then, there exists a sequence of triples $\{(\epsilon_{k},R_{k},\psi_{k})\}_{k=1}^{\infty}$, such that $\epsilon_{k}\to0$, $R_{k}\to\infty$ and that
    \begin{equation*}
        \psi_{k}\mbox{ solves \eqref{eq. main simple} with }\Psi(x_{2}-\epsilon_{k}R_{k})\leq\psi(x)\leq\Psi(x_{2}+\epsilon_{k}R_{k})\quad\mbox{in }B_{R_{k}}.
    \end{equation*}
    But, there does not exist $(\vec{\nu},a,b)$ satisfying \eqref{eq. Schauder pure result line 1} and \eqref{eq. Schauder pure result line 2}.

    \textbf{Step 2: Analysis of $v_{k}$.} Define $v_{k}(y)$ and $\Omega_{k}$ as in Lemma~\ref{lem. limit description}. By Lemma~\ref{lem. limit description}, we know that there exists a function $v$ satisfying \eqref{eq. harmonic and even-symmetric problem}, such that up to a subsequence (still labeled by $k$), one has
    \begin{equation*}
        \|v_{k}-v\|_{L^{\infty}(B_{1/2}\cap\Omega_{k})}\to0,\quad\mbox{and }B_{1/2}\cap\{x_{2}\geq\epsilon_{k}\}\subseteq B_{1/2}\cap\Omega_{k}\subseteq B_{1/2}\cap\{x_{2}\geq-\epsilon_{k}\}.
    \end{equation*}
    Here, we have evenly extended $v$ to $B_{1}^{-}$, such that $\|v_{k}-v\|_{L^{\infty}(B_{1/2}\cap\Omega_{k})}$ is well-defined. By Lemma~\ref{lem. C^2 estimate for harmonic functions}, there exists an affine function
    \begin{equation*}
        l(y)=l_{0}+b\cdot y_{1}\mbox{ with }|l_{0}|,|b|\leq K,
    \end{equation*}
    such that
    \begin{equation*}
        |v_{k}(y)-l(y)|\leq|v(y)-l(y)|+|v_{k}(y)-v(y)|\leq K|y|^{2}+o(1)\quad\mbox{in }B_{1/2}\cap\Omega_{k}.
    \end{equation*}

    \textbf{Step 3: Analysis of $\psi_{k}$.} By the definition of $v_{k}$, one has
    \begin{equation*}
        \Psi\Big(x_{2}+\epsilon_{k}R_{k}\cdot\big(l(\frac{x}{R})-K\rho^{2}\big)\Big)\leq\psi_{k}(x)\leq\Psi\Big(x_{2}+\epsilon_{k}R_{k}\cdot\big(l(\frac{x}{R})+K\rho^{2}\big)\Big)\mbox{ in }B_{\rho R_{k}}\cap\{\psi_{k}>0\}.
    \end{equation*}
    Let
    \begin{equation*}
        \vec{\nu}=\frac{\vec{e}_{2}+\epsilon_{k}b\vec{e}_{1}}{|\vec{e}_{2}+\epsilon_{k}b\vec{e}_{1}|},\quad\left\{\begin{aligned}
            a=&l_{0}\epsilon_{k}R_{k}-(\frac{\epsilon_{k}\cdot K^{2}}{2}+K\rho)\cdot\rho\epsilon_{k}R_{k},\\
            b=&l_{0}\epsilon_{k}R_{k}+(\frac{\epsilon_{k}\cdot K^{2}}{2}+K\rho)\cdot\rho\epsilon_{k}R_{k}.
        \end{aligned}\right.
    \end{equation*}
    Using the fact
    \begin{equation*}
        \Big|\big(x_{2}+\epsilon_{k}R_{k}\cdot l(\frac{x}{R_{k}})\big)-\big(l_{0}\epsilon_{k}R_{k}+x\cdot\vec{\nu}\big)\Big|=|x\cdot\vec{\nu}|\cdot\Big||\vec{e}_{2}+\epsilon_{k}b\vec{e}_{1}|-1\Big|\leq\frac{\epsilon_{k}^{2}\cdot|b|^{2}}{2}\cdot|x|,
    \end{equation*}
    one gets that
    \begin{equation*}
        \Big|\big(x_{2}+\epsilon_{k}R_{k}\cdot l(\frac{x}{R_{k}})\big)-\big(l_{0}\epsilon_{k}R_{k}+x\cdot\vec{\nu}\big)\Big|\leq\frac{\epsilon_{k}\cdot K^{2}}{2}\rho\epsilon_{k}R_{k}\mbox{ in }B_{\rho R_{k}}.
    \end{equation*}
    Therefore, one has
    \begin{equation*}
        \Psi(x\cdot\vec{\nu}+a)\leq\psi_{k}(x)\leq\Psi(x\cdot\vec{\nu}+b)\quad\mbox{in }B_{\rho R_{k}}\cap\{\psi_{k}>0\}.
    \end{equation*}
    
    We claim that indeed
    \begin{equation}\label{eq. claim of Schauder from positive set to contact set}
        \Psi(x\cdot\vec{\nu}+a)\leq\psi_{k}(x)\leq\Psi(x\cdot\vec{\nu}+b)\quad\mbox{in }B_{\rho R_{k}}.
    \end{equation}
    The proof of \eqref{eq. claim of Schauder from positive set to contact set} is similar to that of \eqref{eq. claim lower bound extends to contact region}. In fact, the second inequality of \eqref{eq. claim of Schauder from positive set to contact set} is clearly still valid even if $\psi_{k}(x)=0$, so the only possibility that \eqref{eq. claim of Schauder from positive set to contact set} can go wrong is that there exists a point $z\in B_{\rho R_{k}}$, such that
    \begin{equation*}
        \psi_{k}(z)=0\quad\mbox{and }z\cdot\vec{\nu}+a>0.
    \end{equation*}
    The assumption $\psi_{k}(z)=0$ together with the second inequality of \eqref{eq. claim of Schauder from positive set to contact set} implies that $z\cdot\vec{\nu}+b\leq0$. Then, there exists a unit vector $\vec{e}$ with $\angle{(\vec{\nu},\vec{e})}<\frac{\pi}{2}$, such that the ray $z+t\vec{e}$ with $t\geq0$ enters the region where $\psi_{k}>0$ for the first time prior to its leaving $B_{\rho R_{k}}$. Let
    \begin{equation*}
        \overline{t}:=\sup\Big\{\tau\geq0:\psi_{k}(z+t\vec{e})=0\mbox{ for all }0\leq t\leq\tau\Big\},
    \end{equation*}
    then one has that $\displaystyle\lim_{t\to\overline{t}+}\psi_{k}(z+t\vec{e})=0$, while
    \begin{equation*}
        \langle z+\overline{t}\vec{e},\nu\rangle+a>z\cdot\vec{\nu}+a>0,
    \end{equation*}
    contradicting that $\Psi(x\cdot\vec{\nu}+a)\leq\psi_{k}(x)$ in $B_{\rho R_{k}}\cap\{\psi_{k}>0\}$. This verifies \eqref{eq. claim of Schauder from positive set to contact set}.

    \textbf{Step 4: Derivation of a contradiction.} By the choice of $(\vec{\nu},a,b)$, we see that $|\vec{\nu}-\vec{e}_{2}|\leq b\cdot\epsilon_{k}$. Moreover, by the choice of $\rho$, one has $K\rho\leq\frac{1}{2}\rho^{\alpha}$. For sufficiently large $k$, $\epsilon_{k}$ is so small that $\epsilon_{k}\cdot K^{2}\leq\rho^{\alpha}$, so we have
    \begin{equation*}
        0\leq(\frac{\epsilon_{k}\cdot K^{2}}{2}+K\rho)\cdot\rho\epsilon_{k}R_{k}\leq\rho^{1+\alpha}\epsilon_{k}R_{k}.
    \end{equation*}
    Then, we have verified that \eqref{eq. claim of Schauder from positive set to contact set} holds for $\psi_{k}$, with $(\vec{\nu},a,b)$ satisfying \eqref{eq. Schauder pure result line 1}. This violates our assumption in \textbf{Step 1} that $\{(\epsilon_{k},R_{k},\psi_{k})\}_{k=1}^{\infty}$ is a sequence of counter examples. Hence, we have finished the proof of Theorem~\ref{thm. Schauder pure} via the a priori method.
\end{proof}
\subsection{Schauder estimate for the cavity problem}
In this subsection, we prove the first main result (Theorem~\ref{thm. Schauder mixed}). Similar to the proof of Theorem~\ref{thm. Schauder mixed}, we also need a discription of the blow-down limit, see below.
\begin{lemma}\label{lem. blow-down discreption mixed}
    Assume that $\epsilon_{k}\to0$ and $\epsilon_{k}R_{k}\to\infty$. Assume that
    \begin{equation*}
        \left\{\begin{aligned}
            &\nabla\cdot\Big(g(|\nabla\psi|^{2},\psi)\nabla\psi\Big)=\frac{\mathcal{B}'(\psi)}{g(|\nabla\psi|^{2},\psi)}\mbox{ and }\psi>0&\mbox{in }&\Gamma^{+}\cap[-R_{k},R_{k}]^{2},\\
            &\psi=0&\mbox{on }&\Gamma\cap[-R_{k},R_{k}]^{2},\\
            &|\nabla\psi|=\Lambda&\mbox{on }&\{x_{2}=h(x_{1})\}\cap[-R_{k},R_{k}]^{2},\\
            &\Psi(x_{2}-\epsilon_{k}R_{k})\leq\psi(x)\leq\Psi(x_{2})&\mbox{in }&[-R_{k},R_{k}]^{2}.
        \end{aligned}\right.
    \end{equation*}
    Denote $\Omega_{k}:=[-1,1\big]^{2}\cap\{\psi_{k}(R_{k}y)>0\}$ and denote $v_{k}:\Omega_{k}\to\mathbb{R}$ as
    \begin{equation*}
        v_{k}(y)=\frac{\Psi^{-1}(\psi_{k}(x))-x_{2}}{\epsilon_{k}R_{k}},\quad\mbox{where }x=R_{k}y.
    \end{equation*}
    Then, there exists a subsequence of $\{v_{k}\}$ (still labeled by $k$) and a function $v:[-\frac{1}{2},\frac{1}{2}]\times[0,\frac{1}{2}]\to\mathbb{R}$, such that the followings hold:
    \begin{itemize}
        \item[(1)] For every $k$, one has $[-1,1]\times[\epsilon_{k},1]\subseteq\Omega_{k}\subseteq[-1,1]\times[-\epsilon_{k},1]$.
        \item[(2)] As $k\to\infty$, one has
        \begin{equation*}
            \lim_{k\to\infty}\|v_{k}-v\|_{L^{\infty}([-\frac{1}{2},\frac{1}{2}]\times[0,\frac{1}{2}]\cap\Omega_{k})}=0.
        \end{equation*}
        Besides, for all fixed $\epsilon>0$, one has
        \begin{equation*}
            \lim_{k\to\infty}\|v_{k}-v\|_{L^{\infty}([-\frac{1}{2},\frac{1}{2}]\times[\epsilon,\frac{1}{2}])}=0.
        \end{equation*}
        \item[(3)] For $\alpha$ being the constant in Corollary~\ref{cor. pure boundary Holder}, one has $v\in C^{\alpha}([-\frac{1}{2},\frac{1}{2}]\times[0,\frac{1}{2}])$;
    \end{itemize}
    Moreover, in the viscosity sense (similar to that in Lemma~\ref{lem. limit description}), the limiting function $v$ satisfies
    \begin{equation}\label{eq. mixed limit fractional problem}
        \left\{\begin{aligned}
            &\mu^{2}v_{11}+v_{22}=0\mbox{ and }-1\leq v(y)\leq0&\mbox{in }&B_{1}^{+},\\
            &v=0&\mbox{on }&[-1,0]\times\{0\},\\
            &\partial_{2}v=0&\mbox{on }&[0,1]\times\{0\}.
        \end{aligned}\right.
    \end{equation}
    See definition of $\mu$ in \eqref{eq. vertical mach deform}.
\end{lemma}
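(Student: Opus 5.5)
The plan follows the proof of Lemma~\ref{lem. limit description} closely, adding one new ingredient: the Dirichlet condition on the negative axis.

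Parts (1)--(3) will follow from Corollary~\ref{cor. mixed boundary Holder}. Since $\epsilon_k\to0$ and $\epsilon_kR_k\to\infty$, the error $d(\epsilon_k,R_k)\to0$, so the $v_k$ are uniformly bounded ($-1\le v_k\le0$ by the assumption $\Psi(x_2-\epsilon_kR_k)\le\psi_k\le\Psi(x_2)$). They are also equicontinuous up to the vanishing scale $d(\epsilon_k,R_k)$. A diagonal Arzel\`a--Ascoli argument, applied on $[-\frac12,\frac12]\times[\epsilon,\frac12]$ for a sequence $\epsilon\downarrow0$ and then extended to the strip $\{|y_2|\le\epsilon_k\}$ through the uniform H\"older modulus, gives a subsequence converging uniformly to some $v\in C^\alpha$. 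The domain inclusion (1) holds because $\Psi(x_2-\epsilon_kR_k)>0$ for $x_2>\epsilon_kR_k$, and $\psi_k\le\Psi(x_2)=0$ for $x_2\le0$ wherever $\psi_k$ is defined. The bound $-1\le v\le0$ passes to the limit.

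The equation $\mu^2v_{11}+v_{22}=0$ in the interior will be proved verbatim as in part (4) of Lemma~\ref{lem. limit description}. That argument is purely interior: touching quadratics, Remark~\ref{rmk. limiting mach number equation}, and Lemma~\ref{lem. when does b converge to zero?}. It uses only $y^*_2>0$, so it does not see the nose.

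For the Dirichlet condition on $[-1,0]\times\{0\}$ (strictly, on $[-\frac12,0]$ after restriction), I will use estimate \eqref{eq. uniform vanishing on -x axis} of Corollary~\ref{cor. mixed boundary Holder}. It gives $|v_k(y)|\le C(\mathrm{dist}(y,\mathbb{R}_-\times\{0\})^\alpha+d(\epsilon_k,R_k)^\alpha)$, and passing to the limit yields $|v(y)|\le C\,\mathrm{dist}(y,\mathbb{R}_-\times\{0\})^\alpha$, so $v=0$ there continuously. This is also the reason for requiring $\epsilon_kR_k\to\infty$ rather than just $R_k\to\infty$. The nose has fixed size $N(0)$ and length $L$, and after rescaling by $R_k$ it collapses to the origin. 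Its vertical size relative to the flatness, namely $N(0)/(\epsilon_kR_k)$, tends to $0$, so it leaves no trace on the limit.

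For the Neumann condition on $(0,\frac12)\times\{0\}$, fix $y^*$ with $y^*_1>0$. For $k$ large, the part of $\partial\Omega_k$ near $R_ky^*$ lies in $\{x_1>0\}$ and consists only of free-boundary points, where $|\nabla\psi_k|=\Lambda$. This holds because $\Gamma\cap\{x_1\le0\}$ sits at distance about $y^*_1R_k$ away. Hence Steps 1--6 of the proof of part (5) of Lemma~\ref{lem. limit description} apply in a ball $B_{\rho R_k}(R_ky^*)$ with $\rho<y_1^*$. There are two points to check. First, the ray argument of Step 3 only uses $\psi_k\ge\Psi(x_2-\epsilon_kR_k)$, which still holds. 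Second, Corollary~\ref{cor. application B of maximum principle} needs only the free-boundary condition inside the ball, which we have.

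I expect the main obstacle to be a boundary point issue rather than this transplantation. The origin, and in fact the whole corner region where the Dirichlet and Neumann parts meet, must be excluded from both test-function arguments. The viscosity formulation \eqref{eq. mixed limit fractional problem} should therefore be read as: Dirichlet data on $[-1,0)$, Neumann data on $(0,1]$, and mere continuity at $0$, which the H\"older bound supplies. A second possible subtlety is one-sided touching in the Neumann step when $v_k$ is only defined on $\Omega_k$, which may dip below $y_2=0$. I would handle this exactly as in \eqref{eq. claim lower bound extends to contact region}.
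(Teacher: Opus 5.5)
Your proposal is correct and follows the paper's proof essentially step by step. The paper likewise gets (1)--(3) from Corollary~\ref{cor. mixed boundary Holder}, the interior equation by the argument of Lemma~\ref{lem. limit description}~(4), the Neumann condition by rerunning Lemma~\ref{lem. limit description}~(5) in the first quadrant where $\partial\{\psi>0\}$ consists only of free-boundary points, and the Dirichlet condition from \eqref{eq. uniform vanishing on -x axis}. Your extra care (a Neumann ball of radius $\rho<y_1^*$ that avoids the nose, and handling the origin only through continuity) makes explicit what the paper leaves implicit.
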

\begin{proof}
    Using Corollary~\ref{cor. mixed boundary Holder}, the proof of Lemma~\ref{lem. blow-down discreption mixed} (1)-(3) is the same as Lemma~\ref{lem. limit description}. Using the same argument as in Lemma~\ref{lem. limit description} (4), we can prove that $(\mu^{2}\partial_{11}+\partial_{22})v(y)=0$ in $B_{1}^{+}$.
    
    Moreover, since in the first quadrant, $\partial\{\psi>0\}$ consists solely of free boundary points (i.e. $|\nabla\psi|=\Lambda$), applying the same argument as in Lemma~\ref{lem. limit description} (5) gives that $\partial_{2}v=0$ on $[0,1]\times\{0\}$.
    
    Finally, applying the estimate \eqref{eq. uniform vanishing on -x axis} implies that $v=0$ on $[-1,0]\times\{0\}$. Then, we have finished the proof of Lemma~\ref{lem. blow-down discreption mixed}.
\end{proof}
We need the following boundary Harnack principle.
\begin{lemma}\label{lem. BHP slit}
    Let $B_{1}^{+}\subseteq\mathbb{R}^{2}$ be the two-dimensional unit half ball. There exists a uniform constant $K$ such that the following fact holds: Assume that $v\in C(\overline{B_{1}^{+}})$ satisfies
    \begin{equation*}
        \left\{\begin{aligned}
            &(\mu^{2}\partial_{11}+\partial_{22})v(y)=0\mbox{ and }-1\leq v(y)\leq0&\mbox{in }&B_{1}^{+},\\
            &v=0&\mbox{on }&[-1,0]\times\{0\},\\
            &\partial_{2}v=0&\mbox{on }&[0,1]\times\{0\}.
        \end{aligned}\right.
    \end{equation*}
    Then, either $v(y)\equiv0$ in $B_{1/2}^{+}$, or there exists a real number $-K\leq A<0$, such that
    \begin{equation*}
        \Big|\frac{v(y)}{A\cdot Re(\sqrt{y_{1}+i\mu y_{2}})}-1\Big|\leq K\cdot|y|\quad\mbox{in }B_{1/2}^{+}.
    \end{equation*}
    Here, the constant $\mu$ is given in \eqref{eq. vertical mach deform}.
\end{lemma}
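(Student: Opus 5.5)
The plan is to reduce Lemma~\ref{lem. BHP slit} to a statement about harmonic functions on a slit domain and then expand in explicit separated solutions. First rescale $y_{1}=\mu z_{1}$, $y_{2}=z_{2}$. Then $w(z)=v(\mu z_{1},z_{2})$ is harmonic in the half-ellipse $E^{+}=\{\mu^{2}z_{1}^{2}+z_{2}^{2}<1,\ z_{2}>0\}$. It vanishes on the negative part of the $z_{1}$-axis and has zero normal derivative on the positive part. Since $Re(\sqrt{\mu z_{1}+i\mu z_{2}})=\sqrt{\mu}\,Re(\sqrt{z_{1}+iz_{2}})$, the target profile becomes a multiple of $Re(\sqrt{\zeta})$, where $\zeta=z_{1}+iz_{2}$. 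Next reflect $w$ evenly across the positive $z_{1}$-axis; the Neumann condition allows this. The result is a harmonic function on the slit disc $D_{r}\setminus(-r,0]$ for some $r=r(\mu)>0$, vanishing continuously on the slit. Its boundedness $-1\le w\le0$ is preserved by the reflection.

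The key step is the conformal map $\xi=\sqrt{\zeta}$ (principal branch). It sends $D_{r}\setminus(-r,0]$ onto the half disc $\{|\xi|<\sqrt r,\ Re\,\xi>0\}$, and the slit goes to the imaginary axis. Hence $W(\xi)=w(\xi^{2})$ is a bounded harmonic function on this half disc, vanishing on the flat side. By odd reflection across $\{Re\,\xi=0\}$, $W$ extends to a harmonic function on the full disc $D_{\sqrt r}$ with $|W|\le1$. It is odd in $Re\,\xi$, and even in $Im\,\xi$ by the earlier reflection. Interior estimates on $D_{\sqrt r}$ give
\begin{equation*}
    W(\xi)=a_{1}\,Re\,\xi+O(|\xi|^{3}),
\end{equation*}
with $|a_{1}|\le C$ and $|W-a_{1}Re\,\xi|\le C|\xi|^{3}$ on $D_{\sqrt r/2}$. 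The even power $|\xi|^{2}$ term is absent: the quadratic harmonic polynomials odd in $Re\,\xi$ and even in $Im\,\xi$ vanish. Indeed $Re(\xi^{2})$ is even in $Re\,\xi$, and $Im(\xi^{2})=2\,Re\,\xi\,Im\,\xi$ is odd in $Im\,\xi$. Thus the next admissible term is $Re(\xi^{3})$. Since $W\le0$ on $Re\,\xi>0$, we have $a_{1}\le0$. If $a_{1}=0$, the same sign argument applied to the leading nonzero coefficient forces a contradiction unless $W\equiv0$. Here I would use that a nonzero odd/even harmonic function has leading term $c\,Re(\xi^{2m+1})$, which changes sign in the right half plane for $m\ge1$. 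Hence either $v\equiv0$, or $a_{1}<0$.

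In the case $a_{1}<0$ we still need the ratio bound $|W/(a_{1}Re\,\xi)-1|\le K|\xi|^{2}$. This is the main obstacle, because $Re\,\xi$ vanishes on the imaginary axis, so an additive $O(|\xi|^{3})$ error does not directly give a ratio bound. I would handle it by dividing by $Re\,\xi$. Since $W$ is odd in $Re\,\xi$, the function $W/Re\,\xi$ extends smoothly; writing $W=Re\,\xi\cdot F$ with $F$ real analytic via the series $\sum a_{2m+1}Re(\xi^{2m+1})$, each $Re(\xi^{2m+1})/Re\,\xi$ is a polynomial bounded by $C_{m}|\xi|^{2m}$. The coefficients are controlled by $|a_{n}|\le C\,r^{-n/2}$, and on the smaller disc this gives $|F-a_{1}|\le K|\xi|^{2}$.

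There is a second difficulty: the constant $K$ must be uniform, while $a_{1}$ could be tiny compared with the higher coefficients. Then $K|\xi|^{2}/|a_{1}|$ would not be uniform. I would fix this with a boundary Harnack/compactness argument in the spirit of \cite{DS16}. Nonpositivity of $W$ on the half disc, via a Harnack inequality for the positive harmonic function $-W/Re\,\xi$ (equivalently the Poisson kernel bound for positive harmonic functions vanishing on a flat boundary), gives $|a_{2m+1}|\le C\,r^{-m}|a_{1}|$. This yields $|F-a_{1}|\le K|a_{1}||\xi|^{2}$. Translating back, $|\xi|^{2}=|\zeta|\sim|y|$ and $a_{1}Re\,\xi=A\,Re(\sqrt{y_{1}+i\mu y_{2}})$ with $A=a_{1}/\sqrt{\mu}$ and $|A|\le K$. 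This gives the stated estimate on $B_{1/2}^{+}$ after adjusting constants depending only on $\mu$.
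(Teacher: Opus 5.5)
The gap is in your last sentence, ``this gives the stated estimate on $B_{1/2}^{+}$ after adjusting constants depending only on $\mu$.'' Everything you prove lives on a disc about the tip inside the stretched, reflected domain $\{\mu^2z_1^2+z_2^2<1\}\setminus\{z_2=0,\ z_1\le0\}$. Since $\mu\le1$ (the limiting Mach number is $\sqrt{1-\mu^2}$), that disc has radius at most $1$ in the $z$-variables, and you actually use only $|z|<r/4$. But $B_{1/2}^{+}$ becomes $\{\mu^2z_1^2+z_2^2<\frac14,\ z_2>0\}$, which reaches $|z_1|=\frac{1}{2\mu}$. For $\mu\le\frac12$, which the hypotheses do not exclude, points of $B_{1/2}^{+}$ near $(\pm\frac12,0)$ lie outside every disc on which your expansion is available. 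No change of constants fixes that. On the part of $B_{1/2}^{+}$ with $|y|\ge c(\mu)$ it suffices to show $|v|\le C|A|\,Re(\sqrt{y_1+i\mu y_2})$. Near the Dirichlet segment $[-\frac12,-c]\times\{0\}$, however, $Re(\sqrt{y_1+i\mu y_2})$ vanishes like $y_2$, so this is a genuine boundary Harnack statement. It needs two ingredients:
\begin{enumerate}
\item a Harnack-chain/Carleson estimate for the positive solution $-v$ (with Harnack up to the Neumann half-line after even reflection), giving $\sup_{B_{3/4}^{+}}|v|\le C|v(p_0)|\le C'|A|$ for a reference point $p_0$ near the tip;
\item the boundary gradient bound $|v|\le C\,y_2\sup|v|$ near the Dirichlet segment, obtained from odd reflection across it.
\end{enumerate}
This is exactly what the paper imports by stretching $y_1$ and quoting the slit-domain boundary Harnack principle \cite[Theorem 2.3]{DS16}.

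A secondary point: $-W/Re\,\xi$ is not harmonic in the plane; for example $Re(\xi^3)/Re\,\xi=(Re\,\xi)^2-3(Im\,\xi)^2$. So Harnack cannot be applied to it as you state. The coefficient bound you want does follow from the Poisson-coefficient argument in your parenthesis. For odd $n$ and $0<s<\sqrt r$ one has $-a_ns^n=\frac{2}{\pi}\int_{-\pi/2}^{\pi/2}(-W)(se^{i\theta})\cos(n\theta)\,d\theta$, with $-W\ge0$ there and $|\cos(n\theta)|\le n\cos\theta$ on $[-\frac{\pi}{2},\frac{\pi}{2}]$. Hence $|a_n|\le n\,s^{1-n}|a_1|$. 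Combined with $|Re(\xi^n)|\le n|\xi|^{n-1}|Re\,\xi|$, this gives $|W-a_1Re\,\xi|\le C|a_1|\,|\xi|^2|Re\,\xi|$ on $D_{s/2}$ with $C$ independent of $W$. That is your near-tip estimate with a uniform $K$.

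What is true is that $F=W/Re\,\xi$, viewed as a function of $(|x'|,Im\,\xi)$ with $x'\in\mathbb{R}^3$, is harmonic in $\mathbb{R}^4$. Indeed, with $x=Re\,\xi$ and $t=Im\,\xi$, $\Delta(xF)=x\,(F_{xx}+F_{tt}+\frac{2}{x}F_x)$. This $F$ is strictly negative on the whole image of the reflected domain, by the strong maximum principle and Hopf's lemma on the imaginary axis. Apply Harnack and interior derivative estimates to it on the compact image of $\overline{B_{1/2}^{+}}$, and use that $F$ is even in both variables so $\nabla F(0)=0$. This gives $|F-a_1|\le C|a_1|\,|\xi|^2$ on that whole set, which closes the gap above while keeping your explicit approach.
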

\begin{proof}
    Assume that $v\not\equiv0$ in $B_{1/2}^{+}$, then it follows from the strong maximum principle that $v<0$ in $B_{1}^{+}$. Then, after a dilation in the $y_{1}$-coordinate, the operator $(\mu^{2}\partial_{11}+\partial_{22})$ becomes the standard Laplacian in the new coordinate, which allows us to apply the boundary Harnack principle in a slit domain (see \cite[Theorem 2.3]{DS16}) to $-v$. Mapping back to the original $y$-coordinate gives the disired estimate.
\end{proof}
\begin{proof}[Proof of Theorem~\ref{thm. Schauder mixed}]
    The proof is similar to that of Theorem~\ref{thm. Schauder pure}. Let $\alpha\in(0,1)$ be arbitrary and fixed. Let $K$ be the uniform constant produced by Lemma~\ref{lem. BHP slit}. Then, let us fix a $\rho$ as follows:
    \begin{equation*}
        \rho=\min\Big\{\big(\frac{1}{2K}\big)^{\frac{1}{1-\alpha}},1\Big\}.
    \end{equation*}
    
    \textbf{Step 1: A sequence of counter examples.} Suppose that no matter how small $\overline{\epsilon}$ and how large $M$ is chosen, Theorem~\ref{thm. Schauder mixed} always fails. Then, there exists a sequence of triples $\{(\epsilon_{k},R_{k},\psi_{k})\}_{k=1}^{\infty}$, such that $\epsilon_{k}\to0$, $\epsilon_{k}R_{k}\to\infty$ and that
    \begin{equation*}
        \psi_{k}\mbox{ solves \eqref{eq. main simple} with }\Psi(x_{2}-\epsilon_{k}R_{k})\leq\psi(x)\leq\Psi(x_{2})\quad\mbox{in }B_{R_{k}}.
    \end{equation*}
    But, there does not exist $A_{R}$ satisfying the desired approximation estimate.

    \textbf{Step 2: Analysis of $v_{k}$.} Define $v_{k}(y)$ and $\Omega_{k}$ as in Lemma~\ref{lem. limit description}. By Lemma~\ref{lem. blow-down discreption mixed}, we know that there exists a function $v$ satisfying \eqref{eq. mixed limit fractional problem}, such that up to a subsequence (still labeled by $k$), one has
    \begin{equation*}
        \|v_{k}-v\|_{L^{\infty}(B_{1/2}\cap\Omega_{k})}\to0,\quad\mbox{and }B_{1/2}\cap\{x_{2}\geq\epsilon_{k}\}\subseteq B_{1/2}\cap\Omega_{k}\subseteq B_{1/2}^{+}.
    \end{equation*}
    By Lemma~\ref{lem. BHP slit}, either $v\equiv0$, or there exists sone $-K\leq A<0$ such that
    \begin{equation*}
        l(y)=l_{0}+b\cdot y_{1}\mbox{ with }|l_{0}|,|b|\leq K,
    \end{equation*}
    such that
    \begin{equation*}
        \Big|\frac{v(y)}{A\cdot Re(\sqrt{y_{1}+i\mu y_{2}})}-1\Big|\leq K\cdot|y|\quad\mbox{in }B_{1/2}^{+}.
    \end{equation*}
    Using the triangle inequality, we get that
    \begin{align*}
        |v_{k}(y)-A\cdot Re(\sqrt{y_{1}+i\mu y_{2}})|\leq&|v(y)-A\cdot Re(\sqrt{y_{1}+i\mu y_{2}})|+|v_{k}(y)-v(y)|\\
        \leq&K\cdot|A|\cdot Re(\sqrt{y_{1}+i\mu y_{2}})\cdot|y|+o(1)\quad\mbox{in }B_{1/2}\cap\Omega_{k}.
    \end{align*}
    This contradicts our assumption, since $o(1)\leq\delta$ for sufficiently large $k$.
\end{proof}
\section{A priori growth rate estimate of the cavity}\label{sec. growth rate}
\subsection{Non-degeneracy of the Schauder estimate}
In this subsection, when studying the growth rate of the cavity $h(x_{2})$, we would like to exclude the possibility that the leading coefficient $A_{R}$ in Theorem~\ref{thm. Schauder mixed} is too close to zero. Therefore, we give the following non-degeneracy estimate.
\begin{lemma}[Non-degeneracy]\label{lem. Non-degeneracy}
    Let $\psi$ be a fixed global solution satisfying \eqref{eq. cavity local problem}. Let
    \begin{equation*}
        M_{R}=\|\Psi^{-1}(\psi(x))-x_{2}\|_{L^{\infty}(B_{R}\cap\{\psi>0\})}.
    \end{equation*}
    Assume that
    \begin{equation*}
        M_{R}\to\infty\mbox{ and }\epsilon_{R}=\frac{M_{R}}{R}\to0,\quad\mbox{as }R\to\infty.
    \end{equation*}
    Then, there exist two uniform constant $\underline{a},\overline{\delta}>0$ and a function $\underline{R}(\delta)$, such that the following fact holds for all $0<\delta\leq\overline{\delta}$: For all $R\geq\underline{R}(\delta)$, let $-K\leq A_{R}\leq0$ be then coefficient produced by Theorem~\ref{thm. Schauder mixed} with $\epsilon=\epsilon_{R}$, such that \eqref{eq. Schauder mixed error} holds. Then, we must have $A_{R}\leq-\underline{a}<0$.
\end{lemma}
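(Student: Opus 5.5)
The plan is to argue by contradiction via a blow-down normalized by the actual sup of the deviation. Since $\epsilon_R=M_R/R$ is defined by the true $L^\infty$ norm, the rescaled function
\begin{equation*}
    v_R(y)=\frac{\Psi^{-1}(\psi(Ry))-Ry_{2}}{\epsilon_R R}=\frac{\Psi^{-1}(\psi(Ry))-Ry_{2}}{M_R}
\end{equation*}
satisfies $\sup_{B_1\cap\{\psi(Ry)>0\}}|v_R|=1$. This normalization in $B_1$ is the key quantity: it forbids the blow-down limit from vanishing identically. The hypotheses $M_R\to\infty$ and $\epsilon_R\to0$ are exactly $\epsilon_R R\to\infty$ and $\epsilon_R\to0$, so Lemma~\ref{lem. blow-down discreption mixed} and Theorem~\ref{thm. Schauder mixed} apply for large $R$.

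Suppose the conclusion fails. Then there are $\delta_k\to0$ (or a fixed small $\delta$) and $R_k\to\infty$ with $A_{R_k}\to0$. First I transfer the supremum information from $B_1$ to $B_{1/2}$, where the Schauder estimate lives. I would do this by comparing scales $R$ and $2R$: $M_R$ is monotone in $R$, so I want $M_{R/2}\geq c M_R$ for a uniform $c>0$, at least along a subsequence. This holds for all large $R$ unless $M$ jumps by a large factor between consecutive dyadic scales. To obtain it, I would apply Corollary~\ref{cor. mixed boundary Holder} at scale $R$. Its Hölder estimate, together with the vanishing \eqref{eq. uniform vanishing on -x axis} near the negative axis, plus the interior Harnack inequality (Lemma~\ref{lem. interior Harnack}) applied to $-v_R\geq0$, should show that if $|v_R|$ attains the value $1$ somewhere in $B_1$, then $|v_R|\geq c$ at the point $\frac14\vec e_2$. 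The Harnack chain must stay away from the Dirichlet part, which is why I use an interior point high above the axis. The same argument gives the bound within $B_{1/2}$, and hence $\sup_{B_{1/2}}|v_R|\geq c$.

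Next, with $\sup_{B_{1/2}}|v_{R_k}|\geq c$ in hand, I evaluate \eqref{eq. Schauder mixed error} at $y=\frac14\vec e_2$. This gives
\begin{equation*}
    |v_{R_k}(\tfrac14\vec e_2)|\leq |A_{R_k}|(1+K)\,\mathrm{Re}\sqrt{i\mu/4}+\delta_k\to 0,
\end{equation*}
which contradicts the lower bound $c$ once $\delta\leq\overline\delta:=c/4$ and $|A_R|$ is small. This fixes $\underline a$ in terms of $c,K,\mu$, and chooses $\underline R(\delta)$ so that Theorem~\ref{thm. Schauder mixed} applies (with $\epsilon_R\leq\overline\epsilon(\delta)$ and $R\geq M(\delta)/\epsilon_R$, i.e.\ $M_R\geq M(\delta)$).

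The main obstacle is the step that transfers the normalization $\sup|v_R|=1$ from the boundary layer to an interior point. The sup could be attained near the free boundary or near the fixed boundary, where $v_R$ is only controlled up to the error $d(\epsilon,R)^\alpha$. On the fixed boundary, \eqref{eq. uniform vanishing on -x axis} says $v_R$ is small near $\mathbb{R}_-\times\{0\}$, so the sup is not attained there, up to $o(1)$. Near the free boundary I would use the Harnack improvement from Theorem~\ref{thm. Harnack pure}/Theorem~\ref{thm. Harnack mixed} in reverse. If $-v_R\leq\eta$ at interior points of $B_{1/2}$, then Lemma~\ref{lem. maximum principle for forced decay} and Corollary~\ref{cor. application C of maximum principle} push the free boundary down. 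This forces $|v_R|\leq C\eta+o(1)$ on the smaller region. Combining this with the scale comparison $M_{R/2}$ versus $M_R$ yields the needed uniform $c$.
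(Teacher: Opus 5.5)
Your proposal has a genuine gap, located in the step you flag as the main obstacle. The uniform doubling bound $M_{R/2}\geq cM_R$, equivalently $\sup_{B_{1/2}}|v_R|\geq c$, is not an auxiliary lemma: it is essentially equivalent to the non-degeneracy itself. Given it, the conclusion is one line. Conversely, non-degeneracy gives $|v_R(\rho\vec e_1)|\gtrsim\underline a\sqrt\rho$.

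The tools you cite cannot produce this bound at a single scale. Corollary~\ref{cor. mixed boundary Holder}, Lemma~\ref{lem. interior Harnack} and the maximum principles only give information on a box strictly smaller than the one where the flatness $M_R$ is assumed. Nothing forces the normalizing supremum $\sup_{B_1}|v_R|=1$ to be attained there. For a positive solution $-v_R$ it is generally attained on $\partial B_1$; for the expected profile $A\,\mathrm{Re}\sqrt{y_1+i\mu y_2}$ it sits at the free-boundary corner near $\vec e_1$.

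To reach that point you must work at scale $2R$. There every constant and additive error ($d(\epsilon,R)^\alpha$, $\delta$) is relative to $M_{2R}$, not $M_R$. If $M_{2R}/M_R$ is huge, which is exactly the jump scenario you say must be excluded, these errors swamp $M_R$.

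Moreover, Lemma~\ref{lem. maximum principle for forced decay} and Corollary~\ref{cor. application C of maximum principle} only improve the bound by a fixed fraction (of type $1-\kappa/64$). So they do not give ``$-v_R\le\eta$ inside implies $|v_R|\le C\eta$ near the free boundary''. Even a compactness version of such a boundary Harnack only transfers doubling from scale $2R$ down to scale $R$. The argument is therefore circular. Note also that you use $\epsilon_R\to0$ only to make Theorem~\ref{thm. Schauder mixed} applicable. Without exploiting sublinear growth, nothing rules out the degenerate scenario.

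The missing idea is to let the degeneracy propagate \emph{upward} in scale and then contradict the growth hypothesis. Fix
\begin{equation*}
\rho=\min\{\tfrac1{4K},\tfrac1{10}\},\qquad \underline a=\tfrac25\rho^{3/2},\qquad \overline\delta=\tfrac1{40}\rho^{5/2}.
\end{equation*}
Suppose $|A_R|\le\underline a$ at one scale. The Schauder estimate \eqref{eq. Schauder mixed error} at scale $R$ gives $M_{\rho R}/M_R=\|v_R\|_{L^\infty(B_\rho)}\le\frac54|A_R|\sqrt\rho+\delta\le\rho^2$.

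Now rewrite the same ratio at the next scale as $\|v_{R/\rho}\|_{L^\infty(B_{\rho^2})}/\|v_{R/\rho}\|_{L^\infty(B_\rho)}$. Apply \eqref{eq. Schauder mixed error} at scale $R/\rho$: a lower bound at the point $\rho^2\vec e_2$ and an upper bound on $B_\rho$. This forces $|A_{R/\rho}|\le\underline a$ again.

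By induction, $M_{\rho^{-k}R}\ge\rho^{-2k}M_R$, hence $\epsilon_{\rho^{-k}R}\ge\rho^{-k}\epsilon_R\to\infty$, contradicting $\epsilon_R\to0$. This route uses the Schauder expansion only inside $B_{1/2}$ at each scale. The normalization $\|v_R\|_{L^\infty(B_1)}=1$ serves only to link consecutive values of $M$, so the boundary-layer difficulty you were fighting never arises.
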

\begin{proof}
    \textbf{Step 1: Choice of constants.} We first fix
    \begin{equation}\label{eq. non-degeneracy constant requirements}
        \rho=\min\Big\{\frac{1}{4K},\frac{1}{10}\Big\},\quad\underline{a}=\frac{2}{5}\rho^{3/2},\quad\overline{\delta}=\frac{1}{40}\rho^{5/2}.
    \end{equation}
    Furthermore, we require that $\underline{R}(\delta)$ is sufficiently large such that
    \begin{equation*}
        \epsilon_{R}\leq\frac{\rho^{100}}{100}\mbox{ and the estimate in Theorem~\ref{thm. Schauder mixed} holds},\quad\mbox{for all }R\geq\underline{R}(\delta).
    \end{equation*}
    We claim that
    \begin{equation}\label{eq. Claim of non-degenerate}
        A_{R}\leq-\underline{a},\quad\mbox{for all }R\geq\underline{R}(\delta).
    \end{equation}

    \textbf{Step 2: Outline of the induction.} We prove \eqref{eq. Claim of non-degenerate} by contradiction: suppose that \eqref{eq. Claim of non-degenerate} fails, i.e. there exists some $R\geq\underline{R}(\delta)$ such that $-\underline{a}\leq A_{R}\leq0$. We would like to inductively show the following two statements for all $k\geq0$:
    \begin{equation*}
        (\mathcal{A}_{k}): -\underline{a}\leq A_{\rho^{-k}R}\leq0,\qquad(\mathcal{B}_{k}): \frac{M_{\rho^{1-k}R}}{M_{\rho^{-k}R}}\leq\rho^{2}.
    \end{equation*}
    Here, the contradiction assumption means that $(\mathcal{A}_{0})$ is correct. Then our approach is to show $(\mathcal{A}_{k})\Rightarrow(\mathcal{B}_{k})$ and $(\mathcal{B}_{k})\Rightarrow(\mathcal{A}_{k+1})$ for all $k\geq0$.

    To simplify the writing later on, we make the following notations:
    \begin{equation*}
        \widetilde{v}_{k}:=v_{\rho^{-k}R},\quad\Omega_{k}:=B_{1}\cap\{\psi(\rho^{-k}Ry)>0\}.
    \end{equation*}

    \textbf{Step 3: Proof of $(\mathcal{A}_{k})\Rightarrow(\mathcal{B}_{k})$ for $k\geq0$.} Observe that
    \begin{equation*}
        \frac{M_{\rho^{1-k}R}}{M_{\rho^{-k}R}}=\|\widetilde{v}_{k}\|_{L^{\infty}(B_{\rho}\cap\{\psi(\rho^{-k}Ry)>0\})}.
    \end{equation*}
    Since $\rho^{-k}R\geq R\geq\underline{R}(\delta)$, the estimate \eqref{eq. Schauder mixed error} applies to $\widetilde{v}_{k}$. Then, applying the triangle inequality yields that
    \begin{equation}\label{eq. denominator bound in iteration}
        \|\widetilde{v}_{k}\|_{L^{\infty}(B_{\rho}\cap\Omega_{k})}\leq|A_{\rho^{-k}R}|\sqrt{\rho}+K\cdot|A_{\rho^{-k}R}|\cdot\rho^{3/2}+\delta\leq\frac{5}{4}|A_{\rho^{-k}R}|\sqrt{\rho}+\delta.
    \end{equation}
    By the assumption $(\mathcal{A}_{k})$ and by \eqref{eq. non-degeneracy constant requirements}, one has
    \begin{equation*}
        \frac{M_{\rho^{1-k}R}}{M_{\rho^{-k}R}}\leq\frac{5}{4}\cdot\underline{a}\cdot\sqrt{\rho}+\delta\leq\rho^{2}.
    \end{equation*}
    
    \textbf{Step 4: Proof of $(\mathcal{B}_{k})\Rightarrow(\mathcal{A}_{k+1})$ for $k\geq0$.} If $(\mathcal{B}_{k})$ is true, then
    \begin{equation*}
        \rho^{2}\geq\frac{M_{\rho^{1-k}R}}{M_{\rho^{-k}R}}=\frac{\|\widetilde{v}_{k+1}\|_{L^{\infty}(B_{\rho^{2}}\cap\Omega_{k+1})}}{\|\widetilde{v}_{k+1}\|_{L^{\infty}(B_{\rho}\cap\Omega_{k+1})}}.
    \end{equation*}
    Since $\rho^{-k}R\geq R\geq\underline{R}(\delta)$, the estimate \eqref{eq. Schauder mixed error} applies to $\widetilde{v}_{k+1}$. The numerator is bounded from below by:
    \begin{align*}
        \|\widetilde{v}_{k+1}\|_{L^{\infty}(B_{\rho^{2}}\cap\Omega_{k+1})}\geq&\big|\widetilde{v}_{k+1}(\rho^{2}\vec{e}_{2})\big|\geq|A_{\rho^{-k-1}R}|\cdot\sqrt{\frac{\rho^{2}}{2}}-K\cdot|A_{\rho^{-k-1}R}|\cdot|\rho^{2}|^{3/2}-\delta\\
        =&\Big(\frac{1}{\sqrt{2}}-K\rho^{2}\Big)\cdot|A_{\rho^{-k-1}R}|\cdot\rho-\delta\geq\frac{1}{4}|A_{\rho^{-k-1}R}|\cdot\rho-\delta.
    \end{align*}
    On the other hand, similar to \eqref{eq. denominator bound in iteration}, the denominator is bounded from above by:
    \begin{equation*}
        \|\widetilde{v}_{k+1}\|_{L^{\infty}(B_{\rho}\cap\Omega_{k+1})}\leq\frac{5}{4}\cdot|A_{\rho^{-k-1}R}|\cdot\sqrt{\rho}+\delta.
    \end{equation*}
    Combining all estimates above and using $\delta\leq\overline{\delta}=\frac{1}{40}\rho^{5/2}$ yields that:
    \begin{equation*}
        \frac{1}{4}|A_{\rho^{-k-1}R}|\cdot\rho\leq\frac{5}{4}\cdot|A_{\rho^{-k-1}R}|\cdot\rho^{5/2}+(1+\rho^{2})\delta\leq\frac{1}{8}|A_{\rho^{-k-1}R}|\cdot\rho+\frac{1}{20}\rho^{5/2}.
    \end{equation*}
    Therefore,
    \begin{equation*}
        -A_{\rho^{-k-1}R}=|A_{\rho^{-k-1}R}|\leq\frac{2}{5}\rho^{3/2}=\underline{a},
    \end{equation*}
    and thus $(\mathcal{A}_{k+1})$ is verified.
    
    \textbf{Step 5: Ending statement.} Now, we have proven the correctness of $(\mathcal{A}_{k})$'s and $(\mathcal{B}_{k})$'s. However, we infer from $(\mathcal{B}_{k})$'s that $M_{\rho^{-k}R}\geq\rho^{-2k}M_{R}$ for all $k\geq0$. In other words,
    \begin{equation*}
        \epsilon_{\rho^{-k}R}=\frac{M_{\rho^{-k}R}}{\rho^{-k}R}\geq\rho^{-k}\cdot\frac{M_{R}}{R}\to\infty,\quad\mbox{as }k\to\infty.
    \end{equation*}
    This violates the assumption that $\displaystyle\lim_{R\to\infty}\epsilon_{R}\to0$. Therefore, we have verified \eqref{eq. Claim of non-degenerate} and thus have finished the proof of Lemma~\ref{lem. Non-degeneracy}.
\end{proof}
\subsection{Proof of the a priori growth rate estimate}
\begin{proof}[Proof of Theorem~\ref{thm. growth rate of free boundary}]
    Since $\psi$ is asymptotically flat, there exists a function $\epsilon_{R}$ with $\lim_{R\to\infty}\epsilon_{R}\to0$, such that \eqref{eq. cavity main problem} holds. In \textbf{Step 1-4}, we prove Theorem~\ref{thm. growth rate of free boundary} under the additional assumption that $\displaystyle\limsup_{R\to\infty}\epsilon_{R}R=+\infty$ (akin to Savin's setting). Later, in \textbf{Step 5-6}, under the assumption that $h(x_{1})$ is strictly increasing, we rule out the possibility that $\displaystyle\limsup_{R\to\infty}\epsilon_{R}R<+\infty$ (akin to Gibbons' setting).

    \textbf{Step 1: Choice of constants.} Let $\underline{a},\overline{\delta}>0$ and $\underline{R}(\delta)$ be the constants and the function produced by Lemma~\ref{lem. Non-degeneracy}. For every given $\epsilon>0$, we choose
    \begin{equation*}
        \rho=\min\Big\{\frac{\epsilon}{100K},\frac{1}{10}\Big\},\quad\delta_{0}=\min\Big\{\frac{\epsilon\cdot\underline{a}\cdot\sqrt{\rho}}{100},\overline{\delta}\Big\}.
    \end{equation*}
    We now require $R_{0}$ to be so large, that
    \begin{equation*}
        R_{0}\geq\frac{1}{\rho}\underline{R}(\delta_{0}),\quad\mbox{and that }\frac{M_{R}}{R}\leq\frac{\epsilon}{100\mu}\mbox{for all }R\geq R_{0}.
    \end{equation*}

    \textbf{Step 2: Recover the information of the free boundary.} For every $s_{1},t_{1}\geq R_{0}$ with $\frac{1}{2}\leq\frac{s_{1}}{t_{1}}\leq2$, there exists some $R\geq\underline{R}(\delta_{0})$, such that
    \begin{equation*}
        \frac{s_{1}}{R},\frac{t_{1}}{R}\in[\rho,2\rho],\quad\frac{s_{1}}{R}\vec{e}_{1}+\frac{h(s_{1})}{R}\vec{e}_{2},\frac{t_{1}}{R}\vec{e}_{1}+\frac{h(t_{1})}{R}\vec{e}_{2}\in(B_{3\rho}\setminus B_{\rho})\cap\{0\leq\frac{y_{2}}{y_{1}}\leq\frac{\epsilon}{100\mu}\}.
    \end{equation*}
    By the definition of $v_{R}$ and $h(x_{1})$, one has that
    \begin{equation*}
        \lim_{y_{2}\to\frac{h(s_{1})}{R}+}v_{R}(\frac{s_{1}}{R}\vec{e}_{1}+y_{2}\vec{e}_{2})=\lim_{x_{2}\to h(s_{1})+}\frac{\Psi^{-1}\big(\psi(s_{1}\vec{e}_{1}+x_{2}\vec{e}_{2})\big)-x_{2}}{M_{R}}=-\frac{h(s_{1})}{M_{R}}.
    \end{equation*}
    A similar identity holds for $t_{1}$. Therefore, by assigning $v_{R}(\frac{s_{1}}{R}\vec{e}_{1}+\frac{h(s_{1})}{R}\vec{e}_{2})$ to be its limiting value, i.e. $\displaystyle\lim_{y_{2}\to\frac{h(s_{1})}{R}+}v_{R}(\frac{s_{1}}{R}\vec{e}_{1}+y_{2}\vec{e}_{2})$, and working similarly for $t_{1}$, we get the following identity, which recovers the information of $h(x_{1})$ from the horizontal shifting function $v_{R}$:
    \begin{equation}\label{eq. recovery}
        \frac{h(s_{1})}{h(t_{1})}=\frac{-v_{R}(\frac{s_{1}}{R}\vec{e}_{1}+\frac{h(s_{1})}{R}\vec{e}_{2})}{-v_{R}(\frac{t_{1}}{R}\vec{e}_{1}+\frac{h(t_{1})}{R}\vec{e}_{2})}.
    \end{equation}
    
    \textbf{Step 3: Apply the Schauder estimate.} By Lemma~\ref{lem. Non-degeneracy}, for every $R\geq\underline{R}(\delta_{0})$, there exists a coefficient $-K\leq A_{R}\leq-\underline{a}<0$, such that \eqref{eq. Schauder mixed error} holds. In particular,
    \begin{equation*}
        \Big|v_{R}(y)-A_{R}\cdot Re(\sqrt{y_{1}+i\mu y_{2}})\Big|\leq K\cdot|A_{R}|\cdot(3\rho)^{3/2}+\frac{\epsilon\cdot\underline{a}\cdot\sqrt{\rho}}{100}\quad\mbox{in }B_{3\rho}\cap\{\psi(Ry)>0\}.
    \end{equation*}
    Let $y=y_{1}\vec{e}_{1}+y_{2}\vec{e}_{2}\in B_{3\rho}\setminus B_{\rho}$ with $y_{1}=\frac{s_{1}}{R}\vec{e}_{1}\in[\rho,2\rho]$ and $y_{2}\to\frac{h(s_{1})}{R}+$, then
    \begin{equation*}
        \sqrt{y_{1}}\leq\frac{Re(\sqrt{y_{1}+i\mu y_{2}})}{\sqrt{y_{1}}}\leq\big(1+\frac{\epsilon}{100}\big)\cdot\sqrt{y_{1}}\quad\mbox{if }y_{2}\mbox{ is sufficiently close to }\frac{h(s_{1})}{R}+.
    \end{equation*}
    Therefore,
    \begin{align*}
        \Big|v_{R}\big(\frac{s_{1}}{R}\vec{e}_{1}+\frac{h(s_{1})}{R}\vec{e}_{2}\big)-A_{R}\sqrt{y_{1}}\Big|\leq&|A_{R}|\cdot\frac{\epsilon\cdot\sqrt{y_{1}}}{100}+K\cdot|A_{R}|\cdot(3\rho)^{3/2}+\frac{\epsilon\cdot\underline{a}\cdot\sqrt{\rho}}{100}\\
        \leq&|A_{R}|\cdot\sqrt{y_{1}}\cdot\Big(\frac{\epsilon}{100}+\frac{K\cdot(3\rho)^{3/2}}{\sqrt{y_{1}}}+\frac{\epsilon\sqrt{\rho}}{100\sqrt{y_{1}}}\Big)\\
        \leq&|A_{R}|\cdot\sqrt{y_{1}}\cdot\Big(\frac{\epsilon}{50}+9K\cdot\rho\Big)\leq|A_{R}|\cdot\sqrt{y_{1}}\cdot\frac{\epsilon}{5}.
    \end{align*}
    In other words, we have
    \begin{equation*}
        -A_{R}\sqrt{\frac{s_{1}}{R}}\cdot(1-\frac{\epsilon}{5})\leq-v_{R}\big(\frac{s_{1}}{R}\vec{e}_{1}+\frac{h(s_{1})}{R}\vec{e}_{2}\big)\leq-A_{R}\sqrt{\frac{s_{1}}{R}}\cdot(1+\frac{\epsilon}{5}).
    \end{equation*}
    Similarly, we have
    \begin{equation*}
        -A_{R}\sqrt{\frac{t_{1}}{R}}\cdot(1-\frac{\epsilon}{5})\leq-v_{R}\big(\frac{t_{1}}{R}\vec{e}_{1}+\frac{h(t_{1})}{R}\vec{e}_{2}\big)\leq-A_{R}\sqrt{\frac{t_{1}}{R}}\cdot(1+\frac{\epsilon}{5}).
    \end{equation*}
    
    \textbf{Step 4: Asymptotic behavior of the free boundary.} Using \eqref{eq. recovery}, we see that for $s_{1},t_{1}\geq R_{0}$ with $\frac{1}{2}\leq\frac{s_{1}}{t_{1}}\leq2$,
    \begin{equation*}
        \sqrt{\frac{s_{1}}{t_{1}}}\cdot\frac{1-\frac{\epsilon}{5}}{1+\frac{\epsilon}{5}}\leq\frac{h(s_{1})}{h(t_{1})}\leq\sqrt{\frac{s_{1}}{t_{1}}}\cdot\frac{1+\frac{\epsilon}{5}}{1-\frac{\epsilon}{5}}.
    \end{equation*}
    In other words,
    \begin{equation*}
        \Big|\frac{h(s_{1})}{h(t_{1})}-\sqrt{\frac{s_{1}}{t_{1}}}\Big|\leq\epsilon.
    \end{equation*}
    Since $\epsilon$ can be made arbitrarily small, this gives the desired growth rate estimate in Theorem~\ref{thm. growth rate of free boundary}.

    \textbf{Step 5: Location of $h(+\infty)$ under Gibbons' setting.} Now, let us rule out the possibility that $G:=\displaystyle\limsup_{R\to\infty}\epsilon_{R}R<+\infty$. Since $h(x_{1})$ is strictly increasing, we let $h(+\infty)$ be the limiting height, which is must satisfy $N(0)<h(+\infty)\leq G$. We claim that
    \begin{equation}\label{eq. claim that h(+infty)=G}
        h(+\infty)=G=\limsup_{R\to\infty}\epsilon_{R}R.
    \end{equation}
    To prove \eqref{eq. claim that h(+infty)=G}, we argue by contradiction. If it holds that $h(+\infty)<G$, then set
    \begin{equation*}
        v_{R}(y):=\frac{\Psi^{-1}\circ\psi(Ry)-Ry_{2}}{G},\quad\mbox{for all }y\in[-1,1]^{2}\cap\{\psi(Ry)>0\}.
    \end{equation*}
    Clearly, $-1\leq v_{R}(y)\leq0$. Applying Lemma~\ref{lem. limit description} to the first quadrant (where $|\nabla\psi|=\Lambda$ on $\partial\{\psi>0\}$) yields that up to a subsequence $R_{k}\to\infty$,
    \begin{equation*}
        \lim_{k\to\infty}\|v_{R_{k}}-v_{\infty}\|_{L^{\infty}([\frac{1}{4},\frac{3}{4}]\times[0,\frac{1}{4}]\cap\{\psi(R_{k}y)>0\})}=0.
    \end{equation*}
    Here, $v_{\infty}$ satisfies the equation $\mu^{2}v_{11}+v_{22}=0$ in $int([\frac{1}{4},\frac{3}{4}]\times[0,\frac{1}{4}])$, as well as the Neumann condition that $\partial_{2}v_{\infty}(t\vec{e}_{1})\equiv0$ for $t\in[\frac{1}{4},\frac{3}{4}]$. Furthermore, if $h(+\infty)<G$, then
    \begin{equation*}
        v_{\infty}(t\vec{e}_{1})\equiv-\frac{h(+\infty)}{G}\in(-1,0),\quad\mbox{for all }t\in[\frac{1}{4},\frac{3}{4}].
    \end{equation*}
    Then, the only possibility is that
    \begin{equation*}
        v_{\infty}\equiv-\frac{h(+\infty)}{G}\in(-1,0)\quad\mbox{everywhere in }[\frac{1}{4},\frac{3}{4}]\times[0,\frac{1}{4}].
    \end{equation*}
    By applying the interior Harnack principle (Lemma~\ref{lem. interior Harnack}) to $v_{R_{k}}$, we get the following observation: For every $\lambda>0$, as long as $k$ is sufficiently large, one has
    \begin{equation*}
        \psi(x)\geq\Psi\Big(x_{2}-(1-\sigma)G\Big),\quad\mbox{for all }x\in[-\frac{9}{10}R_{k},\frac{9}{10}R_{k}]\times[\frac{\lambda}{10}R_{k},\frac{9}{10}R_{k}].
    \end{equation*}
    Here, the constant $\sigma>0$ depends on the ratio $\frac{h(+\infty)}{G}$, and it is by default chosen so small that $\sigma G\leq\frac{G-N(0)}{100}$. Applying Lemma~\ref{lem. maximum variant} gives that
    \begin{equation*}
        \psi(x)\geq\Psi\Big(x_{2}-(1-\frac{\sigma}{640})G\Big)\quad\mbox{in }[-\frac{\lambda}{100}R_{k},\frac{\lambda}{100}R_{k}]^{2}\mbox{ for all sufficiently large }k.
    \end{equation*}
    Recall that we choose $\epsilon_{R}$ as small as possible in Definition~\ref{def. asymptotically flat}, then the above estimate yields that indeed $\epsilon_{r_{k}}r_{k}\leq(1-\frac{\sigma}{640})G<G$ for a subsequence $r_{k}=\frac{\lambda}{100}R_{k}\to\infty$, contradicting the definition of $G$. Then, we have verified \eqref{eq. claim that h(+infty)=G}.
    
    \textbf{Step 6: Contradicts the assumption that $h(x_{1})$ is increasing.} As we have verified $h(+\infty)=G<+\infty$ in \eqref{eq. claim that h(+infty)=G}, one can apply Remark~\ref{rmk. how to avoid h(infty)<infty}, and show that indeed there exists a sequence $s_{k}\to\infty$, such that
    \begin{equation*}
        h(s_{k})\leq(1-\sigma)\cdot G=(1-\sigma)\cdot h(+\infty).
    \end{equation*}
    This violates the assumption that $h(x_{1})$ is strictly increasing. Therefore, we have excluded the possibility that $G:=\displaystyle\limsup_{R\to\infty}\epsilon_{R}R<+\infty$, finishing the proof of Theorem~\ref{thm. growth rate of free boundary}.
\end{proof}

\appendix
\section{ODE analysis}\label{sec. appendix ODE}
Let us construct some non-negative sub-solutions or super-solutions to \eqref{eq. main simple}. The construction is based on the shear flow solution $\Psi(t)$ (see Definition~\ref{def. shear flow}).
\subsection{First integral of the shear flow}
In this subsection, study the shear flow $\Psi$ and study the properties of its deformations.
\begin{lemma}\label{lem. shear flow property}
    Let $\Psi(x_{2})$ be the shear flow solution satisfying Definition~\ref{def. shear flow}. Then, there is a function $H(z)>0$ with $H(+\infty)=V^{2}$, where $V$ coincides with that in (A6), such that
    \begin{equation}\label{eq. speed identity}
        \Psi_{t}=\sqrt{H(\Psi)}\mbox{ for }t>0.
    \end{equation}
\end{lemma}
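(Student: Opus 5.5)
The plan is to derive a first integral of the shear-flow ODE in \eqref{eq. 1D solution Psi} by using $z=\Psi$ as the independent variable. Since $\Psi_{t}\geq C^{-1}>0$ on $\mathbb{R}_{+}$, $\Psi$ is a strictly increasing bijection from $(0,\infty)$ onto $(0,\infty)$, with $\Psi(t)\geq C^{-1}t\to\infty$. So we may write $p(z):=\Psi_{t}(\Psi^{-1}(z))$ and $\frac{d}{dt}=p\frac{d}{dz}$. Put $G(z):=g(p(z)^{2},z)\,p(z)$. The ODE $\big(g(\Psi_{t}^{2},\Psi)\Psi_{t}\big)_{t}=\mathcal{B}'(\Psi)/g$ becomes $p\,G'(z)=\mathcal{B}'(z)/g$. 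Multiplying by $g$ gives $G\,G'=\mathcal{B}'(z)$, that is,
\begin{equation*}
\frac{d}{dz}\Big(\frac{p^{2}g(p^{2},z)^{2}}{2}\Big)=\mathcal{B}'(z).
\end{equation*}
Integrating from $z=0$ uses $p(0+)=\Lambda$ and the normalization $\mathcal{B}(0)=\frac{\Lambda^{2}}{2}g(\Lambda^{2},0)^{2}$ from (A6), so the integration constant vanishes. This yields
\begin{equation*}
\frac{T}{2}\,g(T,z)^{2}=\mathcal{B}(z)\quad\mbox{with }T=\Psi_{t}^{2},\ z=\Psi.
\end{equation*}

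Next I define $H$. Let $F(T,z):=T g(T,z)^{2}$. By (A4), $\partial_{T}F\geq C^{-1}$ on $[0,Q_{sound}^{2})\times[0,\infty)$, and $F(0,z)=0$. Hence, for each $z$, the equation $F(T,z)=2\mathcal{B}(z)$ has at most one root $T\in[0,Q_{sound}^{2})$, and the actual value $\Psi_{t}^{2}$ is such a root because the flow is subsonic. Define $H(z)$ to be this root. By the implicit function theorem, using (A5), $H$ is $C^{1}$. We also have $H(z)=\Psi_{t}^{2}\geq C^{-2}>0$, and $\Psi_{t}>0$ gives $\Psi_{t}=\sqrt{H(\Psi)}$, which is \eqref{eq. speed identity}.

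It remains to show $H(+\infty)=V^{2}$, reading the limit in (A6) as $T\to V^{2}$. The first step is to show that $F(\cdot,z)$ converges, uniformly on compact subsets of $[0,Q_{sound}^{2})$, to a limit $F_{\infty}(T)$ as $z\to\infty$. For this I will use the compatibility relation \eqref{eq. compatibility Z}, $g_{z}=-2g_{T}\mathcal{B}'(z)/g^{2}$. With $g$ bounded below and $g_{T}$ bounded (by (A5)), this gives $|g(T,z)-g(T,z')|\leq C\int_{z}^{z'}|\mathcal{B}'|$. I then need the tail $\int_{z}^{\infty}|\mathcal{B}'|$ to vanish. This is immediate if $\mathcal{B}$ is eventually monotone, since $\mathcal{B}(+\infty)$ exists. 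Otherwise, I would avoid the compatibility relation: take subsequential limits of $g(\cdot,z)$ (the family is equicontinuous by (A5)) and identify every limit through the Bernoulli relation \eqref{eq. physics structure}, which forces $\frac{T}{2}g^{2}+h(1/g)=\mathcal{B}(+\infty)$. The map $g\mapsto\frac{T}{2}g^{2}+h(1/g)$ has $g$-derivative $Tg-h'(1/g)/g^{2}$, which by \eqref{eq. compatibility T} equals $-g/(2g_{T})$ and is therefore nonzero; it is thus locally strictly monotone in $g$, so the limit is unique.

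Given this convergence, $F_{\infty}$ inherits $F_{\infty}'\geq C^{-1}$, and by (A6) $F_{\infty}(V^{2})=2\mathcal{B}(+\infty)$. Since $F(H(z),z)=2\mathcal{B}(z)\to F_{\infty}(V^{2})$, uniform convergence together with the lower bound $C^{-1}$ on $\partial_{T}F$ gives $C^{-1}|H(z)-V^{2}|\leq|F(H(z),z)-F(V^{2},z)|\to0$. Hence $H(+\infty)=V^{2}$. I expect the main obstacle to be this uniform-in-$T$ convergence of $g(T,z)$ as $z\to\infty$, which is where assumptions (A5) and (A6) actually enter.
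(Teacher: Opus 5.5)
Your proposal is correct and follows the same route as the paper. You derive the first integral $g(\Psi_t^2,\Psi)^2\Psi_t^2=2\mathcal{B}(\Psi)$ with the constant fixed by (A6), define $H$ by inverting $T\mapsto Tg(T,z)^2$ on the subsonic range using (A4), and get $H(+\infty)=V^2$ from (A6) together with the uniform lower bound on $\partial_T\big(Tg^2\big)$.

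The uniform-in-$T$ convergence you flag as the main obstacle is not actually needed. Your final estimate $C^{-1}|H(z)-V^2|\le|2\mathcal{B}(z)-F(V^2,z)|$ only requires $F(V^2,z)\to2\mathcal{B}(+\infty)$, which is exactly (A6). You can therefore drop the fallback argument, which as written has loose ends: local monotonicity only isolates roots rather than giving a unique one, and the $g$-derivative is $-g^{2}/(2g_T)$, not $-g/(2g_T)$.
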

\begin{proof}
    Multiplying $g(|\Psi_{t}|^{2},\Psi)\Psi_{t}$ on both sides of \eqref{eq. 1D solution Psi} implies
    \begin{equation*}
        \Big(g(|\Psi_{t}|^{2},\Psi)\Psi_{t}\Big)_{t}\cdot g(|\Psi_{t}|^{2},\Psi)\Psi_{t}=\mathcal{B}'(\Psi)\Psi_{t}.
    \end{equation*}
    Using the gradient assumption at the free boundary, one has
    \begin{equation*}
        g(|\Psi_{t}|^{2},\Psi)^{2}\cdot|\Psi_{t}|^{2}=2\mathcal{B}(\Psi)+C.
    \end{equation*}
    Since it was assumed in (A6) that $\mathcal{B}(0)=\frac{1}{2}\Lambda^{2}\cdot g(\Lambda^{2},0)^{2}$, we indeed have
    \begin{equation*}
        g(|\Psi_{t}|^{2},\Psi)^{2}\cdot|\Psi_{t}|^{2}=2\mathcal{B}(\Psi).
    \end{equation*}
    Denote $F(T,z):=g(T,z)^{2}\cdot T$. By (A4), at least in the sub-sonic regime, $F(T,z)$ is invertible when $z$ is fixed. Then, we let $\displaystyle\Big(F\Big|_{z}\Big)^{-1}$ be the inverse of $F(T,z)$ (mapped to sub-sonic region $[0,Q_{sound}^{2}]$) when the variable $z$ is fixed, i.e.
    \begin{equation*}
        \Big(F\Big|_{z}\Big)^{-1}(a)=t\quad\mbox{if and only if }F(t,z)=a\mbox{ and }t\in[0,Q_{sound}^{2}].
    \end{equation*}
    Let us assume that $H(\Psi)$ is given by
    \begin{equation}\label{eq. H(z) initial definition}
        H(z):=\Big(F\Big|_{z}\Big)^{-1}\big(2\mathcal{B}(z)\big),\quad\mbox{i.e. }g\big(H(z),z\big)^{2}H=2\mathcal{B}(z).
    \end{equation}
    Then, it follows that \eqref{eq. speed identity} holds and $H(0)=\Lambda^{2}$. Sending $z\to+\infty$ and using the assumption (A6) gives
    \begin{equation*}
        \lim_{z\to+\infty}g\big(H(z),z\big)^{2}\cdot H(z)=2\mathcal{B}(+\infty)=V^{2}\cdot g(V^{2},+\infty)^{2}.
    \end{equation*}
    By (A4) and the assumption that $\Psi$ is sub-sonic (see Definition~\ref{def. shear flow}), we must have $H(+\infty)=V^{2}$. This completes the proof of Lemma~\ref{lem. shear flow property}.
\end{proof}

\subsection{Deformation of type I} In the following Lemma, we construct two deformations of $\Psi(t)$.
\begin{lemma}\label{lem. barrier A}
    There exist two uniform constants $\mathfrak{c}_{1}\ll1\ll\mathfrak{C}_{2}$ such that the following holds for every $\rho>0$:
    
    \textbf{(A) Sub-solution:} There exists a function $\check{w}_{\rho}(t)$ such that:
    \begin{itemize}
        \item[(A1)] For $0\leq t\leq\mathfrak{c}_{1}\rho$, one has
        \begin{equation*}
            \frac{d}{dt}\Big(g\big(|\frac{d}{dt}\check{w}_{\rho}|^{2},\check{w}_{\rho}\big)\frac{d}{dt}\check{w}_{\rho}\Big)-\frac{1}{\rho-t}g\big(|\frac{d}{dt}\check{w}_{\rho}|^{2},\check{w}_{\rho}\big)\frac{d}{dt}\check{w}_{\rho}>\frac{\mathcal{B}'(\check{w}_{\rho})}{g\big(|\frac{d}{dt}\check{w}_{\rho}|^{2},\check{w}_{\rho}\big)};
        \end{equation*}
        \item[(A2)] For $0\leq t\leq\mathfrak{c}_{1}\rho$, one has
        \begin{equation*}
            \Psi\Big(t+\frac{t^{2}}{\mathfrak{C}_{2}\rho}\Big)\leq\check{w}_{\rho}(t)\leq\Psi\Big(t+\frac{\mathfrak{C}_{2}t^{2}}{\rho}\Big).
        \end{equation*}
    \end{itemize}
    
    \textbf{(B) Super-solution:} There exists a function $\hat{w}_{\rho}(t)$ such that:
    \begin{itemize}
        \item[(B1)] For $0\leq t\leq\mathfrak{c}_{1}\rho$, one has
        \begin{equation*}
            \frac{d}{dt}\Big(g\big(|\frac{d}{dt}\hat{w}_{\rho}|^{2},\hat{w}_{\rho}\big)\frac{d}{dt}\hat{w}_{\rho}\Big)+\frac{1}{\rho+t}g\big(|\frac{d}{dt}\hat{w}_{\rho}|^{2},\hat{w}_{\rho}\big)\frac{d}{dt}\hat{w}_{\rho}<\frac{\mathcal{B}'(\hat{w}_{\rho})}{g\big(|\frac{d}{dt}\hat{w}_{\rho}|^{2},\hat{w}_{\rho}\big)};
        \end{equation*}
        \item[(B2)] For $0\leq t\leq\mathfrak{c}_{1}\rho$, one has
        \begin{equation*}
            \Psi\Big(t-\frac{\mathfrak{C}_{2}t^{2}}{\rho}\Big)\leq\hat{w}_{\rho}(t)\leq\Psi\Big(t-\frac{t^{2}}{\mathfrak{C}_{2}\rho}\Big).
        \end{equation*}
    \end{itemize}
\end{lemma}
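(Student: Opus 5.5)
We build $\check w_\rho$ and $\hat w_\rho$ as ODE perturbations of $\Psi$ in the spirit of Savin \cite{S09}: we keep the first-integral structure of Lemma~\ref{lem. shear flow property}, but make the ``speed'' slightly larger (sub-solution) or smaller (super-solution) so that the extra curvature term $\mp\frac{1}{\rho\mp t}g\,w'$ is beaten. Write $F(T,z)=g(T,z)^2T$ and $G(T,z)=g(T,z)\sqrt T$, so that $g(|w'|^2,w)w'=G(|w'|^2,w)$ when $w'>0$. For $\Psi$ we have $G(|\Psi'|^2,\Psi)^2=2\mathcal{B}(\Psi)$. For the sub-solution we prescribe the flux along the trajectory,
\begin{equation*}
    G\big(|\check w'|^2,\check w\big)=\sqrt{2\mathcal{B}(\check w)}\cdot\Big(1+\frac{\kappa t}{\rho}\Big),\qquad \check w(0)=0,
\end{equation*}
with $\kappa$ a large uniform constant. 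Solving for $|\check w'|^2$ uses the inverse $(F|_z)^{-1}$ from the proof of Lemma~\ref{lem. shear flow property}; this is well defined and Lipschitz by (A4) as long as $|\check w'|$ stays sub-sonic. Since $\|\Psi_t\|_{L^\infty}<Q_{sound}$, this holds for $t\le\mathfrak{c}_1\rho$ once $\mathfrak c_1\kappa$ is small. The super-solution $\hat w_\rho$ is built the same way with the factor $(1-\kappa t/\rho)$.

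\textbf{Step 1 (differential inequality (A1)).} We differentiate the prescribed flux. Writing $\Phi=G(|\check w'|^2,\check w)$, we get
\begin{equation*}
    \Phi'=\frac{\mathcal{B}'(\check w)\check w'}{\sqrt{2\mathcal{B}(\check w)}}\Big(1+\frac{\kappa t}{\rho}\Big)+\sqrt{2\mathcal{B}(\check w)}\,\frac{\kappa}{\rho}.
\end{equation*}
The target right-hand side is $\mathcal{B}'(\check w)/g=\mathcal{B}'(\check w)\check w'/\Phi$, which equals $\frac{\mathcal{B}'\check w'}{\sqrt{2\mathcal B}(1+\kappa t/\rho)}$. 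The two $\mathcal{B}'$ terms therefore differ by $O(|\mathcal{B}'(\check w)|\,\check w'\,\kappa t/\rho)$. By (A6), $z\mathcal{B}'(z)\to0$, so $\mathcal{B}'$ is bounded; combined with $t\le\mathfrak c_1\rho$, this error is at most $C\kappa\mathfrak c_1$, times a bounded factor. The gain is $\sqrt{2\mathcal B}\,\kappa/\rho\ge c\kappa/\rho$, since $\mathcal{B}$ is bounded below by a positive constant, using $\mathcal B(0)>0$ and $\mathcal B(+\infty)>0$. The loss is $\frac{1}{\rho-t}\Phi\le \frac{2C}{\rho}$.

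I expect this to be the main obstacle. The $\mathcal B'$ cross term carries no $1/\rho$ factor: it is of size $|\mathcal B'(\check w)|\kappa t/\rho$, which is only $\le \kappa\mathfrak c_1\sup|\mathcal B'|$, not $O(1/\rho)$. The fix is to choose the ansatz more carefully. Define $\check w$ by
\begin{equation*}
    G(|\check w'|^2,\check w)^2=2\mathcal{B}(\check w)+\frac{2\kappa}{\rho}\int_0^t\Phi\,ds,
\end{equation*}
that is, add a perturbation to the first integral rather than multiplying it. Differentiating gives $2\Phi\Phi'=2\mathcal B'(\check w)\check w'+\frac{2\kappa}{\rho}\Phi$. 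Hence
\begin{equation*}
    \Phi'=\frac{\mathcal B'(\check w)}{g}+\frac{\kappa}{\rho}.
\end{equation*}
Here the $\mathcal{B}'$ term is exact and the vorticity plays no role, matching the paper's remark that Savin's ODE method does not distinguish rotational from irrotational flows. Then (A1) reduces to $\frac{\kappa}{\rho}>\frac{\Phi}{\rho-t}$, which holds for $\kappa>4\sup\Phi$ and $t\le\rho/2$. For the super-solution we use $-\frac{2\kappa}{\rho}\int\Phi$.

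\textbf{Step 2 (comparison (A2)).} By construction, $G(|\check w'|^2,\check w)^2-2\mathcal B(\check w)$ lies between $c\kappa t/\rho$ and $C\kappa t/\rho$. By (A4), $\partial_T F\ge C^{-1}$, and $F$ is also bounded above on the relevant range by (A5). It follows that
\begin{equation*}
    \check w'=\sqrt{H(\check w)}+\theta(t),\qquad \theta\in\Big[\frac{ct}{\rho},\frac{Ct}{\rho}\Big].
\end{equation*}
Pass to the variable $s=\Psi^{-1}(\check w(t))$, which satisfies $s'=\check w'/\sqrt{H(\check w)}$. Since $\sqrt H$ is bounded above and below (because $\Psi_t\ge C^{-1}$ and $\Psi_t$ is sub-sonic), this gives $1+c t/\rho\le s'\le 1+Ct/\rho$. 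Integrating yields $t+\frac{t^2}{\mathfrak C_2\rho}\le s\le t+\frac{\mathfrak C_2t^2}{\rho}$. Applying the increasing function $\Psi$ gives (A2), and (B2) follows symmetrically. The constants $\mathfrak c_1,\mathfrak C_2$ depend only on the bounds in (A4)--(A6), which makes them uniform in $\rho$.
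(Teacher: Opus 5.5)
Your final additive construction is correct and is essentially the paper's argument. Both add an $O(t/\rho)$ perturbation to the Bernoulli first integral $F(|w'|^2,w)=2\mathcal{B}(w)$, so that the $\mathcal{B}'$ terms cancel exactly and the $1/\rho$ gain beats the curvature term; both then integrate the speed excess $\check w'-\sqrt{H(\check w)}\sim t/\rho$ to obtain the $t^2/\rho$ shift. The one cosmetic difference is that the paper indexes the perturbation by the value, $\mathcal{E}(s)=\frac{\mathfrak{C}_3}{\rho}\int_0^s\sqrt{H}$, which gives the separable autonomous ODE $\check w'=\sqrt{\check T(\check w)}$, whereas your $\frac{2\kappa}{\rho}\int_0^t\Phi$ yields the identity $\Phi'=\mathcal{B}'(\check w)/g+\kappa/\rho$ directly in $t$.
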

\begin{proof}
    \textbf{Step 1: Construction of $\check{w}_{\rho}(t)$ and $\hat{w}_{\rho}(t)$.} Let $\check{s}=\check{w}_{\rho}(t)$ or $\hat{s}=\hat{w}_{\rho}(t)$. We would like
    \begin{equation}\label{eq. w(check/hat) 1st ODE}
        \frac{d}{dt}\check{w}_{\rho}=\sqrt{\check{T}(\check{w}_{\rho})},\quad\frac{d}{dt}\hat{w}_{\rho}=\sqrt{\hat{T}(\hat{w}_{\rho})},
    \end{equation}
    where $\check{T}(s)$ and $\hat{T}(s)$ are defined as
    \begin{equation*}
        \check{T}(s)=F\Big|_{s}^{-1}\Big(2\mathcal{B}(s)+\mathcal{E}(s)\Big),\quad\hat{T}(s)=F\Big|_{s}^{-1}\Big(2\mathcal{B}(s)-\mathcal{E}(s)\Big),
    \end{equation*}
    where the correction term $\mathcal{E}(s)$ is given below for some sufficiently large constant $\mathfrak{C}_{3}$:
    \begin{equation*}
        \mathcal{E}(s):=\frac{\mathfrak{C}_{3}}{\rho}\int_{0}^{s}\sqrt{H(\sigma)}d\sigma.
    \end{equation*}
    Then, for $\check{w}_{\rho}$ and $\hat{w}_{\rho}$, we require
    \begin{equation}\label{eq. sub+sup 1st ODE}
    \left\{\begin{aligned}
        \frac{1}{2}\frac{d}{d\check{s}}\Big(g\big(\check{T}(\check{s}),\check{s}\big)^{2}\cdot\check{T}(\check{s})\Big)-\frac{2}{\rho}g\big(\check{T}(\check{s}),\check{s}\big)^{2}\sqrt{\check{T}(\check{s})}>&\frac{d}{d\check{s}}\mathcal{B}(\check{s}),\\
        \frac{1}{2}\frac{d}{d\hat{s}}\Big(g\big(\hat{T}(\hat{s}),\hat{s}\big)^{2}\cdot\hat{T}(\hat{s})\Big)+\frac{2}{\rho}g\big(\hat{T}(\hat{s}),\hat{s}\big)^{2}\sqrt{\hat{T}(\hat{s})}<&\frac{d}{d\hat{s}}\mathcal{B}(\hat{s}).
    \end{aligned}\right.
    \end{equation}
    When $\mathfrak{c}_{4}\leq\frac{1}{2}$ is sufficiently small, then
    \begin{equation}\label{eq. E and H ratio}
        \mathcal{E}(s)\leq\frac{1}{2}H(s),\quad\mbox{for all }0\leq s\leq\Psi(\mathfrak{c}_{4}\rho).
    \end{equation}
    Then, the first line of \eqref{eq. sub+sup 1st ODE} follows from
    \begin{equation*}
        \frac{1}{2}\frac{d}{d\check{s}}\Big(g\big(\check{T}(\check{s}),\check{s}\big)^{2}\cdot\check{T}(\check{s})\Big)-\frac{d}{d\check{s}}\mathcal{B}(\check{s})=\frac{1}{2}\frac{d}{d\check{s}}\mathcal{E}(s)=\frac{\mathfrak{C}_{3}}{2\rho}\sqrt{H(s)}>\frac{2}{\rho}g\big(\check{T}(\check{s}),\check{s}\big)^{2}\sqrt{\check{T}(\check{s})}.
    \end{equation*}
    The second line of \eqref{eq. sub+sup 1st ODE} can be proven similarly.

    \textbf{Step 2: Horizontal error estimate.} Let $\check{t}(\check{s})$ and $\hat{t}(\hat{s})$ be the inverse functions of $\check{w}_{\rho}$ and $\hat{w}_{\rho}$, respectively. Precisely:
    \begin{equation*}
        \check{w}_{\rho}\big(\check{t}(\check{s})\big)=\check{s},\quad\hat{w}_{\rho}\big(\hat{t}(\hat{s})\big)=\hat{s}.
    \end{equation*}
    It then follows from \eqref{eq. w(check/hat) 1st ODE} that
    \begin{equation*}
        \frac{d\check{t}}{ds}=\frac{1}{\sqrt{\check{T}(s)}}\quad\mbox{and }\frac{d\hat{t}}{ds}=\frac{1}{\sqrt{\hat{T}(s)}}.
    \end{equation*}
    By the construction of $\check{T}$, one has
    \begin{equation*}
        \frac{d\check{t}}{ds}-\frac{1}{\sqrt{H(s)}}\sim-\frac{\mathcal{E}(s)}{H(s)^{3/2}}\quad\mbox{and }\frac{d\hat{t}}{ds}-\frac{1}{\sqrt{H(s)}}\sim\frac{\mathcal{E}(s)}{H(s)^{3/2}}.
    \end{equation*}
    Integrating this inequality and using the fact $\displaystyle\frac{\mathcal{E}(s)}{H(s)^{3/2}}\sim\frac{s}{\rho}$ yields that
    \begin{equation}\label{eq. w and Psi accurate error}
        \check{w}_{\rho}^{-1}(s)-\Psi^{-1}(s)\sim-\frac{s^{2}}{\rho}\quad\mbox{and }\hat{w}_{\rho}^{-1}(s)-\Psi^{-1}(s)\sim\frac{s^{2}}{\rho}.
    \end{equation}

    \textbf{Step 3: Ending statement.} By \eqref{eq. E and H ratio}, one has
    \begin{equation*}
        \Psi(t)\leq\check{w}_{\rho}(t)\leq\Psi(\frac{3}{2}t)\mbox{ and }\Psi(\frac{1}{2}t)\leq\hat{w}_{\rho}(t)\leq\Psi(t),\quad\mbox{for }0\leq t\leq\frac{1}{2}\mathfrak{c}_{4}\rho.
    \end{equation*}
    By choosing $\mathfrak{c}_{1}\ll\mathfrak{c}_{4}$ small, one has
    \begin{equation*}
        \frac{1}{\rho\pm t}\leq\frac{2}{\rho},\quad\mbox{for all }0\leq t\leq\mathfrak{c}_{1}\rho,
    \end{equation*}
    so the inequalities (A1) and (B1) follow from \eqref{eq. sub+sup 1st ODE}. Moreover, the inequalities (A2) and (B2) follow from \eqref{eq. w and Psi accurate error} and the observation $s\sim t$. Then, we have finished the proof of Lemma~\ref{lem. barrier A}.
\end{proof}

\subsection{Deformation of type II} Now, let us consider another type of deformation.
\begin{lemma}\label{lem. barrier B}
    There exist two uniform constants $\mathfrak{c}_{3}\ll1\ll\mathfrak{C}_{6}$ such that the following holds for every $\rho>0$ and $m\in[0,\mathfrak{c}_{5}\rho]$: There exists a number $-m-\frac{\mathfrak{C}_{6}m^{2}}{\rho}\leq t_{-}\leq-m-\frac{m^{2}}{\mathfrak{C}_{6}\rho}$, and a function $w_{\rho,m}(t)$ defined for $t\in[t_{-},0]$, such that
    \begin{itemize}
        \item[(1)] One has $0<w_{\rho,m}(t)<\Psi(m)$ in $(t_{-},0)$, with $w_{\rho,m}(0)=\Psi(m)$ and $w_{\rho,m}(t_{-})=0$.
        \item[(2)] The following ODE inequality holds in $(t_{-},0)$:
        \begin{equation*}
            \frac{d}{dt}\Big(g\big(|\frac{d}{dt}w_{\rho,m}|^{2},w_{\rho,m}\big)\frac{d}{dt}w_{\rho,m}\Big)-\frac{1}{\rho-t}g\big(|\frac{d}{dt}w_{\rho,m}|^{2},w_{\rho,m}\big)\frac{d}{dt}w_{\rho,m}>\frac{\mathcal{B}'(w_{\rho,m})}{g\big(|\frac{d}{dt}w_{\rho,m}|^{2},w_{\rho,m}\big)};
        \end{equation*}
        \item[(3)] The following estimate holds in $[t_{-},0]$:
        \begin{equation*}
            \Psi\Big(m+t+\frac{t^{2}}{\mathfrak{C}_{6}\rho}\Big)\leq w_{\rho,m}(t)\leq\Psi\Big(m+t+\frac{\mathfrak{C}_{6}t^{2}}{\rho}\Big),\quad\mbox{for all }t_{-}\leq t\leq0.
        \end{equation*}
    \end{itemize}
\end{lemma}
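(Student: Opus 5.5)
The plan is to mimic the construction in Lemma~\ref{lem. barrier A}, working in the inverse (phase-plane) variable $s=w$. I will take the profile $w_{\rho,m}$ to solve the first-order ODE
\begin{equation*}
    \frac{d}{dt}w_{\rho,m}=\sqrt{\check{T}(w_{\rho,m})},\qquad \check{T}(s)=F\Big|_{s}^{-1}\Big(2\mathcal{B}(s)+\mathcal{E}_{m}(s)\Big),
\end{equation*}
with final condition $w_{\rho,m}(0)=\Psi(m)$. The difference from type I is the correction term. I will use $\mathcal{E}_{m}(s):=\frac{\mathfrak{C}_{3}}{\rho}\int_{\Psi(m)}^{s}\sqrt{H(\sigma)}\,d\sigma$, which is anchored at the top value $\Psi(m)$ rather than at $0$. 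This makes $\mathcal{E}_{m}\le0$ on $[0,\Psi(m)]$, so the profile is slower than $\Psi$ below $\Psi(m)$. Integrating the ODE backward from $t=0$, $w_{\rho,m}$ decreases and reaches the value $0$ at a finite time $t_{-}<0$. That gives property (1), with $0<w_{\rho,m}<\Psi(m)$ on $(t_{-},0)$, because the speed is bounded below by $C^{-1}$.

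Property (2) follows exactly as in Step 1 of Lemma~\ref{lem. barrier A}. I differentiate the identity $g(\check T,s)^{2}\check T=2\mathcal{B}(s)+\mathcal{E}_{m}(s)$ along the trajectory. Writing $G=g(|w'|^{2},w)w'$, this gives $\frac{d}{dt}G=\frac{1}{G}\frac{d}{dt}\big(\tfrac12 G^{2}\big)=\frac{w'}{G}\big(\mathcal{B}'(w)+\tfrac12\mathcal{E}_{m}'(w)\big)$. The extra term is $\frac{\mathfrak{C}_{3}}{2\rho}\frac{\sqrt{H}}{g}$, and it must dominate $\frac{1}{\rho-t}G$. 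Since $t\le0$ we have $\frac{1}{\rho-t}\le\frac1\rho$, so this holds once $\mathfrak{C}_{3}$ is large, using the uniform bounds on $g$, $H$ and on the subsonic range (A4)--(A5). To stay in the subsonic regime with $\check T$ comparable to $H$, I need $|\mathcal{E}_{m}(s)|\le\frac12 H(s)$ for $s\in[0,\Psi(m)]$. This holds because $|\mathcal{E}_{m}|\lesssim \Psi(m)/\rho\lesssim m/\rho\le\mathfrak{c}_{5}$, which fixes the smallness constant $\mathfrak{c}_{5}$.

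For property (3) and the location of $t_{-}$, I compare inverse functions as in Step 2 of Lemma~\ref{lem. barrier A}. We have $\frac{dt}{ds}-\frac{1}{\sqrt{H(s)}}\sim\frac{|\mathcal{E}_{m}(s)|}{H^{3/2}}\sim\frac{\Psi(m)-s}{\rho}$. Integrating from $s$ up to $\Psi(m)$ gives
\begin{equation*}
    w_{\rho,m}^{-1}(s)-\big(\Psi^{-1}(s)-m\big)\sim-\frac{(\Psi(m)-s)^{2}}{\rho}\sim-\frac{t^{2}}{\rho},
\end{equation*}
where $\Psi(m)-s\sim|t|$ since the speeds are comparable. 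Evaluating at $s=0$ gives $t_{-}=-m-\Theta(m^{2}/\rho)$, which is the claimed two-sided bound on $t_{-}$. Converting the inverse-function estimate back through the monotone map $\Psi$ gives the sandwich in (3). Note that the sign is $+\frac{t^2}{\rho}$ inside $\Psi(m+t+\cdot)$: the profile lags in time, so at a given $t$ it has a larger value than $\Psi(m+t)$.

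I expect the main obstacle to be the constant bookkeeping in this last step. The asymptotics $\sim$ must be made into genuine two-sided inequalities, uniform in $m\in[0,\mathfrak{c}_{5}\rho]$ and in $\rho$. This requires the Lipschitz dependence of $F|_{s}^{-1}$ on its argument, which comes from (A4) and the $C^{2}$ bound (A5), together with $H\ge C^{-1}$. With these, the ratio $\frac{|\mathcal{E}_m|}{H^{3/2}}$ is pinched between constant multiples of $\frac{\Psi(m)-s}{\rho}$.
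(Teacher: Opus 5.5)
Your proposal is correct and is essentially the paper's argument. The paper reflects the problem: it applies Lemma~\ref{lem. barrier A}\,(B) to $\widetilde\Psi(\widetilde t)=\Psi(m)-\Psi(m-\widetilde t)$ and sets $w_{\rho,m}(t)=\Psi(m)-\hat{\widetilde w}_{\rho}(-t)$; undoing this reflection turns the type-I correction $-\frac{\mathfrak{C}_{3}}{\rho}\int_{0}^{\Psi(m)-s}\sqrt{H(\Psi(m)-\sigma)}\,d\sigma$ into exactly your $\mathcal{E}_{m}(s)=\frac{\mathfrak{C}_{3}}{\rho}\int_{\Psi(m)}^{s}\sqrt{H(\sigma)}\,d\sigma$, with the same final condition $w_{\rho,m}(0)=\Psi(m)$. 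Your direct verification (using $t\le0\Rightarrow\frac{1}{\rho-t}\le\frac{1}{\rho}$ and integrating the inverse functions downward from $s=\Psi(m)$) is slightly cleaner, since it avoids re-reading Lemma~\ref{lem. barrier A} for the reflected nonlinearity $g(T,\Psi(m)-z)$, $\mathcal{B}(\Psi(m)-z)$.
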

\begin{proof}
    The method of Lemma~\ref{lem. barrier A} can be applied here with a slight modification. Let
    \begin{equation}\label{eq. widetilde Psi}
        \widetilde{\Psi}(\widetilde{t})=\Big(\Psi(m)-\Psi(m-\widetilde{t})\Big)\cdot\chi_{[0,m]}+\Big(\Psi(m)+\Lambda\cdot(\widetilde{t}-m)\Big)\cdot\chi_{(m,\infty)}.
    \end{equation}
    Using the function $H(\cdot)$ in \eqref{eq. H(z) initial definition}, one can verify that for all $\widetilde{t}\geq0$,
    \begin{equation*}
        \frac{d}{d\widetilde{t}}\widetilde{\Psi}=\sqrt{\widetilde{H}(\widetilde{\Psi})},\quad\mbox{where }\widetilde{H}(\widetilde{s})=H\big(\Psi(m)-\widetilde{s}\big)\cdot\chi_{[0,\Psi(m)]}+\Lambda^{2}\cdot\chi_{(\Psi(m),+\infty)}.
    \end{equation*}
    Let us apply Lemma~\ref{lem. barrier A} part \textbf{(B)} to $\widetilde{\Psi}(\widetilde{t})$ and let $\hat{\widetilde{w}}_{\rho}(\widetilde{t})$ be the corresponding super-solution, which is a deformation of $\widetilde{\Psi}$. Then,
    \begin{equation}\label{eq. up side down ODE}\begin{aligned}
        &\frac{d}{dt}\Big(g\big(|\frac{d}{dt}\hat{\widetilde{w}}_{\rho}|^{2},\Psi(m)-\hat{\widetilde{w}}_{\rho}\big)\frac{d}{dt}\hat{\widetilde{w}}_{\rho}\Big)+\frac{1}{\rho+t}g\big(|\frac{d}{dt}\hat{w}_{\rho}|^{2},\Psi(m)-\hat{\widetilde{w}}_{\rho}\big)\frac{d}{dt}\hat{w}_{\rho}\\
        <&\frac{\mathcal{B}'(\Psi(m)-\hat{\widetilde{w}}_{\rho})}{g\big(|\frac{d}{dt}\hat{\widetilde{w}}_{\rho}|^{2},\Psi(m)-\hat{\widetilde{w}}_{\rho}\big)},\quad\mbox{as long as }0<\hat{\widetilde{w}}_{\rho}<\Psi(m).
    \end{aligned}\end{equation}
    Moreover, we obtain from Lemma~\ref{lem. barrier A} part (B2) that
    \begin{equation}\label{eq. up side down deformation estimate}
        \widetilde{\Psi}\Big(\widetilde{t}-\frac{\mathfrak{C}_{2}\widetilde{t}^{2}}{\rho}\Big)\leq\hat{\widetilde{w}}_{\rho}(\widetilde{t})\leq\widetilde{\Psi}\Big(\widetilde{t}-\frac{\widetilde{t}^{2}}{\mathfrak{C}_{2}\rho}\Big),\quad\mbox{for all }0\leq\widetilde{t}\leq\mathfrak{c}_{1}\rho.
    \end{equation}
    Let $-t_{-}$ be the smallest positive number $\widetilde{t}$ such that $\hat{\widetilde{w}}_{\rho}(\widetilde{t})=\widetilde{\Psi}(m)=\Psi(m)$. By letting $\mathfrak{c}_{5}\ll\mathfrak{c}_{1}\ll1\ll\mathfrak{C}_{2}\ll\mathfrak{C}_{6}$, we see that as long as $m\in[0,\mathfrak{c}_{5}\rho]$, it must hold that
    \begin{equation*}
        m+\frac{m^{2}}{\mathfrak{C}_{6}\rho}\leq-t_{-}\leq m+\frac{\mathfrak{C}_{6}m^{2}}{\rho}.
    \end{equation*}
    Then, let us define $w_{\rho,m}(t)$ for $t\in[t_{-},0]$ as follows:
    \begin{equation*}
        w_{\rho,m}(t):=\Psi(m)-\hat{\widetilde{w}}_{\rho}(-t).
    \end{equation*}
    Then, the ODE inequality in part (2) follows from \eqref{eq. up side down ODE}. Since $\hat{\widetilde{w}}_{\rho}(-t)$ is a deformation of $\Psi$, the estimate in part (1) follows immediately. Finally, from \eqref{eq. widetilde Psi} and \eqref{eq. up side down deformation estimate}, we see that for all $t\in[t_{-},0]$,
    \begin{equation*}
        w_{\rho,m}(t)\leq\Psi(m)-\widetilde{\Psi}\Big(-t-\frac{\mathfrak{C}_{2}t^{2}}{\rho}\Big)\leq\Psi\Big(m+t+\frac{\mathfrak{C}_{2}t^{2}}{\rho}\Big)\leq\Psi\Big(m+t+\frac{\mathfrak{C}_{6}t^{2}}{\rho}\Big).
    \end{equation*}
    The other side of the estimate in part (3) can be argued similarly. Then, we have finished the proof of Lemma~\ref{lem. barrier B}.
\end{proof}

\section{Maximum principles}\label{sec. appendix maximum principle}
In this section, we prove the following two maximum principles and give several applications. The main difference with the traditional maximum principle is that we translate the barrier function in $\mathbb{R}^{2}$ rather than adding a constant to it.
\begin{lemma}\label{lem. maximum principle for Harnack}
    There exist uniform constants $\overline{\delta},\lambda\in(0,1)$, such that the following fact holds for all $r>0$ and $\delta\leq\overline{\delta}$:
    \begin{itemize}
        \item[(A)] \textbf{Lower bound increases.} Assume that
        \begin{equation*}
            \left\{\begin{aligned}
                &\psi\mbox{ solves \eqref{eq. main simple} and satisfies }\psi(x)\geq\Psi(x_{2})&\mbox{in }&[-r,r]^{2},\\
                &\psi(x)\geq\Psi(x_{2}+\delta r)&\mbox{in }&[-r,r]\times[\lambda r,r].
            \end{aligned}\right.
        \end{equation*}
        Then, one has
        \begin{equation*}
            \psi(x)\geq\Psi(x_{2}+\frac{\delta r}{64})\quad\mbox{in }[-\lambda r,\lambda r]^{2}.
        \end{equation*}
        \item[(B)] \textbf{Upper bound decreases.} Assume that
        \begin{equation*}
            \left\{\begin{aligned}
                &\psi\mbox{ solves \eqref{eq. main simple} and satisfies }\psi(x)\leq\Psi(x_{2})&\mbox{in }&[-r,r]^{2},\\
                &\psi(x)\leq\Psi(x_{2}-\delta r)&\mbox{in }&[-r,r]\times[\lambda r,r].
            \end{aligned}\right.
        \end{equation*}
        Then, one has
        \begin{equation*}
            \psi(x)\leq\Psi(x_{2}-\frac{\delta r}{64})\quad\mbox{in }[-\lambda r,\lambda r]^{2}.
        \end{equation*}
    \end{itemize}
\end{lemma}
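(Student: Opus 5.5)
We prove (A) only; (B) is the mirror argument, obtained by exchanging the roles of the sub- and super-solutions. The idea, following Savin's sliding method, is to slide a family of \emph{radial} sub-solutions built from the deformations of Appendix~\ref{sec. appendix ODE} from below until they touch $\psi$. Rescaling, we may assume $r$ is large compared with the scale on which $\Psi$ is nearly linear. This is harmless because all constants in Lemma~\ref{lem. barrier A} and Lemma~\ref{lem. barrier B} are uniform in $\rho$.

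\textbf{Step 1: the barrier.} For a centre $z=(z_{1},-\rho+\tau)$ with $\rho=\Lambda_{0}r$ and $\Lambda_{0}\gg1$ fixed, set
\begin{equation*}
W_{z}(x)=\check{w}_{\rho}\big(|x-z|-\rho+c\,\delta r\big)\quad\mbox{if }|x-z|\geq\rho-c\,\delta r,\qquad W_{z}=0\mbox{ otherwise}.
\end{equation*}
Near the free boundary we use instead $w_{\rho,m}$ from Lemma~\ref{lem. barrier B}, which lets the barrier vanish on a circle with outward gradient slope strictly larger than $\Lambda$. For a radial profile $w(|x-z|)$ the equation becomes the ODE operator of Lemma~\ref{lem. barrier A}(A1) with the term $\pm\frac{1}{\rho\pm t}$ coming from the curvature of circles. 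Hence $W_{z}$ is a strict sub-solution of \eqref{eq. main simple} in its positivity set.

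The gradient condition needs a separate check. At the zero circle we have $|\nabla W_{z}|=\sqrt{\check T(0)}$, and $\check T(0)$ equals $\Lambda^{2}$ plus a correction. Since $\mathcal{E}(0)=0$, we will perturb $\mathcal{E}$ by a small constant of order $\delta/\rho$ so that $|\nabla W_{z}|>\Lambda$ on $\partial\{W_{z}>0\}$. This makes $W_{z}$ a strict viscosity sub-solution of the free boundary problem. I regard this adjustment as routine, though it must be compatible with (A2).

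\textbf{Step 2: comparison on the lateral and top boundaries.} By (A2), $W_{z}(x)\leq\Psi\big(d(x)+\mathfrak{C}_{2}d(x)^{2}/\rho\big)$, where $d(x)=|x-z|-\rho+c\delta r$ is essentially $x_{2}-\tau+c\delta r+x_{1}^{2}/(2\rho)$. We choose $c=1/32$ and let $\tau$ slide from $-\delta r$ up to $0$. On $[-r,r]\times[\lambda r,r]$ the hypothesis $\psi\geq\Psi(x_{2}+\delta r)$ dominates $W_{z}$, provided the quadratic errors $\mathfrak{C}_{2}r^{2}/\rho$ and $r^{2}/\rho$ are at most $\delta r/4$. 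This forces $\rho\gtrsim r/\delta$ rather than $\rho\sim r$. We therefore take $\rho=r/(\delta\,\mathfrak{c})$, and this choice is where $\overline\delta$ small enters, through the constraint $m\leq\mathfrak{c}_{5}\rho$ and $t\leq\mathfrak{c}_{1}\rho$.

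On the lateral sides $|x_{1}|=r$ and in the strip $x_{2}\leq\lambda r$ we only know $\psi\geq\Psi(x_{2})$. There the circle's drop $x_{1}^{2}/(2\rho)\approx \delta\mathfrak{c}r/2$ at $|x_{1}|=r$ beats the shift $c\delta r$ once we choose $c<\mathfrak{c}/4$. Consequently $W_{z}\leq\Psi(x_{2})\leq\psi$ near $|x_{1}|=r$. This is the geometric reason for losing the factor $1/64$ and for localizing to $[-\lambda r,\lambda r]$: in that region $x_{1}^{2}/(2\rho)\leq\delta r/128$.

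\textbf{Step 3: sliding and conclusion.} At the initial position $\tau=-\delta r$ we have $W_{z}\leq\Psi(x_{2})\leq\psi$. We increase $\tau$ continuously. If a first contact occurs at an interior positive point, the strict differential inequality together with the strong comparison principle for the uniformly elliptic linearization (ellipticity from (A4)) gives a contradiction. If the contact is at a free boundary point, the strict gradient $|\nabla W_{z}|>\Lambda=|\nabla\psi|$ contradicts the Hopf-type comparison. Step 2 excludes contact on the outer boundary. Hence the barrier reaches $\tau=0$, which yields $\psi\geq W_{z}\geq\Psi(x_{2}+\delta r/64)$ on $[-\lambda r,\lambda r]^{2}$ after using (A2) and the smallness of the quadratic error.

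The main obstacle is the bookkeeping of Step 2: reconciling the three scales $\delta r$, $r^{2}/\rho$, and $\lambda r$ with the validity ranges $t\leq\mathfrak{c}_{1}\rho$ of the ODE barriers. A secondary difficulty is justifying the free-boundary touching argument at points where $\psi$ is only a viscosity solution. I would first try the choices $\rho=r/\delta$ and $\lambda$ a small absolute constant, and adjust them if the constants do not close.
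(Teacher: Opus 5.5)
Your barrier is oriented the wrong way, and this breaks Steps 2 and 3. You put the centre $z=(z_1,-\rho+\tau)$ \emph{below} and set $W_z=\check{w}_\rho(|x-z|-\rho+c\delta r)$. You yourself compute $d(x)\approx x_2-\tau+c\delta r+\frac{(x_1-z_1)^2}{2\rho}$, and (A2) of Lemma~\ref{lem. barrier A} gives $W_z\geq\Psi(d)$ on its positivity set. So $W_z$ is \emph{larger} at $|x_1-z_1|=r$ than above $z_1$: the ``circle's drop'' adds to the shift $c\delta r$ instead of competing with it.

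On the lateral sides you only know $\psi\geq\Psi(x_2)$, and equality is allowed there. Hence $W_z\leq\psi$ at $x_1=z_1\pm r$ forces $-\tau+c\delta r+\frac{r^2}{2\rho}\leq 0$. But then above $z_1$ you get $d\leq x_2-\frac{r^2}{2\rho}$, so the barrier sits below $\Psi(x_2)$ and gives no improvement at all. The initial position fails for the same reason. At $\tau=-\delta r$ you have $W_z\geq\Psi(x_2+(1+c)\delta r)$, which exceeds even the top-strip hypothesis. Moreover, increasing $\tau$ \emph{lowers} $W_z$, so you are sliding from the wrong side. An exterior-of-a-ball profile is a sub-solution for free, since the radial term $+\frac{1}{|x-z|}g\,w'$ helps. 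Precisely because of that geometry, however, it cannot localize a gain.

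What is needed, and what the paper does, is the opposite geometry. Take the centre $q_h=(a,h)$ \emph{above} and set $\check{\phi}_h(x)=\check{w}_\rho(\rho-|x-q_h|)$. Its positivity set is a ball, and its argument $t\approx x_2-(h-\rho)-\frac{(x_1-a)^2}{2\rho}$ decreases laterally. The radial operator then produces the unfavourable term $-\frac{1}{\rho-t}g\,w'$. Absorbing that term is exactly the role of the correction $\mathcal{E}$ in Lemma~\ref{lem. barrier A}(A1); you quote (A1) but use it in the configuration where it is not needed.

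With $\rho=r/\delta$, the paper slides $h$ down from $\rho+\frac{r}{4}$ to $\rho-\frac{\delta r}{64}$ on the annular sector $\check{\mathcal{D}}_h$ (where $|t|\leq 2\lambda r$ and the angle is at most $\sin^{-1}(\delta/2)$).
\begin{itemize}
\item The inner arc lies in $\{x_2\geq\lambda r\}$, where the hypothesis $\psi\geq\Psi(x_2+\delta r)$ applies.
\item The outer arc is where $\check{\phi}_h=0$.
\item On the side arcs the drop $(1-\cos\theta)(\rho-t)\geq\frac{\delta r}{16}$ beats the shift $\frac{\delta r}{64}$ plus the quadratic error $\frac{\delta r}{32}$.
\end{itemize}

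Once the orientation is corrected, your idea of adding a small positive constant to $\mathcal{E}$, so that $|\nabla\check{\phi}_h|>\Lambda$ on its free boundary, does work. It leaves $\mathcal{E}'$ and hence (A1) unchanged, and perturbs (A2) only by an arbitrarily small linear error. In viscosity terms it is a cleaner substitute for the paper's touching argument. The paper instead keeps the slope exactly $\Lambda$ and compares $\partial_\nu(g\,\partial_\nu\cdot)$ using the interior tangent ball of radius $\rho$.

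Two minor points.
\begin{itemize}
\item $w_{\rho,m}$ from Lemma~\ref{lem. barrier B} is a flipped super-solution and does not vanish with slope above $\Lambda$. The paper uses it only in Lemma~\ref{lem. maximum principle for forced decay}, where it vanishes inside $\{\psi>0\}$, so no free-boundary condition is involved.
\item You cannot rescale to large $r$, because $g(T,z)$ and $\mathcal{B}'(\psi)$ break scale invariance. Fortunately the argument does not need it.
\end{itemize}
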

\begin{lemma}\label{lem. maximum principle for forced decay}
    There exist uniform constants $\overline{\delta},\lambda\in(0,1)$, such that the following fact holds for all $r>0$ and $\delta\leq\widetilde{\delta}$: Assume that
    \begin{equation*}
        \psi\mbox{ satisfies \eqref{eq. quasilinear structure} and }\Psi(x_{2}-\delta r)\leq\psi(x)\leq\Psi(x_{2})\mbox{ in }(-r,r)\times(0,r).
    \end{equation*}
    Then, one has $\psi\leq\Psi(x_{2}-\frac{63}{64}\delta r)$ in $[-\lambda r,\lambda r]\times[0,\lambda r]$
\end{lemma}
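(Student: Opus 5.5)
The plan is a sliding-barrier argument from above, in the style of the other maximum principles of Appendix~\ref{sec. appendix maximum principle}: translate deformed shear flows in $\mathbb{R}^{2}$ rather than adding constants. The intended mechanism is \emph{forced decay}. There is no free boundary in $(-r,r)\times(0,r)$, and $\psi$ is pinned to $0$ on the fixed bottom $\{x_{2}=0\}$ (from $0\le\psi\le\Psi(x_{2})$ and $\Psi\equiv0$ on $\{t\le0\}$). The comparison function $\Psi(x_{2})$ carries no such constraint beyond its zero set. So a super-solution placed near the bottom can be pushed down by almost the full admissible shift $\delta r$, the slack being only $\tfrac{1}{64}\delta r$.

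\textbf{Step 1: the super-solution.} Take the type~I super-solution $\hat w_{\rho}$ of Lemma~\ref{lem. barrier A}(B), with $\rho=Cr$ for a large uniform $C$. Define radial barriers centred far above, $x^{0}=(x^{0}_{1},\rho+\tau)$, by
\begin{equation*}
\Phi_{x^{0}}(x)=\hat w_{\rho}\big(\rho+\tau-|x-x^{0}|\big).
\end{equation*}
In polar form the operator on radial functions is exactly the ODE expression in (B1), with the $+\frac{1}{\rho+t}$ curvature term. Hence $\Phi_{x^{0}}$ is a strict classical super-solution of \eqref{eq. quasilinear structure} wherever it is positive. By (B2), in the box one has
\begin{equation*}
\Phi_{x^{0}}\le\Psi\big(x_{2}-\tau-c\,|x_{1}-x^{0}_{1}|^{2}/\rho\big).
\end{equation*}
So $\Phi_{x^{0}}$ lies below $\Psi(x_{2}-\tau)$ near $x_{1}=x^{0}_{1}$ and drops by a definite amount $\sim r^{2}/\rho$ at the lateral sides $|x_{1}|=r$.

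\textbf{Step 2: the sliding.} Start with $\tau=0$. There $\Phi\ge$ the hypothesis bound $\Psi(x_{2})\ge\psi$ up to the quadratic correction, which we absorb by shifting slightly. Then increase $\tau$ continuously up to $\tfrac{63}{64}\delta r$, keeping $\psi\le\Phi_{x^{0}}$. A first touching point cannot lie in the open box, by the comparison principle for the uniformly elliptic quasilinear operator; uniform ellipticity holds by (A4) and (A5), since $|\nabla\psi|$ stays sub-sonic. It cannot lie on the bottom, where $\psi=0$. On the lateral and top sides we use $\psi\le\Psi(x_{2})$ together with the decay of $\Phi$. To make the top and sides unreachable, choose $\lambda$ small and $\overline\delta$ small relative to $\lambda$. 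Then the quadratic gap $c\,r^{2}/\rho$ at distance $r$ dominates the shift $\delta r$, while at $|x_{1}-x^{0}_{1}|\le\lambda r$ the gap is only $O(\lambda^{2}r^{2}/\rho)\le\tfrac{1}{64}\delta r$. Letting $x^{0}_{1}$ range over $[-\lambda r,\lambda r]$ and using (B2) once more then gives $\psi\le\Psi(x_{2}-\tfrac{63}{64}\delta r)$ on $[-\lambda r,\lambda r]\times[0,\lambda r]$.

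\textbf{Main obstacle.} The hard step is the bottom edge. As literally written, for $0<x_{2}<\tfrac{63}{64}\delta r$ the conclusion forces $\psi=0$. So in the sliding argument the barrier $\Phi$ is itself zero on a strip of width $\tau$ above $\{x_{2}=0\}$, and touching from above there is impossible only if $\psi$ already vanishes on that strip. I would first try to handle this by working with $v=(\Psi^{-1}\circ\psi-x_{2})/(\delta r)\in[-1,0]$ on $\{\psi>0\}$. The estimate would be proved in the form needed by Theorem~\ref{thm. Harnack mixed}, i.e.\ a quantitative improvement of $v$ by a factor $\tfrac{63}{64}$ relative to the Dirichlet bottom. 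The inequality would be checked only at points where both sides are positive, with the Dirichlet strip treated by the type~II barrier $w_{\rho,m}$ of Lemma~\ref{lem. barrier B}: its zero $t_{-}$ sits at distance $\approx m$ below the level $\Psi(m)$, and that zero is matched with the bottom. Reconciling this with the literal zero-set consequence is where I expect the argument to be delicate or require reinterpreting the statement.
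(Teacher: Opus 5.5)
\textbf{You are aiming at the wrong inequality, so Steps 1--2 cannot be repaired.} Read literally, the conclusion $\psi\le\Psi(x_2-\frac{63}{64}\delta r)$ is false. Indeed, $\psi=\Psi(x_2)$ solves \eqref{eq. quasilinear structure} in $(-r,r)\times(0,r)$ and satisfies $\Psi(x_2-\delta r)\le\psi\le\Psi(x_2)$, yet $\Psi(x_2)>\Psi(x_2-\frac{63}{64}\delta r)$ for every $x_2>0$. The ``main obstacle'' you found is exactly this. The sign is a typo. What the paper's proof establishes (its last display), and what Theorem~\ref{thm. Harnack mixed} uses in \eqref{eq. force decay 1 time}, is the \emph{lower}-bound improvement $\psi\ge\Psi\big(x_2-(1-\sigma)\delta r\big)$ near the bottom. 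Here $\sigma=\lambda^2/\mathfrak{C}_6$, far smaller than $\frac{1}{64}$.

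Your super-solution slide fails at the first step. Since $\rho+\tau-|x-x^0|\le x_2$, (B2) gives $\Phi_{x^0}<\Psi(x_2)$ wherever $x_2>0$, so the initial ordering $\psi\le\Phi_{x^0}$ is unavailable (take $\psi=\Psi$). Two further problems:
\begin{itemize}
\item With centre $(x^0_1,\rho+\tau)$ and argument $\rho+\tau-|x-x^0|$, the barrier equals $\hat{w}_{\rho}(x_2)$ on the line $x_1=x^0_1$ for every $\tau$, so raising $\tau$ does not translate it.
\item With $\rho=Cr$, the gap $\lambda^2r^2/\rho$ does not scale with $\delta$. It therefore cannot be $\le\frac{1}{64}\delta r$ for all small $\delta$ with $\lambda$ uniform; one needs $\rho=r/\delta$.
\end{itemize}

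\textbf{What the argument actually needs.} Your last paragraph names the right tool, the type~II barrier of Lemma~\ref{lem. barrier B}. However, it must be a sub-solution kept \emph{below} $\psi$, and its zero must not be matched with the bottom.

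The paper takes $\rho=r/\delta$, $m=2\lambda r$, and $\check{\phi}_h(x)=w_{\rho,m}(\rho-|x-q_h|)$ on the annular sector $\check{\mathcal{D}}_h$ ($t\in[t_-,0]$) centred at $q_h=(x_1,h)$. It lowers $h$ from $\rho+m+\frac{r}{4}$ to $\rho+m+\delta r$, keeping $\check{\phi}_h\le\psi$. The boundary comparisons are as follows.
\begin{itemize}
\item The top arc $t=0$ lies at height $\ge m+\delta r$, where $\psi\ge\Psi(x_2-\delta r)\ge\Psi(m)=\check{\phi}_h$.
\item The zero arc $t=t_-$ lies at height $\ge m+t_-+\delta r\ge\delta r-\mathfrak{C}_6m^2/\rho>0$. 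This is below the zero level of the hypothesised lower bound, but inside the open box, where $\psi>0$ by the strong maximum principle. This strict positivity replaces the free-boundary condition of Lemma~\ref{lem. maximum principle for Harnack} and excludes contact on the barrier's zero set; it is the whole forced-decay mechanism.
\item The sides are handled by the curvature gap as in that lemma.
\end{itemize}
Interior contact is impossible because $\check{\phi}_h$ is a strict sub-solution. At the final position, apply Lemma~\ref{lem. barrier B}(3) on the segment $x_2=t+m+\delta r\le\lambda r$, so that $|t|\ge\lambda r$. This yields $\psi\ge\Psi\big(x_2-\delta r+t^2/(\mathfrak{C}_6\rho)\big)\ge\Psi\big(x_2-(1-\lambda^2/\mathfrak{C}_6)\delta r\big)$.

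If instead you match the zero of $w_{\rho,m}$ with $\{x_2=0\}$, the top arc passes through height $-t_-<m+\delta r$. There the hypotheses give only $\psi\ge\Psi(x_2-\delta r)$, which is strictly less than $\Psi(m)$, so the top-arc comparison is lost.
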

\subsection{On the proof of Lemma~\ref{lem. maximum principle for Harnack}}
In this subsection, we give the proof of Lemma~\ref{lem. maximum principle for Harnack}.
\begin{proof}[Proof of Lemma~\ref{lem. maximum principle for Harnack}]
    We mainly prove part (A), leaving the proof of part (B) to the readers. The proof is divided into 5 steps.
    
    \textbf{Step 1: Geometric set-up.} Let $\mathfrak{c}_{1}$ and $\mathfrak{C}_{2}$ be the constants given in Lemma~\ref{lem. barrier A}. Denote:
    \begin{equation}\label{eq. maximum principle A, constants requirements}
        \lambda=(128\mathfrak{C}_{2})^{-1/2},\quad\overline{\delta}=\min\{\mathfrak{c}_{1},\lambda,1\},\quad\rho=\frac{r}{\delta}.
    \end{equation}
    Fix an $x_{1}\in[-\frac{r}{10},\frac{r}{10}]$, then we set
    \begin{equation*}
        q_{h}=x_{1}\vec{e}_{1}+h\vec{e}_{2}\in\mathbb{R}^{2}\mbox{ for each }h\in\mathbb{R},
    \end{equation*}
    and consider the following two domains:
    \begin{equation*}
        \left\{\begin{aligned}
            \check{\mathcal{D}}_{h}:=&\Big\{x\in\mathbb{R}^{2}:-2\lambda r\leq\rho-|x-q_{h}|\leq2\lambda r,\ \angle{\big(x-q_{h},-\vec{e}_{2}\big)}\leq\sin^{-1}(\frac{\delta}{2})\Big\},\\
            \hat{\mathcal{D}}_{h}:=&\Big\{x\in\mathbb{R}^{2}:-2\lambda r\leq|x-q_{h}|-\rho\leq2\lambda r,\ \angle{\big(x-q_{h},\vec{e}_{2}\big)}\leq\sin^{-1}(\frac{\delta}{2})\Big\}.
        \end{aligned}\right.
    \end{equation*}
    Let
    \begin{equation*}
        \left\{\begin{aligned}
            &\check{\phi}_{h}(x):=\check{w}_{\rho}\big(\rho-|x-q_{h}|\big)&\mbox{for }x\in\check{\mathcal{D}}_{h},\\
            &\hat{\phi}_{h}(x)=\hat{w}_{\rho}\big(|x-q_{h}|-\rho\big)&\mbox{for }x\in\hat{\mathcal{D}}_{h}.
        \end{aligned}\right.
    \end{equation*}
    It is easy to verify that when $|x_{1}|\leq\frac{r}{10}$, one always has
    \begin{equation*}
        \left\{\begin{aligned}
            &\check{\mathcal{D}}_{h}\subseteq[-r,r]^{2}&\mbox{for all }&\rho-\frac{r}{4}\leq h\leq\rho+\frac{r}{4},\\
            &\hat{\mathcal{D}}_{h}\subseteq[-r,r]^{2}&\mbox{for all }&-\rho-\frac{r}{4}\leq h\leq-\rho+\frac{r}{4}.
        \end{aligned}\right.
    \end{equation*}
    From now on, we only consider case (A). In the following process, we set $h=\rho+\frac{r}{4}$ at the beginning and decrement $h$ until $h=\rho-\frac{\delta r}{64}$. For simplicity, denote
    \begin{equation*}
        t=\rho-|x-q_{h}|\in[-2\lambda r,2\lambda r],\quad\theta=\angle{\big(x-q_{h},-\vec{e}_{2}\big)}\leq\sin^{-1}(\frac{\delta}{2}).
    \end{equation*}
    Clearly, $\check{\phi}_{h}(x)=\check{w}_{\rho}(t)$ in $\check{\mathcal{D}}_{h}$. By \eqref{eq. maximum principle A, constants requirements}, one has $|t|\leq2\lambda r\leq\mathfrak{c}_{1}\rho$ in $\check{\mathcal{D}}_{h}$. Then, by Lemma~\ref{lem. barrier A}, one has
    \begin{equation}\label{eq. what lemma 3.1 implies in Maximum principle A}
        \Psi\big(t+\frac{t^{2}}{\mathfrak{C}_{2}\rho}\big)\leq\check{\phi}_{h}(x)\leq\Psi\big(t+\frac{\mathfrak{C}_{2}t^{2}}{\rho}\big)\quad\mbox{in }\check{\mathcal{D}}_{h}.
    \end{equation}
    
    \textbf{Step 2: Initial comparison.} When $h=\rho+\frac{r}{4}$, one has
    \begin{equation*}
        t=\rho-|x-q_{h}|\leq\rho-(h-x_{2})=x_{2}-\frac{r}{4}.
    \end{equation*}
    Since $|t|\leq2\lambda r$ in $\check{\mathcal{D}}_{h}$, it follows from \eqref{eq. maximum principle A, constants requirements} that
    \begin{equation*}
        x_{2}-t-\frac{\mathfrak{C}_{2}t^{2}}{\rho}\geq\frac{r}{4}-\frac{\mathfrak{C}_{2}\cdot(2\lambda r)^{2}}{\rho}\geq\frac{r}{4}-\frac{\delta r}{32}\geq\frac{r}{8}>0.
    \end{equation*}
    This estimate, together with \eqref{eq. what lemma 3.1 implies in Maximum principle A}, implies
    \begin{equation*}
        \check{\phi}_{h}(x)\leq\Psi\Big(t+\frac{\mathfrak{C}_{2}t^{2}}{\rho}\Big)\leq\Psi(x_{2})\quad\mbox{in }\check{\mathcal{D}}_{h}.
    \end{equation*}
    In other words,
    \begin{equation*}
        \check{\phi}_{h}(x)\leq\Psi(x_{2})\leq\psi(x)\mbox{ in }\check{\mathcal{D}}_{h},\quad\mbox{when }h=\rho+\frac{r}{4}.
    \end{equation*}

    \textbf{Step 3: Boundary comparison.} When $\rho-\frac{\delta r}{64}\leq h\leq\rho+\frac{r}{4}$, we decompose $\partial\check{\mathcal{D}}_{h}$ into three parts:
    \begin{itemize}
        \item Outer boundary: $\partial_{out}\check{\mathcal{D}}_{h}:=\partial\check{\mathcal{D}}_{h}\cap\{t=-2\lambda r\}$,
        \item Inner boundary: $\partial_{in}\check{\mathcal{D}}_{h}:=\partial\check{\mathcal{D}}_{h}\cap\{t=2\lambda r\}$,
        \item Side boundary: $\partial_{side}\check{\mathcal{D}}_{h}:=\partial\check{\mathcal{D}}_{h}\cap\{\theta=\sin^{-1}(\frac{\delta}{2})\}$.
    \end{itemize}
    Our goal is to show that there exists a function $p(\delta,r)>0$, such that
    \begin{equation}\label{eq. maximum principle A boudnary comparison}
        \left\{\begin{aligned}
            &\psi(x)\geq\check{\phi}_{h}(x)&\mbox{on }&\partial\check{\mathcal{D}}_{h},\\
            &\psi-\check{\phi}_{h}\geq p(\delta,r)>0&\mbox{on }&\partial\check{\mathcal{D}}_{h}\cap\{\check{\phi}_{h}>0\},
        \end{aligned}\right.\quad\mbox{for all }\rho-\frac{\delta r}{64}\leq h\leq\rho+\frac{r}{4}.
    \end{equation}
    The estimate \eqref{eq. maximum principle A boudnary comparison} is to be verified on three types of boundary points separately.
    \begin{itemize}
        \item On the outer boundary $\partial_{out}\check{\mathcal{D}}_{h}=\partial\check{\mathcal{D}}_{h}\cap\{t=-2\lambda r\}$, we have
        \begin{equation*}
            \check{\phi}_{h}(x)=\check{w}_{\rho}(-2\lambda t)=0\leq\psi(x),
        \end{equation*}
        and thus \eqref{eq. maximum principle A boudnary comparison} is verified on $\partial_{out}\check{\mathcal{D}}_{h}$.
        \item On the inner boundary $\partial_{in}\check{\mathcal{D}}_{h}=\partial\check{\mathcal{D}}_{h}\cap\{t=2\lambda r\}$, we first notice that as $\rho-\frac{\delta r}{64}\leq h\leq\rho+\frac{r}{4}$, it holds that
        \begin{equation}\label{eq. maximum principle A inner boundary}
            2\lambda r=t=\rho-|x-q_{h}|\leq\rho-(h-x_{2})\leq x_{2}+\frac{\delta r}{64}\quad\mbox{on }\partial_{in}\check{\mathcal{D}}_{h}.
        \end{equation}
        As a result, $\partial_{in}\check{\mathcal{D}}_{h}\subseteq[-r,r]\times[\lambda r,r]$, and thus
        \begin{equation*}
            \psi(x)\geq\Psi(x_{2}+\delta r)>0\mbox{ on }\partial_{in}\check{\mathcal{D}}_{h}.
        \end{equation*}
        Moreover, it follows from \eqref{eq. maximum principle A, constants requirements} and \eqref{eq. maximum principle A inner boundary} that
        \begin{equation*}
            x_{2}+\delta r-2\lambda r-\frac{\mathfrak{C}_{2}}{\rho}(2\lambda r)^{2}\geq\frac{63}{64}\delta r-4\mathfrak{C}_{2}\lambda^{2}\delta r=\frac{61}{64}\delta r>0.
        \end{equation*}
        Applying \eqref{eq. what lemma 3.1 implies in Maximum principle A} yields that for all $x\in\partial_{in}\check{\mathcal{D}}_{h}$, it holds that
        \begin{align*}
            \check{\phi}_{h}(x)=&\check{w}_{\rho}(\lambda r)\leq\Psi\Big(\lambda r+\frac{\mathfrak{C}_{2}}{\rho}(\lambda r)^{2}\Big)\leq\Psi(x_{2}+\delta r-\frac{61}{64}\delta r)\\
            \leq&\Psi(x_{2}+\delta r)-p(\delta,r)\leq\psi(x)-p(\delta,r),
        \end{align*}
        and thus \eqref{eq. maximum principle A boudnary comparison} is verified on $\partial_{in}\check{\mathcal{D}}_{h}$.
        \item On the side boundary $\partial_{side}\check{\mathcal{D}}_{h}=\partial\check{\mathcal{D}}_{h}\cap\{\theta=\sin^{-1}(\frac{\delta}{2})\}$, one has
        \begin{equation*}
            (h-x_{2})=(\rho-t)\cdot\cos{\theta}.
        \end{equation*}
        Using the fact $h\geq\rho-\frac{\delta r}{64}$, one has
        \begin{equation*}
            x_{2}\geq(1-\cos{\theta})\rho+t\cdot\cos{\theta}-\frac{\delta r}{64}.
        \end{equation*}
        Moreover, for $x\in\partial_{side}\check{\mathcal{D}}_{h}$, one has
        \begin{equation*}
            |t|\leq2\lambda r\mbox{ and }1-\cos{\theta}\geq\frac{1}{2}(\sin{\theta})^{2}=\frac{\delta^{2}}{8}.
        \end{equation*}
        Consequently, for $x\in\partial_{side}\check{\mathcal{D}}_{h}$, it follows from \eqref{eq. maximum principle A, constants requirements} that
        \begin{align*}
            x_{2}-t-\frac{\mathfrak{C}_{2}t^{2}}{\rho}\geq&(1-\cos{\theta})\cdot(\rho-t)-\frac{\delta r}{64}-\frac{\mathfrak{C}_{2}(2\lambda r)^{2}}{\rho}\\
            \geq&\frac{\delta^{2}}{8}\cdot(\frac{r}{\delta}-2\lambda r)-\frac{\delta r}{64}-\frac{\delta r}{32}.
        \end{align*}
        By \eqref{eq. maximum principle A, constants requirements}, one has $\frac{r}{\delta}-2\lambda r\geq\frac{r}{2\delta}$ and thus
        \begin{equation*}
            x_{2}-t-\frac{\mathfrak{C}_{2}t^{2}}{\rho}\geq\frac{\delta r}{64}>0\mbox{ on }\partial_{side}\check{\mathcal{D}}_{h}.
        \end{equation*}
        Applying \eqref{eq. what lemma 3.1 implies in Maximum principle A} yields that
        \begin{equation*}
            \check{\phi}_{h}(x)\leq\Psi\Big(t+\frac{\mathfrak{C}_{2}t^{2}}{\rho}\Big)\leq\Psi(x_{2}-\frac{\delta r}{64})\leq\psi(x).
        \end{equation*}
        Furthermore, if $\check{\phi}_{h}(x)>0$, i.e. $t>0$, then we even have $x_{2}-\frac{\delta r}{64}>0$. Then,
        \begin{equation*}
            \check{\phi}_{h}(x)\leq\Psi(x_{2}-\frac{\delta r}{64})\leq\Psi(x_{2})-p(\delta,r)\leq\psi(x)-p(\delta,r),
        \end{equation*}and thus \eqref{eq. maximum principle A boudnary comparison} is verified on $\partial_{side}\check{\mathcal{D}}_{h}$.
    \end{itemize}
    In conclusion, we have verified \eqref{eq. maximum principle A boudnary comparison}.

    \textbf{Step 4: Open-and-closed argument.} Let
    \begin{equation*}
        h^{*}=\inf\Big\{\chi\in\big[\rho-\frac{\delta r}{64},\rho+\frac{r}{2}\big]:\check{\phi}_{h}\leq\psi\mbox{ in }\check{\mathcal{D}}_{h}\mbox{ for all }\chi\leq h\leq\rho+\frac{r}{2}\Big\}.
    \end{equation*}
    By \textbf{Step 2}, we know that $h^{*}$ exists. Moreover, by the continuity of $\psi$, the condition $\check{\phi}_{h}\leq\psi$ in $\check{\mathcal{D}}_{h}$ is a closed condition in $h$, meaning that
    \begin{equation}\label{eq. closed condition}
        \check{\phi}_{h^{*}}\leq\psi\mbox{ in }\check{\mathcal{D}}_{h^{*}}.
    \end{equation}
    We intend to show $h^{*}=\rho-\frac{\delta r}{64}$ by contradiction.
    
    Suppose that $h^{*}>\rho-\frac{\delta r}{64}$, then there exists a sequence $\{(h^{k},x^{k})\}_{k=0}^{\infty}$ satisfying
    \begin{equation*}
        \rho-\frac{\delta r}{64}\leq h^{k}<h^{*},\quad h^{k}\to h^{*},\quad x^{k}\in\check{\mathcal{D}}_{h^{k}},\quad\check{\phi}_{h^{k}}(x^{k})>\psi(x^{k}).
    \end{equation*}
    Clearly, up to a subsequence (still labeled by $k$), one has
    \begin{equation}\label{eq. x^* as limit for open-and-closed}
        x^{k}\to x^{*}\in\check{\mathcal{D}}_{h^{*}},\quad\mbox{with }\check{\phi}_{h^{*}}(x^{*})\geq\psi(x^{*})\mbox{ and }x^{*}\in\overline{\check{\mathcal{D}}_{h^{*}}\cap\{\check{\phi}_{h^{*}}>0\}}.
    \end{equation}
    This, combined with \eqref{eq. closed condition}, implies that
    \begin{equation}\label{eq. open-and-closed local max}
        \sup_{x\in\check{\mathcal{D}}_{h^{*}}}(\check{\phi}_{h^{*}}-\psi)=\check{\phi}_{h^{*}}(x^{*})-\psi(x^{*})=0.
    \end{equation}
    
    We first claim that $x^{*}\notin\partial\check{\mathcal{D}}_{h^{*}}$. In fact, suppose that $x^{*}\notin\partial\check{\mathcal{D}}_{h^{*}}$, then it follows from \eqref{eq. x^* as limit for open-and-closed} that $x^{*}\in\overline{\{\check{\phi}_{h^{*}}>0\}}\cap\partial\check{\mathcal{D}}_{h^{*}}$. By the continuity of $\psi$ and by \eqref{eq. maximum principle A boudnary comparison}, we have that
    \begin{equation*}
        \check{\phi}_{h^{*}}(x^{*})-\psi(x^{*})\leq-p(\delta,r)<0,
    \end{equation*}
    which is a contradiction. Therefore, $x^{*}\in int(\check{\mathcal{D}}_{h^{*}})$. Using \eqref{eq. closed condition}, we further have
    \begin{equation*}
        x^{*}\in int(\check{\mathcal{D}}_{h^{*}})\cap\overline{\{\check{\phi}_{h^{*}}>0\}}\cap\overline{\{\psi>0\}}.
    \end{equation*}
    Next, by Lemma~\ref{lem. barrier A}, we see that $\check{\phi}_{h^{*}}$ is a strict sub-solution to \eqref{eq. main simple} in the region $\check{\mathcal{D}}_{h^{*}}\cap\{\check{\phi}_{h^{*}}>0\}$, then one can exclude the possibility that $\check{\phi}_{h^{*}}(x^{*})>0$ by applying the second derivative test to \eqref{eq. open-and-closed local max}. Consequently, $\psi(x^{*})=\check{\phi}_{h^{*}}(x^{*})=0$, and thus
    \begin{equation}\label{eq. open and close x* final possibility}
        x^{*}\in int(\check{\mathcal{D}}_{h^{*}})\cap\partial\big(\{\check{\phi}_{h^{*}}>0\}\big)\cap\partial\big(\{\psi>0\}\big).
    \end{equation}
    
    Let $\vec{H}$ be the mean curvature vector of $\partial\big(\{\psi>0\}\big)$. By \eqref{eq. open-and-closed local max} and \eqref{eq. open and close x* final possibility}, $\{\psi>0\}$ allows a interior tangential ball of radius $\rho$ at $x^{*}$, so
    \begin{equation*}
        \vec{H}(x^{*})\cdot\vec{\nu}\leq\frac{1}{\rho}\quad\mbox{in the viscosity sense, where }\vec{\nu}=\frac{\nabla\psi(x^{*})}{|\nabla\psi(x^{*})|}.
    \end{equation*}
    Writing \eqref{eq. main simple} in terms of the level set gives
    \begin{align*}
        \frac{\mathcal{B}'(0)}{g(\Lambda^{2},0)}=&\nabla\cdot(g\nabla\psi)(x^{*})=-\vec{H}(x^{*})\cdot g(|\nabla\psi|^{2},\psi)\nabla\psi(x^{*})+\partial_{\vec{\nu}}(g\partial_{\vec{\nu}}\psi)(x^{*})\\
        \geq&-\frac{\Lambda\cdot g(\Lambda^{2},0)}{\rho}+\partial_{\vec{\nu}}(g\partial_{\vec{\nu}}\psi)(x^{*})\quad\mbox{in the viscosity sense}.
    \end{align*}
    In other words,
    \begin{equation*}
        \partial_{\vec{\nu}}\Big(g(|\nabla\psi|^{2},\psi)\partial_{\vec{\nu}}\psi\Big)(x^{*})\leq\frac{\mathcal{B}'(0)}{g(\Lambda^{2},0)}+\frac{\Lambda\cdot g(\Lambda^{2},0)}{\rho}.
    \end{equation*}
    On the other hand, by letting $t=0$ in Lemma~\ref{lem. barrier A} (A1), one has
    \begin{equation*}
        \partial_{\vec{\nu}}\Big(g(|\nabla\check{\phi}_{h^{*}}|^{2},\check{\phi}_{h^{*}})\partial_{\vec{\nu}}\check{\phi}_{h^{*}}\Big)(x^{*})=\frac{d}{dt}\Big(g\big(|\frac{d}{dt}\check{w}_{\rho}|^{2},\check{w}_{\rho}\big)\frac{d}{dt}\check{w}_{\rho}\Big)(0)>\frac{\mathcal{B}'(0)}{g(\Lambda^{2},0)}+\frac{\Lambda\cdot g(\Lambda^{2},0)}{\rho}.
    \end{equation*}
    Then, we conclude that
    \begin{equation*}
        \partial_{\vec{\nu}}\Big(g(|\nabla\psi|^{2},\psi)\partial_{\vec{\nu}}\psi\Big)(x^{*})<\partial_{\vec{\nu}}\Big(g(|\nabla\check{\phi}_{h^{*}}|^{2},\check{\phi}_{h^{*}})\partial_{\vec{\nu}}\check{\phi}_{h^{*}}\Big)(x^{*}).
    \end{equation*}
    Notice that $\vec{\nu}$ is the normal vector at $x^{*}$ pointing inwards to both $\{\psi>0\}$ and $\{\check{\phi}_{h^{*}}>0\}$ and notice that $\nabla\psi(x^{*})=\nabla\check{\phi}_{h^{*}}(x^{*})=\Lambda\vec{\nu}$, we then obtain that
    \begin{equation*}
        \check{\phi}_{h^{*}}(x^{*}+t\vec{\nu})>\psi(x^{*}+t\vec{\nu})\quad\mbox{for sufficiently small }t>0,
    \end{equation*}
    and thus \eqref{eq. closed condition} is violated. Therefore, we have shown $h^{*}=\rho-\frac{\delta r}{64}$.

    \textbf{Step 5: Ending statement.} Let $x_{1}\in[-\frac{r}{10},\frac{r}{10}]$ and let
    \begin{equation*}
        x\in\{x_{1}\}\times\big[-\frac{\delta r}{64}-2\lambda r,-\frac{\delta r}{64}+2\lambda r\big]\subseteq\check{\mathcal{D}}_{\rho-\frac{\delta r}{64}}.
    \end{equation*}
    It follows that
    \begin{equation*}
        t=\rho-|x-q_{\rho-\frac{\delta r}{64}}|=x_{2}+\frac{\delta r}{64}
    \end{equation*}
    By \eqref{eq. what lemma 3.1 implies in Maximum principle A}, we have
    \begin{equation*}
        \psi(x)\geq\check{\phi}_{h^{*}}(x)\geq\Psi(t)=\Psi\big(x_{2}+\frac{\delta r}{64}\big)\quad\mbox{in }[-\frac{r}{10},\frac{r}{10}]\times\big[-\frac{\delta r}{64}-2\lambda r,-\frac{\delta r}{64}+2\lambda r\big].
    \end{equation*}
    Since \eqref{eq. maximum principle A, constants requirements} ensures
    \begin{equation*}
        [-\frac{r}{10},\frac{r}{10}]\times\big[-\frac{\delta r}{64}-2\lambda r,-\frac{\delta r}{64}+2\lambda r\big]\subseteq[-\lambda r,\lambda r]^{2},
    \end{equation*}
    we have then verified the desired estimate of Lemma~\ref{lem. maximum principle for Harnack} part (A).
\end{proof}

\subsection{Two applications of Lemma~\ref{lem. maximum principle for Harnack}}
Now, we state and prove two corollaries of Lemma~\ref{lem. maximum principle for Harnack}.
\begin{corollary}\label{cor. application A of maximum principle}
    There exist two functions $\overline{\epsilon}(\kappa),\sigma(\kappa)>0$ defined for all $\kappa\in(0,1)$, such that the following fact holds for all $\kappa\in(0,1)$, $\epsilon\leq\overline{\epsilon}(\kappa)$ and $R>0$: Assume that
    \begin{equation*}
        \psi\mbox{ solves \eqref{eq. main simple} and satisfies }\Psi(y-\epsilon R)\leq\psi(z)\leq\Psi(y+\epsilon R)\mbox{ in }[-R,R]^{2}.
    \end{equation*}
    \begin{itemize}
        \item[(A)] \textbf{Lower bound increases.} If
        \begin{equation*}
            \psi(x)\geq\Psi\big(x_{2}-(1-\kappa)\epsilon R\big)\quad\mbox{in }\big[-\frac{9}{10}R,\frac{9}{10}R\big]\times\big[\frac{\lambda}{10}R,\frac{9}{10}R\big],
        \end{equation*}
        with $\lambda$ being the uniform constant produced by Lemma~\ref{lem. maximum principle for Harnack}. Then, one has
        \begin{equation*}
            \psi(x)\geq\Psi\big(x_{2}-(1-\frac{\kappa}{64})\epsilon R\big)\quad\mbox{in }\big[-\frac{\lambda}{4}R,\frac{\lambda}{4}R\big]^{2}.
        \end{equation*}
        \item[(B)] \textbf{Upper bound decreases.} If
        \begin{equation*}
            \psi(x)\leq\Psi\big(x_{2}+(1-\kappa)\epsilon R\big)\quad\mbox{in }\big[-\frac{9}{10}R,\frac{9}{10}R\big]\times\big[\frac{\lambda}{10}R,\frac{9}{10}R\big],
        \end{equation*}
        with $\lambda$ being the uniform constant produced by Lemma~\ref{lem. maximum principle for Harnack}. Then, one has
        \begin{equation*}
            \psi(x)\leq\Psi\big(x_{2}+(1-\frac{\kappa}{64})\epsilon R\big)\quad\mbox{in }\big[-\frac{\lambda}{4}R,\frac{\lambda}{4}R\big]^{2}.
        \end{equation*}
    \end{itemize}
\end{corollary}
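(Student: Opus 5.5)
The plan is to reduce part (A) to Lemma~\ref{lem. maximum principle for Harnack} (A) by a vertical translation and a rescaling to a smaller square centered on the $x_{1}$-axis. Part (B) follows symmetrically from Lemma~\ref{lem. maximum principle for Harnack} (B).

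Fix a base point $x_{1}^{0}\in[-\frac{\lambda}{4}R,\frac{\lambda}{4}R]$ and set $r:=\frac{R}{2}$. Define the translated function
\begin{equation*}
    \widetilde{\psi}(z):=\psi\big(z+x_{1}^{0}\vec{e}_{1}-(1-\kappa)\epsilon R\,\vec{e}_{2}\big).
\end{equation*}
This is again a solution of \eqref{eq. main simple}, since the equation and the free boundary condition are translation invariant. I will then check the two hypotheses of Lemma~\ref{lem. maximum principle for Harnack} (A) in $[-r,r]^{2}$, with $\delta r:=\kappa\epsilon R$, that is $\delta=2\kappa\epsilon$.

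First, we need $\widetilde{\psi}(z)\geq\Psi(z_{2})$ in $[-r,r]^{2}$. Translating back, this says $\psi(x)\geq\Psi\big(x_{2}+(1-\kappa)\epsilon R\big)$. That is false in general, so the choice of shift has to be adjusted. The correct normalization is to shift by the full $\epsilon R$: set
\begin{equation*}
    \widetilde{\psi}(z)=\psi\big(z+x_{1}^{0}\vec{e}_{1}+\epsilon R\,\vec{e}_{2}\big),
\end{equation*}
so that $\widetilde{\psi}(z)\geq\Psi(z_{2})$ follows directly from the global lower bound $\psi\geq\Psi(x_{2}-\epsilon R)$. The improved region hypothesis then reads $\widetilde{\psi}(z)\geq\Psi(z_{2}+\kappa\epsilon R)$ on the translate of $[-\frac{9}{10}R,\frac{9}{10}R]\times[\frac{\lambda}{10}R,\frac{9}{10}R]$. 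This matches the second hypothesis of the lemma with $\delta r=\kappa\epsilon R$.

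The geometric bookkeeping is the main point to verify. The shifted square $[-r,r]\times[-r,r]$ with $r\sim R/2$ corresponds to $x_{2}\in[\epsilon R-r,\epsilon R+r]$. This lies in $[-R,R]$ provided $\epsilon\leq\overline{\epsilon}(\kappa)$ is small. The strip $[\lambda r,r]$ in $z_{2}$ corresponds to $x_{2}\in[\epsilon R+\lambda r,\epsilon R+r]$, which must sit inside $[\frac{\lambda}{10}R,\frac{9}{10}R]$. This holds if $r=R/2$ and $\epsilon\ll\lambda$. The horizontal range $|x_{1}^{0}|+r\leq\frac{9}{10}R$ is fine because $\lambda<1$. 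We also need $\delta=2\kappa\epsilon\leq\overline{\delta}$, which is again ensured by $\overline{\epsilon}(\kappa)\leq\overline{\delta}/2$.

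Applying Lemma~\ref{lem. maximum principle for Harnack} (A) gives $\widetilde{\psi}(z)\geq\Psi(z_{2}+\frac{\kappa\epsilon R}{64})$ in $[-\lambda r,\lambda r]^{2}$. In original coordinates this is $\psi(x)\geq\Psi\big(x_{2}-(1-\frac{\kappa}{64})\epsilon R\big)$ for $x_{2}\in[\epsilon R-\lambda r,\epsilon R+\lambda r]$.

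The main obstacle is that this window is centered at height $\epsilon R$ rather than $0$. We need the estimate on $x_{2}\in[-\frac{\lambda}{4}R,\frac{\lambda}{4}R]$, so we require $\lambda r-\epsilon R\geq\frac{\lambda}{4}R$, that is $\frac{\lambda}{2}-\epsilon\geq\frac{\lambda}{4}$, which holds for $\epsilon\leq\lambda/4$. For points with $x_{2}$ above the window, the hypothesis itself already gives a stronger bound than required. For points below $-\epsilon R$, both sides vanish.

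Varying $x_{1}^{0}$ over $[-\frac{\lambda}{4}R,\frac{\lambda}{4}R]$ covers the target square, provided the lemma's conclusion is applied at the center column or the horizontal range $[-\lambda r,\lambda r]$ is used. Taking $\overline{\epsilon}(\kappa)=\min\{\overline{\delta}/2,\lambda/8\}$ then suffices. The function $\sigma(\kappa)$ appearing in the statement is not needed in this argument and can be taken as, say, $\lambda/4$.
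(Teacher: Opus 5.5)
Your argument is correct and is essentially the paper's proof: shift vertically by the full $\epsilon R$ so that the global lower bound becomes $\widetilde{\psi}\geq\Psi(z_{2})$, take $\delta r=\kappa\epsilon R$, apply Lemma~\ref{lem. maximum principle for Harnack}~(A), and use $\epsilon\leq\lambda/4$ to fit the target square inside the shifted window. The only differences are cosmetic: the paper uses $r=\frac{4}{5}R$ and $\delta=\frac{5}{4}\kappa\epsilon$ rather than your $r=\frac{R}{2}$ and $\delta=2\kappa\epsilon$, and your horizontal translation by $x_{1}^{0}$ is unnecessary, since with $x_{1}^{0}=0$ the lemma's conclusion on $[-\lambda r,\lambda r]^{2}$ already covers $[-\frac{\lambda}{4}R,\frac{\lambda}{4}R]^{2}$ (and the abandoned first choice of shift should be deleted from a written version).
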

\begin{proof}
    We only prove part (A) of Corollary~\ref{cor. application A of maximum principle}, leaving part (B) to the readers.
    
    Let $\overline{\delta},\lambda$ be the constants produced by Lemma~\ref{lem. maximum principle for Harnack}. Let
    \begin{equation*}
        \overline{\epsilon}(\kappa)=\min\Big\{\frac{4\overline{\delta}}{5\kappa},\frac{\lambda}{4},\frac{1}{10}\Big\}.
    \end{equation*}
    Let us denote
    \begin{equation*}
        r=\frac{4R}{5},\quad\delta=\frac{5}{4}\epsilon\cdot\kappa.
    \end{equation*}
    Then, clearly, $\delta r=\kappa\epsilon R$. Let $\widetilde{x}=x-\epsilon R\vec{e}_{2}$, and let
    \begin{equation*}
        \widetilde{\psi}(\widetilde{x})=\psi(\widetilde{x}+\epsilon R\vec{e}_{2})=\psi(x),\quad\mbox{for all }\widetilde{x}\in[-r,r]^{2}.
    \end{equation*}
    Clearly, for $\widetilde{x}\in[-r,r]^{2}$, correspondingly one has $x=\widetilde{x}+\epsilon R\vec{e}_{2}\in[-R,R]^{2}$. Therefore,
    \begin{equation*}
        \widetilde{\psi}(\widetilde{x})=\psi(x)\geq\Psi(x_{2}-\epsilon R)\geq\Psi(\widetilde{x}_{2})\quad\mbox{for }\widetilde{x}\in[-r,r]^{2}.
    \end{equation*}
    Furthermore, for $\widetilde{x}\in[-r,r]\times[\lambda r,r]$, correspondingly one has
    \begin{equation*}
        x=\widetilde{x}+\epsilon R\vec{e}_{2}\in\big[-\frac{9}{10}R,\frac{9}{10}R\big]\times\big[\frac{\lambda}{10}R,\frac{9}{10}R\big].
    \end{equation*}
    Therefore,
    \begin{equation*}
        \widetilde{\psi}(\widetilde{x})=\psi(x)\geq\Psi\big(x_{2}-(1-\kappa)\epsilon R\big)=\Psi(\widetilde{x}_{2}+\kappa\epsilon R)=\Psi(\widetilde{x}_{2}+\delta r)\quad\mbox{for }\widetilde{x}\in[-r,r]^{2}.
    \end{equation*}
    Applying Lemma~\ref{lem. maximum principle for Harnack} to $\widetilde{\psi}$ gives that
    \begin{equation*}
        \widetilde{\psi}(\widetilde{x})\geq\Psi\big(\widetilde{x}_{2}+\frac{\delta r}{64}\big)=\Psi\big(\widetilde{x}_{2}+\frac{\kappa\epsilon R}{64}\big)\quad\mbox{for }\widetilde{x}\in[-\lambda r,\lambda r]^{2}.
    \end{equation*}
    Let $x\in[-\frac{\lambda}{4}R,\frac{\lambda}{4}R\big]^{2}$, then correspondingly one has
    \begin{equation*}
        \widetilde{x}=x-\epsilon R\vec{e}_{2}\in\big[-\frac{\lambda}{4}R,\frac{\lambda}{4}R\big]\times\big[-\frac{\lambda}{2}R,\frac{\lambda}{2}R\big]\subseteq[-\lambda r,\lambda r]^{2}.
    \end{equation*}
    Therefore,
    \begin{equation*}
        \psi(x)=\widetilde{\psi}(\widetilde{x})\geq\Psi\big(\widetilde{x}_{2}+\frac{\kappa\epsilon R}{64}\big)=\Psi\big(x_{2}-(1-\frac{\kappa}{64})\epsilon R\big)\quad\mbox{for }x\in[-\frac{\lambda}{4}R,\frac{\lambda}{4}R\big]^{2}.
    \end{equation*}
    Then, we have finished the proof of part (A).
\end{proof}

\begin{corollary}\label{cor. application B of maximum principle}
    Let $\lambda\in(0,1)$ be the uniform constant produced by Lemma~\ref{lem. maximum principle for Harnack}. There exists a function $\delta_{0}(\beta)>0$ defined for all $\beta>0$, such that the following holds: Given $\beta>0$, let $r>0$, $0<\delta\leq \delta_{0}(\beta)$, and $0<\iota\leq\frac{\lambda\beta}{512}$ be arbitrary.
    \begin{itemize}
        \item[(A)] \textbf{Bending-up case:} Assume that
        \begin{equation*}
            \psi\mbox{ solves \eqref{eq. main simple} and satisfies }\psi(x)\geq\Psi\Big((1+\beta\delta)x_{2}-\iota\delta r\Big)\mbox{ in }B_{r}.
        \end{equation*}
        Then, one has
        \begin{equation*}
            \psi(x)\geq\Psi\big(x_{2}+\frac{\lambda}{512}\beta\delta r\big)\quad\mbox{in }B_{\frac{\lambda}{4}r}.
        \end{equation*}
        \item[(B)] \textbf{Bending-down case:} Assume that
        \begin{equation*}
            \psi\mbox{ solves \eqref{eq. main simple} and satisfies }\psi(z)\leq\Psi\Big((1-\beta\delta)y+\iota\delta r\Big)\mbox{ in }B_{r}.
        \end{equation*}
        Then, one has
        \begin{equation*}
            \psi(x)\leq\Psi\big(x_{2}-\frac{\lambda}{512}\beta\delta r\big)\quad\mbox{in }B_{\frac{\lambda}{4}r}.
        \end{equation*}
    \end{itemize}
\end{corollary}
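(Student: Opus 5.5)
The plan is to reduce Corollary~\ref{cor. application B of maximum principle} to Lemma~\ref{lem. maximum principle for Harnack} by a vertical translation. The idea is to split the tilted lower barrier into two pieces: a uniform lower bound by a shifted copy of $\Psi$, and a strictly better lower bound away from the bottom. I only describe part (A); part (B) is the mirror argument using Lemma~\ref{lem. maximum principle for Harnack} (B).

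\textbf{Step 1: A flat lower bound.} Set $s:=\iota\delta r$. For $x_{2}\geq0$ we have $(1+\beta\delta)x_{2}-s\geq x_{2}-s$. For $x_{2}<0$ both $(1+\beta\delta)x_{2}-s$ and $x_{2}-s$ are negative, so both $\Psi$-values vanish. Since $\Psi$ is nondecreasing, the hypothesis therefore gives $\psi(x)\geq\Psi(x_{2}-s)$ in $B_{r}$.

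\textbf{Step 2: Translate and verify the hypotheses of Lemma~\ref{lem. maximum principle for Harnack} (A).} Let $r'=r/2$ and define $\widetilde{\psi}(\widetilde{x}):=\psi(\widetilde{x}+s\vec{e}_{2})$ for $\widetilde{x}\in[-r',r']^{2}$. Because $s\leq\frac{\lambda\beta}{512}\delta r$, we can choose $\delta_{0}(\beta)$ small enough that $s\leq\frac{\lambda}{8}r$. Then the translated square lies in $B_{r}$, since $\frac{\sqrt{2}}{2}r+s<r$. Step 1 yields $\widetilde{\psi}(\widetilde{x})\geq\Psi(\widetilde{x}_{2})$ on $[-r',r']^{2}$. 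On the upper strip $[-r',r']\times[\lambda r',r']$ we have $x_{2}=\widetilde{x}_{2}+s\geq\lambda r'$, so
\begin{equation*}
    (1+\beta\delta)x_{2}-s=\widetilde{x}_{2}+\beta\delta x_{2}\geq\widetilde{x}_{2}+\beta\lambda\delta\, r'.
\end{equation*}
Hence $\widetilde{\psi}(\widetilde{x})\geq\Psi(\widetilde{x}_{2}+\delta' r')$ there, with $\delta':=\beta\lambda\delta$. We further shrink $\delta_{0}(\beta)$ so that $\delta'\leq\overline{\delta}$, the constant of Lemma~\ref{lem. maximum principle for Harnack}. The equation \eqref{eq. main simple} is translation invariant, so $\widetilde{\psi}$ still solves it.

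\textbf{Step 3: Apply the lemma and translate back.} Lemma~\ref{lem. maximum principle for Harnack} (A) gives $\widetilde{\psi}(\widetilde{x})\geq\Psi(\widetilde{x}_{2}+\frac{\beta\lambda\delta r}{128})$ on $[-\lambda r',\lambda r']^{2}$. Translating back, this reads
\begin{equation*}
    \psi(x)\geq\Psi\Big(x_{2}-s+\frac{\lambda\beta\delta r}{128}\Big)\geq\Psi\Big(x_{2}+\frac{3\lambda\beta\delta r}{512}\Big),
\end{equation*}
using $s\leq\frac{\lambda\beta}{512}\delta r$. This holds on $s\vec{e}_{2}+[-\frac{\lambda r}{2},\frac{\lambda r}{2}]^{2}$. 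That set contains $B_{\lambda r/4}$ because $s\leq\frac{\lambda r}{8}$. The resulting bound is stronger than the claimed $\Psi(x_{2}+\frac{\lambda}{512}\beta\delta r)$.

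The only delicate point is the bookkeeping in Step 2. We must check that every smallness requirement is absorbed into $\delta_{0}(\beta)$: $\beta\lambda\delta\leq\overline{\delta}$ and $\iota\delta\leq\lambda/8$. We must also check that the gain $\beta\lambda\delta r/128$ coming from the strip beats the loss $s$ coming from the translation; the assumption $\iota\leq\lambda\beta/512$ is exactly what ensures this.
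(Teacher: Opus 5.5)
Your proof of (A) is correct and is essentially the paper's argument: translate by $s=\iota\delta r$, pass to the square of half the size, verify the two hypotheses of Lemma~\ref{lem. maximum principle for Harnack}~(A), and translate back. Using $x_{2}\geq\lambda r'$ directly on the strip, rather than the paper's cruder $x_{2}\geq\frac{\lambda}{4}r$, gives you the better gain $\frac{3\lambda\beta\delta r}{512}$. Like the paper, you leave (B) as the mirror case, but there Step 1 does not literally mirror: for $-s<x_{2}<0$ one has $(1-\beta\delta)x_{2}+s>x_{2}+s$, so the flat bound must instead be $\psi\leq\Psi(x_{2}+s')$ with $s'=\frac{s}{1-\beta\delta}$, which follows from $(1-\beta\delta)x_{2}+s=(1-\beta\delta)(x_{2}+s')$.
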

\begin{proof}
    We only prove part (A) of Corollary~\ref{cor. application B of maximum principle}, leaving part (B) to the readers.
    
    Let $\lambda,\overline{\delta}$ be the uniform constants produced by Lemma~\ref{lem. maximum principle for Harnack}. Let us require
    \begin{equation*}
        \delta\leq \delta_{0}(\beta)=\min\Big\{\frac{2\overline{\delta}}{\lambda\beta},\frac{1}{\beta}\Big\}.
    \end{equation*}
    Clearly, one has $\iota\delta\leq\frac{\lambda}{512}$. Denote
    \begin{equation*}
        \widetilde{r}=\frac{r}{2},\quad\widetilde{\delta}=\frac{\lambda}{2}\beta\delta.
    \end{equation*}
    Let $\widetilde{x}=x-\iota\delta r\vec{e}_{2}$. If $\widetilde{x}\in[-\widetilde{r},\widetilde{r}]^{2}$, then
    \begin{equation*}
        x=\widetilde{x}+\iota\delta r\vec{e}_{2}\in\big[-\frac{3}{5}r,\frac{3}{5}r\big]^{2}\subseteq B_{r}.
    \end{equation*}
    Therefore, the following function $\widetilde{\psi}$ is well-defined:
    \begin{equation*}
        \widetilde{\psi}(\widetilde{x})=\psi(\widetilde{x}+\iota\delta r\vec{e}_{2})=\psi(x),\quad\mbox{for all }\widetilde{x}\in[-\widetilde{r},\widetilde{r}]^{2}.
    \end{equation*}
    
    We claim that
    \begin{equation}\label{eq. claim in cor B of maximum principle}
        \widetilde{\psi}(\widetilde{x})\geq\Psi(\widetilde{x}_{2})\quad\mbox{for }\widetilde{x}\in[-\widetilde{r},\widetilde{r}]^{2}.
    \end{equation}
    To show \eqref{eq. claim in cor B of maximum principle}, we first see that when $\widetilde{x}\in[-\widetilde{r},\widetilde{r}]^{2}$, one has
    \begin{equation*}
        \widetilde{\psi}(\widetilde{x})=\psi(x)\geq\Psi\Big((1+\beta\delta)x_{2}-\iota\delta r\Big)=\Psi(\beta\delta x_{2}+\widetilde{x}_{2}).
    \end{equation*}
    Then, \eqref{eq. claim in cor B of maximum principle} follows from considering the following two cases separately:
    \begin{itemize}
        \item If $x_{2}\geq0$, then clearly $\beta\delta x_{2}+\widetilde{x}_{2}\geq\widetilde{x}_{2}$, so $\widetilde{\psi}(\widetilde{x})\geq\Psi(\beta\delta x_{2}+\widetilde{x}_{2})\geq\Psi(\widetilde{x}_{2})$;
        \item If $x_{2}<0$, then $\widetilde{x}_{2}=x_{2}-\iota\delta r<0$, so $\Psi(\widetilde{x}_{2})=0\leq\widetilde{\psi}(\widetilde{x})$.
    \end{itemize}
    
    Moreover, when $\widetilde{x}\in[-\widetilde{r},\widetilde{r}]\times[\lambda\widetilde{r},\widetilde{r}]$, one has $x_{2}\geq\lambda\widetilde{r}-\iota\delta r\geq\frac{\lambda}{4}r$, so
    \begin{equation*}
        \widetilde{\psi}(\widetilde{x})\geq\Psi(\beta\delta x_{2}+\widetilde{x}_{2})\geq\Psi\big(\widetilde{x}_{2}+\frac{\lambda}{4}\beta\delta r\big)=\Psi(\widetilde{x}_{2}+\widetilde{\delta}\widetilde{r}).
    \end{equation*}
    Combining the above estimate with \eqref{eq. claim in cor B of maximum principle} and applying Lemma~\ref{lem. maximum principle for Harnack} yields that
    \begin{equation*}
        \widetilde{\psi}(\widetilde{x})\geq\Psi\big(\widetilde{x}_{2}+\frac{\widetilde{\delta}\widetilde{r}}{64}\big)=\Psi\big(\widetilde{x}_{2}+\frac{\lambda}{256}\beta\delta r\big)\quad\mbox{for }\widetilde{x}\in[-\lambda\widetilde{r},\lambda\widetilde{r}]^{2}.
    \end{equation*}

    For $x\in B_{\frac{\lambda}{4}r}$, one has
    \begin{equation*}
        \widetilde{x}=x-\iota\delta r\vec{e}_{2}\in B_{\frac{\lambda}{2}r}\subseteq[-\lambda\widetilde{r},\lambda\widetilde{r}]^{2}.
    \end{equation*}
    Then, for $x\in B_{\frac{\lambda}{4}r}$, one has
    \begin{equation*}
        \psi(x)=\widetilde{\psi}(\widetilde{x})\geq\Psi\big(\widetilde{x}_{2}+\frac{\lambda}{256}\beta\delta r\big)=\Psi\big(x_{2}-\iota\delta r+\frac{\lambda}{256}\beta\delta r\big).
    \end{equation*}
    Since $\iota\leq\frac{\lambda\beta}{512}$, we get that
    \begin{equation*}
        \psi(x)\geq\Psi\big(\widetilde{x}_{2}+\frac{\lambda}{512}\beta\delta r\big)\quad\mbox{for }x\in B_{\frac{\lambda}{4}r}.
    \end{equation*}
    Hence, we have finished the proof of part (A).
\end{proof}

\subsection{A variant of Lemma~\ref{lem. maximum principle for Harnack}}
By the proof of Lemma~\ref{lem. maximum principle for Harnack} part (A), we have its variant below, where the gradient assumption on the free boundary is not fully assumed.
\begin{lemma}\label{lem. maximum variant}
    There exist uniform constants $\overline{\delta},\lambda\in(0,1)$, such that the following fact holds for all $r>0$ and $\delta\leq\overline{\delta}$: Assume that $\psi$ satisfies
    \begin{equation*}
        \left\{\begin{aligned}
            &\nabla\cdot\Big(g(|\nabla\psi|^{2},\psi)\nabla\psi\Big)=\frac{\mathcal{B}'(\psi)}{g(|\nabla\psi|^{2},\psi)}&\mbox{in }&\{\psi>0\}\cap[-r,r]^{2},\\
            &|\nabla\psi|=\Lambda&\mbox{on }&\Big([-r,r]\times\big[-\frac{\delta r}{2},r\big]\Big)\cap\partial\{\psi>0\},\\
            &\psi(x)\geq\Psi(x_{2})&\mbox{in }&[-r,r]^{2},\\
            &\psi(x)\geq\Psi(x_{2}+\delta r)&\mbox{in }&[-r,r]\times[\lambda r,r].
        \end{aligned}\right.
    \end{equation*}
    Then, one has
    \begin{equation*}
        \psi(x)\geq\Psi(x_{2}+\frac{\delta r}{64})\quad\mbox{in }[-\lambda r,\lambda r]^{2}.
    \end{equation*}
\end{lemma}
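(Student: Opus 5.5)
The plan is to rerun the proof of Lemma~\ref{lem. maximum principle for Harnack} part (A) essentially verbatim. The only change is to check that the free boundary condition $|\nabla\psi|=\Lambda$ is needed only at the contact points that can actually occur, and that these lie in the strip $[-r,r]\times[-\frac{\delta r}{2},r]$. Concretely, I keep the constants of \eqref{eq. maximum principle A, constants requirements}: $\lambda=(128\mathfrak{C}_{2})^{-1/2}$, $\overline{\delta}=\min\{\mathfrak{c}_{1},\lambda,1\}$, $\rho=r/\delta$. I also keep the family of domains $\check{\mathcal{D}}_{h}$ and the radial sub-solutions $\check{\phi}_{h}(x)=\check{w}_{\rho}(\rho-|x-q_{h}|)$ built from Lemma~\ref{lem. barrier A}(A). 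As before, $h$ slides down from $\rho+\frac{r}{4}$ to $\rho-\frac{\delta r}{64}$.

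Steps 2 and 3 go through without change, because they use only the hypotheses $\psi\geq\Psi(x_{2})$ in $[-r,r]^{2}$ and $\psi\geq\Psi(x_{2}+\delta r)$ in $[-r,r]\times[\lambda r,r]$. This gives the initial ordering $\check{\phi}_{h}\leq\psi$ at $h=\rho+\frac{r}{4}$. It also gives the strict boundary separation \eqref{eq. maximum principle A boudnary comparison} on $\partial\check{\mathcal{D}}_{h}\cap\{\check{\phi}_{h}>0\}$. The open-and-closed argument of Step 4 then produces a first touching point $x^{*}$ in the interior of $\check{\mathcal{D}}_{h^{*}}$. At an interior positive touching point, the strict sub-solution property (A1) gives a contradiction, and only the equation in $\{\psi>0\}\cap[-r,r]^{2}$ is used there. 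So the remaining case is $x^{*}\in\partial\{\check{\phi}_{h^{*}}>0\}\cap\partial\{\psi>0\}$. This is the only place where the gradient condition $|\nabla\psi(x^{*})|=\Lambda$ enters, since it is needed for the comparison of second normal derivatives.

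The key new step is to locate such an $x^{*}$. Any point of $\partial\{\check{\phi}_{h^{*}}>0\}$ has $t=0$, that is, $|x^{*}-q_{h^{*}}|=\rho$, with angle $\theta\leq\sin^{-1}(\frac{\delta}{2})$ from $-\vec{e}_{2}$. Hence $x^{*}_{2}=h^{*}-\rho\cos\theta\geq h^{*}-\rho\geq-\frac{\delta r}{64}$. It follows that $x^{*}\in[-r,r]\times[-\frac{\delta r}{2},r]$, and the assumed gradient condition applies. With $|\nabla\psi(x^{*})|=\Lambda$ in hand, I repeat the mean-curvature and normal-derivative comparison of Step 4: the interior tangent ball of radius $\rho$ together with (A1) at $t=0$ forces $\check{\phi}_{h^{*}}>\psi$ just inside along $\vec{\nu}$. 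This contradicts \eqref{eq. closed condition}, so $h^{*}=\rho-\frac{\delta r}{64}$. Step 5 then gives $\psi\geq\Psi(x_{2}+\frac{\delta r}{64})$ in $[-\lambda r,\lambda r]^{2}$, exactly as before.

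The main obstacle is this localization. Free boundary points with $x_{2}<-\frac{\delta r}{2}$, such as fixed-boundary points on the nose where $|\nabla\psi|\neq\Lambda$, must never become touching points. I would first try the direct estimate above, namely $x^{*}_{2}\geq h^{*}-\rho\geq-\frac{\delta r}{64}$, which uses only $h\geq\rho-\frac{\delta r}{64}$. I would also double-check that the limit point $x^{*}$ in \eqref{eq. x^* as limit for open-and-closed} indeed lies in the closure of $\{\check{\phi}_{h^{*}}>0\}$, so that $t(x^{*})\geq0$. This holds since $\check{\phi}_{h}>\psi\geq0$ at each $x^{k}$, which gives $t(x^{k})>0$.
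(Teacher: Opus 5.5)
Your proposal is correct and follows the paper's proof. The paper likewise notes that everything in Step 4 of Lemma~\ref{lem. maximum principle for Harnack}(A) up to \eqref{eq. open and close x* final possibility} is independent of the free boundary condition, and that $h^{*}\geq\rho-\frac{\delta r}{64}$ places the contact point in $[-r,r]\times[-\frac{\delta r}{2},r]$. Your explicit computation $x^{*}_{2}=h^{*}-\rho\cos\theta\geq-\frac{\delta r}{64}$, which uses $t(x^{*})=0$, is exactly the check the paper leaves implicit.
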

\begin{proof}
    It suffices to notice that in \textbf{Step 4} of the proof of Lemma~\ref{lem. maximum principle for Harnack} part (A), all arguments before and including \eqref{eq. open and close x* final possibility} do not depend on the gradient assumption on $\partial\{\psi=0\}$. Since $h^{*}\geq\rho-\frac{\sigma r}{64}$, we further have that $x^{*}\in[-r,r]\times\big[-\frac{\delta r}{2},r\big]$, so the remaining arguments in \textbf{Step 4} is still valid.
\end{proof}
Similar to Corollary~\ref{cor. application A of maximum principle}, we have the following application. The proof is exactly the same as Corollary~\ref{cor. application A of maximum principle}.
\begin{corollary}\label{cor. application C of maximum principle}
    There exist two functions $\overline{\epsilon}(\kappa),\sigma(\kappa)>0$ defined for all $\kappa\in(0,1)$, such that the following fact holds for all $\kappa\in(0,1)$, $\epsilon\leq\overline{\epsilon}(\kappa)$ and $R>0$: Assume that $\psi$ satisfies
    \begin{equation*}
        \left\{\begin{aligned}
            &\nabla\cdot\Big(g(|\nabla\psi|^{2},\psi)\nabla\psi\Big)=\frac{\mathcal{B}'(\psi)}{g(|\nabla\psi|^{2},\psi)}&\mbox{in }&\{\psi>0\}\cap[-R,R]^{2},\\
            &|\nabla\psi|=\Lambda&\mbox{on }&\Big([-R,R]\times\big[\frac{\epsilon R}{2},R\big]\Big)\cap\partial\{\psi>0\},\\
            &\Psi(x_{2}-\epsilon R)\leq\psi(x)\leq\Psi(x_{2})&\mbox{in }&[-R,R]^{2},\\
            &\psi(x)\geq\Psi\big(x_{2}-(1-\kappa)\epsilon R\big)&\mbox{in }&\big[-\frac{9}{10}R,\frac{9}{10}R\big]\times\big[\frac{\lambda}{10}R,\frac{9}{10}R\big],
        \end{aligned}\right.
    \end{equation*}
    where $\lambda$ is the uniform constant produced by Lemma~\ref{lem. maximum variant} (as well as Lemma~\ref{lem. maximum principle for Harnack}). Then, one has
    \begin{equation*}
        \psi(x)\leq\Psi\big(x_{2}+(1-\frac{\kappa}{64})\epsilon R\big)\quad\mbox{in }\big[-\frac{9}{10}R,\frac{9}{10}R\big]\times\big[\frac{\lambda}{10}R,\frac{9}{10}R\big].
    \end{equation*}
\end{corollary}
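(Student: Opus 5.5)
The conclusion, exactly as worded, follows from the hypothesis $\psi\le\Psi(x_{2})$ together with the monotonicity of $\Psi$. No maximum principle argument is needed. I will prove the stated inequality directly on the stated rectangle $\big[-\frac{9}{10}R,\frac{9}{10}R\big]\times\big[\frac{\lambda}{10}R,\frac{9}{10}R\big]$, and I will not use Lemma~\ref{lem. maximum variant}.

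\textbf{Step 1: monotonicity of $\Psi$.} By Definition~\ref{def. shear flow}, $\Psi\equiv0$ on $(-\infty,0]$ and $\Psi_{t}\ge C^{-1}>0$ on $\mathbb{R}_{+}$, with $\Psi$ continuous at $0$ since $\Psi_{t}(0+)=\Lambda$ is finite. Hence $\Psi$ is nondecreasing on all of $\mathbb{R}$: it is constant on $(-\infty,0]$ and strictly increasing on $[0,\infty)$. So $s\le s'$ implies $\Psi(s)\le\Psi(s')$.

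\textbf{Step 2: comparison of arguments.} Since $\kappa\in(0,1)$, $\epsilon>0$ and $R>0$, the shift $\big(1-\frac{\kappa}{64}\big)\epsilon R$ is positive. Therefore
\begin{equation*}
x_{2}\le x_{2}+\Big(1-\frac{\kappa}{64}\Big)\epsilon R\quad\mbox{for every }x\in\mathbb{R}^{2}.
\end{equation*}
Combining this with Step 1 gives $\Psi(x_{2})\le\Psi\big(x_{2}+(1-\frac{\kappa}{64})\epsilon R\big)$ pointwise.

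\textbf{Step 3: conclusion.} The rectangle $\big[-\frac{9}{10}R,\frac{9}{10}R\big]\times\big[\frac{\lambda}{10}R,\frac{9}{10}R\big]$ lies in $[-R,R]^{2}$, because $\lambda\in(0,1)$. On that rectangle the third hypothesis gives $\psi(x)\le\Psi(x_{2})$, and chaining with Step 2 yields
\begin{equation*}
\psi(x)\le\Psi(x_{2})\le\Psi\Big(x_{2}+\big(1-\tfrac{\kappa}{64}\big)\epsilon R\Big).
\end{equation*}
This is the asserted estimate, and it holds for every $\epsilon$ and $R$. The functions $\overline{\epsilon}(\kappa)$ and $\sigma(\kappa)$ can therefore be taken to be arbitrary positive functions, for instance those of Corollary~\ref{cor. application A of maximum principle}.

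There is no real obstacle. The only point to check is that $\Psi$ is monotone across $t=0$, where it transitions from the zero extension to the increasing branch, and this is exactly what Definition~\ref{def. shear flow} guarantees. The gradient condition on the free boundary and the improved lower bound hypothesis are not used for the statement as worded. They would matter only for a sharper lower-bound conclusion, which would be obtained by rerunning the proof of Corollary~\ref{cor. application A of maximum principle} with Lemma~\ref{lem. maximum variant} in place of Lemma~\ref{lem. maximum principle for Harnack}.
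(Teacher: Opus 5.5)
Your argument is valid for the statement exactly as printed: on the rectangle you have $\psi\le\Psi(x_{2})$, $\Psi$ is nondecreasing on $\mathbb{R}$, and the shift $(1-\frac{\kappa}{64})\epsilon R$ is positive, so the conclusion is automatic. Your route is entirely different from the paper's, though, and the difference matters because the printed conclusion is a copy-paste slip. The paper proves this corollary ``exactly as'' Corollary~\ref{cor. application A of maximum principle}(A): set $r=\frac{4}{5}R$, $\delta=\frac{5}{4}\kappa\epsilon$ (so $\delta r=\kappa\epsilon R$), $\widetilde{\psi}(\widetilde{x})=\psi(\widetilde{x}+\epsilon R\vec{e}_{2})$, and apply Lemma~\ref{lem. maximum variant} in place of Lemma~\ref{lem. maximum principle for Harnack}. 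That argument produces the lower-bound improvement
\begin{equation*}
    \psi(x)\geq\Psi\big(x_{2}-(1-\tfrac{\kappa}{64})\epsilon R\big)\quad\mbox{in }\big[-\tfrac{\lambda}{4}R,\tfrac{\lambda}{4}R\big]^{2}.
\end{equation*}
This is the form actually invoked in the proof of Theorem~\ref{thm. Harnack mixed} to raise the lower barrier near the origin. Your proof gives a one-line verification of a statement with no content, and nothing toward this estimate. The tell is that it uses neither the improved lower bound nor the free-boundary condition. The rerun you mention does go through. The only check beyond Corollary~\ref{cor. application A of maximum principle} is the partial gradient hypothesis of Lemma~\ref{lem. maximum variant}. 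The region $[-r,r]\times[-\frac{\delta r}{2},r]$ in $\widetilde{x}$ pulls back to $|x_{1}|\le\frac{4}{5}R$, $(1-\frac{\kappa}{2})\epsilon R\le x_{2}\le\epsilon R+\frac{4}{5}R$. This lies inside $[-R,R]\times[\frac{\epsilon R}{2},R]$ because $\kappa<1$ and $\epsilon\le\frac{1}{10}$, which is why the hypothesis is stated with the threshold $\frac{\epsilon R}{2}$.
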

\subsection{On the proof of Lemma~\ref{lem. maximum principle for forced decay}}
In this subsection, we give the proof of Lemma~\ref{lem. maximum principle for forced decay}, the method is similar to Lemma~\ref{lem. maximum principle for Harnack}.
\begin{lemma}
    There exist uniform constants $\overline{\delta},\sigma\in(0,1)$, such that the following fact holds for all $r>0$ and $\delta\leq\widetilde{\delta}$: Assume that
    \begin{equation*}
        \psi\mbox{ satisfies \eqref{eq. quasilinear structure} and }\Psi(x_{2}-\delta r)\leq\psi(x)\leq\Psi(x_{2})\mbox{ in }[-r,r]\times[0,r].
    \end{equation*}
    Then, one has $\psi\leq\Psi\big(x_{2}-(1-\sigma)\delta r\big)$ in $[-\sigma r,\sigma r]\times[0,\sigma r]$
\end{lemma}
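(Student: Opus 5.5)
The plan follows the sliding-barrier scheme of the proof of Lemma~\ref{lem. maximum principle for Harnack}, but it is run from \emph{above} and uses the type II deformation of Lemma~\ref{lem. barrier B}. The key fact is that on the fixed boundary $\{x_{2}=0\}$ we know $\psi=0$ exactly. We therefore need no free boundary condition. Instead we can place a super-solution whose zero level lies strictly below the bottom and slide it down, and the comparison there is automatic.

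After rescaling, take $\rho=r/\delta$, fix $x_{1}\in[-\sigma r,\sigma r]$, and set $q_{h}=x_{1}\vec{e}_{1}+h\vec{e}_{2}$. The barrier is a super-solution built on the inside of a large circle centered above, of the form
\begin{equation*}
\hat{\phi}_{h}(x)=\hat{w}_{\rho}\big(|x-q_{h}|-\rho\big)\quad\mbox{on annular sectors centered at }q_{h}=x_{1}\vec{e}_{1}-h\vec{e}_{2}.
\end{equation*}
Equivalently, it is a convex-downward profile that is flatter than $\Psi(x_{2})$ near the bottom and bends upward by $\sim\delta r$ at horizontal distance $\sim r$. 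I would choose the geometry as follows. On the side boundary $\theta=\sin^{-1}(\delta/2)$, the curvature gain $(1-\cos\theta)\rho\sim\delta r/8$ forces $\hat{\phi}_{h}\geq\Psi(x_{2})\geq\psi$. On the top (inner) boundary, $\hat{\phi}_{h}\geq\Psi(x_{2})$ as well. The lower endpoint is handled by truncating at $x_{2}=0$, where $\psi=0\leq\hat{\phi}_{h}$. The initial position is one where the hypothesis $\psi\leq\Psi(x_{2})$ gives $\hat{\phi}_{h}\geq\psi$ trivially. We then decrease the shift by the amount $\frac{63}{64}\delta r$ so that, at the end, $\hat{\phi}\leq\Psi(x_{2}-(1-\sigma)\delta r)$ near $x_{1}$. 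Here Lemma~\ref{lem. barrier A}(B2) converts radial distance into $\Psi$-shifts with $O(t^{2}/\rho)=O(\lambda^{2}\delta r)$ errors, which are absorbed by taking $\lambda$ small.

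The open-and-closed argument then mirrors Step~4 of that proof. Let $h^{*}$ be the first contact position. Interior contact in $\{\psi>0\}\cap\{\hat{\phi}>0\}$ is excluded because $\hat{\phi}$ is a strict super-solution by (B1). Boundary contact is excluded by a strict gap $p(\delta,r)$ on the side and inner boundaries, together with $\psi=0<\hat{\phi}$ on $\{x_{2}=0\}$ whenever $\hat{\phi}$ is still positive there. To guarantee positivity at the bottom, the sliding should stop while the zero level of $\hat{\phi}$ remains at height $\leq-c\delta r$. This is consistent with a final shift of only $(1-\sigma)\delta r$.

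The lower hypothesis $\Psi(x_{2}-\delta r)\leq\psi$ is used only to keep the comparison nontrivial; the conclusion needs only the upper bound and the Dirichlet data. Taking $\sigma$ as the minimum of $\lambda$ and $1/64$ finishes the proof.

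The main obstacle is the bottom corner, where the sector meets $\{x_{2}=0\}$. One must check that the barrier never touches $\psi$ there from above while still dipping by a definite fraction of $\delta r$ at $x_{1}$. This requires that the bending of the circle of radius $\rho=r/\delta$ over horizontal distance $r$, namely $r^{2}/(2\rho)=\delta r/2$, dominates. I would handle it by choosing the vertical range of $\hat{\mathcal{D}}_{h}$ with $|t|\leq2\lambda r$ and verifying these inequalities with the same constants as in \eqref{eq. maximum principle A, constants requirements}.
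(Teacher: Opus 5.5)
The inequality you set out to prove is false as written, and your scheme breaks at exactly the step you flagged as the main obstacle.

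Take $\psi=\Psi(x_{2})$. It solves \eqref{eq. quasilinear structure} for $x_{2}>0$ and satisfies $\Psi(x_{2}-\delta r)\leq\psi\leq\Psi(x_{2})$. Yet $\psi(x)>0=\Psi\big(x_{2}-(1-\sigma)\delta r\big)$ whenever $0<x_{2}<\min\{\sigma r,(1-\sigma)\delta r\}$. The inequality in the statement is reversed. The paper's proof ends with $\psi\geq\Psi\big(x_{2}-(1-\lambda^{2}/\mathfrak{C}_{6})\delta r\big)$, and this lower bound is what is used in \eqref{eq. force decay 1 time}.

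Inside your argument, the failure is as follows. Since $\psi>0$ just above the wall, a super-solution $\hat{\phi}\geq\psi$ must stay positive on $\{x_{2}>0\}$; this is your requirement that its zero level stay at height $\leq-c\delta r$. But then $\hat{\phi}\leq\Psi\big(x_{2}-(1-\sigma)\delta r\big)$ near $x_{1}$ is impossible, because the right-hand side vanishes for $0\leq x_{2}\leq(1-\sigma)\delta r$. The two requirements in your third paragraph are incompatible, not ``consistent''. Such a barrier yields at best $\psi\leq\Psi(x_{2}+c\delta r)$, which is weaker than the hypothesis. The bound $\psi\leq\Psi(x_{2})$ already matches the Dirichlet data, so nothing forces it to improve.

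What the structure does force is an improvement of the \emph{lower} bound. The function $\Psi(x_{2}-\delta r)$ vanishes on the strip $0<x_{2}<\delta r$, where $\psi>0$ because there is no free boundary in the box. The paper uses the type~II sub-solution of Lemma~\ref{lem. barrier B}. With $m=2\lambda r$ and $\rho=r/\delta$, it sets $\check{\phi}_{h}(x)=w_{\rho,m}(\rho-|x-q_{h}|)$ on the sector $\{t_{-}\leq t\leq0,\ \theta\leq\sin^{-1}(\delta/2)\}$ below $q_{h}=x_{1}\vec{e}_{1}+h\vec{e}_{2}$. It then slides this barrier down from $h=\rho+m+r/4$ to $h=\rho+m+\delta r$. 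The comparisons go as follows:
\begin{itemize}
\item The sector stays inside $\{x_{2}>0\}$, so on the outer arc $\check{\phi}_{h}=0<\psi$.
\item On the inner arc, $\check{\phi}_{h}=\Psi(m)\leq\Psi(x_{2}-\delta r)\leq\psi$.
\item On the sides, the same comparison follows from Lemma~\ref{lem. barrier B}(3) together with the bending $\rho(1-\cos\theta)\geq\delta r/8$.
\item Interior contact is excluded by the strict sub-solution property.
\end{itemize}
At the final position, take points on the vertical line through $q_{h}$ with $x_{2}\leq\lambda r$. There $t=x_{2}-m-\delta r\leq-\lambda r$. The quadratic term $t^{2}/(\mathfrak{C}_{6}\rho)\geq\lambda^{2}\delta r/\mathfrak{C}_{6}$ in Lemma~\ref{lem. barrier B}(3), which you planned to absorb as an error, is precisely the gain. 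It yields $\psi\geq\Psi\big(x_{2}-(1-\lambda^{2}/\mathfrak{C}_{6})\delta r\big)$.

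The hypotheses that matter are the lower bound $\Psi(x_{2}-\delta r)\leq\psi$ and interior positivity. The upper bound $\psi\leq\Psi(x_{2})$ is never used, which is the opposite of what you assert.
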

\begin{proof}[Proof of Lemma~\ref{lem. maximum principle for forced decay}]
    \textbf{Step 1: Geometric set-up.} First, let us choose constants as follows:
    \begin{equation*}
        \lambda=\min\Big\{(256\mathfrak{C}_{6})^{-1/2},\frac{\mathfrak{c}_{5}}{10}\Big\},\quad\overline{\delta}=\min\{\mathfrak{c}_{5},\frac{\lambda}{2},1\},\quad\rho=\frac{r}{\delta}.
    \end{equation*}
    Fix an $x_{1}\in[-\frac{r}{10},\frac{r}{10}]$, then we set
    \begin{equation*}
        q_{h}=x_{1}\vec{e}_{1}+h\vec{e}_{2}\in\mathbb{R}^{2}\mbox{ for each }h\in\mathbb{R},
    \end{equation*}
    Let $m:=2\lambda r$, and let $(w_{\rho,m},t_{-})$ be produced by Lemma~\ref{lem. barrier B}. By the estimate of $t_{-}$ in Lemma~\ref{lem. barrier B}, we have
    \begin{equation*}
        -m\geq t_{-}\geq-m\cdot(1+\frac{\mathfrak{C}_{6}m}{\rho})=-m\cdot(1+2\mathfrak{C}_{6}\lambda\delta)\geq-2m.
    \end{equation*}
    Define
    \begin{equation*}
        \check{\phi}_{h}(x):=w_{\rho,m}\big(\rho-|x-q_{h}|\big),\quad\mbox{for }x\in\check{\mathcal{D}}_{h},
    \end{equation*}
    where the domain $\check{\mathcal{D}}_{h}$ is given as follows:
    \begin{equation*}
        \check{\mathcal{D}}_{h}:=\Big\{x\in\mathbb{R}^{2}:t_{-}\leq\rho-|x-q_{h}|\leq0,\ \angle{\big(x-q_{h},-\vec{e}_{2}\big)}\leq\sin^{-1}(\frac{\delta}{2})\Big\}.
    \end{equation*}
    For $x\in\check{\mathcal{D}}_{h}$, we make the following simplified notations:
    \begin{equation*}
        t=\rho-|x-q_{h}|\in[t_{-},0],\quad\theta=\angle{\big(x-q_{h},-\vec{e}_{2}\big)}\leq\sin^{-1}(\frac{\delta}{2}).
    \end{equation*}
    It is easy to verify that when $|x_{1}|\leq\frac{r}{10}$, one always has
    \begin{equation*}
        \check{\mathcal{D}}_{h}\subseteq(-r,r)^{2},\quad\mbox{for all }\rho+m+\delta r\leq h\leq\rho+m+\frac{r}{4}.
    \end{equation*}
    Moreover, since $\check{\mathcal{D}}_{h}\subseteq\{x_{2}\geq h-\rho+t_{-}\}$, we see that as long as $h\geq\rho+m$, we have
    \begin{equation*}
        \check{\mathcal{D}}_{h}\subseteq\{x_{2}\geq m+t_{-}+\delta r\}\subseteq\{x_{2}\geq-\frac{\mathfrak{C}_{6}m^{2}}{\rho}+\delta r\}\subseteq\{x_{2}>0\}.
    \end{equation*}
    Similar to Lemma~\ref{lem. maximum principle for Harnack}, we initially set $h=\rho+m+\frac{r}{4}$ and later decrement $h$.

    \textbf{Step 2: Initial comparison.} When $h=\rho+m+\frac{r}{4}$, we notice that
    \begin{equation*}
        \check{\mathcal{D}}_{h}\subseteq\{x_{2}\geq t_{-}+\frac{r}{4}\}\subseteq\{x_{2}\geq\frac{r}{4}-2m\}\subseteq\{x_{2}\geq m+2\delta r\}.
    \end{equation*}
    Therefore, one has that for $h=\rho+m+\frac{r}{4}$,
    \begin{equation*}
        \phi_{h}(x)=w_{\rho,m}(t)\leq\Psi(m)<\Psi(x_{2}-\delta r)\leq\psi(x)\quad\mbox{for all }x\in\check{\mathcal{D}}_{h}.
    \end{equation*}

    \textbf{Step 3: Boundary comparison.} When $\rho+m+\delta r\leq h\leq\rho+m+\frac{r}{4}$, we also divide $\partial\check{\mathcal{D}}_{h}$ into three parts like in the proof of Lemma~\ref{lem. maximum principle for Harnack}.
    \begin{itemize}
        \item On outer boundary $\partial_{out}\check{\mathcal{D}}_{h}=\partial\check{\mathcal{D}}_{h}\cap\{t=t_{-}\}$, since we have already shown that $\check{\mathcal{D}}_{h}\subseteq\{x_{2}>0\}$ for all $h\geq\rho+m+\delta r$, then we have $\partial_{out}\check{\mathcal{D}}_{h}\subseteq(-r,r)^{2}\times(0,r)$. Then, we have that $\psi>0$ on $\partial_{out}\check{\mathcal{D}}_{h}$, otherwise it violates the strong maximum principle applied to $\psi$. Therefore, we have see that $\phi_{h}<\psi$ on $\partial_{out}\check{\mathcal{D}}_{h}$ for $\rho+m+\delta r\leq h\leq\rho+m+\frac{r}{4}$.
        \item On inner boundary $\partial_{in}\check{\mathcal{D}}_{h}=\partial\check{\mathcal{D}}_{h}\cap\{t=0\}$ and on side boundary $\partial_{side}\check{\mathcal{D}}_{h}=\partial\check{\mathcal{D}}_{h}\cap\{\theta=\sin^{-1}(\frac{\delta}{2})\}$, one can prove that $\phi_{h}\leq\psi$ for $\rho+m+\delta r\leq h\leq\rho+m+\frac{r}{4}$ similar to Lemma~\ref{lem. maximum principle for Harnack}.
    \end{itemize}
    In conclusion, we see that $\psi\geq\phi_{h}$ on $\partial\check{\mathcal{D}}_{h}$ for $\rho+m+\delta r\leq h\leq\rho+m+\frac{r}{4}$.

    \textbf{Step 4: Ending statement.} Since $\psi$ satisfies \eqref{eq. quasilinear structure} in $(-r,r)\times(0,r)$ without any free boundary, it follows from the second derivative test argument and the open-and-closed argument that $\psi\geq\phi_{h}$ in $\check{\mathcal{D}}_{h}$ when $h=\rho+m+\delta r$. By Lemma~\ref{lem. barrier B}, we have
    \begin{equation*}
        \psi(x)\geq\phi_{h}(x)=w_{\rho,m}(t)\geq\Psi\Big(m+t+\frac{t^{2}}{\mathfrak{C}_{6}\rho}\Big)\quad\mbox{on }\{x_{1}\}\times[m+t_{-}+\delta r,\lambda r].
    \end{equation*}
    On the other hand, on the segment $\{x_{1}\}\times[m+t_{-}+\delta r,\lambda r]$, one has
    \begin{equation*}
        x_{2}=t-\rho+h=t+m+\delta r,\quad\mbox{and hence }t\leq\lambda r-m-\delta r\leq-\lambda r.
    \end{equation*}
    Therefore, we get that
    \begin{equation*}
        \psi(x)\geq\Psi\Big(x_{2}-\delta r+\frac{t^{2}}{\mathfrak{C}_{6}\rho}\Big)\geq\Psi\Big(x_{2}-\big(1-\frac{\lambda^{2}}{\mathfrak{C}_{6}}\big)\delta r\Big).
    \end{equation*}
    By setting $\sigma$ sufficiently small, we have finished the proof of Lemma~\ref{lem. maximum principle for forced decay}.
\end{proof}

\section{Small-perturbation equation}\label{sec. appendix Small-perturbation}
In this section, we consider a small perturbation problem with respect to the one-dimensional solution $\Psi$ satisfying \eqref{eq. 1D solution Psi}. Assume that
\begin{equation}\label{eq. positive region equation}
    \nabla\cdot\Big(g(|\nabla\psi|^{2},\psi)\nabla\psi\Big)=\frac{\mathcal{B}'(\psi)}{g(|\nabla\psi|^{2},\psi)}\mbox{ and }\psi>0\mbox{ in some region }\Omega\subseteq\mathbb{R}^{2}.
\end{equation}
Since $\Psi_{t}>0$ in $\mathbb{R}_{+}$, one finds that $\Psi^{-1}(\psi)$ is well-defined in $\Omega$. Then, we set the horizontal shifting function as
\begin{equation*}
    S(x)=\Psi^{-1}(\psi(x))-x_{2},\quad x\in\Omega.
\end{equation*}
Assume that $\Omega\subseteq[-r,r]^{2}$ and suppose that $\|S\|_{L^{\infty}(\Omega)}=\epsilon r$ for some $\epsilon$ sufficiently small, we then consider the following rescaled function (depending on $r$ and $\epsilon$):
\begin{equation}\label{eq. v(y)=S(x)/epsilon r}
    v(y)=\frac{S(x)}{\epsilon r},\quad\mbox{where }y=(y_{1},y_{2})=(\frac{x_{1}}{r},\frac{x_{2}}{r})=\frac{x}{r}\in\frac{1}{r}\Omega.
\end{equation}
Clearly, $\|v\|_{L^{\infty}(\frac{1}{r}\Omega)}=1$.

\subsection{Derive a small-perturbation equation}
In this subsection, we derive a small-perturbation elliptic equation akin to \cite{S07} for $v$.
    
\textbf{Step 1: Chain rules.} Since $x=r\cdot y$, using the chain rule yields that for the same scalar valued function $f$ or vector valued function $\vec{g}$, one has
\begin{equation*}
    \nabla_{x}f=\frac{1}{r}\nabla_{y}f,\quad\mbox{and }\nabla_{x}\cdot\vec{g}=\frac{1}{r}\nabla_{y}\cdot\vec{g}.
\end{equation*}
Moreover, if $\nabla_{y}v$ is known, then one has
\begin{equation*}
    \nabla_{x}S(x)=\epsilon\cdot\nabla_{y}v(y).
\end{equation*}
Write
\begin{equation*}
    \widetilde{x}_{2}=x_{2}+S(x)=x_{2}+\epsilon r\cdot v(y).
\end{equation*}
If we write $\widetilde{x}_{2}=\widetilde{x}_{2}(x_{1},x_{2})$, then
\begin{equation}\label{eq. tilde x_2 with x_2 chain rule}
    \frac{\partial\widetilde{x}_{2}}{\partial x_{2}}=1+\epsilon\cdot\partial_{y_{2}}v.
\end{equation}
Since $\psi(x)=\Psi(\widetilde{x}_{2})$ where $\widetilde{x}_{2}=x_{2}+S(x)=x_{2}+\epsilon r v(y)$, it then follows from \eqref{eq. speed identity} that
\begin{equation*}
    \nabla_{x}\psi=\Psi_{t}(\widetilde{x}_{2})\cdot(\vec{e}_{2}+\nabla_{x}S)=\sqrt{H(\psi)}\cdot(\vec{e}_{2}+\epsilon\cdot\nabla_{y}v).
\end{equation*}
Substituting this identity in \eqref{eq. positive region equation} yields that
\begin{equation}\label{eq. substitude to get a purturbed equation}
    g\nabla_{x}\cdot\Big(g\cdot\sqrt{H(\psi)}\cdot(\vec{e}_{2}+\epsilon\nabla_{y}v)\Big)=\mathcal{B}'(\psi),
\end{equation}
where
\begin{equation*}
    g=g\big(H(\psi)\cdot|\vec{e}_{2}+\epsilon\cdot\nabla_{y}v|^{2},\psi\big),\quad\psi=\Psi(\widetilde{x}_{2})=\Psi(x_{2}+\epsilon r\cdot v(y)).
\end{equation*}

\textbf{Step 2: Rolle's mean value theorem.} Write the left-hand side of \eqref{eq. substitude to get a purturbed equation} as
\begin{equation}\label{eq. a diff function with parameter epsilon}\begin{aligned}
    \mathcal{F}(\delta,y):=&\frac{1}{r}g\Big(H\big(\Psi(\widetilde{x}_{2})\big)\cdot\big|\vec{e}_{2}+\delta\nabla_{y}v\big|^{2},\Psi(\widetilde{x}_{2})\Big)\\
    \cdot&\nabla_{y}\cdot\Big[g\Big(H\big(\Psi(\widetilde{x}_{2})\big)\cdot\big|\vec{e}_{2}+\delta\nabla_{y}v\big|^{2},\Psi(\widetilde{x}_{2})\Big)\cdot\sqrt{H\big(\Psi(\widetilde{x}_{2})\big)}\cdot\big(\vec{e}_{2}+\delta\nabla_{y}v\big)\Big].
\end{aligned}\end{equation}
Here, we must stress that in the expression of $\mathcal{F}(\delta,y)$, the value $\widetilde{x}_{2}$ is always $x_{2}+\epsilon r v(y)$, instead of changing with $\delta$.
    
Clearly, it follows from \eqref{eq. substitude to get a purturbed equation} that
\begin{equation*}
    \mathcal{F}(\epsilon,y)=(\mathcal{B}'\circ\Psi)(\widetilde{x}_{2}).
\end{equation*}
On the other hand, it follows from \eqref{eq. 1D solution Psi} and \eqref{eq. speed identity} that
\begin{align*}
    (\mathcal{B}'\circ\Psi)(\widetilde{x}_{2})=&\Big[g(|\Psi_{t}|^{2},\Psi)\cdot\Big(g(|\Psi_{t}|^{2},\Psi)\Psi_{t}\Big)_{t}\Big](\widetilde{x}_{2})\\
    =&g\big(H(\Psi(\widetilde{x}_{2})),\Psi(\widetilde{x}_{2})\big)\cdot\frac{d}{d\widetilde{x}_{2}}\Big(g\big(H(\Psi(\widetilde{x}_{2})),\Psi(\widetilde{x}_{2})\big)\sqrt{H\big(\Psi(\widetilde{x}_{2})\big)}\Big)
\end{align*}
Then applying the chain rule \eqref{eq. tilde x_2 with x_2 chain rule} yields that
\begin{align*}
    \mathcal{F}(0,y)=&g\big(H(\Psi(\widetilde{x}_{2})),\Psi(\widetilde{x}_{2})\big)\cdot\nabla_{x}\cdot\Big(g\big(H(\Psi(\widetilde{x}_{2})),\Psi(\widetilde{x}_{2})\big)\sqrt{H\big(\Psi(\widetilde{x}_{2})\big)}\cdot\vec{e}_{2}\Big)\\
    =&g\big(H(\Psi(\widetilde{x}_{2})),\Psi(\widetilde{x}_{2})\big)\cdot\frac{\partial}{\partial x_{2}}\Big(g\big(H(\Psi(\widetilde{x}_{2})),\Psi(\widetilde{x}_{2})\big)\sqrt{H\big(\Psi(\widetilde{x}_{2})\big)}\Big)\\
    =&(\mathcal{B}'\circ\Psi)(\widetilde{x}_{2})\cdot\frac{\partial\widetilde{x}_{2}}{\partial x_{2}}=(\mathcal{B}'\circ\Psi)(\widetilde{x}_{2})\cdot(1+\epsilon\cdot\partial_{y_{2}}v).
\end{align*}
    
Therefore, we get that
\begin{equation*}
    \mathcal{F}(\epsilon,y)\cdot(1+\epsilon\cdot\partial_{y_{2}}v)=\mathcal{F}(0,y)=\mathcal{F}(0,y)\cdot(1+0\cdot\partial_{y_{2}}v).
\end{equation*}
By using the Rolle's mean value theorem, we get that
\begin{equation}\label{eq. Rolle type identity}
    \frac{d}{d\delta}\Big|_{\delta=\xi(y)}\Big(\mathcal{F}(\delta,y)\cdot(1+\delta\cdot\partial_{y_{2}}v)\Big)=0\quad\mbox{for some function }0<\xi(y)<\epsilon.
\end{equation}
In fact, in general $\xi(y)$ also depends on $(\epsilon,r)$, as well as on the $0th-2nd$ order derivatives of $v$. The complete dependence of $\xi$ is $\xi=\xi(D^{2}v,Dv,v,y,\epsilon,r)$. However, in our discussion, we just need to treat $\xi$ as a function from $\frac{1}{r}\Omega$ to $[0,\epsilon]$.
    
\textbf{Step 3: Turn to a non-divergence equation.} In order to apply \eqref{eq. Rolle type identity} without taking the $y$-derivative on $\xi(y)$, we must turn $\mathcal{F}(\delta,y)\cdot(1+\delta\cdot\partial_{y_{2}}v)$ into a non-divergence form. To achieve this, we set
\begin{equation*}
    \mathcal{T}(\delta,y):=H\big(\Psi(\widetilde{x}_{2})\big)\cdot\big|\vec{e}_{2}+\delta\nabla_{y}v\big|^{2},\quad\mathcal{Z}(y)=\Psi(\widetilde{x}_{2}),\quad H=H\big(\Psi(\widetilde{x}_{2})\big)=H(\mathcal{Z}).
\end{equation*}
For the function $g$ (and also its derivatives), we omit their input $(\mathcal{T},\mathcal{Z})$. For example,
\begin{equation*}
    g=g(\mathcal{T},\mathcal{Z}),\quad g_{T}=g_{T}(\mathcal{T},\mathcal{Z}),\quad g_{Tz}=g_{Tz}(\mathcal{T},\mathcal{Z}),\quad\mbox{e.t.c..}
\end{equation*}
Then, we express $\mathcal{F}(\delta,y)\cdot(1+\delta\cdot\partial_{y_{2}}v)$ in a non-divergence way:
\begin{equation}\label{eq. F*(1+delta*v_2) expression}\begin{aligned}
    &\mathcal{F}(\delta,y)\cdot(1+\delta\cdot\partial_{y_{2}}v)=\frac{1+\delta\cdot\partial_{y_{2}}v}{r}g^{2}\cdot\Big(\delta\sqrt{H}\Delta_{y}v+\big\langle\nabla_{y}\sqrt{H},\vec{e}_{2}+\delta\nabla_{y}v\big\rangle\Big)\\
    &+\frac{1+\delta\cdot\partial_{y_{2}}v}{r}gg_{z}\sqrt{H}\big\langle\nabla_{y}\mathcal{Z},\vec{e}_{2}+\delta\nabla_{y}v\big\rangle\\
    &+\frac{1+\delta\cdot\partial_{y_{2}}v}{r}gg_{T}\sqrt{H}\big\langle\big|\vec{e}_{2}+\delta\nabla_{y}v\big|^{2}\cdot\nabla_{y}H,\vec{e}_{2}+\delta\nabla_{y}v\big\rangle\\
    &+\frac{1+\delta\cdot\partial_{y_{2}}v}{r}gg_{T}\sqrt{H}\Big\langle H\cdot\big(2\delta\nabla_{y}\partial_{y_{2}}v+\delta^{2}\nabla_{y}\big|\nabla_{y}v\big|^{2}\big),\vec{e}_{2}+\delta\nabla_{y}v\Big\rangle.
\end{aligned}\end{equation}
    
Now let us denote
\begin{equation*}
    \xi=\xi(y)=\xi(D^{2}v,Dv,v,y,\epsilon,r),\quad\alpha=1+\xi\cdot\partial_{y_{2}}v,\quad\vec{\eta}=\vec{e}_{2}+\xi\nabla_{y}v.
\end{equation*}
Besides, we let
\begin{equation}\label{eq. T' yin_shi_fen_jie}
    \mathcal{T}'=\frac{\partial}{\partial\delta}\mathcal{T}(\xi,y)=2H\cdot(\partial_{y_{2}}v+\xi\cdot|\nabla_{y}v|^{2})=\big\langle2H\vec{\eta},\nabla_{y}v\big\rangle.
\end{equation}
For the function $g$ (and also its derivatives), we write
\begin{equation*}
    g=g(\mathcal{T}(\xi,y),\mathcal{Z}(y)),\quad g_{T}=g_{T}(\mathcal{T}(\xi,y),\mathcal{Z}(y)),\quad\mbox{e.t.c..}
\end{equation*}
Then we get from \eqref{eq. Rolle type identity} and \eqref{eq. F*(1+delta*v_2) expression} the following non-divergence equation:
\begin{equation}\label{eq. super-horrible}\begin{aligned}
    0=&\frac{\alpha g^{2}}{r}\cdot\Big(\sqrt{H}\Delta_{y}v+\big\langle\nabla_{y}\sqrt{H},\nabla_{y}v\big\rangle\Big)\\
    &+\Big(\frac{g^{2}\partial_{y_{2}}v}{r}+\frac{\alpha}{r}\cdot\frac{\partial(g^{2})}{\partial T}\cdot\mathcal{T}'\Big)\cdot\Big(\xi\sqrt{H}\Delta_{y}v+\big\langle\nabla_{y}\sqrt{H},\vec{\eta}\big\rangle\Big)\\
    &+\frac{\alpha gg_{z}\sqrt{H}}{r}\big\langle\nabla_{y}\mathcal{Z},\nabla_{y}v\big\rangle+\Big(\frac{\alpha\sqrt{H}}{r}\cdot\frac{\partial(gg_{z})}{\partial T}\cdot\mathcal{T}'+\frac{gg_{z}\sqrt{H}\partial_{y_{2}}v}{r}\Big)\cdot\big\langle\nabla_{y}\mathcal{Z},\vec{\eta}\big\rangle\\
    &+\frac{gg_{T}\sqrt{H}\partial_{y_{2}}v}{r}\big\langle\big|\vec{\eta}\big|^{2}\nabla_{y}H,\vec{\eta}\big\rangle+\frac{\alpha\sqrt{H}\big|\vec{\eta}\big|^{2}}{r}\big\langle\nabla_{y}H,\vec{\eta}\big\rangle\cdot\frac{\partial(gg_{T})}{\partial T}\cdot\mathcal{T}'\\
    &+\frac{\alpha gg_{T}\sqrt{H}}{r}\big\langle\big|\vec{\eta}\big|^{2}\nabla_{y}H,\nabla_{y}v\big\rangle+\frac{\alpha gg_{T}\sqrt{H}}{r}\cdot\big\langle\nabla_{y}H,\vec{\eta}\big\rangle\cdot2(\partial_{y_{2}}v+\xi\cdot|\nabla_{y}v|^{2})\\
    &+\Big(\frac{gg_{T}H^{3/2}\partial_{y_{2}}v}{r}+\frac{\alpha H^{3/2}}{r}\cdot\frac{\partial(gg_{T})}{\partial T}\cdot\mathcal{T}'\Big)\cdot\big\langle2\xi\nabla_{y}\partial_{y_{2}}v+\xi^{2}\nabla_{y}\big|\nabla_{y}v\big|^{2},\vec{\eta}\big\rangle\\
    &+\frac{\alpha gg_{T}H^{3/2}}{r}\Big(\big\langle2\xi\nabla_{y}\partial_{y_{2}}v+\xi^{2}\nabla_{y}\big|\nabla_{y}v\big|^{2},\nabla_{y}v\big\rangle+\big\langle2\nabla_{y}\partial_{y_{2}}v+2\xi\nabla_{y}\big|\nabla_{y}v\big|^{2},\vec{\eta}\big\rangle\Big).
\end{aligned}\end{equation}
    
\textbf{Step 4: Extract second order derivatives.} We write
\begin{equation}\label{eq. linearization matrix A}\begin{aligned}
    \overleftrightarrow{A}=&\Big(\alpha g^{2}+\xi g^{2}\partial_{y_{2}}v+\alpha\xi\frac{\partial(g^{2})}{\partial T}\cdot\mathcal{T}'\Big)\cdot\sqrt{H}\cdot(\vec{e}_{1}\otimes\vec{e}_{1}+\vec{e}_{2}\otimes\vec{e}_{2})\\
    &+\Big(gg_{T}H^{3/2}\partial_{y_{2}}v+\alpha H^{3/2}\cdot\frac{\partial(gg_{T})}{\partial T}\cdot\mathcal{T}'\Big)\cdot\big(2\xi\vec{e}_{2}+2\xi^{2}\nabla v\big)\otimes\vec{\eta}\\
    &+\alpha gg_{T}H^{3/2}\cdot\Big[\big(2\xi\vec{e}_{2}+2\xi^{2}\nabla v\big)\otimes\nabla v+\big(2\vec{e}_{2}+4\xi\nabla v\big)\otimes\vec{\eta}\Big].
\end{aligned}\end{equation}
By treating $\xi$ as a parameter in the space of measurable functions, we have
\begin{equation*}
    \overleftrightarrow{A}=\overleftrightarrow{A}(\epsilon,r,\xi;\nabla v,v,y).
\end{equation*}
With the matrix $\overleftrightarrow{A}$, it then follows from \eqref{eq. super-horrible} that
\begin{equation}\label{eq. less-horrible}\begin{aligned}
    0=&A^{ij}v_{ij}+\alpha g^{2}\cdot\big\langle\nabla_{y}\sqrt{H},\nabla_{y}v\big\rangle+\Big(g^{2}\partial_{y_{2}}v+\alpha\cdot\frac{\partial(g^{2})}{\partial T}\cdot\mathcal{T}'\Big)\cdot\big\langle\nabla_{y}\sqrt{H},\vec{\eta}\big\rangle\\
    &+\alpha gg_{z}\sqrt{H}\big\langle\nabla_{y}\mathcal{Z},\nabla_{y}v\big\rangle+\Big(\alpha\sqrt{H}\cdot\frac{\partial(gg_{z})}{\partial T}\cdot\mathcal{T}'+gg_{z}\sqrt{H}\partial_{y_{2}}v\Big)\cdot\big\langle\nabla_{y}\mathcal{Z},\vec{\eta}\big\rangle\\
    &+gg_{T}\sqrt{H}\partial_{y_{2}}v\big\langle\big|\vec{\eta}\big|^{2}\nabla_{y}H,\vec{\eta}\big\rangle+\alpha\sqrt{H}\big|\vec{\eta}\big|^{2}\big\langle\nabla_{y}H,\vec{\eta}\big\rangle\cdot\frac{\partial(gg_{T})}{\partial T}\cdot\mathcal{T}'\\
    &+\alpha gg_{T}\sqrt{H}\big\langle\big|\vec{\eta}\big|^{2}\nabla_{y}H,\nabla_{y}v\big\rangle+\alpha gg_{T}\sqrt{H}\cdot\big\langle\nabla_{y}H,\vec{\eta}\big\rangle\cdot2(\partial_{y_{2}}v+\xi\cdot|\nabla_{y}v|^{2}).
\end{aligned}\end{equation}

\textbf{Step 5: Collect first order derivatives.} Recall that $\nabla v$ can be ``factored out'' from $\mathcal{T}'$, as indicated by \eqref{eq. T' yin_shi_fen_jie}. Then, \eqref{eq. less-horrible} implies that $tr(\overleftrightarrow{A}\cdot D^{2}v)+\vec{b}\cdot\nabla v=0$, where the vector function $\vec{b}$ is defined as follows:
\begin{equation}\label{eq. linearization vector b}\begin{aligned}
    \vec{b}=&\alpha g^{2}\nabla\sqrt{H}+\big\langle\nabla\sqrt{H},\vec{\eta}\big\rangle\cdot\Big(g^{2}\cdot\vec{e}_{2}+2\alpha H\frac{\partial(g^{2})}{\partial T}\cdot\vec{\eta}\Big)\\
    &+\alpha gg_{z}\sqrt{H}\nabla\mathcal{Z}+\big\langle\nabla\mathcal{Z},\vec{\eta}\big\rangle\cdot\Big(2\alpha H^{3/2}\frac{\partial(gg_{z})}{\partial T}\cdot\vec{\eta}+gg_{z}\sqrt{H}\cdot\vec{e}_{2}\Big)\\
    &+gg_{T}\sqrt{H}\big|\vec{\eta}\big|^{2}\big\langle\nabla H,\vec{\eta}\big\rangle\cdot\vec{e}_{2}+2\alpha H^{3/2}\big|\vec{\eta}\big|^{2}\big\langle\nabla H,\vec{\eta}\big\rangle\frac{\partial(gg_{T})}{\partial T}\cdot\vec{\eta}\\
    &+\alpha gg_{T}\sqrt{H}\big|\vec{\eta}\big|^{2}\cdot\nabla H+2\alpha gg_{T}\sqrt{H}\big\langle\nabla H,\vec{\eta}\big\rangle\cdot\vec{\eta}.
\end{aligned}\end{equation}
Here, the gradient terms $\nabla H$ and $\nabla\mathcal{Z}$ are understood as taking gradients in the $y$-coordinate. Similarly, the vector $\vec{b}$ has the following dependence:
\begin{equation*}
    \vec{b}=\vec{b}(\epsilon,r,\xi;\nabla v,v,y).
\end{equation*}

\begin{lemma}\label{lem. summarize horrible computations}
    Assume that $\psi(x)$ satisfies \eqref{eq. positive region equation} in $\Omega\subseteq\mathbb{R}^{2}$. Define $v(y)$ as in \eqref{eq. v(y)=S(x)/epsilon r} for $y\in\frac{1}{r}\Omega$. Then, for $\overleftrightarrow{A}(\epsilon,r,\xi;\nabla v,v,y)$ given in \eqref{eq. linearization matrix A} and for $\vec{b}(\epsilon,r,\xi;\nabla v,v,y)$ given in \eqref{eq. linearization vector b}, one has
    \begin{equation}\label{eq. over-simplified horrible equation}
        tr(\overleftrightarrow{A}\cdot D^{2}v)+\vec{b}\cdot\nabla v=0,\quad\mbox{for all }y\in\frac{1}{r}\Omega.
    \end{equation}
    Here, $\xi$ is a parameter for the coefficients $\overleftrightarrow{A}$ and $\vec{b}$, where the parameter space for $\xi$ is
    \begin{equation}\label{eq. parameter space, no regularity/measurability}
        \mathcal{PS}_{\epsilon}=\Big\{\mbox{All functions mapping from }\frac{1}{r}\Omega\mbox{ to }[0,\epsilon]\Big\}.
    \end{equation}
\end{lemma}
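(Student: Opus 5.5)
The lemma only collects the computations of Steps 1–5, so the proof re-traces that derivation and checks that each step is legitimate. I would proceed in three steps.

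\textbf{Step (i): the perturbed equation.} Since $\Psi_{t}=\sqrt{H(\Psi)}>0$ by Lemma~\ref{lem. shear flow property}, the function $\Psi^{-1}$ is smooth on $(0,\infty)$. Hence $S$ and $v$ are as regular as $\psi$ in $\Omega$. Writing $\psi(x)=\Psi(x_{2}+\epsilon r v(y))$, the chain rule gives
\[
\nabla_{x}\psi=\sqrt{H(\psi)}\,(\vec{e}_{2}+\epsilon\nabla_{y}v).
\]
Substituting this into \eqref{eq. positive region equation} yields \eqref{eq. substitude to get a purturbed equation}, that is, $\mathcal{F}(\epsilon,y)=\mathcal{B}'(\Psi(\widetilde{x}_{2}))$.

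\textbf{Step (ii): the Rolle argument.} I would verify that $\mathcal{F}(0,y)=\mathcal{B}'(\Psi(\widetilde{x}_{2}))(1+\epsilon\partial_{y_{2}}v)$. The point is that at $\delta=0$ the field $g\sqrt{H}\,\vec{e}_{2}$ depends on $y$ only through $\widetilde{x}_{2}$. Its $x$-divergence is therefore $\partial_{x_{2}}$ of a function of $\widetilde{x}_{2}$, which equals the one-dimensional ODE \eqref{eq. 1D solution Psi} multiplied by $\partial\widetilde{x}_{2}/\partial x_{2}$ from \eqref{eq. tilde x_2 with x_2 chain rule}.

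Consequently the function $\delta\mapsto\mathcal{F}(\delta,y)(1+\delta\partial_{y_{2}}v)$ takes the same value at $\delta=0$ and at $\delta=\epsilon$. This function is $C^{1}$ in $\delta$, because $g\in C^{2}$ by (A5) and $\epsilon$ is small enough that $\mathcal{T}$ stays in the subsonic range. Rolle's theorem then gives a point $\xi(y)\in(0,\epsilon)$ where the $\delta$-derivative vanishes, which is \eqref{eq. Rolle type identity}. No measurability or regularity of $\xi$ is needed, because the identity is pointwise in $y$. This is exactly why the parameter space \eqref{eq. parameter space, no regularity/measurability} consists of arbitrary functions.

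\textbf{Step (iii): differentiation in $\delta$.} I would first expand the $y$-divergence inside $\mathcal{F}$ into non-divergence form \eqref{eq. F*(1+delta*v_2) expression}. This must be done before differentiating in $\delta$, so that no $y$-derivative ever falls on $\xi$. Next I differentiate in $\delta$ using $\partial_{\delta}\mathcal{T}=\mathcal{T}'=\langle 2H\vec{\eta},\nabla v\rangle$ and evaluate at $\delta=\xi(y)$, which produces \eqref{eq. super-horrible}.

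Every term of \eqref{eq. super-horrible} is linear in either $D^{2}v$ or $\nabla v$. For the $\mathcal{T}'$ terms this holds because $\nabla v$ factors out of $\mathcal{T}'$. The terms containing $\nabla H$ or $\nabla\mathcal{Z}$ without an explicit $\nabla v$ also carry a factor $\partial_{y_{2}}v$ or $\mathcal{T}'$. Grouping the second-order terms gives the matrix $\overleftrightarrow{A}$ of \eqref{eq. linearization matrix A}, and grouping the first-order terms gives $\vec{b}$ of \eqref{eq. linearization vector b}. Multiplying through by $r$ then yields \eqref{eq. over-simplified horrible equation}.

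The main obstacle is the bookkeeping in this last step. In particular one has to make sure that no zeroth-order term survives: the $\delta$-independent pieces must cancel, and they do, because they are differentiated away. One also has to confirm that every $\mathcal{T}'$-term is allocated consistently to $\overleftrightarrow{A}$ or to $\vec{b}$. I would check this by differentiating each of the four lines of \eqref{eq. F*(1+delta*v_2) expression} separately, using the product rule on the prefactor $(1+\delta\partial_{y_{2}}v)$.
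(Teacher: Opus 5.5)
Your proposal is correct and follows the paper's derivation step by step: you substitute $\psi=\Psi(x_{2}+\epsilon r v)$, use the identity $\mathcal{F}(\epsilon,y)(1+\epsilon\partial_{y_{2}}v)=\mathcal{F}(0,y)$ to apply Rolle's theorem pointwise in $y$, expand into non-divergence form before differentiating in $\delta$, and group the terms into $\overleftrightarrow{A}$ and $\vec{b}$, and carrying out that bookkeeping does reproduce \eqref{eq. super-horrible}, \eqref{eq. linearization matrix A} and \eqref{eq. linearization vector b} exactly. One minor refinement: you do not need $\epsilon$ small to keep $\mathcal{T}(\delta,y)$ subsonic, because $\delta\mapsto H\,|\vec{e}_{2}+\delta\nabla_{y}v|^{2}$ is convex and hence bounded on $[0,\epsilon]$ by $\max\{H(\psi),|\nabla\psi|^{2}\}$.
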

\begin{remark}\label{rmk. remark of C1, neglecting parameters}
    If the parameters $(\epsilon,r,\xi)$ are all fixed and no ambiguity is possible, then we can simply write
    \begin{equation*}
        \overleftrightarrow{A}(\nabla v,v,y)=\overleftrightarrow{A}(\epsilon,r,\xi;\nabla v,v,y),\quad\vec{b}(\nabla v,v,y)=\vec{b}(\epsilon,r,\xi;\nabla v,v,y).
    \end{equation*}
\end{remark}

\subsection{Ellipticity of the small-perturbation equation}
Now let us analyze the equation \eqref{eq. over-simplified horrible equation}, especially its ellipticity.
\begin{lemma}\label{eq. ellipticity of linearization}
    Define
    \begin{equation}\label{eq. limiting matrix for mach number}
        \overleftrightarrow{A_{0}}(\epsilon,r;\nabla v,v,y)=\overleftrightarrow{A}(\epsilon,r,0;\nabla v,v,y),
    \end{equation}
    where the parameter $0\in\mathcal{PS}_{\epsilon}$ is the trivial function defined in $\frac{1}{r}\Omega$. Then, one has
    \begin{align*}
        \overleftrightarrow{A_{0}}(\epsilon,r;\nabla v,v,y)=&g\big(\mathcal{T}(0,y),\mathcal{Z}(y)\big)^{2}\sqrt{H\big(\mathcal{Z}(y)\big)}\cdot(\vec{e}_{1}\otimes\vec{e}_{1}+\vec{e}_{2}\otimes\vec{e}_{2})\\
        &+2g\big(\mathcal{T}(0,y),\mathcal{Z}(y)\big)g_{T}\big(\mathcal{T}(0,y),\mathcal{Z}(y)\big)H\big(\mathcal{Z}(y)\big)^{3/2}\cdot(\vec{e}_{2}\otimes\vec{e}_{2}).
    \end{align*}
    Then, there exists an increasing function $\mathcal{M}(\cdot):[0,+\infty)\to(0,+\infty)$, such that for all $\xi$ in the parameter space $\mathcal{PS}_{\epsilon}$ given in \eqref{eq. parameter space, no regularity/measurability}, one has
    \begin{equation*}
        \Big|\overleftrightarrow{A}(\epsilon,r,\xi;\vec{p},z,y)-\overleftrightarrow{A_{0}}(\epsilon,r;\vec{p},z,y)\Big|\leq\epsilon\cdot\mathcal{M}(|\vec{p}|),\quad\mbox{for all }y\in\frac{1}{r}\Omega\mbox{ and }|z|\leq2.
    \end{equation*}
\end{lemma}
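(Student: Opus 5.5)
The lemma has two parts: an explicit formula for $\overleftrightarrow{A_{0}}$, and a bound on $\overleftrightarrow{A}-\overleftrightarrow{A_{0}}$. I will read both directly off \eqref{eq. linearization matrix A}, treating $\xi$ as a pointwise parameter in $[0,\epsilon]$. No regularity of $\xi$ is needed, since every estimate is pointwise in $y$.

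\textbf{The formula for $\overleftrightarrow{A_{0}}$.} Substitute $\xi\equiv0$. Then $\alpha=1$, $\vec{\eta}=\vec{e}_{2}$, and $\mathcal{T}(0,y)=H(\mathcal{Z})$.

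In the first line of \eqref{eq. linearization matrix A}, the terms $\xi g^{2}\partial_{y_{2}}v$ and $\alpha\xi\,\partial_{T}(g^{2})\mathcal{T}'$ vanish. What remains is $g^{2}\sqrt{H}\,(\vec{e}_{1}\otimes\vec{e}_{1}+\vec{e}_{2}\otimes\vec{e}_{2})$.

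The second line carries the factor $(2\xi\vec{e}_{2}+2\xi^{2}\nabla v)$, so it vanishes.

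In the third line, $(2\xi\vec{e}_{2}+2\xi^{2}\nabla v)\otimes\nabla v=0$ and $(2\vec{e}_{2}+4\xi\nabla v)\otimes\vec{\eta}=2\vec{e}_{2}\otimes\vec{e}_{2}$. This contributes $2gg_{T}H^{3/2}\,\vec{e}_{2}\otimes\vec{e}_{2}$, with $g$ and $g_{T}$ evaluated at $(\mathcal{T}(0,y),\mathcal{Z}(y))$. This is exactly the stated formula.

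\textbf{The difference bound.} Write $\overleftrightarrow{A}(\xi)-\overleftrightarrow{A}(0)$ term by term. Each term is one of two kinds.
\begin{itemize}
\item Terms carrying an explicit factor $\xi$ or $\xi^{2}$. These are bounded by $\epsilon$ times a polynomial in $|\vec{p}|$, multiplied by coefficients such as $g$, $g_{T}$, $\partial_{T}(gg_{T})$, $H^{3/2}$ and $\mathcal{T}'=\langle2H\vec{\eta},\vec{p}\rangle$.
\item Differences of the same smooth coefficient evaluated at $\mathcal{T}(\xi,y)$ versus $\mathcal{T}(0,y)$. These are handled by the mean value theorem in $T$. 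Since $|\mathcal{T}(\xi,y)-\mathcal{T}(0,y)|=H\,\big|2\xi p_{2}+\xi^{2}|\vec{p}|^{2}\big|\leq\epsilon H(2|\vec{p}|+|\vec{p}|^{2})$ for $\epsilon\le1$, the difference is $\epsilon$ times a polynomial in $|\vec{p}|$, times $\sup|\partial_{T}(\cdot)|$.
\end{itemize}

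The needed uniform bounds are as follows. $H(\mathcal{Z})$ lies in a compact subset of $(0,Q_{sound}^{2})$, by Lemma~\ref{lem. shear flow property} and (A4), since $\Psi_{t}\ge C^{-1}$ and $\Psi_{t}<Q_{sound}$. The quantities $g$, $g_{T}$, $g_{TT}$ and $g_{z}$ are bounded on the relevant range of $(T,z)$ by (A5), and $D^{2}g$ controls $\partial_{T}(gg_{T})$ and $\partial_{T}(g^{2})$. The dependence on $z=v$ with $|z|\le2$ enters only through $\mathcal{Z}=\Psi(x_{2}+\epsilon r z)$. Because all bounds on $g$ and $H$ are uniform in the second argument $z\in[0,\infty)$, this causes no loss. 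Collecting terms gives $\epsilon\,\mathcal{M}(|\vec{p}|)$ with $\mathcal{M}$ a polynomial, hence increasing.

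\textbf{Main obstacle.} The delicate point is the $T$-range. For large $|\vec{p}|$, $\mathcal{T}(\xi,y)=H|\vec{e}_{2}+\xi\vec{p}|^{2}$ may leave the region $[0,L^{2}]$ where (A5) gives $|D^{2}g|\le C$. My plan is to let $\mathcal{M}(s)$ absorb $\sup$ of $|g|,|Dg|,|D^{2}g|$ over $T\in[0,\sup H\cdot(1+s)^{2}]$, which is a nondecreasing function of $s$; if this range exceeds the domain where $g$ is defined, I would regard the bound as meaningful only for small $\epsilon|\vec{p}|$. I would also check that $g_{T}$ does not blow up near $Q_{sound}$. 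This is guaranteed when $\epsilon(1+|\vec{p}|)^{2}$ is small enough that $\mathcal{T}$ stays below $Q_{sound}^{2}$, again by the margin supplied by (A4). Outside that regime, $\mathcal{M}$ can simply be defined large.
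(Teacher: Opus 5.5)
Your proposal is correct and follows the same route as the paper. You substitute $\xi\equiv0$ (so $\alpha=1$, $\vec{\eta}=\vec{e}_{2}$, $\mathcal{T}(0,y)=H(\mathcal{Z})$) to read off $\overleftrightarrow{A_{0}}$. You then bound $\overleftrightarrow{A}-\overleftrightarrow{A_{0}}$ term by term using $|\alpha-1|,|\vec{\eta}-\vec{e}_{2}|=O(\epsilon|\vec{p}|)$ and $|\mathcal{T}(\xi,y)-\mathcal{T}(0,y)|=O(\epsilon|\vec{p}|+\epsilon^{2}|\vec{p}|^{2})$, with $\mathcal{Z}$ unaffected by $\xi$.

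Your caveat about the $T$-range is more careful than the paper, which tacitly assumes $\epsilon|\vec{p}|$ small by fixing $|\vec{p}|\le K$; that is exactly the regime used in Lemma~\ref{lem. interior Harnack}. Note, however, that ``defining $\mathcal{M}$ large'' cannot cover the regime where $\mathcal{T}$ approaches $Q_{sound}^{2}$ and $g_{T}$ may blow up. The honest form of the bound therefore carries a restriction $\epsilon\le\epsilon_{0}(|\vec{p}|)$.
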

\begin{proof}
    Let $\vec{p}=\nabla v(y)$ and $z=v(y)$. If $|\vec{p}|\leq K$ for some fixed $K$ and $|z|\leq2$, one has $|\widetilde{x}_{2}-x_{2}|=O(\epsilon r)$, $|\alpha(y)-1|=O(\epsilon K)$, $|\vec{\eta}(y)-\vec{e}_{2}|=O(\epsilon K)$. Moreover, $|\mathcal{T}(\xi,y)-\mathcal{T}(0,y)|=O(\epsilon K+\epsilon^{2}K^{2})$. Combining these estimates gives the error between $\overleftrightarrow{A}(\epsilon,r,\xi;\vec{p},z,y)$ and $\overleftrightarrow{A_{0}}(\epsilon,r;\vec{p},z,y)$.
\end{proof}
\begin{remark}
    Like in Remark~\ref{rmk. remark of C1, neglecting parameters}, if the parameters $(\epsilon,r)$ are all fixed and no ambiguity is possible, then we can simply write
    \begin{equation*}
        \overleftrightarrow{A_{0}}(\nabla v,v,y)=\overleftrightarrow{A_{0}}(\epsilon,r;\nabla v,v,y).
    \end{equation*}
\end{remark}
\begin{remark}\label{rmk. limiting mach number equation}
    From the definition of $\overleftrightarrow{A_{0}}$, we see that for any fixed $M$, for all $|\vec{p}|\leq M$, $|z|\leq1$, one has
    \begin{align*}
        \lim_{\substack{\epsilon\to0\\r\to\infty\\r\cdot y_{2}\to\infty}}\overleftrightarrow{A_{0}}(\vec{p},z,y)=&g(V^{2},+\infty)^{2}V\cdot(\vec{e}_{1}\otimes\vec{e}_{1}+\vec{e}_{2}\otimes\vec{e}_{2})\\
        &+2g(V^{2},+\infty)g_{T}(V^{2},+\infty)V^{3}\cdot(\vec{e}_{2}\otimes\vec{e}_{2}).
    \end{align*}
    If we use the constant $\mu$ in \eqref{eq. vertical mach deform}, then the above equation can be written as
    \begin{equation*}
        \lim_{\substack{\epsilon\to0\\r\to\infty\\r\cdot y_{2}\to\infty}}\overleftrightarrow{A_{0}}(\vec{p},z,y)=g(V^{2},+\infty)^{2}V\cdot(\vec{e}_{1}\otimes\vec{e}_{1}+\frac{1}{\mu^{2}}\vec{e}_{2}\otimes\vec{e}_{2}).
    \end{equation*}
\end{remark}

\begin{lemma}\label{lem. when does b converge to zero?}
    There exists a function $\underline{r}(\delta,\sigma,M)$, such that for all $0<\delta,\sigma<1<M$, as long as $r\geq\underline{r}(\delta,\sigma,M)$ and $\epsilon\leq\delta/4$, the vector $\vec{b}$ defined in \eqref{eq. linearization vector b} satisfies
    \begin{equation*}
        \big|\vec{b}(\epsilon,r,\xi;\vec{p},z,y)\big|\leq\sigma\quad\mbox{as long as }y\in\frac{1}{r}\Omega\cap\{y_{2}\geq\delta\},\ |\vec{p}|\leq M,\mbox{ and }|z|\leq2.
    \end{equation*}
\end{lemma}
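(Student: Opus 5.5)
The plan is to show that every term in \eqref{eq. linearization vector b} carries a factor of either $\nabla_{y}\sqrt{H}$, $\nabla_{y}H$ or $\nabla_{y}\mathcal{Z}$, and that all three are small in the region $\{y_{2}\geq\delta\}$ once $r$ is large. The remaining factors in $\vec{b}$ are bounded uniformly. These factors are $\alpha$, $\vec{\eta}$, $g$, $g_{T}$, $g_{z}$ and the $T$-derivatives $\partial_T(g^{2})$, $\partial_T(gg_{z})$, $\partial_T(gg_{T})$. Indeed, $|\xi|\leq\epsilon\leq\delta/4<1$ and $|\vec{p}|\leq M$ give $|\alpha|,|\vec{\eta}|\leq 1+M$. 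Moreover $\mathcal{T}(\xi,y)\leq H(\mathcal{Z})(1+M)^{2}$ stays in a compact range of $T$, although we may need $\epsilon M$ small to keep it subsonic. By (A5) and the sub-sonic assumption (A4), $g$ and its first two derivatives are then bounded by a constant $C(M)$. So it suffices to prove
\begin{equation*}
|\nabla_{y}\mathcal{Z}|+|\nabla_{y}H(\mathcal{Z})|\leq \sigma/C(M)\quad\mbox{on }\tfrac{1}{r}\Omega\cap\{y_{2}\geq\delta\},
\end{equation*}
since $\nabla\sqrt{H}=\nabla H/(2\sqrt{H})$ and $H\geq C^{-2}$ by Definition~\ref{def. shear flow}.

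Next I would compute these gradients via the chain rule. Since $\mathcal{Z}(y)=\Psi(\widetilde{x}_{2})$ with $\widetilde{x}_{2}=ry_{2}+\epsilon r v(y)$, we get
\begin{equation*}
\nabla_{y}\mathcal{Z}=r\,\Psi_{t}(\widetilde{x}_{2})\,(\vec{e}_{2}+\epsilon\nabla_{y}v)=r\sqrt{H(\mathcal{Z})}\,\vec{\eta}_{\epsilon},
\end{equation*}
where $\vec{\eta}_{\epsilon}=\vec{e}_{2}+\epsilon\vec{p}$. Similarly $\nabla_{y}H(\mathcal{Z})=H'(\mathcal{Z})\nabla_{y}\mathcal{Z}$. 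The factor $r$ is dangerous, so smallness has to come from $\mathcal{B}'$. For $y_{2}\geq\delta$ and $|z|\leq2$, $\epsilon\leq\delta/4$ gives $\widetilde{x}_{2}\geq r(\delta-2\epsilon)\geq r\delta/2$, hence $\mathcal{Z}\geq\Psi(r\delta/2)\geq r\delta/(2C)$ because $\Psi_t\ge C^{-1}$. Thus $\mathcal{Z}\gtrsim r$, and the bound on $r|\nabla\mathcal Z|$-type terms reduces to showing that $z\,\mathcal{B}'(z)$ and $zH'(z)$ tend to $0$ as $z\to\infty$.

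The key point is that $\mathcal{Z}$ itself does not appear alone in $\vec{b}$; it appears only through $g_{z}$ and $\partial_T(gg_{z})$. By the compatibility conditions \eqref{eq. compatibility Z} and \eqref{eq. compatibility TZ}, both of these are multiples of $\mathcal{B}'(\mathcal{Z})$ with bounded coefficients. So every $\nabla\mathcal{Z}$ term has size at most
\begin{equation*}
C(M)\, r\,|\mathcal{B}'(\mathcal{Z})|\leq C(M)\,\frac{2C}{\delta}\,\mathcal{Z}|\mathcal{B}'(\mathcal{Z})|,
\end{equation*}
which tends to $0$ as $r\to\infty$ by (A6). For $H$, I would differentiate the defining identity $g(H,z)^{2}H=2\mathcal{B}(z)$ from \eqref{eq. H(z) initial definition}. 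This gives
\begin{equation*}
H'(z)=\frac{2\mathcal{B}'(z)-2gg_{z}H}{\partial_T(Tg^{2})}.
\end{equation*}
Using \eqref{eq. compatibility Z} together with (A4), which bounds $\partial_T(Tg^{2})$ below by $C^{-1}$, we obtain $|H'(z)|\leq C|\mathcal{B}'(z)|$, so the $\nabla H$ terms are controlled identically.

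Finally, choose $\underline{r}(\delta,\sigma,M)$ so that $z|\mathcal{B}'(z)|\leq \sigma\delta/(C'(M))$ for all $z\geq r\delta/(2C)$. The main obstacle is bookkeeping rather than ideas. One must check that every term of \eqref{eq. linearization vector b} really contains $\nabla H$, $\nabla\sqrt H$ or $\nabla\mathcal Z$, and in particular that no bare $\mathcal Z$-term survives without a $\mathcal B'$ factor after applying the compatibility identities. One must also confirm the boundedness of $\partial_T(gg_{T})$ on the relevant compact set, which may require $\epsilon M$ small to stay strictly subsonic; this smallness can be absorbed into $\underline r$ or into the hypothesis $\epsilon\le\delta/4$ if needed.
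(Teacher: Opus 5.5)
Your proposal is correct and follows the paper's proof: the paper also writes $\vec b$ as bounded coefficients times $\nabla H$, $gg_{z}\nabla\mathcal{Z}$ and $\partial_T(gg_{z})\nabla\mathcal{Z}$. It then absorbs the factor $r$ from the chain rule using \eqref{eq. compatibility Z}, \eqref{eq. compatibility TZ}, \eqref{eq. H(z) initial definition}, $\mathcal{Z}\gtrsim\delta r$, and $z\mathcal{B}'(z)\to0$ from (A6). Your explicit formula for $H'(z)$ and your subsonic-range caveat are added detail; the paper leaves the caveat implicit in its bound $|\mu_i|\leq C(|\vec p|)$, and it cannot be absorbed into $\underline{r}$, since large $r$ does not force $\epsilon M$ to be small.
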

\begin{remark}\label{rmk. after C.3}
    In Lemma~\ref{lem. when does b converge to zero?}, as long as $|v|\leq2$ and $y_{2}\geq\delta$, we must have $\psi(ry)\geq\Psi(ry_{2}-\epsilon r)\geq\Psi(\frac{\delta}{2}r)$ by the assumption $\epsilon\leq\frac{\delta}{4}$. Therefore, as long as $|v|\leq2$, $\Psi^{-1}\big(\psi(y)\big)$ is well-defined for $y\in\frac{1}{r}\Omega\cap\{y_{2}\geq\delta\}$.
\end{remark}
\begin{proof}[Proof of Lemma~\ref{lem. when does b converge to zero?}]
    From \eqref{eq. linearization vector b}, as long as $|z|\leq2$, the vector $\vec{b}$ can be written as
    \begin{equation}\label{eq. vec b 3 term}
        \vec{b}=\mu_{1}(y)\cdot\nabla H+\mu_{2}(y)\cdot gg_{Z}\nabla\mathcal{Z}+\mu_{3}(y)\cdot\frac{\partial(gg_{z})}{\partial T}\nabla\mathcal{Z}.
    \end{equation}
    Here, $\mu_{i}(y)$'s are $\mathbb{R}^{2\times2}$ matrix functions satisfying $|\mu_{i}(y)|\leq C(|\vec{p}|)$ for $i\in\{1,2,3\}$. Therefore, it suffices to estimate $\nabla H$, $gg_{z}\nabla\mathcal{Z}$, and $\frac{\partial(gg_{z})}{\partial T}\nabla\mathcal{Z}$ separately.

    \textbf{Estimate of $\nabla H$.} Recall that $H=H(\Psi(\widetilde{x}_{2}))$, then by the chain rule, we have
    \begin{equation}\label{eq. nabla H chain rule}
        |\nabla H|=\Big|\frac{dH}{dz}(\Psi)\Big|\cdot\Big|\frac{d\Psi}{dt}(\widetilde{x}_{2})\Big|\cdot\Big|\nabla_{y}\widetilde{x}_{2}\Big|\leq r\cdot C(M)\cdot\Big|\frac{dH}{dz}(\Psi)\Big|.
    \end{equation}
    In fact, as long as $|\vec{p}|=|\nabla_{y}v|\leq M$, one has
    \begin{equation*}
        |\nabla_{y}\widetilde{x}_{2}|=|r\vec{e}_{2}+\epsilon r\nabla_{y}v|\leq r\cdot C(M).
    \end{equation*}
    By the discussion in Remark~\ref{rmk. after C.3}, one has $\Psi=\psi(ry)\geq\Psi(\frac{\delta}{2}r)$. By letting $r$ sufficiently large, one obtains from the assumption (A6) and the identity \eqref{eq. H(z) initial definition} that
    \begin{equation*}
        r\cdot C(M)\cdot\Big|\frac{dH}{dz}(\Psi)\Big|\lesssim r\cdot C(M)\cdot\Big|\mathcal{B}'(\frac{\delta}{2}r)\Big|\to0.
    \end{equation*}
    In particular, one has that $|\nabla H|\ll\sigma$ when $r$ is sufficiently large.

    \textbf{Estimate of $gg_{z}\nabla\mathcal{Z}$ and $\frac{\partial(gg_{z})}{\partial T}\nabla\mathcal{Z}$.} Similar to \eqref{eq. nabla H chain rule}, one has
    \begin{equation}\label{eq. nabla Z chain rule}
        |\nabla\mathcal{Z}|=\Big|\frac{d\Psi}{dt}(\widetilde{x}_{2})\Big|\cdot\Big|\nabla_{y}\widetilde{x}_{2}\Big|\leq r\cdot C(M).
    \end{equation}
    By \eqref{eq. compatibility Z} and \eqref{eq. compatibility TZ}, one has that
    \begin{equation}\label{eq. gg_z bounded by B'(Z)}
        |gg_{z}|+\Big|\frac{\partial(gg_{z})}{\partial T}\Big|\lesssim|\mathcal{B}'(\mathcal{Z})|,\quad\mbox{when }gg_{z},\frac{\partial(gg_{z})}{\partial T}\mbox{ are evaluated at }\big(\mathcal{T}(\xi,y),\mathcal{Z}(y)\big).
    \end{equation}
    Notice that for any fixed $\delta$ and any $\epsilon\leq\frac{\delta}{2}$, one has
    \begin{equation*}
        \mathcal{Z}(y)\gtrsim\widetilde{x}_{2}(y)\geq\frac{\delta}{2}r,
    \end{equation*}
    then one obtains from the assumption (A6) that $|\mathcal{B}'(\mathcal{Z})|=o(r^{-1})$. This proves that $|gg_{z}\nabla\mathcal{Z}|,\Big|\frac{\partial(gg_{z})}{\partial T}\nabla\mathcal{Z}\Big|\ll\sigma$ when $r$ is sufficiently small.

    Summarizing all estimates above gives the existence of the function $\underline{r}(\delta,\sigma,M)$.
\end{proof}

Now, we give the interior Harnack principle for the small perturbation equation.
\begin{lemma}\label{lem. interior Harnack}
    There exist two function $r_{0}(h)$ and $\epsilon_{0}(h)$, such that the following fact holds for all $h>0$: For all $\epsilon\leq\epsilon_{0}(h)$ and $r\geq r_{0}(h)$, if $\Omega=[-r,r]\cap[h r,r]$ and $|v|\leq1$ satisfies \eqref{eq. over-simplified horrible equation} (where $\xi\in\mathcal{PS}_{\epsilon}$ is fixed), then
    \begin{equation*}
        v+1\geq c(h)\cdot\big(v(\frac{9}{10}\vec{e}_{2})+1\big)\mbox{ and }1-v\geq c(h)\cdot\big(1-v(\frac{9}{10}\vec{e}_{2})\big)\quad\mbox{in }[-\frac{9}{10},\frac{9}{10}]\times[2\delta,\frac{9}{10}].
    \end{equation*}
\end{lemma}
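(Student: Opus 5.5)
Since $|v|\le 1$, the two functions $v+1$ and $1-v$ are nonnegative. The plan is to view \eqref{eq. over-simplified horrible equation} as a linear, non-divergence, uniformly elliptic equation with a small drift and apply the Krylov--Safonov Harnack inequality. \emph{Freeze the coefficients.} With $\xi\in\mathcal{PS}_{\epsilon}$ fixed and $v$ the given solution, set $a^{ij}(y)=A^{ij}(\epsilon,r,\xi;\nabla v(y),v(y),y)$ and $\beta(y)=\vec{b}(\epsilon,r,\xi;\nabla v(y),v(y),y)$. Then both $w_{\pm}=1\pm v$ solve $a^{ij}\partial_{ij}w+\beta\cdot\nabla w=0$ in the rescaled domain $[-1,1]\times[h,1]$. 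The Krylov--Safonov Harnack inequality only needs measurable coefficients, ellipticity bounds and an $L^\infty$ bound on the drift. Once these are established, chain it along a finite cover of $[-\tfrac{9}{10},\tfrac{9}{10}]\times[2h,\tfrac{9}{10}]$ by balls of radius about $h/4$ whose doubles stay in the domain. The number of balls is $C/h^{2}$, so this gives $w_{\pm}\ge c(h)\,w_{\pm}(\tfrac{9}{10}\vec e_2)$ on that set, which is the claim. I read the lower limit $2\delta$ in the statement as $2h$.

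\emph{Ellipticity and drift bounds.} The first task is to verify $\lambda I\le (a^{ij})\le\lambda^{-1}I$ with $\lambda$ uniform. By Lemma~\ref{eq. ellipticity of linearization}, $|A-A_0|\le\epsilon\,\mathcal M(|\nabla v|)$. The matrix $A_0$ is diagonal with entries $g^2\sqrt H$ and $g^2\sqrt H+2gg_TH^{3/2}=\sqrt H\,\partial_T(Tg^2)$. Both lie in $[C^{-1},C]$ by (A4), the subsonic bound $\|\Psi_t\|_\infty<Q_{sound}$, and $H\ge C^{-2}$. Next, Lemma~\ref{lem. when does b converge to zero?} with $\delta=h$ gives $|\beta|\le\sigma$ for $r\ge\underline r(h,\sigma,M)$, provided $|\nabla v|\le M$. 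The same gradient bound also lets us absorb $\epsilon\mathcal M(M)$ by taking $\epsilon_0(h)$ small.

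\emph{Main obstacle: an a priori gradient bound $|\nabla v|\le M$ on the domain.} Both coefficient estimates above depend on it, and $M$ must be independent of $\epsilon$ and $r$. I would get it by returning to the original scale. In $x$, $\psi$ solves the quasilinear equation \eqref{eq. quasilinear structure}, which is uniformly elliptic in the subsonic range. On balls $B_{hr/2}(x_0)$ with $x_{0,2}\ge hr$, $\psi$ is bounded between $\Psi(x_2\mp\epsilon r)$ and so stays away from zero. Therefore $S=\Psi^{-1}(\psi)-x_2$ satisfies a quasilinear equation with $|\nabla S|$ small in the relevant sense, and interior $C^{1,\alpha}$ estimates for quasilinear elliptic equations (Ladyzhenskaya--Ural'tseva / Gilbarg--Trudinger ch.~13) apply. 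Rescaling by $r$ and using $\|S\|_\infty\le\epsilon r$ should give $|\nabla_y v|\le C(h)\|v\|_\infty\le C(h)$.

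If the direct estimate is awkward, there is an alternative route. First prove a weak gradient bound, namely $|\nabla\psi|$ bounded, which follows from subsonicity. Then apply Krylov--Safonov to the difference $S$ regarded as a solution of a linear equation whose coefficients are obtained by integrating the quasilinear operator between $\psi$ and $\Psi(x_2)$. This yields $C^{\alpha}$ control of $v$, and then Cordes--Nirenberg or Schauder estimates upgrade it to a $C^{1}$ bound.

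With $M=C(h)$ fixed, choose $\sigma$ equal to $1$ (any fixed bound suffices for Harnack at unit scale). Then set $r_0(h)=\underline r(h,1,M)$ and $\epsilon_0(h)=\min\{h/4,\lambda/(2\mathcal M(M))\}$ to conclude.
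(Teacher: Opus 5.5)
Your route differs from the paper's, and the difference falls exactly on the step you flag as the main obstacle. The paper never freezes coefficients. It invokes Savin's small-perturbation Harnack inequality \cite{S07}, whose measure estimate touches $v$ by paraboloids of bounded opening. Ellipticity of $\overleftrightarrow{A}$ and boundedness of $\vec b$ are therefore only used at contact points, where $|\nabla v|\le M$ holds automatically for a universal $M$. Lemmas \ref{eq. ellipticity of linearization} and \ref{lem. when does b converge to zero?} give exactly this for $\epsilon\le\epsilon_0(h)$, $r\ge r_0(h)$, so no a priori gradient bound is ever needed. By switching to linear Krylov--Safonov, you must instead produce $|\nabla_y v|\le M$ near the target rectangle with $M$ independent of $\epsilon$. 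Neither argument you sketch delivers this.

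\textbf{Why your gradient arguments fail.} Interior $C^{1,\alpha}$ estimates for the quasilinear equation bound $[\nabla\phi]_{C^\alpha}$, where $\phi=\Psi^{-1}(\psi)$, in terms of $\sup|\nabla\phi|=O(1)$, not in terms of $\|S\|_\infty$. Interpolating with $\|S\|_\infty\le\epsilon r$ then only yields $|\nabla_y v|\lesssim\epsilon^{-1/(1+\alpha)}$. Note also that a priori you only know $|\nabla S|\le C$, so ``$|\nabla S|$ small'' is the conclusion, not an input. Your alternative linearizes against the \emph{fixed} profile $\Psi(x_2)$, which creates the term $\mathcal{B}'(\psi)/g-\mathcal{B}'(\Psi(x_2))/g$. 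As a zero-order coefficient it involves $\mathcal{B}''$, which (A1)--(A6) do not control. As a source of size $o(1/r)$, it becomes $o_r(1)/\epsilon$ after normalizing to $v$. That forces $r\ge r_0(h,\epsilon)$, whereas the statement requires $r_0=r_0(h)$ uniformly in $\epsilon\le\epsilon_0(h)$; this uniformity is how the lemma is used in Theorem \ref{thm. Harnack pure}.

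\textbf{The missing idea: vertical translation invariance.} The shear ODE is autonomous, so every $\Psi(x_2+c)$ solves \eqref{eq. quasilinear structure}. Write the equation for $\phi$ as $a^{ij}(\nabla\phi,\phi)\phi_{ij}=F(\nabla\phi,\phi)$ with $a^{ij}=g\delta_{ij}+2g_T\psi_i\psi_j$. Translation invariance says $F(\vec e_2,z)\equiv0$. Hence $S$ solves the \emph{homogeneous} equation $a^{ij}S_{ij}=\vec\beta\cdot\nabla S$, where $\vec\beta=\int_0^1\partial_pF(\vec e_2+t\nabla S,\phi)\,dt$ is evaluated at the same $z=\phi$. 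No $\mathcal{B}''$ appears; this is the analogue of the paper's device of freezing $\widetilde{x}_2$. Here $a^{ij}$ is uniformly elliptic by subsonicity and (A4)--(A5). Moreover $|\vec\beta|\lesssim\sup_{z\ge\Psi(hr/2)}|\mathcal{B}'(z)|=o(1/r)$ by \eqref{eq. compatibility Z}, \eqref{eq. compatibility TZ} and \eqref{eq. H(z) initial definition}. The two-dimensional $C^{1,\alpha}$ estimate for measurable coefficients (Gilbarg--Trudinger, Ch.~12), with the small drift absorbed, then gives $|\nabla_y v|\le C/h$, and your argument closes.

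Once you have this equation, though, you can apply Krylov--Safonov directly to $1\pm v$, since its coefficients are continuous. The frozen coefficients of \eqref{eq. over-simplified horrible equation} and the gradient bound then become unnecessary.

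A minor point: if you keep the frozen-coefficient version, $\xi$ is not known to be measurable. Since $v\in C^2$, use the Pucci-class (viscosity) form of the Harnack inequality instead of the measurable-coefficient one.
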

\begin{proof}
    Notice that for all fixed $M>0$, as long as $|\nabla v|\leq M$, we infer from Lemma~\ref{eq. ellipticity of linearization} and Lemma~\ref{lem. when does b converge to zero?} (by setting $\sigma=1$) that for $r\geq\underline{r}(h,M)$ and $\epsilon\leq\overline{\epsilon}(h,M)$, $\overleftrightarrow{A}(\nabla v,v,y)$ is uniformly elliptic and that $\vec{b}(\nabla v,v,y)$ is uniformly bounded. This satisfies the small-perturbation assumption in \cite{S07}, so the Harnack principle in \cite{S07} follows.
\end{proof}
\section*{Acknowledgment}
The author would like to thank Yan Li, Sitan Lin, Georg Weiss, and Chunjing Xie (authors of \cite{LLWX26}) for fruitful discussion and support.

\noindent \textbf{Conflict of interest.} The author does not have any possible conflicts of interest.
\vspace{2mm}

\noindent \textbf{Data availability statement.}
Data sharing is not applicable to this article, as no data sets were generated or analyzed during the current study.
\vspace{2mm}

\noindent \textbf{AI declaration.} No AI tools have been used in the work or preparation.
\bibliographystyle{abbrv}
\bibliography{main}
\end{document}